\newcommand{\R}{\mathbb R}
\newcommand{\p}{\mathfrak{p}}
\newcommand{\g}{\mathfrak{g}}
\newcommand{\kk}{\mathfrak{k}}
\newtheorem{thm}{Theorem}[section]
\newtheorem{lem}[thm]{Lemma}
\newtheorem{prop}[thm]{Proposition}
\newtheorem{cor}[thm]{Corollary}
\newtheorem*{thma}{Theorem A}
\newtheorem*{thma'}{Theorem A'}
\newtheorem*{thmb}{Theorem B}
\newtheorem*{thmb'}{Theorem B'}
\newtheorem*{thmc}{Theorem C}
\newtheorem*{thmd}{Theorem D}
\newtheorem*{corE}{Corollary E}
\newtheorem*{thme}{Theorem E}
\DeclareMathOperator{\C}{\mathbb{C}}
\DeclareMathOperator{\Diff}{\mathrm{Diff}}
\DeclareMathOperator{\CP}{\mathbb{C}\mathbb{P}}
\DeclareMathOperator{\PSL}{\mathrm{PSL}}
\DeclareMathOperator{\inners}{\langle \cdot, \cdot \rangle}
\DeclareMathOperator{\Hyp}{\mathbb H}
\newcommand{\ccpair}{(c_1, \overline{c_2})}
\DeclareMathOperator{\fm}{\vb*{\overline {f_{--}}}}
\DeclareMathOperator{\f+}{\vb*{f_+}}
\newcommand{\hpair}{\vb*h{(c_1, \overline{c_2})}}
\DeclareMathOperator{\phee}{\varphi}
\DeclareMathOperator{\eps}{\varepsilon}
\DeclareMathOperator{\CAS}{\mathrm {SOL}}
\newcommand{\CSCS}{\mathcal C(S)\times \mathcal C(\overline S)}
\newcommand{\TSTS}{\mathcal T(S)\times \mathcal T(\overline S)}
\newcommand{\GC}{G^{\mathbb C}}
\newcommand{\Tau}{\mathscr T}
\theoremstyle{definition}
\newtheorem*{biHitchin}{Strategy}
\newtheorem{defn}[thm]{Definition}
\newtheorem{remark}[thm]{Remark}
\newtheorem{ass}[thm]{Assumption}
\newcommand\xleftrightarrow[2][]{%
  \ext@arrow 9999{\longleftrightarrowfill@}{#1}{#2}}
\newcommand\longleftrightarrowfill@{%
  \arrowfill@\leftarrow\relbar\rightarrow}
\begin{document}
\title[complex harmonic maps and rank 2 higher teichm{\"u}ller theory]{
 complex harmonic maps and rank 2 higher teichm{\"u}ller theory}

\author[Christian El Emam]{Christian El Emam}
\address{Christian El Emam: University of Torino, Dipartimento di Matematica ``Giuseppe Peano", Via Carlo Alberto, 10, 10123 Torino, Italy.} \email{christian.elemam@unito.it}

\author[Nathaniel Sagman]{Nathaniel Sagman}
\address{Nathaniel Sagman: University of Luxembourg, 
Maison du Nombre,
6 Avenue de la Fonte,
L-4364 Esch-sur-Alzette, Luxembourg.} \email{nathaniel.sagman@uni.lu}

\begin{abstract}
We initiate and develop the theory of complex harmonic maps to holomorphic Riemannian symmetric spaces, which we make use of to study complex analytic aspects of higher Teichm{\"u}ller theory, with a focus on rank $2$ Hitchin components.

Complex harmonic maps lead to various generalizations of objects from the theory of G-Higgs bundles; for instance, the Hitchin fibration, cyclic G-Higgs bundles, and the affine Toda equations. Beyond such generalizations, we also find a relation between complex harmonic maps and opers.

Within the realm of higher Teichm{\"u}ller theory, for any rank $2$ Hitchin component, we prove a Bers-type theorem, which extends and improves our previous work on $\mathrm{SL}(3,\R)$, and we prove that Goldman's symplectic form is compatible with Labourie's complex structure, so that the two determine a mapping class group invariant pseudo-K{\"a}hler structure. We obtain partial generalizations in higher rank, and we construct K{\"a}hler structures on other spaces that are related to the Hitchin components.  
\end{abstract}
\maketitle
\tableofcontents

\section{Introduction}
Let $S$ be a closed oriented surface of genus $\mathrm g\geq 2,$ and let $\mathcal{T}(S)$ be the Teichm{\"u}ller space of marked Riemann surface structures on $S.$ Given a non-compact semisimple Lie group $G$, a higher Teichm{\"u}ller space for $S$ and $G$ is a connected component of the
character variety $\chi(\pi_1(S),G)$ of conjugacy classes of representations from $\pi_1(S)$ to $G$ that consists entirely of classes of discrete and faithful representations. The definition
is motivated by the fact that $\mathcal{T}(S)$ identifies with such a component for $G = \mathrm{PSL}(2, \R)$. The most well-known higher Teichm{\"u}ller spaces are the Hitchin components, defined for split real simple Lie groups of adjoint type and originally studied by Hitchin in \cite{Hi}, which all contain $\mathcal{T}(S)$. Since \cite{Hi}, higher Teichm{\"u}ller spaces have been shown to possess interesting geometric, dynamical, and analytic properties, some of which generalize Teichm{\"u}ller space, and others that are new and surprising (see \cite{Wie} for a survey).

The field of higher Teichm{\"u}ller theory has developed alongside the theories of harmonic maps and minimal immersions to symmetric spaces of non-compact type, Higgs bundles, and the non-abelian Hodge correspondence. To give some idea, Donaldson and Corlette proved that, given a Riemann surface structure on $S$ and an irreducible representation $\rho:\pi_1(S)\to G$, there exists a unique $\rho$-equivariant harmonic map from the universal cover $\widetilde{S}$ to the Riemannian symmetric space of $G$. One can then use that harmonic map to glean information about the representation $\rho$. Moreover, the harmonic map determines a Higgs bundle, which is a holomorphic object that allows one to see the representation and the harmonic map from another viewpoint. One example of these objects in action is in Hitchin's work on the Hitchin components in \cite{Hi}.

In this paper, we take up two objectives. Our first one is to initiate and develop the theory of complex harmonic maps to holomorphic Riemannian manifolds, which extends the ordinary theory of harmonic maps. We show that when the target is a holomorphic Riemannian symmetric space (see Section \ref{section: holo Riem mflds}), our complex harmonic maps give rise to new objects that resemble Higgs bundles, and which we call complex harmonic $G$-bundles. We provide a way of constructing complex harmonic maps by solving systems of complex elliptic equations (Theorem C), and we find an unexpected connection to the theory of opers (Theorem D).

Our second objective is to use complex harmonic maps to study complex analytic aspects of higher Teichm{\"u}ller theory. Our main focus is on the Hitchin components of rank $2.$ As is well known, $\mathcal{T}(S)$ carries the structure of a contractible complex manifold, biholomorphic to a bounded domain in $\C^{3g-3},$ and it also has a K{\"a}hler metric called the Weil-Petersson metric. For every split real simple Lie group $G$ of adjoint type and of rank $2$, Labourie used conformal harmonic maps (equivalently, minimal immersions) to the Riemannian symmetric space of $G$ to give a real analytic and mapping class group equivariant parametrization of the Hitchin component of $G$ as a holomorphic vector bundle $\mathcal{M}(S,G)$ over Teichm{\"u}ller space \cite{Lab2}. Precisely, the fiber of $\mathcal{M}(S,G)$ over a class of complex structures $[c]\in\mathcal{T}(S)$ is represented by the space of holomorphic differentials of degree $d_G$ on $(S,c)$, where $d_G$ is the Coxeter number of $G$ (by Riemann-Roch, the dimension of a fiber is $(2d_G-1)(g-1)$). For each group $G$, we denote the parametrization $$\mathcal{L}_G:\mathcal{M}(S,G)\to \mathrm{Hit}(S,G),$$ and we endow $\mathrm{Hit}(S,G)$ with the complex structure of $\mathcal{M}(S,G)$. The almost complex structures on $\mathrm{Hit}(S,G)$ and on $\mathcal M(S,G)$ are denoted by $\mathcal{J}_G$ and $\mathcal J'_G$ respectively.

The only split real semisimple Lie group of adjoint type and rank $2$ is $G=\mathrm{PSL}(2,\R)^2$. For products of Fuchsian representations into $G$, there is a similar parametrization, via minimal immersions, by the bundle of holomorphic quadratic differentials over $\mathcal{T}(S)$ (see Section \ref{sec: Labourie paper}). In this case, we define $d_G=2$ and keep the notations $\mathcal{L}_G,$ $\mathcal{M}(S,G)$, and $\mathcal{J}_G$. So as not to have unruly Theorem statements, we incorrectly refer to the parametrization as Labourie's. 

Our main results on $\mathrm{Hit}(S,G)$ are a generalization of Bers' Simultaneous Uniformization Theorem for this complex structure (Theorem B), and a proof that $\mathcal{J}_G$ is compatible with Goldman's symplectic form and that the two objects together determine a pseudo-K{\"a}hler structure on $\mathrm{Hit}(S,G)$ (Theorem A). Outside of the rank $2$ thread, we find partial generalizations for Hitchin components for Lie groups of higher rank (Theorems A' and B'). As a counterpart to Theorems A and A', we also uncover, for $G$ of any rank, a K{\"a}hler manifold that parametrizes certain minimal surfaces in the Riemannian symmetric space of the complexification (Theorem E). We deduce a further application for the non-split Lie group $\mathrm{PU}(2,1)$ (Corollary E).

In the coming subsections, we state our results precisely. We first give the applications to rank $2$ higher Teichm{\"u}ller theory (Theorems A and B) in Sections \ref{sec: thma} and \ref{sec: Bers}, and then we discuss complex harmonic maps and our main results on this new theory (Theorems C and D) in Section \ref{sec: complex harmonic maps intro}. Since Theorems A and B vs. Theorems C and D might be of interest to different audiences, we have written Section \ref{sec: complex harmonic maps intro} to be mostly independent of Sections \ref{sec: thma} and \ref{sec: Bers}.

\subsection{Goldman's form and Labourie's complex structure}\label{sec: thma}
As our first main result, Theorem A below, we prove the compatibility of Goldman's symplectic form and Labourie's complex structure. As we'll explain in Section \ref{sec: outline}, the proof of Theorem A uses Theorem B below, as well as the explicit descriptions of connection forms associated with complex harmonic $G$-bundles.

On the smooth part of any character variety as above, Goldman defined a symplectic form, often referred to as the \textbf{Goldman symplectic form} \cite{Golform} (see Section \ref{sec: G symplectic form}). This symplectic form is always invariant under the natural action of the mapping class group on the character variety, and, when restricted to the Hitchin components, its further restriction to Teichm{\"u}ller space is, up to normalization, the classical Weil-Petersson symplectic form. At this point, the theory around the Goldman symplectic form has been developed from many viewpoints.

Let $\omega_G$ be the Goldman symplectic form on $\mathrm{Hit}(S,G).$ For $G=\mathrm{PSL}(3,\R)$, Labourie's parametrization was known before \cite{Lab3} and due to Loftin \cite{Lof} and Labourie \cite{Lab2}. Since \cite{Lof} and \cite{Lab2}, just for $G=\mathrm{PSL}(3,\R)$, the question of the relationship between $\omega_G$ and $\mathcal{J}_{G}$, in particular whether they are compatible, has been raised consistently (see, for example, \cite{RunTambnonkahler}). Note that in \cite{Lab3}, Labourie inquires about the general rank $2$ case. Theorem A says that $\omega_G$ and $\mathcal{J}_G$ are indeed compatible in a certain sense.

In this paper, we say that an almost complex structure $\mathcal{J}_0$ and a 2-form $\omega_0$ on a manifold $M$ are \textbf{compatible} if $\omega_0(\mathcal J_0\cdot,\mathcal J_0\cdot)=\omega_0$. 
 If $\omega_0$ is non-degenerate, then $\omega_0(\cdot, \mathcal J_0\cdot)$ is a pseudo-Riemannian metric. If $\mathcal J_
 0$ is integrable and $\omega_0$ is a compatible symplectic form, we say that $(M,\omega_0,\mathcal J_0)$ is \textbf{pseudo-Kähler}. When $\omega_0(\cdot, \mathcal J_0\cdot)$ is positive definite, the manifold is K{\"a}hler. When $M$ is connected, the signature of $(M,\omega_0,\mathcal J_0)$ is defined to be the signature of $\omega_0(\cdot, \mathcal J_0\cdot)$.
\begin{thma}
\label{thm: Hit is Kähler}
    Let $G$ be a split real Lie group of adjoint type and of rank $2$. The space $\mathrm{Hit}(S,G)$, endowed with the Goldman symplectic form and Labourie's complex structure, is a pseudo-Kähler manifold of signature $(6\mathrm g-6,2(2d_G-1)(\mathrm g-1))$. 
\end{thma}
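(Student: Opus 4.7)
The plan is to reduce the compatibility question to an explicit blockwise pairing computation on the tangent bundle of $\mathcal{M}(S,G)$ after unwinding Labourie's parametrization $\mathcal{L}_G$. Because $\mathcal{M}(S,G)$ is a holomorphic vector bundle over $\mathcal{T}(S)$, $\mathcal{J}_G$ is tautologically integrable; and $\omega_G$ is closed by Goldman's theorem. The substantive content of Theorem A is therefore the pointwise compatibility $\omega_G(\mathcal{J}_G\cdot,\mathcal{J}_G\cdot)=\omega_G$ together with the signature of the induced pseudo-metric $\omega_G(\cdot,\mathcal{J}_G\cdot)$.

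First, I would identify $T_{[\rho]}\mathrm{Hit}(S,G)\cong H^1(\pi_1(S),\mathrm{Ad}\rho)$ and write Goldman's form as the cup product paired by the Killing form. Labourie's parametrization provides a canonical real splitting $T_{[\rho]}\mathrm{Hit}(S,G)=H_{[\rho]}\oplus V_{[\rho]}$, with $H_{[\rho]}\cong T_{[c]}\mathcal{T}(S)$ corresponding to varying the Riemann surface structure and $V_{[\rho]}\cong H^0(S,K_c^{d_G})$ corresponding to varying the holomorphic $d_G$-differential; both summands are preserved by $\mathcal{J}_G$, which acts as multiplication by $i$ on each.

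The technical core of the proof is to produce explicit $\mathrm{Ad}\rho$-valued cocycle representatives for horizontal and vertical tangent vectors using the connection-form formulas for complex harmonic $G$-bundles developed earlier in the paper. These formulas express the infinitesimal variation of the flat adjoint connection in terms of Beltrami differentials (for horizontal vectors) and holomorphic $d_G$-differentials (for vertical ones). In rank $2$, the underlying Higgs bundle is cyclic, so $\mathrm{Ad}\rho$ splits as a direct sum of line subbundles along which the Killing pairing becomes essentially diagonal; this feature is what makes the pairing amenable to closed-form evaluation.

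With these representatives in hand, $\omega_G$ can be evaluated blockwise. The horizontal-horizontal block should match, up to an explicit nonzero factor, a suitable deformation of the Weil-Petersson form on $\mathcal{T}(S)$ and yield the $6\mathrm g-6$ positive directions; the vertical-vertical block should reduce, via the cyclic decomposition, to a pointwise pairing of holomorphic against antiholomorphic $d_G$-differentials, producing the $2(2d_G-1)(\mathrm g-1)$ negative directions with the opposite sign. The main obstacle is the compatibility of the mixed horizontal-vertical block, i.e.\ the vanishing of its $(2,0)$ and $(0,2)$ parts after complexification. This is where Theorem B is expected to enter: the Bers-type extension of $\mathcal{L}_G$ to a neighbourhood of the diagonal in $\mathcal{T}(S)\times\overline{\mathcal{T}(S)}$ inside the $G^{\C}$-character variety should analytically continue $\omega_G$ to a holomorphic bilinear form whose restriction to the real locus is of type $(1,1)$, settling the mixed block. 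Once compatibility is established, the signature read off from the horizontal and vertical blocks matches the claim.
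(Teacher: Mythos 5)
Your strategy coincides with the paper's: use the explicit connection forms of complex harmonic $G$-bundles to write down variations of the flat connection, use the holomorphic extension of $\mathcal{L}_G$ from Theorem B to reduce compatibility to a type-$(1,1)$ statement, and compute the signature blockwise. The gap is in how compatibility is actually established. You localize the $(2,0)$/$(0,2)$-vanishing to the mixed horizontal--vertical block, on the grounds that the horizontal--horizontal block ``should match'' a deformation of Weil--Petersson and the vertical--vertical block ``should reduce'' to a holomorphic-against-antiholomorphic pairing. Neither claim is proved, and at a general point of $\mathcal{M}(S,G)$ away from the zero section neither is obvious; a priori every block has its own $(2,0)$ and $(0,2)$ parts, and $\omega_G(\mathcal{J}_G\cdot,\mathcal{J}_G\cdot)=\omega_G$ is precisely the vanishing of \emph{all} of them. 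The paper treats all blocks uniformly: by Proposition \ref{prop:compatible}, compatibility is equivalent to the slices $\mathcal M(S,G)\times\{p\}$ and $\{p\}\times\mathcal M(\overline S,G)$ being Lagrangian for the holomorphic extension $(\mathcal{L}_G^{\C})^*\omega_G^{\C}$, and Proposition \ref{prop:lagrangians} proves this pointwise: within a slice, a variation of the flat connection splits as an $(S\times\mathfrak{h}^{\C})$-valued $(1,0)$-form plus a term supported on the root spaces $\g_\alpha$ ($\alpha\in\Pi$) and $\g_{-\delta}$, and the Killing pairing of two such variations vanishes because $\g_\alpha\perp\g_\beta$ unless $\alpha+\beta=0$ and because the wedge of two $(1,0)$-forms on a surface is zero. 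This root-space computation is the concrete content behind your sentence ``the Bers-type extension should analytically continue $\omega_G$ to a holomorphic bilinear form whose restriction is of type $(1,1)$''; without it the compatibility is asserted, not proved.

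For the signature you neither need nor want a closed-form evaluation of the horizontal block at a general point: since the signature is locally constant where the metric is non-degenerate and the relevant component is connected, it suffices to compute at a single Fuchsian point $[c,0]$. There the horizontal space is genuinely $T_{[c]}\mathcal{T}(S)$, the restriction of $\omega_G(\cdot,\mathcal{J}_G\cdot)$ is a positive multiple of the Weil--Petersson metric, the two blocks are orthogonal (again by the complexified pairing computations), and the fiber block is negative definite by the explicit integral formula $\omega_G(\dot q,\mathcal{J}_G\dot q)=-2i\,\omega_G^{\C}(\dot q,\dot{\overline q})\le 0$. One further organizational difference: the paper first proves the statement on $\mathcal{M}(S,G)$ for arbitrary rank (Theorem A$'$) and only then transports it to $\mathrm{Hit}(S,G)$ via the diffeomorphism $\mathcal{L}_G$ in rank $2$; your argument works directly on $\mathrm{Hit}(S,G)$, which is fine in rank $2$ but forgoes the more general result.
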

Observe that the mapping class group action preserves the pseudo-Kähler structure, since it preserves both the complex structure and the Goldman symplectic form. Teichm{\"u}ller space is totally geodesic (see Remark \ref{rmk: Teich totally geodesic}).

For $G=\mathrm{PSL}(2,\R)^2$, Theorem A was already proved by Mazzoli-Seppi-Tamburelli in \cite{2021parahyperkahler}. For $G=\mathrm{PSL}(3,\R),$ in \cite{RunTambnonkahler}, Rungi and Tamburelli show that if one assumes the Goldman symplectic form is compatible with Labourie's complex structure, then $\omega_{G}(\cdot, \mathcal{J}_G\cdot)$ would have signature $\left(6\mathrm g-6, 10 \mathrm g -10 \right)$. So, for $G=\mathrm{PSL}(3,\R)$, our main contribution is the compatibility.

The mapping class group invariant pseudo-Riemannian metric $\omega_G(\cdot,\mathcal{J}_G\cdot)$ seems worthy of further investigation. We point out that a lot of effort has already been invested in constructing and studying metrics on rank $2$ Hitchin components using Labourie's parametrizations. Just for $G=\mathrm{PSL}(3,\R)$, there are metrics due to Darvishzadeh-Goldman \cite{DG} (built out of $\omega_G$ and an almost complex structure different from $\mathcal J_G$), Li \cite{Ligold} (Riemannian), Kim-Zhang \cite{kim2017kahler} (K{\"a}hler with respect to the complex structure on the dual bundle), Dai-Eptaminitakis  \cite{DE} (actually defined on $\mathcal{M}(S,G)/S^1)$, and Rungi-Tamburelli \cite{RunTambnonkahler} (semi-pseudo-K{\"a}hler with respect to $\mathcal{J}_G$). More generally, Labourie extends the Kim-Zhang construction for all rank $2$ Hitchin components in \cite[Corollary 1.3.2]{Lab3}. As well, some metrics have been defined in higher rank from different perspectives (for instance, see \cite{Bridgeman2013ThePM}), and some distinctive symplectic structures have been defined on the bundle of holomorphic degree-k differentials over Teichmüller space (see \cite{trautwein2018infinite}).

\subsection{Rank $2$ Bers Theorems}\label{sec: Bers}
Embedding $\mathrm{PSL}(2,\R)$ inside its complexification $\mathrm{PSL}(2,\C),$ any Fuchsian representation produces a representation to $\mathrm{PSL}(2,\C)$ that preserves a round circle in $\mathbb{CP}^1.$ More generally, a representation to $\mathrm{PSL}(2,\C)$ is called quasi-Fuchsian if it preserves a Jordan curve in $\mathbb{CP}^1$. Quotients of $\mathbb{H}^3$ by quasi-Fuchsian representations are the well-known quasi-Fuchsian $3$-manifolds. The classical Bers' Simultaneous Uniformization Theorem (Theorem \ref{thm: Bers thm} below) gives a way to associate two oppositely oriented complex structures on $S$ to a quasi-Fuchsian representation. Moreover, if we denote by $\mathcal{QF}(S)\subset {\chi}(\pi_1(S),\mathrm{PSL}(2,\C))$ the open subset of quasi-Fuchsian representations, the construction descends to a biholomorphism $$\mathcal{T}(S)\times\mathcal{T}(\overline{S})\to \mathcal{QF}(S).$$ Above $\overline{S}$ is the oppositely oriented surface.
Note that an embedding $\mathrm{PSL}(2,\R)\to G$ complexifies to an embedding $\mathrm{PSL}(2,\C)\to G^{\C}$, and hence we can extend Bers' Simultaneous Uniformization to a map from $\mathcal{T}(S)\times \mathcal{T}(\overline{S})\to \chi^{\mathrm{an}}(\pi_1(S),G^{\C})$ that also extends the inclusion $\mathcal{T}(S)\to \mathrm{Hit}(S,G)$ (when we identify $\mathcal{T}(S)$ with the diagonal in $\mathcal{T}(S)\times \mathcal{T}(\overline{S})$).

Equipping the rank $2$ Hitchin components with Labourie's complex structures, we look for a Bers-type theorem. Denote by $\mathrm{Hit}(\overline S,G)$ the Hitchin component endowed with the opposite complex structure--the notation is chosen because this identifies with $\mathcal{M}(\overline{S},G).$ We identify the diagonal of $\mathrm{Hit}(S,G)\times \mathrm{Hit}(\overline{S},G)$ with 
$\mathrm{Hit}(S,G)$ via $(\rho, \rho)\mapsto\rho$. By analytically continuing power series, one can find neighbourhoods $V$ of the diagonal in $\mathrm{Hit}(S,G)\times \mathrm{Hit}(\overline{S},G)$ and $W$ of $\mathrm{Hit}(S,G)$ inside the analytic character variety ${\chi}^{\mathrm{an}}(\pi_1(S),G^{\C})$ on which the above identification extends uniquely to a biholomorphism $\mathcal B_G:V\to W.$ However, this description of $\mathcal{B}_G$ is rather formal and not immediately meaningful. We seek a more geometric understanding of this map, which will allow us to study it better, and to see how far we can extend $\mathcal{B}_G$ into $\mathcal{M}(S,G)\times \mathcal{M}(\overline{S},G)$. 

By introducing complex harmonic maps to holomorphic Riemannian symmetric spaces, we address all of these problems: we show that complex harmonic maps do indeed give a geometric realization of $\mathcal{B}_G,$ and that $\mathcal{B}_G$ extends farther than originally expected. Given a semisimple Lie group $G$ with maximal compact subgroup $K$, the quotient $G/K$ carries a $G$-invariant metric that makes it a Riemannian symmetric space. As we alluded to above, for $G$ of rank $2,$ Labourie's parametrization associated points in $\mathcal{M}(S,G)$ to holonomies of conformal harmonic maps to $G/K.$
If $G^{\C}$ and $K^{\C}$ are the complexifications of $G$ and $K$ respectively, then that same invariant metric on $G/K$ induces a holomorphic Riemannian metric on $G^{\C}/K^{\C}$ (see Section \ref{section: holo Riem mflds}), into which we can define a notion of harmonic maps (see Section \ref{sec: complex harmonic maps intro} below for more details).

\begin{thmb}
Let $G$ be a split real Lie group of adjoint type and of rank $2$. There exists a $\mathrm{MCG}(S)$-invariant connected subset $\Omega_{G}\subset \mathrm{Hit}(S,G)\times \mathrm{Hit}(\overline S,G)$ such that
\begin{itemize}
    \item $\Omega_{G}$ contains the diagonal, $ \mathrm{Hit}(S,G)\times \mathcal T(\overline S)$, and $\mathcal T(S)\times \mathrm{Hit}(\overline S,G)$,
    \item $\mathrm{int}(\Omega_{{G}})$ intersects $\mathrm{Hit}(S,G)\times \mathcal T(\overline S)$ and $\mathcal T(S)\times \mathrm{Hit}(\overline S,G)$ in connected and dense subsets of the latter two spaces,
\end{itemize}
and such that the identification $(\rho, \rho)\mapsto \rho$ extends uniquely to a $\mathrm{MCG}(S)$-equivariant continuous map $$\mathcal{B}_G: \Omega_{G}\to \chi^{\mathrm{an}}(\pi_1(S), G^{\C})$$ that is a local biholomorphism on $\mathrm{int}(\Omega_G)$. The map $\mathcal{B}_G$ associates points to holonomies of conformal complex harmonic maps to $G^{\C}/K^{\C}$.  

On $ \mathrm{Hit}(S,G)\times \mathcal T(\overline S)$ and $\mathcal T(S)\times \mathrm{Hit}(\overline S,G)$, ${\mathcal{B}}_G$ takes points to holonomies of $G^{\C}$-opers, and, on the subset $\mathcal{T}(S)\times \mathcal{T}(\overline{S})$, $\mathcal{B}_G$ agrees with Bers' simultaneous uniformization map.
\end{thmb}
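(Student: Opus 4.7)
The plan is to produce $\mathcal{B}_G$ and $\Omega_G$ by combining the Labourie-type parametrizations on the two factors with the complex-harmonic-map existence theory of Theorems C and D. A point of $\mathrm{Hit}(S,G)\times \mathrm{Hit}(\overline S,G)$ is encoded, via $\mathcal{L}_G$ on each factor, by a quadruple $(c_1,q_1,c_2,q_2)$ where $(c_i,q_i)$ is a complex structure on $S$ (resp.\ $\overline S$) together with a holomorphic differential of degree $d_G$. To such a quadruple I would attempt to attach a conformal complex harmonic map $\widetilde S \to G^{\mathbb{C}}/K^{\mathbb{C}}$ whose two complex ``Hopf differentials'' recover $q_1$ and $q_2$, and I would declare $\mathcal{B}_G$ of the quadruple to be the holonomy of this map. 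The set $\Omega_G$ is then the connected component, of the set of admissible quadruples, containing the diagonal.

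The first and most routine piece is the diagonal itself: taking $c_2=\overline{c_1}$ and $q_2=\overline{q_1}$, Labourie's real harmonic maps into the Riemannian symmetric space $G/K$ are recovered as a real form of the complex-harmonic-map equation, which places the diagonal in $\Omega_G$. The extreme slices $\mathrm{Hit}(S,G)\times\mathcal{T}(\overline S)$ and $\mathcal{T}(S)\times \mathrm{Hit}(\overline S,G)$ correspond to one of $q_1,q_2$ being zero. Here the relevant complex harmonic $G^{\mathbb{C}}$-bundle degenerates so that its associated holomorphic connection becomes a $G^{\mathbb{C}}$-oper, and Theorem D is designed to furnish the complex harmonic map as the horizontal data of this oper; this provides global existence on each of these two slices, identifies the image under $\mathcal{B}_G$ with the oper locus, and, restricted to $\mathcal{T}(S)\times \mathcal{T}(\overline S)$, reduces to classical Bers by verifying that the complex harmonic map factors through a $\mathrm{PSL}(2,\mathbb{C})$-orbit isomorphic to $\mathbb{H}^3$ inside $G^{\mathbb{C}}/K^{\mathbb{C}}$.

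With these ingredients in place, continuity and $\mathrm{MCG}(S)$-equivariance of $\mathcal{B}_G$ should follow from smooth dependence of solutions of the complex elliptic system of Theorem C on the data $(c_1,q_1,c_2,q_2)$; uniqueness of the extension from the diagonal follows from unique analytic continuation in the analytic character variety $\chi^{\mathrm{an}}(\pi_1(S),G^{\mathbb{C}})$; and the local biholomorphism property on $\mathrm{int}(\Omega_G)$ follows from an implicit-function-theorem argument applied to the complex elliptic system, matched with a dimension count on the character variety side. The main obstacles I anticipate are, first, establishing \emph{global} (as opposed to merely perturbative near the diagonal) solvability of the complex harmonic map equation on sets large enough to include the two extreme slices; and second, controlling the degenerate limit $q_1=0$ (resp.\ $q_2=0$) so that the complex harmonic maps produced on an open neighbourhood of a slice converge to the oper flat data of Theorem D. The first is addressed precisely by invoking Theorem D in the oper regime; the second amounts to an honest comparison, near the boundary of the Hitchin locus in the space of opers, between the analytic construction by perturbation and the algebraic oper construction, and this is where I expect the bulk of the technical work to lie.
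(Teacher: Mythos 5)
Your overall architecture --- encoding points of $\mathrm{Hit}(S,G)\times\mathrm{Hit}(\overline S,G)$ as quadruples $(c_1,q_1,\overline{c_2},\overline{q_2})$ via Labourie's parametrization, attaching conformal complex harmonic maps through Theorems C and D, taking holonomies, and pinning down uniqueness by analytic continuation --- is the same as the paper's (which proves the more general Theorem B$'$ and specializes). But there is a genuine gap at the step you describe as ``an implicit-function-theorem argument applied to the complex elliptic system.'' The IFT, and hence the local biholomorphism property, requires the linearization $\partial_u\mathscr{T}_G$ of the complex affine Toda system to be invertible at the solution, and you never say why this holds anywhere. Even on the diagonal this is a nontrivial fact: the paper proves injectivity of the linearized system by applying a maximum principle for cooperative, column-diagonally dominant, fully coupled elliptic systems to the functions $n_\alpha\dot u_\alpha$ indexed by the extended simple root system. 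Without some such argument you cannot place the diagonal in $\mathrm{int}(\Omega_G)$ at all.

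The gap is more serious on the slices $\mathrm{Hit}(S,G)\times\mathcal T(\overline S)$ and $\mathcal T(S)\times\mathrm{Hit}(\overline S,G)$. You write that global solvability there ``is addressed precisely by invoking Theorem D in the oper regime,'' but Theorem D supplies only \emph{existence} of a solution (a constant one, independent of the data); it says nothing about invertibility of the linearization, which on these slices reduces to invertibility of an operator of the form $\Delta_h-L_G$ and can genuinely fail on a nonempty locus --- this is exactly why the theorem asserts only that $\mathrm{int}(\Omega_G)$ meets these slices in dense connected subsets rather than containing them. Establishing that the degeneracy locus is a proper, pure codimension-one analytic subset disjoint from the diagonal is the bulk of the work (Analytic Fredholm Theorem plus a real-analytic transport argument along co-real analytic pairs to control entire $\mathrm{Diff}_0(S)$-orbits). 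Relatedly, your second anticipated obstacle --- a delicate ``degenerate limit'' comparison between perturbative complex harmonic maps and oper data near $q_2=0$ --- is not where the difficulty lies: at $(q,0)$ the oper connection \emph{is} literally the flat connection of the complex harmonic $G$-bundle built from the constant solution, with no limit to take; one simply checks the relative position condition. Finally, your sketch does not address the quotient by $\mathrm{Diff}_0(S)\times\mathrm{Diff}_0(S)$: the solution spaces live over Beltrami forms, and descending the IFT output to $\mathcal{M}(S,G)\times\mathcal{M}(\overline S,G)$ requires identifying the fibers of the projection with orbits of (complex) Lie-derivative directions, which is not automatic and is needed both for well-definedness and for the $\mathrm{MCG}(S)$-invariance of $\Omega_G$.
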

For $G=\mathrm{PSL}(3,\R)$ and $\mathrm{PSL}(2,\R)^2,$ we can in fact extend further so that $\mathcal{B}_G$ is defined on neighbourhoods of all of $ \mathrm{Hit}(S,G)\times \mathcal T(\overline S)$ and $\mathcal T(S)\times \mathrm{Hit}(\overline S,G)$. It could be possible for the other Lie groups, but pursuing this would require a deeper investigation of certain systems of complex elliptic PDE's (see the equation (\ref{eq: bers Laplacian loc}) below), which would take us too far away from the aims of the paper.

We discuss $G^{\C}$-opers in Section \ref{sec: complex harmonic maps intro} below, giving more explanation related to $\mathcal{M}(S,G)\times \mathcal{T}(\overline{S})$ and its partner. Perhaps most in line with the higher Teichm{\"u}ller theory perspective, a $G^{\C}$-oper comes with a holonomy representation from $\pi_1(S)\to G^{\C},$ together with an equivariant and holomorphic map to the complex flag variety $G^{\C}/B$ satisfying a non-degeneracy condition (see \cite{Sanders} and Section \ref{subsubsec: opers}). When $G^{\C}=\mathrm{PSL}(2,\C),$ this is the developing map of a $\mathbb{CP}^1$ structure, and $\mathrm{PSL}(2,\C)$-opers are in fact equivalent to $\mathbb{CP}^1$ structures. Since Bers' simultaneous uniformization procedure goes through constructing $\mathbb{CP}^1$-structures with quasi-Fuchsian holonomies, it feels quite natural that opers appear in the image of ${\mathcal{B}}_G.$

  The geometric interpretations that we give for ${\mathcal{B}}_G$ should allow one to probe the geometry of the image representations. On the Anosov property, see the discussion after Theorem D below.

\subsection{Complex harmonic maps and $G^{\C}$-opers}\label{sec: complex harmonic maps intro}
Finally, we get to the engine behind Theorems A and B: complex harmonic maps. In this section, we work with a general non-compact surface $S$. Let $G$ be a semisimple Lie group of non-compact type with maximal compact subgroup $K$, so that $G/K$ with an invariant metric $\nu$ is a Riemannian symmetric space. Equivariant harmonic maps to $G/K$ give rise to $G$-Higgs bundles. Given a $G$-Higgs bundle, if one can solve Hitchin's self-duality equations (see Section \ref{sec: ordinary harmonic maps}), then it comes from a harmonic map. 

We propose complex harmonic maps to the holomorphic Riemannian symmetric space $G^{\C}/K^{\C}$, with the holomorphic Riemannian metric induced by $\nu$, as a complexification of the theory of harmonic maps. As we've mentioned, complex harmonic maps to $G^{\C}/K^{\C}$ lead to a a generalization of $G$-Higgs bundles that we call complex harmonic $G$-bundles

To define harmonic maps from surfaces, one needs a reference complex structure. For our notion of ``complex harmonic," given in Section 4, one requires two oppositely oriented complex structures $c_1,\overline{c_2}$ on $S$. One hint as to why complex harmonic maps can be seen as a complexification is as follows. Ordinary harmonic maps, when satisfying non-degeneracy conditions, vary real analytically (see \cite{EL}). For complex harmonic maps, the defining equation depends holomorphically on its input data. Hence, with non-degeneracy conditions, complex harmonic maps should vary holomorphically.

Many aspects of the theory of $G$-Higgs bundles have some extension in our theory of complex harmonic $G$-bundles. Most relevant to our main theorems, we highlight here the Hitchin fibration and cyclic $G$-Higgs bundles. Let $G$ be a split real simple Lie group of adjoint type and of rank $l$ and let $\mathcal{O}(\g)^{G}$ be the algebra of $\mathrm{Ad}(G)$-invariant polynomials on the Lie algebra $\g$ of $G$. For any homogeneous generating set of polynomials for $\mathcal{O}(\g)^{G}$, let $m_1,\dots, m_l$ be the degrees (so, $m_l=d$ is the Coxeter number), arranged in ascending order. Given a Riemann surface $(S,c)$, the Hitchin base is $$H(c,G)=\oplus_{i=1}^{l}H^0(S,\mathcal{K}_c^{m_i}).$$ The classical Hitchin fibration associates a $G$-Higgs bundle to a point in $H(S,G).$ 
In the seminal work \cite{Hi}, Hitchin produces a section of the Hitchin fibration from $H(c,G)$ to a component of the space of $G$-Higgs bundles for $(S,c)$, which under the non-abelian Hodge correspondence becomes $\mathrm{Hit}(S,G).$

Next, a $G$-Higgs bundle is called cyclic if it carries a certain symmetry; such $G$-Higgs bundles have remarkable properties and have been the subject of intense study (see \cite{Baraglia2010CyclicHB}, \cite{C}, \cite{GPRi}, \cite{ST}, etc.). Notably, for cyclic $G$-Higgs bundles, Hitchin's self-duality equations become affine Toda equations for $G$, a system of $l$ elliptic equations (see \cite{Baraglia2010CyclicHB}, \cite{ST}). The cyclic $G$-Higgs bundles in the Hitchin section (equivalently, certain harmonic maps for Hitchin representation) are exactly those obtained by applying the Hitchin section to the locus $$\{(0,\dots, q_l): q_l\in H^0(S,\mathcal{K}_c^d)\}\subset H(c,G).$$

In Section \ref{sec: ch G-bundles}, we define the bi-Hitchin fibration on complex harmonic $G$-bundles, which associates complex harmonic $G$-bundles to points in what we call the bi-Hitchin base, $$bH(c_1,\overline{c_2},G)=H(c_1,G)\oplus H(\overline{c_2},G).$$  In Section \ref{sec: bi-Hitchin}, we give a strategy for a ``bi-Hitchin section" (see Remark \ref{rem: explanation}), a procedure that, for every point in $bH(c_1,\overline{c_2},G)$, produces an equation (similar to Hitchin's self-duality equations) whose solutions yield complex harmonic $G$-bundles. In the setting of Theorem C, we impose that our bi-Hitchin basepoint is of the form $$((0,\dots,0,q_1),(0,\dots, 0,\overline{q_2}))\in bH(c_1,\overline{c_2},G).$$  Theorem C says that, in this case, we can simplify the equation drastically, and constructing the complex harmonic $G$-bundles amounts to solving a new system of $l$ complex elliptic equations that we call complex affine Toda equations.

To properly state Theorem C, we recall some notions from Lie theory (see Sections \ref{sec: excursion} and \ref{sec: bi-Hitchin} for more details). Associated with a Cartan subalgebra of the Lie algebra of $G^{\C}$, we can choose a set of simple roots $\Pi$ for the root system, which determines a collection of half-integers $r_\alpha$ called the coefficients of $\Pi$. There are positive integers $(n_\alpha)_{\alpha\in \Pi}$ such that the highest root $\delta$ is expressed $\delta=\sum_{\alpha\in \Pi} n_\alpha \alpha.$ Adjoining $-\delta$ to $\Pi,$ we get the extended simple root system $\mathcal{Z}=\Pi\cup \{-\delta\},$ which comes with affine Cartan numbers $(a_{\alpha\beta})_{\alpha,\beta\in \mathcal{Z}}.$ There is also a distinguished permutation $\xi:\mathcal{Z}\to \mathcal{Z},$ which is either trivial or comes from a rotation in the Dynkin diagram (see Section \ref{sec: proof of Theorem C}). As well, drawing from \cite{ElSa}, we recall the Bers Laplacian for $(c_1,\overline{c_2})$ (also studied in \cite{Kim}), which is a second order complex elliptic operator (see Section 3 for details). Here, complex elliptic means that it has complex coefficients and the principal symbol doesn't vanish. Locally, a Bers Laplacian takes the form
\begin{equation}\label{eq: bers Laplacian loc}
    \Delta_h = -\frac{4}{\lambda}\partial_{\overline{z}}(\partial_z-\overline{\mu}\partial_{\overline{z}}),
\end{equation}
where $\lambda$ is a function to $\C^*$, and $\mu$ is a function with $||\mu||_{L^\infty}<1$. In \cite{ElSa}, we developed theory around these operators. For any $C^2$ function $U=(U_\alpha):S\to (\C^*)^l$, set $U_{-\delta}=\prod_{\alpha\in \Pi} (-U_\alpha)^{-n_\alpha}$.
 \begin{thmc}
Let $G$ be a simple split real Lie group of adjoint type with Coxeter number $d$ and let $\Pi,\mathcal{Z}$, $(r_\alpha)_{\alpha\in \Pi},$ $(a_{\alpha\beta})_{\alpha,\beta\in\mathcal{Z}}$, and $\xi$ be as above. Let $c_1$ and $\overline{c_2}$ be oppositely oriented complex structures on $S$ with Bers Laplacian $\Delta_h$, and $q_1\in H^0(S,\mathcal{K}_{c_1}^d), \overline{q_2}\in H^0(S,\mathcal{K}_{\overline{c_2}}^d).$ Suppose that we can find a $C^2$ function $U=(U_\alpha):S\to \C^l$ satisfying $U_\alpha=U_{\xi(\alpha)}$ for all $\alpha\in \Pi$ and solving the complex affine Toda equations
\begin{equation}\label{eq: Theorem C}
\Delta_h \log U_\alpha=2\sum_{\beta\in \Pi}a_{\alpha\beta}r_\beta U_\beta -2a_{\alpha\delta}\frac{q_1\overline{q_2}}{h^d}U_{-\delta} - 2, \hspace{1mm} \alpha\in \Pi.
\end{equation}
 Then there exists an equivariant complex harmonic map $$f: (\widetilde{S},c_1,\overline{c_2})\to G^{\C}/K^{\C}$$ whose bi-Hitchin basepoint is $((0,\dots, 0, q_1),(0,\dots, 0,\overline{q_2})).$ 
\end{thmc}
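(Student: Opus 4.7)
The plan is to mirror the classical passage from cyclic $G$-Higgs bundles in the Hitchin section to harmonic maps, but carry it out in the complex setting of holomorphic Riemannian symmetric spaces. The bi-Hitchin section strategy of Section \ref{sec: bi-Hitchin} specialized to the cyclic basepoint $((0,\dots,0,q_1),(0,\dots,0,\overline{q_2}))$ provides a candidate flat $\g^{\C}$-valued connection whose coefficients involve an unknown function $U$ playing the role of a $(\C^*)^l$-valued ``complex Hermitian metric'', and Theorem C amounts to the statement that the flatness of this connection reduces, thanks to the cyclic symmetry, to the complex affine Toda equations (\ref{eq: Theorem C}). Once flatness is known, the rest is a routine integration on the universal cover, so the whole content of the argument lies in identifying the correct connection and matching the curvature with (\ref{eq: Theorem C}).

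Concretely, I would proceed in three steps. First, construct a candidate complex harmonic $G$-bundle on $S$: decompose $\g^{\C}$ into root spaces, let the holomorphic Higgs-type field along the simple root $\alpha$ carry a factor involving $U_\alpha$ (weighted by $r_\alpha$), let the one along $-\delta$ carry $q_1/h^d$, and let the antiholomorphic side be built analogously using $\overline{q_2}$ and the $(0,1)$-data of $\overline{c_2}$. The identity $U_\alpha = U_{\xi(\alpha)}$ is the precise condition that this connection is preserved by the cyclic automorphism $\xi$, so that it descends to a reduction of structure group. Second, compute the Maurer--Cartan curvature $d\omega + \tfrac12[\omega,\omega]$. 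Off-diagonal root components vanish for algebraic and degree reasons (the candidate is designed so that only cross-brackets of adjacent root spaces contribute). The Cartan-valued component along each simple $\alpha \in \Pi$ becomes, after applying (\ref{eq: bers Laplacian loc}) and the identity $U_{-\delta}=\prod_{\beta\in\Pi}(-U_\beta)^{-n_\beta}$, exactly (\ref{eq: Theorem C}) indexed by $\alpha$. Third, integrate the now-flat connection on $\widetilde S$ to obtain a developing map to $G^{\C}$, and project to $G^{\C}/K^{\C}$; this is the sought complex harmonic map $f$, with $\pi_1(S)$-equivariance built in because $\omega$ descends to $S$.

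The bi-Hitchin basepoint of $f$ is $((0,\dots,0,q_1),(0,\dots,0,\overline{q_2}))$ essentially by construction, since the only nonzero components of the two Higgs-type fields we placed are $q_1$ and $\overline{q_2}$ along $\pm \delta$, and this matches the defining characterization of the bi-Hitchin fibration on the cyclic locus from Section \ref{sec: ch G-bundles}. The main obstacle is the curvature calculation at the heart of the second step: one must check that the normalization of the connection entries by $(r_\alpha)$ and $(n_\alpha)$ conspires with the affine Cartan data $(a_{\alpha\beta})$ so that the Cartan-valued curvature in direction $\alpha$ is exactly the right-hand side of (\ref{eq: Theorem C}), including the constant term $-2$ and the sign in front of the $q_1\overline{q_2}/h^d$ term. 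This is a structural Lie-theoretic computation modelled on the classical affine Toda calculation and would import identities from \cite{Baraglia2010CyclicHB} and \cite{ST}, with the only genuinely new feature being that the antiholomorphic half of the connection is now built from an independent conformal structure $\overline{c_2}$ and differential $\overline{q_2}$ rather than arising as the Hermitian adjoint of the holomorphic half.
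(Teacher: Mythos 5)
Your proposal follows essentially the same route as the paper: specialize the bi-Hitchin section strategy to the cyclic basepoint, encode the unknown in a diagonal (Cartan-preserving) Lie algebra bundle isomorphism determined by the functions $U_\alpha$ (with $U_\alpha=U_{\xi(\alpha)}$ as the compatibility with the involution $\sigma$), compute the connection form $\partial_J\log(\lambda U_\alpha)$ and the commutator $[\phi_1,\overline{\phi_2}^{\mathcal{I}}]$, project onto the $\mathrm{End}(\g_{-\alpha}\otimes\mathcal{K}_{c_1}^{-1})$ components, and use $\Delta_h\log\lambda=2$ to land on (\ref{eq: Theorem C}), after which flatness plus the equivalence of complex harmonic $G$-bundles with equivariant complex harmonic maps finishes the argument. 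This matches the paper's proof in Sections \ref{sec: bi-Hitchin}--\ref{sec: proof of Theorem C}, modulo minor imprecision about which root spaces carry $q_1$ versus the $U_\alpha$ factors.
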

When $c_1=c_2$, $q_1=q_2$, and $U$ is real, the complex harmonic map is the composition of an ordinary map to $G/K$ with the inclusion $G/K\to G^{\C}/K^{\C}$. In this case, (\ref{eq: Theorem C}) becomes an ordinary affine Toda system and is a special case of the equation (1) from \cite{ST}. As well, still for $c_1=c_2$ and $q_1=q_2$, in a local coordinate, it reduces to Baraglia's version of Hitchin's equations from \cite{Baraglia2010CyclicHB}. Note that (\ref{eq: Theorem C}) is usually a system of $l$ equations, but when $\xi$ is non-trivial, it reduces to fewer equations. Along with giving a way to produce complex harmonic maps, Theorem C also motivates a deeper study of complex elliptic systems.

In low rank, (\ref{eq: Theorem C}) reproduces well-known equations from geometry. For instance, when $G=\mathrm{PSL}(2,\R),$ one requires two quadratic differentials $q_1$ and $\overline{q_2}$ and the equation is
\begin{equation}\label{eq: rank 1 case}
   \Delta_h \log U = 2U-2\frac{q_1\overline{q_2}}{h^2}U^{-1}-2. 
\end{equation}
Taking $c_1=c_2$ and $q_1=q_2,$ (\ref{eq: rank 1 case}) returns a classical Bochner formula for harmonic maps. Similarly, for $G=\mathrm{PSL}(3,\R)$, Theorem C provides a generalization of the Tziteica equation for hyperbolic affine spheres (see \cite{Lof}), and for $G=\mathrm{PSp}(4,\R),$ a generalization of the structural equations for spacelike maximal surfaces in the pseudo-hyperbolic space $\mathbb{H}^{2,2}$ (see \cite{Nie}).

As alluded to in Theorem B, we find a connection between certain complex harmonic maps and known meaningful objects: $G^{\C}$-opers. 
The modern theory of $G^{\C}$-opers stems from Drinfeld-Sokolov \cite{DS} and the work of Beilinson-Drinfeld on the Geometric Langlands Conjecture \cite{BD}.
\begin{thmd}
Let $G$ be a split real simple Lie group of adjoint type. Let $c_1$ and $\overline{c_2}$ be oppositely oriented complex structures on $S$, and let $(q,0)\in bH(c_1,\overline{c_2},G).$ 
\begin{enumerate}
    \item There exists an equivariant complex harmonic map $f_{q}: (\widetilde{S},c_1,\overline{c_2})\to G^{\C}/K^{\C}$ with bi-Hitchin basepoint $(q,0)$.
    \item  The holonomy of $f_{q}$ is that of a $G^{\C}$-oper on $(S,c_1)$.
\end{enumerate}
    The analogous statement holds if we reverse the roles of $c_1$ and $\overline{c_2}$.
\end{thmd}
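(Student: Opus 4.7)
The plan is to construct $f_q$ directly from the canonical $G^{\C}$-oper associated with $q$ and read both conclusions off the explicit connection form. By the Beilinson--Drinfeld/Drinfeld--Sokolov picture, the Hitchin base $H(c_1,G)$ parametrizes $G^{\C}$-opers on $(S,c_1)$: each tuple $q=(q_1,\dots,q_l)$ produces an oper whose underlying flat $G^{\C}$-connection, in a standard gauge with uniformizing coordinate $z$ for $c_1$, takes the companion form
\[
\nabla_q \;=\; d + \Bigl(e_+ + \sum_{i=1}^{l} q_i\, F_i\Bigr)\, dz,
\]
where $e_+$ is a principal nilpotent and the $F_i$ are chosen lowest-weight vectors of weights $m_i$. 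Let $\rho_q:\pi_1(S)\to G^{\C}$ denote the monodromy of $\nabla_q$; this representation will be the oper holonomy required for conclusion (2).

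To build $f_q$, pull back $\nabla_q$ to $\widetilde{S}$, fix a basepoint together with a frame, and define a $\rho_q$-equivariant map $g_q:\widetilde{S}\to G^{\C}$ by parallel transport (so that $g_q(\gamma z)=\rho_q(\gamma)g_q(z)$). Projecting to the quotient yields the equivariant map
\[
f_q \;=\; [g_q]:\widetilde{S}\;\longrightarrow\; G^{\C}/K^{\C}.
\]
What remains is to check that $f_q$ is complex harmonic with respect to $(c_1,\overline{c_2})$ and that its bi-Hitchin basepoint equals $(q,0)$. For this I would attach to $f_q$ the complex harmonic $G$-bundle data of Section~\ref{sec: ch G-bundles}, decompose the pulled-back connection on $G^{\C}/K^{\C}$ using the $(c_1,\overline{c_2})$-bigrading of $T\widetilde{S}_{\C}$, and compare with the oper form of $\nabla_q$. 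The coefficients of the oper read off the bi-Hitchin basepoint directly: the $c_1$-side produces exactly $(q_1,\dots,q_l)$, while the $\overline{c_2}$-side vanishes because $\nabla_q$ is purely of type $(1,0)$ with respect to $c_1$. The complex harmonic map equations then reduce to the flatness of $\nabla_q$, which holds by construction.

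The main obstacle is precisely this comparison. One must match the gauge-theoretic $K^{\C}$-reduction implicit in $f_q$ (and in the intrinsic complex harmonic $G$-bundle of Section~\ref{sec: ch G-bundles}) with the reduction carried by the oper framing, and then verify that the bigraded decomposition of the pullback connection fits the bi-Hitchin formalism developed in Section~\ref{sec: bi-Hitchin}. Working locally in a coordinate chart should make the identification transparent, with the oper structure corresponding precisely to the asymmetric bi-Hitchin basepoint $(q,0)$. The symmetric statement, in which $c_1$ and $\overline{c_2}$ swap roles so that the bi-Hitchin basepoint becomes $(0,\overline{q})$ and the oper lives on $(S,\overline{c_2})$, follows by applying the same argument to the conjugate complex structure.
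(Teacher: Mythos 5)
Your proposal runs the argument in the opposite direction from what the statement requires, and in doing so it skips the step that carries all the content. Projecting the flat frame of $\nabla_q$ to $G^{\C}/K^{\C}$ does produce \emph{some} $\rho_q$-equivariant map, but whether that map is complex harmonic for $(c_1,\overline{c_2})$ depends entirely on \emph{which} reduction to $K^{\C}$ you use: harmonicity is the condition that the $\p^{\C}$-components of the flat connection, in the bigrading determined by the chosen reduction, satisfy $\overline{\partial}_2\phi_1=0$ and $\partial_1\overline{\phi_2}=0$, and for a generic reduction this fails. Your sentence ``the complex harmonic map equations then reduce to the flatness of $\nabla_q$'' is therefore not correct; finding a reduction for which the decomposition works is the complex analogue of solving Hitchin's self-duality equations, and it is exactly what part (1) of the theorem asserts. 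The paper does this in Proposition \ref{prop: existence on bigger marginal locus}: the constant isomorphism $\mathcal{I}$ from Proposition \ref{prop: existence on marginal locus} solves the flatness equation (\ref{eq: bi-Hitchin flatness}) for every basepoint of the form $(q,0)$ because each extra commutator $[\widetilde{e},-\mathcal{I}^{-1}(q_ie_i)]$ lands in weight $-m_i-1$ for $\mathrm{ad}(x)$ and must vanish. Nothing in your proposal substitutes for this computation.

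There is a second, independent problem: you start from the wrong oper. For $c_1\neq c_2$ the holonomy of $f_q$ is \emph{not} the monodromy of the Beilinson--Drinfeld oper attached to $q$; the flat connection $A^{\mathcal{I}}+\phi_1+\overline{\phi_2}^{\mathcal{I}}$ involves the Bers metric of the pair $(c_1,\overline{c_2})$, and (for $\mathrm{PSL}(3,\R)$, Theorem \ref{thm: which opers}) the resulting oper is the one attached to $(-2\overline{B_{c_1}(c_2)},q_1)$ — it genuinely varies with $\overline{c_2}$, coinciding with the oper for $q$ only when $c_1=c_2$ (Proposition \ref{prop: BD oper}). So the map you build from $\rho_q$ has the wrong holonomy except on the diagonal. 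Relatedly, ``the $\overline{c_2}$-side vanishes because $\nabla_q$ is purely of type $(1,0)$'' conflates the flat gauge with the harmonic decomposition: in the paper's construction $\overline{\phi_2}^{\mathcal{I}}$ is a nonzero $1$-form valued in $\g_1\otimes\mathcal{K}_{c_1}$, and the second component of the bi-Hitchin basepoint vanishes because all invariant polynomials of positive degree kill the nilpotent space $\g_1$, not because that Higgs field is zero. The paper's actual route is the reverse of yours: build the complex harmonic $G$-bundle first, then read off the oper property of the resulting flat connection from the connection form ($A^{\mathcal{I}}$ is Cartan-valued by Proposition \ref{prop: h valued}, $\overline{\phi_2}^{\mathcal{I}}$ sits in the Borel, and the $\widetilde{e}$ summand of $\phi_1$ hits every negative simple root space, giving relative position $\mathcal{O}$).
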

The oper in part (2) is written out explicitly in the proof. When $c_1=c_2,$ the answer is clean: Beilinson-Drinfeld parametrized $G^{\C}$-opers on $(S,c_1)$ by the Hitchin base $H(c_1,G),$ and the oper corresponds under their parametrization to $q.$ Calling back to Theorem B, $\mathcal{B}_G$ takes a point $([c_1,q_1],[\overline{c_2},0])\in \mathcal{M}(S,G)\times \mathcal{T}(\overline{S})$ to the holonomy of the oper from Theorem D associated with the data $c_1,\overline{c_2},$ and $(q_1,0).$

It's a bit cumbersome to see the general opers from Theorem D in Beilinson-Drinfeld's parametrization. Motivated by wanting to further understand Theorem B, in the appendix, for $G=\mathrm{PSL}(3,\R)$, we fully characterize the opers that $\mathcal{B}_G$ associates with $\mathcal{M}(S,G)\times \mathcal{T}(\overline{S})$ and $\mathcal{T}(S)\times\mathcal{M}(\overline{S},G) $ (see Theorem \ref{thm: which opers}): for $\mathcal{T}(S)\times\mathcal{M}(\overline{S},G) $, they correspond to $(q_1,q_2)\in H(c,G)$, where $c$ and $q_2$ can be arbitrary and $q_1$ ranges over quadratic differentials in the image of the Bers embedding. We are not aware of any other geometric significance for this curious collection of opers. It seems unlikely that the holonomies should be Anosov representations, but we don't know how to rule it out. A version of Theorem \ref{thm: which opers} can probably be done for any $G$, and the answer should be similar (see Appendix \ref{appendix} for details and discussion). For $G=\mathrm{PSL}(2,\R)^2$, in Section \ref{sec: proving Theorem B}, we do use Theorem D to show that the image of $\mathcal{B}_G$ contains representations that are not Anosov for any parabolic subgroup. 

\begin{remark}
Holonomy aside, the bundles from Theorem D are worthy of further study--they point to a generalization of $G^{\C}$-opers where one involves two complex structures on a surface. 
\end{remark}

The relations between complex harmonic maps and $G^{\C}$-opers seem worthy of further study. In \cite{Gai}, Gaiotto conjectured, motivated by physics, that the Hitchin component and the space of opers are holomorphically identified by taking a ``conformal limit." This conjecture was proved in \cite{DFKMMN} and \cite{collier2024conformallimitsparabolicslnchiggs} (see also \cite{CW}). In Section \ref{sec: proof of Theorem D}, we point out a connection between complex harmonic maps and Gaiotto's conjecture, which we're eager to explore further.

\begin{remark}\label{rem: explanation}
One could try to construct a moduli space of complex harmonic $G$-bundles (over a fixed pair $(c_1,\overline{c_2})$, or a joint moduli space where the complex structures can vary), which, at least near the ordinary Hitchin section, should be a complex manifold. With such a moduli space, we propose using the strategy from Section \ref{sec: bi-Hitchin} to holomorphically extend Hitchin's section (over a pair $(c,\overline{c})$ or on the joint moduli space). We would call this extension the bi-Hitchin section. This would all be quite technical, and although interesting, isn't one of our aims in this paper, so we leave these considerations for the future.
\end{remark}

\begin{remark}
   Complex harmonic maps could be built into quite a large theory. In this work, we build up aspects of the theory mostly related to Higgs bundles, and even our results in this direction are specialized toward our main theorems.
\end{remark}

\subsection{Partial generalizations for Theorems A and B}\label{sec: A' and B'}
Regarding higher rank analogues for Theorems A and B, note that the second author proved with Smillie in \cite{SS} that for all $G$ of rank at least $3,$ there exist Hitchin representations that admit multiple minimal surfaces. Hence, there is no known analogue of Labourie's parametrizations. However, in \cite{Lab3}, Labourie does have a partial result, which we build on here. 

Let $G$ be a general split real simple Lie group with Coxter number $d_G$. As in the rank $2$ case, let $\mathcal{M}(S,G)$ be the holomorphic vector bundle over $\mathcal{T}(S)$ whose fiber over $[c]\in\mathcal{T}(S)$ is represented by the space of holomorphic degree $d_G$ differentials on $(S,c)$. This bundle parametrizes the space of minimal surfaces for Hitchin representations with cyclic $G$-Higgs bundles (in rank $2$, all minimal surfaces come from cyclic $G$-Higgs bundles). Labourie proves in \cite{Lab3} that the holonomy map $\mathcal{L}_G:\mathcal{M}(S,G)\to \mathrm{Hit}(S,G)$ is an immersion. 

The Goldman form pulls back to $\mathcal{M}(S,G)$ via $\mathcal{L}_G$, but it might degenerate far away from the Fuchsian locus. Still, we can ask about the compatibility of $\mathcal L_G^*\omega_G$ with $\mathcal{J}'_{G}$. The form $\mathcal L_G^*\omega_G$ is non-degenerate on an open subset of $\mathcal{M}(S,G)$, and we set $\mathcal{M}_0(S,G)\subset \mathcal{M}(S,G)$ to be the connected component of that subset that contains the zero section.
\begin{thma'}
Let $G$ be a split real simple Lie group of adjoint type. On the space $\mathcal{M}(S,G)$, the pullback of Goldman's symplectic form and the almost complex structure $\mathcal{J}'_{G}$ are compatible. On $\mathcal{M}_0(S,G)$, they determine a pseudo-K{\"a}hler structure of signature $(6\mathrm g-6,2(2d_G-1)(\mathrm g-1))$.
\end{thma'}
We similarly generalize Theorem B to this context. The statement of Theorem B' is exactly that of Theorem B, except now $G$ is of higher rank, and rather than holomorphically extending the diagonal identification, we holomorphically extend Labourie's immersion. Similar to the rank $2$ situation, we identify $\mathcal{M}(S,G)$ with the diagonal inside $\mathcal{M}(S,G)\times \mathcal{M}(\overline{S},G).$ 
 \begin{thmb'}
Let $G$ be a split real simple Lie group of adjoint type. There exists a $\mathrm{MCG}(S)$-invariant connected subset $\Omega_G'\subset \mathcal{M}(S,G)\times \mathcal{M}(\overline{S},G)$ such that
\begin{itemize}
    \item $\Omega_G'$ contains the diagonal, $ \mathcal{M}(S,G)\times \mathcal T(\overline S)$, and $\mathcal T(S)\times \mathcal{M}(\overline{S},G)$,
    \item  $\mathrm{int}(\Omega'_{{G}})$ intersects $\mathcal{M}(S,G)\times \mathcal T(\overline S)$ and $\mathcal T(S)\times \mathcal{M}(\overline S,G)$ in connected and dense subsets of the latter two spaces, 
\end{itemize}
and on which Labourie's immersion extends uniquely to a $\mathrm{MCG}(S)$-equivariant continous map $$\mathcal{L}_G^{\C}: \Omega_G'\to \chi^{\mathrm{an}}(\pi_1(S), G^{\C})$$ that is a holomorphic immersion on $\mathrm{int}(\Omega_G').$ The map $\mathcal{L}_G^{\C}$ associates points to holonomies of conformal complex harmonic maps to $G^{\C}/K^{\C}$. 

On $ \mathcal{M}(S,G)\times \mathcal T(\overline S)$ and $\mathcal T(S)\times \mathcal{M}(\overline{S},G)$, ${\mathcal{L}}_G^{\C}$ takes points to holonomies of $G^{\C}$-opers, and on the subset $\mathcal{T}(S)\times \mathcal{T}(\overline{S})$, $\mathcal{L}_G^{\C}$ agrees with Bers' simultaneous uniformization map.
 \end{thmb'}

\begin{remark}
    For $G=G_1\times \dots \times G_m$ semisimple and with no $\mathrm{PSL}(2,\R)$ factors, one can define $\mathcal{M}(S,G)\to \mathcal{T}(S)$ to be the bundle whose fiber over a point consists of tuples of differentials of degrees $d_{G_1},\dots, d_{G_m}.$ Then, by taking the product of every $\mathcal{L}_{G_i}^{\C}$, one obtains a version of Theorem B' for $G$. Similarly, minor modifications in Section 7 (similar to the $\mathrm{PSL}(2,\R)^2$ case) can be used to prove a version of Theorem A' for $\mathcal{M}(S,G)$. Specifically, one can prove that the structures are compatible and that they define pseudo-K{\"a}hler structure of signature $(6\mathrm g-g,\sum_{i=1}^m 2(d_{G_i}-1)(g-1))$ on the analogous space $\mathcal M_0(S,G)$. One can also consider $G\times \mathrm{PSL}(2,\R)^2$, and the analogous theorems hold for $\mathcal{M}(S,G)\times \mathcal{M}(S,\mathrm{PSL}(2,\R)^2)$. 
\end{remark}

\begin{remark}
    Even though it is a routine thing to do, there was no formal proof in the literature that Labourie's parametrizations and immersions are real analytic. This real analyticity now follows from Theorem B'. Since our proof does not assume anything about Labourie's maps other than that they are immersions, there is no circular reasoning. 
\end{remark}
Rather than working in rank $2,$ we prove Theorems A' and B'. Knowing that Labourie's maps are bijections in rank $2,$ Theorems A and B are more or less realized as special cases.

\subsection{A Kähler structure for the $q_2=- {q_1}$ locus}
When $c_1=c_2$ and $q_2=-q_1,$ the equation \eqref{eq: Theorem C} is a real elliptic equation, and it makes sense to look for real solutions. 
Under this constraint, when $G=\mathrm{PSL}(2,\R),$ our equation recovers the Gauss equation for prescribing minimal immersions in $\Hyp^3$ (see \cite{Uh}), and when $G=\mathrm{PSL}(3,\R)$, it is the structural equation for minimal Lagrangian immersions in $\mathbb{CH}^2$ (see \cite{LofM}).

In Section \ref{sec: low rank examples}, we observe that, for a split real simple Lie group $G$ of adjoint type, real solutions in this locus are equivalent to certain minimal immersions to the Riemannian symmetric space of $G^{\C}$. The corresponding $G^{\C}$-Higgs bundles are nilpotent and have a particular form. For $G=\mathrm{PSL}(2,\R)$, $G^{\C}/K^{\C}$ is the space of geodesics of $\mathbb{H}^3$, and the map to $G^{\C}/K^{\C}$ is a Gauss map of the minimal immersion to $\mathbb{H}^3$. We're curious to know what properties the immersions and the holonomies have in general, but we defer pursuing this to future work.

In Section \ref{sec: anti-conjugate}, using the machinery developed in the proof of Theorem B', we define a space $\mathcal{AC}(S,G)$ that parametrizes data $(c,q,\overline{c},-\overline{q})$ together with real solutions that satisfy non-degeneracy conditions. The projection map $(c,q,\overline{c},-\overline{q})\mapsto (c,q)$ determines a local diffeomorphism $\mathcal{AC}(S,G)\to \mathcal{M}(S,G),$ from which we pull back a complex structure $\widehat{\mathcal{J}_G}$. In addition, the holonomies of the conformal harmonic maps give a map to the character variety, from which we can pull back Goldman's symplectic form to a $2$-form $\widehat{\omega}_G$, which we prove to be a real symplectic form on $\mathcal{AC}(S,G).$ The main result of Section \ref{sec: anti-conjugate} is the following.
\begin{thme}
    $(\mathcal{AC}(S,G), \widehat {\mathcal J}_G, \widehat \omega_G)$ is a K\"ahler manifold over which the mapping class group acts by isometries.
\end{thme}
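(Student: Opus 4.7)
The plan is to realize $\mathcal{AC}(S,G)$ as a holomorphic submanifold of the space $\Omega_G'$ from Theorem B', and then to deduce K\"ahlerity by comparing the pullback of Goldman's form with the K\"ahler structure coming from non-abelian Hodge theory for $G^{\C}$ viewed as a real Lie group.

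First, I would consider the anti-conjugate map
\[\iota: \mathcal M(S,G) \to \mathcal M(S,G) \times \mathcal M(\overline S, G), \qquad (c,q) \mapsto (c, q, \overline c, -\overline q).\]
Because $\mathcal M(\overline S, G)$ carries the opposite complex structure of $\mathcal M(S,G)$, the anti-holomorphic assignment $(c,q) \mapsto (\overline c, -\overline q)$ becomes holomorphic when viewed as a map to $\mathcal M(\overline S, G)$, so $\iota$ is a holomorphic embedding. By construction $\mathcal{AC}(S,G)$ is an open subset of $\iota^{-1}(\Omega_G')$ and therefore inherits the structure of a holomorphic submanifold of $\Omega_G'$; the induced complex structure coincides with $\widehat{\mathcal J}_G$, which is thus integrable.

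Next, on $\mathcal{AC}(S,G)$ the holonomy map factors as $\mathcal L_G^{\C} \circ \iota$, and $\mathcal L_G^{\C}$ is holomorphic on $\mathrm{int}(\Omega_G')$ by Theorem B'. The image lies in the subset of $\chi(\pi_1(S), G^{\C})$ of representations coming from minimal immersions to the Riemannian symmetric space of $G^{\C}$, by the identification in Section \ref{sec: low rank examples}. On this subset, the real Goldman form is the K\"ahler form of one of the complex structures in the hyperk\"ahler Hitchin moduli of $G^{\C}$-Higgs bundles (with $G^{\C}$ viewed as a real Lie group). The core technical step is to verify that, under the non-abelian Hodge correspondence applied to $G^{\C}$-as-real, $\widehat{\mathcal J}_G$ agrees with the complex structure for which Goldman's real form is K\"ahler. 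Once matched, both compatibility $\widehat \omega_G(\widehat{\mathcal J}_G \cdot, \widehat{\mathcal J}_G \cdot) = \widehat \omega_G$ and positive-definiteness of $\widehat \omega_G(\cdot, \widehat{\mathcal J}_G \cdot)$ follow at once from the hyperk\"ahler structure.

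This matching of complex structures is the main obstacle. To check it, I would use the explicit description, via Theorem C, of the cyclic $G^{\C}$-Higgs bundles attached to points in $\mathcal{AC}(S,G)$: the assignment $(c,q) \mapsto$ Higgs bundle is manifestly holomorphic in the Dolbeault picture, which should identify $\widehat{\mathcal J}_G$ with the complex structure under which the real Goldman form is K\"ahler. As a more hands-on alternative, I would substitute the anti-conjugate ansatz $(c,q,\overline c,-\overline q)$ directly into the pullback-of-Goldman formulas derived in the proof of Theorem A' and observe that the cross-terms producing the indefinite signature there collapse into a positive $L^2$-norm in the variations of $(c,q)$ and the solution $U$, simultaneously yielding both compatibility and positivity. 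Finally, $\mathrm{MCG}(S)$ acts holomorphically on $\mathcal M(S,G)$ preserving the anti-conjugate locus, and on $\chi(\pi_1(S), G^{\C})$ preserving Goldman's form, so it preserves both $\widehat{\mathcal J}_G$ and $\widehat \omega_G$; the action is therefore by isometries of the resulting K\"ahler metric.
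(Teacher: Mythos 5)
Your first step fails: the map $\iota(c,q)=(c,q,\overline c,-\overline q)$ is \emph{not} holomorphic into $\mathcal M(S,G)\times\mathcal M(\overline S,G)$, and $\mathcal{AC}(S,G)$ is not a complex submanifold of $\Omega_G'$. The assignment $c\mapsto\overline c$ is anti-holomorphic as a map $\mathcal C(S)\to\mathcal C(\overline S)$ precisely \emph{because} the target carries the conjugate complex structure --- you have double-counted the conjugation. This is why the paper describes the diagonal $\delta:M\to M\times\overline M$ as a \emph{totally real} embedding and why $\mathcal{AC}(S,G)$ is exhibited as the fixed-point set of an anti-holomorphic involution $\widehat\tau$, hence totally real. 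Consequently $\widehat{\mathcal J}_G$ cannot be "inherited from $\Omega_G'$"; it must be (and in the paper is) pulled back through the real-analytic local diffeomorphism $[(c,q,\overline c,-\overline q,u)]\mapsto[c,q]$ onto $\mathcal M(S,G)$, and the holonomy map restricted to $\mathcal{AC}(S,G)$ is not holomorphic for $\widehat{\mathcal J}_G$, so the factorization $\mathcal L_G^{\C}\circ\iota$ does not deliver what you want. The correct framework is the complexification one: $\widehat{\mathcal J}_G$ and $\widehat\omega_G$ complexify (via $\pi$) to the ambient structures on $\mathcal M(S,G)\times\mathcal M(\overline S,G)$ and to $(\mathcal L_G^{\C})^*\omega_G^{\C}$, and compatibility then follows from the Lagrangian property of Proposition \ref{prop:lagrangians} together with Proposition \ref{prop:compatible} --- not from holomorphy of the inclusion. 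You also skip two points the paper must establish first: that $\widehat\omega_G$ is a \emph{real} form on $\mathcal{AC}(S,G)$ (proved via $\widehat\tau^*\mathrm{hol}^*\omega_G^{\C}=\overline{\mathrm{hol}^*\omega_G^{\C}}$, which requires checking that negating the differentials preserves the complex Goldman pairing), and that the solution $u$ is automatically real on this locus.

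Your central step --- matching $\widehat{\mathcal J}_G$ with a hyperk\"ahler complex structure on the $G^{\C}$-Hitchin moduli space so that positivity comes for free --- is explicitly deferred ("the main obstacle") and is not a small gap: the points of $\mathcal{AC}(S,G)$ range over varying Riemann surfaces $c$, so they do not sit inside a single Hitchin moduli space carrying Hitchin's hyperk\"ahler metric, and the identification would have to be built and proved holomorphic across the Teichm\"uller directions. The paper avoids this entirely: after compatibility, positive-definiteness is a pointwise signature computation at a Fuchsian point $(c,0,\overline c,0,u_0)$, where the zero section is Weil--Petersson-positive and orthogonal to the fiber, and a fiber variation $(0,\dot q,0,-\dot{\overline q})$ gives $+2i(\mathcal L_G^{\C})^*\omega_G^{\C}(\dot q,\dot{\overline q})=4U_{-\delta}\nu(x_\delta,x_{-\delta})\int_S|\dot q|^2\lambda^{1-d}\,dx\wedge dy>0$. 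Note the sign here is the whole point: the same integral appears with a minus sign in Theorem A' (the conjugate locus $q_2=q_1$), producing the indefinite signature there, and it is exactly the $-\overline q$ that flips it. Your "hands-on alternative" gestures at this but does not verify the sign, which is the crux of the theorem. The MCG statement at the end is fine.
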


For $G=\PSL(2,\R)^2$, $\mathcal{AC}(S,G)$ contains the so-called almost-Fuchsian space. Using symplectic reduction, Donaldson defined a hyperK{\"a}hler structure (a family of K{\"a}hler structures satisfying conditions) on the almost Fuchsian space \cite{Donaldson2003MomentMI}. The K{\"a}hler structure on $\mathcal{AC}(S,G)$ extends one of the K{\"a}hler structures in Donaldson's family.

For $G=\mathrm{PSL}(3,\R)$, we recall that Loftin and McIntosh proved that, for $G=\mathrm{PU}(2,1),$ via minimal Lagrangians in $\mathbb{CH}^2$, one can parametrize a neighbourhood $\mathcal{U}$ of the $\R$-Fuchsian representations inside $\chi(\pi_1(S),\mathrm{PU}(2,1))$ as an open subset of the zero section in $\mathcal{M}(S,\mathrm{PSL}(3,\R))$ \cite{LofM}. Thus, $\mathcal{U}$ inherits a complex structure. Restricting the local diffeomorphism from $\mathcal{AC}(S,G)\to \mathcal{M}(S,G)$ to a neighbourhood of the Fuchsian locus, we obtain the following.
\begin{corE}
    The neighbourhood $\mathcal{U}\subset \chi(\pi_1(S),\mathrm{PU}(2,1))$, equipped with Goldman's symplectic form and Loftin-McIntosh's complex structure, is a K{\"a}hler manifold.
\end{corE}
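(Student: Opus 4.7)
The plan is to identify $\mathcal{U}$ with an open subset of $\mathcal{AC}(S,\mathrm{PSL}(3,\R))$ near the Fuchsian locus, so that the identification intertwines the complex structures and the symplectic forms, and then to apply Theorem E. I would proceed in three steps.

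First, I would appeal to Section \ref{sec: low rank examples}: for $G=\mathrm{PSL}(3,\R)$ under the constraint $c_1=c_2$, $q_2=-q_1$, real solutions of (\ref{eq: Theorem C}) are equivalent to equivariant minimal Lagrangian immersions in $\mathbb{CH}^2$, and the associated complex harmonic map $\widetilde{S}\to G^{\C}/K^{\C}$ is obtained as a Gauss-type map from the minimal Lagrangian. In particular, its holonomy factors through the real subgroup $\mathrm{PU}(2,1)\subset \mathrm{PSL}(3,\C)$ and coincides with the holonomy of the minimal Lagrangian itself. Combined with the Loftin--McIntosh parametrization from \cite{LofM}, this exhibits a preferred bijection between a neighbourhood $\mathcal{U}'$ of the zero section in $\mathcal{M}(S,\mathrm{PSL}(3,\R))$ and $\mathcal{U}$.

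Second, I would restrict the local diffeomorphism $\pi:\mathcal{AC}(S,G)\to\mathcal{M}(S,G)$, $(c,q,\overline c,-\overline q)\mapsto(c,q)$, to a neighbourhood $V$ of the Fuchsian locus so that $\pi(V)=\mathcal{U}'$, and compose with the Loftin--McIntosh identification to obtain a diffeomorphism $\Phi:V\to\mathcal{U}$. Because $\widehat{\mathcal{J}}_G$ is by construction the pullback along $\pi$ of the complex structure on $\mathcal{M}(S,G)$, and Loftin--McIntosh's complex structure on $\mathcal{U}$ is by definition the pullback of the complex structure on $\mathcal{M}(S,\mathrm{PSL}(3,\R))$, the map $\Phi$ is a biholomorphism. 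Likewise, $\widehat{\omega}_G$ is the pullback of Goldman's form on $\chi^{\mathrm{an}}(\pi_1(S),\mathrm{PSL}(3,\C))$ via the holonomy of the complex harmonic map; by the holonomy identification just recalled, this pullback coincides, up to the standard normalization relating the Killing forms of $\mathfrak{sl}(3,\C)$ and $\mathfrak{pu}(2,1)$, with the Goldman form on $\mathcal{U}\subset\chi(\pi_1(S),\mathrm{PU}(2,1))$. Thus $\Phi$ is a symplectic biholomorphism.

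Finally, Theorem E asserts that $(\mathcal{AC}(S,\mathrm{PSL}(3,\R)),\widehat{\mathcal J}_G,\widehat\omega_G)$ is Kähler, so transporting this structure along $\Phi$ equips $\mathcal{U}$ with a Kähler structure compatible with Goldman's symplectic form and the Loftin--McIntosh complex structure, as required. Mapping class group equivariance is automatic since all constructions are natural. The step I expect to be the main obstacle is the precise matching of holonomies between the complex harmonic map constructed in Section \ref{sec: low rank examples} from a real solution of (\ref{eq: Theorem C}) and the Loftin--McIntosh minimal Lagrangian for the same data $(c,q)$; the two constructions share the same base data and differential equation, so this should reduce to comparing the flat bundles underlying them, but some care is needed with normalizations so that Goldman's forms on $\mathrm{PU}(2,1)$ and on $\mathrm{PSL}(3,\C)$ match under the inclusion of character varieties.
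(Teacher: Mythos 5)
Your proposal is correct and follows essentially the same route as the paper: identify $\mathcal{U}$ with a neighbourhood of the Fuchsian locus in $\mathcal{AC}(S,\mathrm{PSL}(3,\R))$ via the holonomy map and the Loftin--McIntosh parametrization, check that this intertwines the two complex structures and the two Goldman forms, and transport the K\"ahler structure from Theorem E. The normalization worry you flag is dissolved by the paper's convention that the real Goldman form on an admissible subset is by definition the restriction of the complex one, and the holonomy matching you identify as the main obstacle is exactly what the paper's discussion of minimal Lagrangians in $\mathbb{CH}^2$ (Section \ref{sec: low rank examples}) supplies via the Higgs-bundle description.
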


\subsection{Outline of paper and proofs}\label{sec: outline}
We prove Theorem C in Section \ref{sec: ch G-bundles} and then Theorem D in Section \ref{sec: opers}, both of which we use in the proof of Theorem B' in Section \ref{sec: thm B}. The proofs of Theorem A' and E, in Section \ref{sec: Goldman form thma A}, make use of the explicit description of flat connections corresponding to the complex harmonic maps from Section \ref{sec: ch G-bundles} (hence, Theorem C), and the holomorphicity of the map from Theorem B'. We give a detailed outline of the paper below.

In Section 2 we give the necessary preliminaries on Lie groups, character varieties, Teichm{\"u}ller space, Hitchin components, Higgs bundles, Labourie's parametrizations, and more. In Section 3, we give preliminaries on the complex side of the theory. Specifically, in Section \ref{section: holo Riem mflds} we introduce holomorphic Riemannian symmetric spaces, and then in the remaining subsections we discuss a number of notions developed in the previous works \cite{BEE}, \cite{ElE}, and \cite{ElSa} around complex metrics. In Section \ref{subsec: Bicomplex structures}, which is mostly new content, we set up the necessary analytic framework for studying complex harmonic maps and complex harmonic $G$-bundles.

In Section \ref{sec: def of ch maps} we define complex harmonic maps and discuss their basic properties. Then in Section \ref{sec: building ch G-bundles} we specialize to complex harmonic maps to holomorphic Riemannian symmetric spaces, which leads to the notion of complex harmonic $G$-bundles (which are equivalent to complex harmonic maps). In Section \ref{sec: bi-Hitchin} we give our method for constructing complex harmonic $G$-bundles, and in Section \ref{sec: proof of Theorem C} we prove Theorem C. The strategy in Section \ref{sec: proof of Theorem C} is similar to that of Theorem A from \cite{ST}, which is about Hitchin's self-duality equations for cyclic $G$-Higgs bundles, although of course we have to deal with new complications, and we don't have analogs for some features of the Higgs bundle theory used in \cite{ST}. Without going into details, our work in Section \ref{sec: bi-Hitchin} reduces finding complex harmonic maps to solving a complicated equation for an operator on a $G^{\C}$-bundle. In the setting of Theorem C we can simplify the situation by demanding that the operator respect a reduction of the structure group to a Cartan subgroup. After a careful computation, we arrive at equation (\ref{eq: Theorem C}). 

In the remainder of Section \ref{sec: ch G-bundles}, we find cases where we can solve the equations (\ref{eq: bi-Hitchin flatness}) and (\ref{eq: Theorem C}) and we prove the first part of Theorem D. We then discuss a myriad of examples.

In Section \ref{sec: opers} we discuss $G^{\C}$-opers and prove the second part of Theorem D. For $c_1=c_2,$ we see directly that the flat connection associated with the complex harmonic $G$-bundle is the same as one constructed by Beilinson-Drinfeld. For $c_1\neq c_2,$ it is a direct computation.

To prove Theorem B' (and Theorem B) in Section 6, we follow a strategy similar to the proof of Theorem A from \cite{ElSa}, but we work in a more general context and simplify certain aspects. The basic method from global analysis is to solve the equation (\ref{eq: Theorem C}) on certain loci and to prove that on these loci, the linearization of (\ref{eq: Theorem C}) determines an isomorphism between appropriate Sobolev spaces. To study the linearization on the real locus, we use the maximum principle for systems from \cite{Dai2018}, and for the loci $\mathcal{M}(S,G)\times \mathcal{T}(\overline{S}),$ we use the Analytic Fredholm Theorem. 

There is one major snag that makes the situation in Section 6 more complicated: the space $\mathcal{M}(S,G)\times \mathcal{M}(\overline S,G)$ is the quotient of an infinite dimensional Fréchet manifold under a difficult group action, namely, that of the group $\mathrm{Diff}_0(S)\times \mathrm{Diff}_0(S),$ where $\mathrm{Diff}_0(S)$ is the group of diffeomorphisms of $S$ isotopic to the identity. In the previous work \cite{ElSa}, in order to deal with this action, we developed the theory of complex Lie derivatives (recalled in Section \ref{sec: complex lie derivatvies}), which we use here as well.

In Section \ref{sec: Goldman form thma A} we prove Theorem A' (and Theorem A) and Theorem E.  Let $\omega_{G}^{\C}$ be the complex Goldman symplectic form on $\chi(\pi_1(S),G^{\C})$ (see Section \ref{sec: G symplectic form}  for the definition), which determines a holomorphic symplectic form $(\mathcal{L}_G^{\C})^*\omega_{G}^{\C}$ on $\mathrm{int}(\Omega_G'),$ where $\Omega_G'$ is the set from Theorem B'. Beginning with Theorem A, we prove the compatibility condition by first establishing that the intersections of $\Omega_G'$ with submanifolds of the form $\mathcal M(S,G)\times \{[\overline{c_2}, \overline{q_2}]\}$ and $\{[c_1, q_1]\}\times \mathcal M(\overline S,G)$ are $(\mathcal{L}_G^{\C})^*\omega_{G}^{\C}$-Lagrangian (Proposition \ref{prop:lagrangians}). By general principles in complex geometry, the above property is in fact equivalent to the compatibility of the (real) form $\mathcal{L}_G^*\omega_G$ (see Proposition \ref{prop:compatible}). Note that we only need to prove Proposition \ref{prop:lagrangians} in a neighbourhood of the diagonal. Thus, the only part that we need from Theorem B' is that the holomorphic map $\mathcal{L}_G^{\C}$ associates points to holonomies of complex harmonic maps (in particular, nothing about $\mathcal{M}(S,G)\times \mathcal{T}(\overline{S}),$ etc.). Proving Proposition \ref{prop:lagrangians} is more approachable than proving Theorem A' directly because we have explicit and very tractable descriptions of flat connections corresponding to holonomies of complex harmonic maps. To compute the signature of the resulting pseudo-K{\"a}hler metric on $\mathcal{M}_0(S,G)$, it suffices to compute it at one point; we do so by again  making use of the flat connections for complex harmonic maps. For Theorem E, once we define the space $\mathcal{AC}(S,G)$ and show that Goldman's symplectic form determines a symplectic form on $\mathcal{AC}(S,G)$, the proof of the theorem is similar to that of Theorem A': we use Proposition \ref{prop:lagrangians} and do a signature computation at a point.

As mentioned above, in the appendix, for $G=\mathrm{PSL}(3,\R)$, we characterize the opers appearing in Theorem B. We make computations for general Lie groups, which then happen to give more information for $\mathrm{PSL}(2,\R)$ and $\mathrm{PSL}(3,\R).$ To prove the main result of the appendix, Theorem \ref{thm: which opers}, a result from the first author's work \cite{ElE} helps us put certain opers in Beilison-Drinfeld's parametrization, and then we use an analytic continuation argument to deal with all opers.

\subsection{Previous work}
In our previous work \cite{ElSa}, we defined equivariant mappings from surfaces to $\C^3$ called complex affine spheres, which we used to prove a version of Theorem B specialized to $G=\mathrm{PSL}(3,\R)$. For remarks relating complex harmonic maps to complex affine spheres, see Section \ref{sec: low rank examples}. In the work \cite{ElSa}, we also developed a number of analytic results that we will make use of in this paper. In \cite{RT}, Rungi and Tamburelli introduce complex minimal Lagrangian surfaces in the bi-complex hyperbolic space, which are probably equivalent to complex affine spheres. It would be interesting to analyze the interplay between all of these objects further.

In the original version of \cite{ElSa} posted on the arXiv, we previously introduced complex harmonic maps in a more specialized context. We have removed complex harmonic maps from \cite{ElSa}, since it was not necessary for the main theorem and because we wanted to give this theory a more proper development.

\subsection{Acknowledgements}
We thank Brian Collier, Oscar Garc{\'i}a-Prada, Fran\c{c}ois Labourie, and Daniel Reyes Nozaleda for helpful discussions. N.S. is funded by the FNR grant O20/14766753, \textit{Convex Surfaces in Hyperbolic Geometry}. C.E. is funded by the European Union (ERC, \textit{GENERATE}, 101124349). Views and opinions expressed are however those of the author(s) only and do not necessarily reflect those of the European Union or the European Research Council Executive Agency. Neither the European Union nor the granting authority can be held responsible for them.

\section{Preliminaries I}
In this section, let $S$ be a closed oriented surface of genus $\mathrm g\geq 2,$ and let $\overline{S}$ be that same surface with the opposite orientation.

\subsection{Lie groups, principal bundles, and symmetric spaces}\label{sec: generalities}
In this section, we set $G$ to be a semisimple Lie group with no compact factors and with Lie algebra $\g$. While a lot of the constructions in this paper go through for reductive Lie groups, to keep things simpler we will often specialize.

\subsubsection{Lie groups and Lie algebras}\label{sec: Lie preliminaries} Let $\nu$ be an $\mathrm{Ad}(G)$-invariant bilinear form on $\g$ that is negative definite on Lie subalgebras of compact subgroups. A Cartan involution $\theta$ for $\nu$ is a Lie algebra involution of $\g$ such that $\nu(\cdot,-\theta(\cdot))$ is positive definite. We always write the $\nu$-orthogonal $+1$ and $-1$ eigenspace decomposition, i.e., the Cartan decomposition, as $\g=\kk\oplus \p$. The subalgebra $\kk\subset \g$ is the Lie algebra of a maximal compact subgroup $K\subset G,$ and in fact any maximal compact subgroup determines a Cartan involution. Since $\nu$ is unique up to independently rescaling its restriction to each simple factor (note each such factor is a positive multiple of the corresponding Killing form), $\p$ is independent of the choice of $\nu.$ 

Let $\mathrm{ad}:\g\to \mathrm{End}(\g)$ be the adjoint representation. An element $X\in\g$ is semisimple if $\mathrm{ad}(X)\in \mathrm{End}(\g)$ is a semisimple endomorphism, and nilpotent if $\mathrm{ad}(X)$ is nilpotent. The rank of $G$ is the dimension of a maximal abelian subalgebra $\mathfrak{h}$ of $\g$ contained in $\p$. Note that such a subalgebra necessarily consists of semisimple elements. We always write $\g^{\C},\p^{\C},\kk^{\C}, \mathfrak{h}^{\C}, G^{\C},$ etc., for complexifications.
A Cartan subalgebra of $\g^{\C}$ is a maximal abelian semisimple subalgebra. The Lie group $G$ is split if the complexification of a maximal abelian subalgebra of $\p\subset \g$ is a Cartan subalgebra of the complexification $\g^{\C}.$ For reference, the adjoint forms of the rank $2$ split real Lie groups are $\mathrm{PSL}(2,\R)^2$, $\mathrm{PSL}(3,\R),$ $\mathrm{PSp}(4,\R),$ and $G_2'$ (split form of the exceptional $G_2$).  

For $g\in G$, let $L_g:G\to G$ be the left multiplication map. The Maurer-Cartan form of $G$ is the section $\theta$ of $\Omega^1(G,\g)$ defined on vectors $v\in T_gG$ by $$\theta(v) = (L_{g^{-1}})_*v.$$
The Maurer-Cartan form provides an isomorphism of vector bundles from $TG$ to $G\times \g$.

\subsubsection{Principal bundles and connections}
To us, a principal $G$-bundle over a manifold $M$ is a fiber bundle $P\to M$ together with a continuous right action $P\times G\to P$ that preserves fibers and such that for each $p\in S,$ the map $\{p\}\times G\to P|_{\{p\}}$ is a homeomorphism.

Given a subgroup $H<G$ and a space $V$, a left action $\alpha$ of $H$ on $V$ determines an equivalence relation via $(g,v)\sim (g\cdot h,\alpha(h)v)$ for all $h\in H.$ The associated bundle is $G\times_{\alpha} V=(G\times V)/\sim.$ When the action $\alpha$ is implicit, we might write $G\times_H V$. If $H$ preserves a structure on $V$ (vector space structure, Lie bracket, etc.), then the associated bundle inherits that structure. For example, the adjoint action of $G$ on $\g$, $g\cdot X = \mathrm{Ad}_g(X),$ yields the adjoint bundle $\mathrm{ad}P=P\times_{G}\g.$ 

\begin{defn}\label{def: principal connection}
    A \textbf{principal connection} on $P$ is a $(S\times \g)$-valued $1$-form $A$ such that
    \begin{enumerate}
        \item for $g\in G$, if $R_g: P\times \{g\} \to P$ is the restriction of the $G$-action, then $R_g^*A = \mathrm{Ad}_{g^{-1}}\circ A$.
        \item If $X\in\g$ and $X^*$ is the {corresponding} $G^{\C}$-invariant vector field on $P$ induced by the infinitesimal $G$-action, then $A(X^*)=X.$
    \end{enumerate}
\end{defn}
A connection $A$ on $P$ determines an affine connection on $\mathrm{ad}P.$ In general, the converse is not quite true: a connection $\nabla$ on $\mathrm{ad}P$ induces a principal connection on $P$ if and only if $\nabla$ preserves the Lie bracket on $\mathrm{ad}P.$

For computational purposes, we often make use of connection forms. Given $P\to M$ with connection $A$ and an open set $V\subset M$ with local section $s:V\to P$, the pullback of $A$ along $s$ is a $(S\times \g)$-valued $1$-form on $V$ called a \textbf{connection form}. On $\mathrm{ad}P$, in the linear frames induced by $s$, the induced connection is expressed as $d+\mathrm{ad}(s^* A).$ 

At a number of points in this paper, we use connections in order to construct complex structures on bundles. If $G$ and $M$ are complex, an \textbf{almost complex structure} for the bundle $P\to M$ is a $G$-equivariant almost complex structure on $P$ such that the $G$-action and the projection $P\to M$ are pseudo-holomorphic. Given a connection $A$ on $P,$ there is a unique almost complex structure such that $A$ is of type $(1,0)$ (see \cite{KM}). A special case of a theorem of Koszul and Malgrange says that when $M$ is a Riemann surface, any almost complex structure is integrable \cite{KM}.

Fix a reduction of $P$ to a compact real form $K$, which induces a Hermitian metric on $\mathrm{ad}P$. The Chern correspondence gives a one-to-one correspondence between almost complex structures on $P$ and connections $A$ that restrict to connections on the reduction (see \cite{Sandon}). When $M$ is a Riemann surface, by the Koszul-Malgrange theorem, the term ``almost complex structure" can be replaced by ``complex structure." As explained carefully in Sandon's thesis \cite[Chapter 4]{Sandon}, if we have an integrable almost complex structure, then relative to $K$ and the induced complex structure on $\mathrm{ad}P$, the corresponding connection $A$ produces the affine Chern connection on $\mathrm{ad}P$.

\subsubsection{Symmetric spaces}
Consider the quotient $G/K$. The Maurer-Cartan form yields an isomorphism between $T(G/K)$ and the associated bundle  $G/K\times_{\mathrm{Ad}|_K} \mathfrak{p}.$ Moreover, the bilinear form $\nu$ from above determines a Riemannian metric on $G/K$, which we still denote by $\nu,$ and conversely, any $G$-invariant Riemannian metric on $G/K$ is identified with some $\mathrm{Ad}(G)$-invariant bilinear form on $G$. The manifold $(G/K,\nu)$ is a Riemannian symmetric space of non-compact type, by which we mean a Riemannian manifold with an isometric inversion symmetry about each point and whose De Rham decomposition has no compact or Euclidean factors.

The projection $G\to G/K$ gives $G$ the structure of a principal $K$-bundle over $G/K$ on which the projection of the Maurer-Cartan form to $\kk$ defines a principal connection $A_K$. Via the adjoint action of $K$ on $\p$, $A_K$ induces a connection on the associated vector bundle $G/K\times_{\mathrm{Ad}|_K}\p.$ Under the identification of $G/K\times|_{\mathrm{Ad}|_K}\p$ with the tangent bundle $T(G/K),$ this connection is the Levi-Civita connection on $(G/K,\nu)$ (see \cite{BR}). Evidently, this connection is independent of the specific choice of $\mathrm{Ad}(G)$-invariant $\nu$.

\subsection{Character varieties}
We recall just a few aspects of the theory around character varieties; see \cite{Golform} and \cite{Sik} for more in-depth treatments.

Let $G$ be a (real) reductive Lie group. A homomorphism $\rho:\pi_1(S)\to G$ is called irreducible if the image $\rho(\pi_1(S))$ is not contained in any proper parabolic subgroup of $G$. The quotient $\chi(\pi_1(S), G)$ of the set of irreducible homomorphisms by the conjugation action of $G$ is a Hausdorff space, usually called the \textbf{character variety} (we keep this name, even though we are not concerned in this paper with algebraic aspects). Let $\mathrm{Hom}^*(\pi_1(S),G)$ be the set of homomorphisms that are irreducible and also simple, the latter condition meaning that the centralizer of $\rho(\pi_1(S))<G$ is the center of $G$. The quotient by conjugation $$\mathrm{Hom}^*(\pi_1(S),G)/G=: \chi^{\mathrm{an}}(\pi_1(S),G) \subset \chi(\pi_1(S),G)$$ is an analytic smooth manifold. When $G$ is a complex Lie group, $\chi^{\mathrm{an}}(\pi_1(S),G)$ admits the structure of a complex manifold, and we view it as such.

 Recall that isomorphism classes of principal $G$-bundles are classified by characteristic classes in the group cohomology $H^2(S,\pi_1(G))\simeq \pi_1(G)$. The curvature of a connection $A$ is $F(A)=dA+[A,A]$, and $A$ is flat if $F(A)=0.$ Denote by $\mathcal{F}(G)$ the space of pairs $(P,A),$ where $P$ is a principal $G$-bundle in the trivial cohomology class and $A$ is a flat connection on $P$. For any $\rho\in \mathrm{Hom}(\pi_1(S),G),$ we can construct the principal $G$-bundle $G\times_\rho \widetilde{S}$, which lies in the trivial cohomology class and which has a canonical flat connection. That is, we get a point in $\mathcal{F}(G).$ Conversely, fixing a point $p\in S$ and a point of $P$ in the fiber over $p$, any $(P,A)\in \mathcal{F}(G)$ determines a (pointed) holonomy representation from $\pi_1(S)$ (based at $p$) to $G$ (see \cite[Chapter 13.9]{Taubes}). When saying ``the holonomy" of a flat irreducible connection, we ignore the basepoints and are really referring to the corresponding point in $\chi(\pi_1(S),G).$

Denoting by $\mathcal{F}^*(G)\subset \mathcal{F}(G)$ the subspace on which the holonomy lands in $\mathrm{Hom}^*(\pi_1(S),G)$, we set $\mathcal{A}(S,G)$ to be the set of isomorphism classes of flat $G$-bundles inside $\mathcal{F}^*(G)$. As is well known, the pointed holonomy map $\mathcal{F}(S,G)\to \mathrm{Hom}^*(\pi_1(S),G)$ descends to a bijection $\mathcal{A}(S,G)\to \chi^{an}(S,G)$ (see \cite[Theorem 13.2]{Taubes}). It is common to create maps to $\chi^{an}(S,G)$ by passing through this bijection.

We'll often embed real Lie groups inside complex ones, so the following notion will be useful. Let $G$ be a real reductive Lie group with complexification $G^{\C}$. Let $V$ be an open subset of $\chi^{\mathrm{an}}(\pi_1(S),G)$ such that every element in $V$ is represented by a homomorphism $\rho:\pi_1(S)\to G$ for which the composition with the inclusion $G<G^{\C}$ defines a point in $\chi^{\mathrm{an}}(\pi_1(S),G^{\C})$, and such that the map $V\to \chi^{\mathrm{an}}(\pi_1(S),G^{\C})$ is a (totally real) embedding. We refer to such a subset $V$ as \textbf{admissible}, and from here on out we identify such $V$ with its image in $\chi^{\mathrm{an}}(\pi_1(S),G^{\C})$.

\subsection{Teichm{\"u}ller space and Hitchin components}\label{sec: definition teichmuller} Fix a basepoint complex structure $c_0$ on $S$ that's compatible with the orientation and let $\mathcal{K}$ be the canonical bundle. A \textbf{Beltrami form} $\mu$ on $S$ is an $L^\infty$-measurable section of the bundle $\mathcal{K}^*\otimes \mathcal{K}^{-1}$ with $L^\infty$ norm strictly less than $1$. 

Recall that the space of complex structures on $S$ 
is parametrized by Beltrami forms.  Explicitly, given a Beltrami form $\mu$ on $(S,c_0)$, the Measurable Riemann Mapping Theorem produces a unique pair consisting of a compatible complex structure $c_\mu$ together with a quasiconformal map $f_\mu:(S,c_0)\to (S,c_\mu)$. We say that a complex structure is $C^\infty$ if the Beltrami form is $C^\infty$. We write $\mathcal{C}(S,c_0)$ for the complex Fr{\'e}chet space of $C^\infty$ Beltrami forms, and $\mathcal C(S)$ for the space of smooth complex structures. The parametrization above gives $\mathcal C(S)$ a complex Fr{\'e}chet manifold structure whose biholomorphism class does not depend on the choice of $c_0$ (see \cite[page 26]{EE}).

 Let $\mathrm{Diff}_+(S)$ be the group of orientation preserving $C^\infty$ diffeomorphisms of $S$ endowed with the topology of $C^\infty$ uniform convergence and let $\mathrm{Diff}_0(S)$ be the normal subgroup of diffeomorphisms isotopic to the identity. The quotient ${\mathrm{Diff}_+(S)}/{\mathrm{Diff}_0(S)}$ is the mapping class group $\mathrm{MCG}(S).$ Precomposing charts with diffeomorphisms allows one to build an action of $\mathrm{Diff}_0(S)$ on $\mathcal{C}(S)$, which we point out has a natural description in any parametrization of $\mathcal{C}(S)$ by Beltrami forms (see \cite[page 27]{EE}).
\begin{defn}
    The \textbf{Teichm{\"u}ller space} of marked complex structures on $S$ is the quotient $\mathcal{T}(S)=\mathcal{C}(S)/\mathrm{Diff}_0(S)$.
\end{defn}
 The following is due to Earle-Eells \cite{EE}. 
\begin{thm}[Earle-Eells]\label{thm: earle eells}
  Under the quotient topology, $\mathcal{T}(S)$ inherits a complex structure with respect to which the map from $\mathcal{C}(S)\to \mathcal{T}(S)$ is holomorphic and a principle $\mathrm{Diff}_0(S)$-fiber bundle.
\end{thm}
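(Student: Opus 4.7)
The plan is to follow the classical strategy of Earle-Eells: construct local holomorphic slices for the action of $\mathrm{Diff}_0(S)$ on $\mathcal{C}(S)$ and descend the complex structure through them. First I would verify that the action is free and proper. Freeness uses the fact that for $\mathrm g \geq 2$ the biholomorphism group of any compact Riemann surface is finite, so no non-identity element of $\mathrm{Diff}_0(S)$ can be a biholomorphism of any $(S,c)$. Properness follows from standard $C^\infty$-compactness for diffeomorphism groups of compact surfaces. Together these guarantee that the quotient $\mathcal{T}(S) = \mathcal{C}(S)/\mathrm{Diff}_0(S)$ is Hausdorff and that orbits are properly embedded closed submanifolds.

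The heart of the proof is the construction of slices. Fix $c \in \mathcal{C}(S)$ and identify a neighborhood of $c$ with an open subset of the Fr{\'e}chet space of Beltrami differentials on $(S,c)$. The derivative at the identity of the orbit map $\mathrm{Diff}_0(S) \to \mathcal{C}(S)$ sends a vector field $X$ to (the $(0,1)$-part of) $\overline\partial X$, so one has the Dolbeault-type decomposition
$$T_c \mathcal{C}(S) = \overline\partial\bigl(\mathrm{Vect}^{1,0}(S,c)\bigr) \oplus \mathcal{H}_c,$$
where $\mathcal{H}_c$ is the finite-dimensional space of harmonic Beltrami forms, canonically paired by integration with holomorphic quadratic differentials on $(S,c)$. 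I would upgrade this infinitesimal splitting to a genuine slice through a Banach-space implicit function argument at each H{\"o}lder regularity $C^{k,\alpha}$, followed by an ILH (inverse limit of H{\"o}lder/Hilbert) inverse function argument to pass to $C^\infty$. This produces a complex submanifold $\Sigma_c \subset \mathcal{C}(S)$ through $c$ tangent to $\mathcal{H}_c$ on which the action map $\mathrm{Diff}_0(S) \times \Sigma_c \to \mathcal{C}(S)$ is a homeomorphism onto a saturated open neighborhood of the orbit.

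Given the slices, I would define the complex manifold structure on $\mathcal{T}(S)$ by declaring each projection $\Sigma_c \to \mathcal{T}(S)$ to be a biholomorphism onto its image. For two overlapping slices $\Sigma_c$ and $\Sigma_{c'}$, the transition map factors as the slice projection followed by the unique $\phi \in \mathrm{Diff}_0(S)$ taking the relevant point of $\Sigma_c$ into $\Sigma_{c'}$; since both slices are complex submanifolds and the $\mathrm{Diff}_0(S)$-action on $\mathcal{C}(S)$ is holomorphic in the Fr{\'e}chet sense (directly verifiable from the rational composition formula for Beltrami differentials), the transition is a holomorphic map between finite-dimensional complex manifolds. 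The local trivializations $\mathrm{Diff}_0(S) \times \Sigma_c \to \pi^{-1}(\pi(\Sigma_c))$ then give the principal $\mathrm{Diff}_0(S)$-bundle structure, and $\pi$ is holomorphic since it is projection onto a factor in these trivializations.

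The main obstacle is the analytic construction of the slice: $\mathrm{Diff}_0(S)$ is not a Lie group in any na{\"i}ve sense, so off-the-shelf slice theorems do not apply, and one must perform a genuine inverse-function-theorem argument at each finite regularity and then control the loss of derivatives to obtain a $C^\infty$ slice—this is where Earle-Eells invested the bulk of their technical work. A secondary point is the careful verification that the $\mathrm{Diff}_0(S)$-action preserves the Fr{\'e}chet complex structure, needed for holomorphicity of the transitions; once this is in hand, the remaining steps of the proof are essentially formal.
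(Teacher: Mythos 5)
The paper does not prove this statement: it is quoted verbatim as a theorem of Earle--Eells and the reference \cite{EE} is cited for the proof, so there is no internal argument to compare yours against line by line. Measured against the cited source, your proposal is a correct strategy but a genuinely different one. Earle--Eells do not run a slice theorem through an inverse function argument at each finite regularity; they construct explicit local holomorphic sections of $\mathcal{C}(S)\to\mathcal{T}(S)$ by composing the Bers embedding with the Ahlfors--Weill assignment $\varphi\mapsto -\tfrac{1}{2}\rho^{-2}\overline{\varphi}$ (harmonic Beltrami representatives), translate these sections around by the group action, and deduce local triviality and holomorphy of $\pi$ from the holomorphic dependence of solutions of the Beltrami equation on parameters. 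What their route buys is that holomorphy comes for free from an explicit formula, with the Ahlfors--Bers theorem carrying the analytic weight; what your Ebin/Fischer--Tromba-style route buys is independence from quasiconformal machinery and a template that generalizes to other geometric structures, at the cost of the loss-of-derivatives bookkeeping you correctly identify as the technical heart.

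One step in your sketch deserves more care than you give it: the holomorphy of the transition maps between slices. Since $\mathrm{Diff}_0(S)$ is not a complex group, "the action is holomorphic" only makes sense for a \emph{fixed} diffeomorphism acting on Beltrami forms (where the composition formula is indeed fractional-linear in $\mu$), whereas the transition $\Sigma_c\to\Sigma_{c'}$ requires the normalizing diffeomorphism $\phi(p)$ to depend on the point $p$, and you need $p\mapsto \phi(p)^*p$ to be holomorphic. This does not follow formally from smoothness of the inverse of the saturation map $\mathrm{Diff}_0(S)\times\Sigma_c\to\mathcal{C}(S)$; one either verifies complex-linearity of the derivative directly using the decomposition $T_c\mathcal{C}(S)=\overline{\partial}\bigl(\mathrm{Vect}^{1,0}(S,c)\bigr)\oplus\mathcal{H}_c$, or invokes the Ahlfors--Bers holomorphic dependence of the normalized solution of the Beltrami equation on $\mu$ -- which is essentially the point at which the Earle--Eells argument and yours reconverge. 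The remaining ingredients (freeness via finiteness of automorphism groups plus the classical fact that a finite-order diffeomorphism isotopic to the identity is trivial for $\mathrm{g}\geq 2$, properness, and the descent of the bundle structure from the slices) are stated correctly.
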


Note that the complex structure on the Teichm{\"u}ller space $\mathcal{T}(\overline{S})$ of the oppositely oriented surface $\overline{S}$ identifies with the complex structure conjugate to the usual one on $\mathcal{T}(S)$. 

Given a complex structure on $S$, the Uniformization Theorem provides a biholomorphism $\mathrm{dev}$ between the universal covering Riemann surface and the upper half-plane $\mathbb{H}\subset \C$. The map $\mathrm{dev}$ is unique up to post-composing with elements in $\PSL(2,\R)=\mathrm{Aut}(\mathbb{H})$ and is \textbf{equivariant}, i.e., there exists a homomorphism $\rho: \pi_1(S)\to \PSL(2,\R)$ such that $\mathrm{dev}\circ \gamma=\rho(\gamma)\circ \mathrm{dev}$ for all $\gamma\in \pi_1(S)$. The homomorphism $\rho$ is discrete and faithful, i.e., it is a \textbf{Fuchsian representation}. This association from complex structures to representations descends to a diffeomorphism between $\mathcal T(S)$ and one of the two connected components of Fuchsian representations inside $\chi^{an}(\pi_1(S),\PSL(2,\R))$. The same construction for the oppositely oriented surface $\overline S$ determines a diffeomorphism between $\mathcal T(\overline S)$ and the other connected component of Fuchsian representations.

The definition of Hitchin representations stems from the above perspective on Teichm{\"u}ller space. Let $G^{\C}$ be a complex simple Lie group of adjoint type. There is a special type of embedding $\mathrm{PSL}(2,\C)\to G^{\C}$ called a principal homomorphism, which is unique up to automorphisms; we defer further explanation to later on, since we'll construct one explicitly in Section \ref{sec: bi-Hitchin}. We fix a principal homomorphism $\iota_G: \mathrm{PSL}(2,\C)\to G^{\C}$, which restricts to an embedding $\mathrm{PSL}(2,\R)\to G,$ where $G$ is a split real form of $G^{\C}.$ Fixing a Fuchsian representation $\rho:\pi_1(S)\to \mathrm{PSL}(2,\R)$, the \textbf{Hitchin component} of $G$ is the component of $\chi(\pi_1(S),G)$ containing $\iota_G \circ \rho$. Note we had to choose $\iota_G$, so really there are a finite number of Hitchin components (see \cite{Hi}), but we will be lazy in this terminology.

Any component $\mathrm{Hit}(S,G)$ lies in $\chi^{\mathrm{an}}(\pi_1(S),G)$ and is admissible in the sense of the previous subsection. Justifications for these assertions can be gleaned from Hitchin's paper \cite{Hi} and the later more general work \cite{GPNR}. Indeed, first note that if $\rho:\pi_1(S)\to G$ post-composed with the inclusion from $G\to G^{\C}$ lies in $\mathrm{Hom}^*(\pi_1(S),G^{\C}),$ then $\rho$ lies in $\mathrm{Hom}^*(\pi_1(S),G).$ As well, under the non-abelian Hodge correspondence, that representations in $\mathrm{Hom}^*(\pi_1(S),G^{\C})$ correspond to stable and simple $G^{\C}$-Higgs bundles (see \cite{GPNR} for definitions, especially Section 5.4). The result then follows from \cite[\S 5]{Hi} and \cite[Section 6]{GPNR}, where the authors show, under the non-abelian Hodge correspondence (see the subsection below), Hitchin representations correspond to stable and simple $G^{\C}$-Higgs bundles, and that the map $\mathrm{Hit}(S,G)\to \chi^{\mathrm{an}}(\pi_1(S),G^{\C})$ is an embedding. 

\subsection{Harmonic maps and $G$-Higgs bundles}\label{sec: ordinary harmonic maps}
Here we review some aspects of the theory of ordinary harmonic maps and $G$-Higgs bundles. Ordinary harmonic maps are the most basic example of the complex harmonic maps that we'll introduce later on. See Section \ref{sec: def of ch maps} for the definition, which we won't really need in this subsection.

In the rest of this subsection, we fix a semisimple Lie group $G$ with no compact factors and bilinear form $\nu$ on the Lie algebra $\g$ as in Section \ref{sec: generalities}, and consider harmonic maps to a Riemannian symmetric space $(G/K,\nu).$ For such spaces, we point out that harmonicity does not depend on the choice of invariant metric $\nu$: for a simple Lie group, the notion is independent of the scaling on $\nu,$ and in the semisimple case, a map is harmonic if only if the map to each simple factor is harmonic. So, in such contexts, we'll often write about harmonic maps while omitting the $\nu$. 

As is well known, a harmonic map to a Riemannian symmetric space $(G/K,\nu)$ gives rise to a $G$-Higgs bundle (Definition \ref{def: G-Higgs} below). We explain the construction here. For relevant subspaces of $\g$, complexifications, etc., we follow the notation from Section \ref{sec: generalities}. 

Let $S$ be a surface with complex structure $c$, $\rho:\pi_1(S)\to G$ a representation, and $f:(\widetilde{S},c)\to G/K$ a $\rho$-equivariant map. Associated with $\rho$ we have the principal $G$-bundle $Q_\rho=\widetilde{S}\times_\rho G$ over $S$, which comes equipped with a flat connection $A$. The map $f$ is equivalent to a reduction of structure group to $K,$ $Q_f\subset Q_\rho$. The reduction induces, on the adjoint bundle $\mathrm{ad}Q_\rho=Q_\rho\times_G \g$, an involution $\sigma_f:\mathrm{ad}Q_\rho\to \mathrm{ad}Q_\rho$ that produces a Cartan involution in each fiber.

Recalling the principal connection $A_K$ on the principal $K$-bundle $G\to G/K$ from Section \ref{sec: Lie preliminaries}, we pull it back via $f$ to get a principal connection $A_{f}$ on $Q_f$. Under the splitting $\g=\p\oplus \kk$, via the adjoint action, $A_{f}$ induces a connection $\nabla$ on $\mathrm{ad}\p_f:=Q_f\times_{\mathrm{Ad}|_{K}}\p.$ The Maurer-Cartan isomorphism identifies the pullback bundle $f^*T(G/K)$ with $\mathrm{ad}Q_f,$ and the pullback of the Levi-Civita connection with $\nabla$. Furthermore, under this isomorphism, $df$ is a $1$-form valued in $\mathrm{ad}Q_f$, which we denote by $\psi_f.$ Upon extending the structure group of $Q_f$ to $K^{\C}$ via $P_f:=Q_f\times_{K}K^{\C}$ and viewing $\psi_f$ as a $1$-form valued in $\mathrm{ad}\p_f^{\C}:=P_{f}\times_{ \mathrm{Ad}|_{K^{\C}}}\p^{\C}$, we can decompose $\psi_f$ into $(1,0)$ and $(0,1)$ forms as $\psi_f=\psi_f^{1,0}+\psi_f^{0,1}.$ Similarly, decompose $\nabla$ into $(1,0)$ and $(0,1)$ components. It is well known that the definition of $f$ \textbf{harmonic} is equivalent to $\nabla^{0,1}\psi_f^{1,0}=0$ (see \cite{Li}). 

Henceforth assuming that $f$ is harmonic, by the Chern correspondence, $A_f$ determines a holomorphic structure on $P_{f}$ in which $\psi_f^{1,0}$ is a holomorphic $\mathrm{ad}\p_f^{\C}$-valued $1$-form.
The extension of structure group of $P_f$ to $G^{\C}$, $P_{f}\times_{K^{\C}}G^{\C}$, identifies with $P_\rho:= Q_\rho\times_G G^{\C}$. The Cartan involution $\sigma_f:\mathrm{ad}Q_\rho\to \mathrm{ad}Q_{\rho}$ then extends sesquilinearly to an isomorphism from the adjoint bundle to its conjugate bundle $$\widehat{\sigma}_f:\mathrm{ad}P_{\rho}\to \overline{\mathrm{ad}P_{\rho}}.$$  
Since $\sigma_f(\psi_f)=-\psi_f$, if $\phi_f=\psi_f^{1,0},$ then $\widehat{\sigma}_f(\phi_f)=-\psi_f^{0,1}.$ The flatness of the original connection $A$ on $P$ is equivalent to saying $$F(A_{f})-[\phi_f,\sigma_f(\phi_f)]=0.$$ If we drop the information of the reduction $P_K$, we obtain the notion of a $G$-Higgs bundle. Let $\mathcal{K}$ be the canonical bundle of $(S,c).$ For a general $K^{\C}$-bundle $P_{K^{\C}},$ we always write $\textrm{ad}\p^{\C}:=P_{K^{\C}} \times_{ \mathrm{Ad}|_{K^{\C}}}\p^{\C}$.
\begin{defn}\label{def: G-Higgs}
    A \textbf{$G$-Higgs bundle} is the data $(P_{K^{\C}},\phi)$, where $P_{K^{\C}}$ is a holomorphic principal $K^{\C}$-bundle over $(S,c)$ and $\phi$ is a holomorphic section of $\mathrm{ad}\p^{\C}\otimes \mathcal{K}$, called the \textbf{Higgs field}.
\end{defn}
A $G$-Higgs bundle together with a suitable reduction is called a harmonic $G$-bundle. 
\begin{defn}
    A \textbf{harmonic $G$-bundle} is the data $(P_{K^{\C}},\phi,P_K)$, where 
    $(P_{K^{\C}},\phi)$ is a $G$-Higgs bundle, and $P_K$ is a reduction of structure group to $K$ with Chern connection $A_{P_K}$ and, for $P_{G^{\C}}=P_{K^{\C}}\times_{K^{\C}}G^{\C},$
 involution $\widehat{\sigma}: \mathrm{ad}P_{G^{\C}}\to \overline{\mathrm{ad}P_{G^{\C}}}$ such that the connection on the principal $G$-bundle $P_{K}\times_{K} G$ defined by $$A_{P_K}+\phi-\widehat{\sigma}(\phi)$$ is flat.
\end{defn}

A harmonic $G$-bundle contains the information of a conjugacy class of representations to $G$ together with, for each representation, an equivariant harmonic map to $G/K,$ in a way that reverses the procedure carried out above. Indeed, given the data $(P_{K^{\C}},\phi,P_K),$ we extend the structure group to the principal $G$-bundle $P_{K}\times_{G} G$ as above, which inherits a flat connection $A=A_{P_K}+\phi-\widehat{\sigma}(\phi)$. Picking a point on $P$, the data of the inclusion $P_{K}\subset P_{K}\times_{G} G$ is equivalent to a map $f$ to $G/K$ that is equivariant for the pointed holonomy of the flat connection. Clearly, if we pick our point correctly, then this reverses the construction of a harmonic bundle from a harmonic map, up to conjugation and translation. We do want to verify that if we start with an arbitrary harmonic $G$-bundle $(P_{K^{\C}},\phi,P_K),$ then the map $f$ is harmonic. To do so, we just note that the decomposition of $A$ over $\p\oplus \mathfrak{k}$ is unique, and hence $\phi-\widehat{\sigma}(\phi)$ identifies with the tangent map of $f$ and $A_{P_K}$ comes from the Levi-Civita connection on the holomorphic Riemannian symmetric space. The holomorphicity condition on $\phi$ is equivalent to the harmonicity of $f$.

Naturally, one is led to ask, given a $G$-Higgs bundle, whether one can find a reduction that turns it into a harmonic $G$-bundle. There are notions of stability and polystability for a $G$-Higgs bundle defined in terms of an antidominant character for a parabolic subgroup of $G$ and a holomorphic reduction of the structure group of $P_{K^{\C}}$. We need these notions to state the next result, but we
omit the definitions since we won’t make use of them in any meaningful way (see \cite{G-P} for this theory).
\begin{thm}[Hitchin \cite{Hitchin:1986vp}, Simpson \cite{Si}]\label{thm: Hitchin simpsons}
    A $G$-Higgs bundle $(P_{K^{\C}},\phi)$ is polystable if and only if there exists a reduction of the structure group of $P_{K^{\C}}$ to $K$, say, $P_K$, which makes $(P_{K^{\C}},\phi,P_K)$ a harmonic $G$-bundle. If the $G$-Higgs bundle is stable, then the reduction is unique.
\end{thm}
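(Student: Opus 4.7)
The plan is to split the proof along the two directions of the equivalence, and to reduce the real reductive case to Simpson's theorem for complex reductive groups via a suitable complexification argument.

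For the easy direction, I would start with a harmonic $G$-bundle $(P_{K^{\C}},\phi,P_K)$ and show polystability of $(P_{K^{\C}},\phi)$. Given any parabolic subgroup, antidominant character, and holomorphic reduction of $P_{K^{\C}}$ entering the definition of (semi/poly)stability, the idea is to compute the corresponding degree via Chern-Weil theory, using the Chern connection $A_{P_K}$ of the reduction to $K$. The flatness equation $F(A_{P_K}) = [\phi,\widehat\sigma(\phi)]$ feeds directly into this integral, and a Cauchy-Schwarz-type estimate combined with the fact that $\phi$ preserves the filtration associated with the parabolic reduction (this is what makes the reduction ``$\phi$-invariant") gives non-negativity of the degree. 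Strict positivity on nontrivial reductions, or splittings into stable summands, then yields polystability. This direction is structurally the same as in the standard Higgs bundle theory; the only adjustment is to keep track of the $K^{\C}$-structure and the involution $\widehat\sigma$.

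For the hard direction, given a polystable $G$-Higgs bundle $(P_{K^{\C}},\phi)$, the task is to construct a reduction $P_K$ satisfying the flatness equation. The strategy would be to reduce to Simpson's theorem for $G^{\C}$-Higgs bundles: the $G$-Higgs bundle gives, by extension of structure group and by forgetting the involution data, a $G^{\C}$-Higgs bundle $(P_{G^{\C}},\phi)$ together with the anti-holomorphic involution coming from the real form $G \subset G^{\C}$. Polystability of the $G$-Higgs bundle is then arranged to imply polystability of the associated $G^{\C}$-Higgs bundle. Simpson's theorem produces a reduction to a compact form of $G^{\C}$ solving the standard self-duality equation, and one shows that because the data is equivariant under the real structure, this compact reduction can be chosen equivariantly, which is equivalent to a reduction $P_K \subset P_{K^{\C}}$ of the type we want. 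The key technical input is therefore the existence half of the non-abelian Hodge correspondence, which one proves either by the continuity method (as in Hitchin's original argument for $\mathrm{SL}(2,\C)$), by minimizing Donaldson's functional on the space of Hermitian metrics, or by the Donaldson heat flow; the polystability assumption is exactly what is needed to rule out escape to infinity and to guarantee that the minimum is attained.

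For the uniqueness in the stable case, assume two reductions $P_K,P_K'$ both solve the equation. The difference is encoded by a self-adjoint automorphism $s$ of $P_{K^{\C}}$ with respect to the first reduction, and one derives a Bochner-Weitzenb\"ock identity showing that $s$ is parallel and commutes with $\phi$. Stability of $(P_{K^{\C}},\phi)$ then forces $s$ to lie in the center, so the two reductions agree up to the center of $G$.

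The main obstacle is the hard direction, and within it the analytic step of solving the non-linear elliptic system $F(A_{P_K}) = [\phi,\widehat\sigma(\phi)]$. The core difficulty is obtaining a uniform $C^0$ bound on the conformal factor relating a fixed background metric to the sought-after harmonic metric; this is precisely where the polystability hypothesis gets converted into an analytic a priori estimate, either by a contradiction argument producing a destabilizing subobject from a sequence of metrics blowing up, or through the convexity properties of Donaldson's functional. Keeping track of the real structure throughout (so that the resulting reduction lands in $K$ and not just in a compact form of $G^{\C}$) is a bookkeeping issue that can be handled by always working with $G$-equivariant functionals and $G$-invariant test configurations.
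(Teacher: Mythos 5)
This theorem is quoted in the paper from Hitchin \cite{Hitchin:1986vp} and Simpson \cite{Si} (with the stability notions deferred to \cite{G-P}); the paper gives no proof of its own, only the remark that solving it amounts to finding an involution $\widehat{\sigma}$ satisfying the self-duality equation (\ref{eq: self-duality real case}). So there is no in-paper argument to compare yours against; what can be assessed is whether your sketch is a faithful outline of the standard proof, and it largely is: Chern--Weil plus the flatness equation for the easy direction, solving the nonlinear elliptic system (heat flow or minimization of Donaldson's functional, with polystability converted into the a priori $C^0$ bound) for the hard direction, and a Bochner/parallel-endomorphism argument for uniqueness in the stable case.

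Two points in your hard direction deserve flagging as more than bookkeeping. First, the implication ``polystable as a $G$-Higgs bundle $\Rightarrow$ polystable as a $G^{\C}$-Higgs bundle'' is itself a theorem (proved by Garc\'ia-Prada--Gothen--Mundet i Riera), not something one ``arranges''; stability for the real form is tested against parabolics of $G$, and relating this to parabolics of $G^{\C}$ takes real work. Second, the descent step --- upgrading Simpson's $U$-reduction of $P_{G^{\C}}$ (for $U$ a compact form of $G^{\C}$) to a $K$-reduction of $P_{K^{\C}}$ --- relies on the uniqueness of the harmonic metric to show it is fixed by the involution defining the real structure; this is clean when the $G^{\C}$-Higgs bundle is stable, but in the polystable, non-stable case uniqueness fails and one must either argue on each stable summand or, as is now standard, prove the Hitchin--Kobayashi correspondence directly for the pair $(P_{K^{\C}},\phi)$ with metrics reducing to $K$ rather than passing through the complex group at all. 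With those caveats your outline is the accepted route.
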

Working in the adjoint representation, if $\overline{\partial}_E$ is the del bar operator, the theorem can be proved by finding a reduction with involution $\widehat{\sigma}$ that satisfies 
\begin{equation}\label{eq: self-duality real case}
    F(\overline{\partial}_E+\widehat{\sigma}(\overline{\partial}_E))+[\phi,-\widehat{\sigma}(\phi)]=0.
\end{equation}
(In the adjoint bundle, $\overline{\partial}_E+\widehat{\sigma}(\overline{\partial}_E)$ is the Chern connection of the reduction.)
We say that $\widehat{\sigma}$ solves Hitchin's self-duality equations. 

The counterpart to Theorem \ref{thm: Hitchin simpsons}, due to Donaldson \cite{D} and Corlette \cite{Co}, is the statement that if $\rho:\pi_1(S)\to G$ is a reductive representation, then there exists a $\rho$-equivariant harmonic map $f:(\widetilde{S},c)\to G/K$ if and only if $\rho$ is reductive (and if $\rho$ is irreducible, it is unique). The results of \cite{Hitchin:1986vp}, \cite{D}, \cite{Co}, and \cite{Si} constitute a version of the non-abelian Hodge correspondence, which is an equivalence between classes of reductive representations to $G$ and classes of polystable $G$-Higgs bundles. This correspondence is used in Labourie's parametrizations of the rank $2$ Hitchin components.

\subsection{Labourie's complex structures}\label{sec: Labourie paper}
Here we recall Labourie's parametrizations of the rank $2$ Hitchin components, as well as Labourie's immersions for cyclic $G$-Higgs bundles. Let $(S,c)$ be a Riemann surface with canonical bundle $\mathcal{K}.$ As in Section \ref{sec: generalities}, we fix $G$ to be a semisimple Lie group with no compact factors and Lie algebra $\g$. 

\subsubsection{Holomorphic differentials}
 We denote tensor powers of $\mathcal{K}$ by $\mathcal{K}^r:=\mathcal{K}^{\otimes r}.$ Elements of $H^0(S,\mathcal{K}^r)$ are called holomorphic differentials.

As is well known, Teichm{\"u}ller space carries a natural complex structure. Given a finite ordered set of integers $(d_i)_{i=1}^k,$ let $\mathrm{M}(S,(d_i))$ be the the bundle over the space $\mathcal C(S)$, whose fiber over $c$ is $\oplus_i H^0(S,\mathcal{K}^{d_i}),$ and set $\mathcal{M}(S,(d_i))$ to be the quotient of $\mathrm{M}(S,(d_i))$ by the natural action of $\mathrm{Diff}_0(S)$, which is a holomorphic vector bundle over $\mathcal{T}(S).$ As explained in \cite[Section 6.2]{ElSa} (in the case $k=1$, but there is no difference in the general construction), it can be seen using the classical theory developed in \cite{Bhd} that $\mathrm{M}(S,(d_i))$ and $\mathcal{M}(S,(d_i))$ are holomorphic vector bundles. There is a natural action of the mapping class group on on $\mathrm{M}(S,(d_i))$, $[\upphi]\cdot [c,(q_1,\dots, q_k)]=[\upphi^*c,(\upphi^*q_1,\dots, \upphi^* q_k)],$ where $\upphi^*c$ is the pullback complex structure.

\subsubsection{Labourie's work}
We mentioned the Hitchin fibration in Section \ref{sec: complex harmonic maps intro}; here we explain more details. As in Section \ref{sec: complex harmonic maps intro}, let $\mathcal{O}(\g)^{G}$ be the algebra of $\mathrm{Ad}(G)$-invariant polynomials on the Lie algebra $\g$ of $G$. Let $\{p_1,\dots, p_l\}$ be a homogeneous minimal generating set for $\mathcal{O}(\g)^{G}$, listed so that the degrees $m_i$ are ascending. When $G$ is simple, the number $m_l$ is called the \textbf{Coxeter number} of $G,$ and we denote it by $d_G,$ or just $d$ when the context is clear. The Hitchin fibration, defined on isomorphism equivalence class of $G$-Higgs bundles (see \cite{Hi}), sends (the isomorphism class of) a $G$-Higgs bundle $(P_{K^{\C}},\phi)$ to $(p_1(\phi),\dots, p_l(\phi))\in H(c,G)=\oplus_{i=1}^l H^0(S,\mathcal{K}^{m_i})$.

We now set $G$ to be simple and of adjoint type, since this is the setting in Labourie's paper \cite{Lab3}. As the Killing form is the unique adjoint invariant bilinear form on $\g$ up to scale, the polynomial $p_1\in \mathcal{O}(\p^{\C})^{K^{\C}}$ is a multiple of the Killing form, and no other $p_i$ has degree $2$. Given an equivariant harmonic map to $G/K,$ the $(2,0)$ component of the pullback metric (extended to the complexified tangent bundle) is a degree $2$ invariant polynomial in the Higgs field $\phi$, and hence the harmonic map is weakly conformal if and only if $p_1(\phi)=0$ (see Section \ref{sec: def of ch maps} or \cite{Li} for more explanation). Using Hitchin's section and the non-abelian Hodge correspondence, a minimal immersion $\widetilde{S}\to G/K$ that's equivariant for a Hitchin representation is equivalent to the data of a complex structure $c$ and a point in $(q_2,\dots, q_l) \in \oplus_{i=2}^l H^0(S,\mathcal{K}^{m_i})$ (identified with $(0,q_2,\dots, q_l) \in H(c,G)$)).

The space of such minimal immersions, up to the conjugation action of $G,$ can thus be packaged as the holomorphic bundle $\mathcal{M}(S,(m_2,\dots, m_l))$ over $\mathcal{T}(S).$ We point out that the mapping class group $\mathrm{MCG}(S)$, along with acting on this bundle, also acts by outer automorphisms on $\mathrm{Hit}(S,G)$. The map that associates a minimal surface to its holonomy yields a map $$\mathcal{L}_{G}: \mathcal{M}(S,(m_2,\dots, m_l))\to \mathrm{Hit}(S,G)$$ that intertwines the two actions of $\mathrm{MCG}(S).$ In \cite{L1} and \cite{Lab3}, when $G$ has rank $2,$ Labourie proved that every Hitchin representation comes with a unique equivariant minimal immersion (see \cite{SS} for the history of this result). Moreover, we have the following.
\begin{thm}[\cite{Lab3}]
    Assuming $G$ has rank $2$, the map $\mathcal{L}_{G}$ is a diffeomorphism.
\end{thm}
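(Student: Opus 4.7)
The plan is straightforward given what has been set up. By the existence and uniqueness of the equivariant minimal immersion for each Hitchin representation in rank $2$ (recalled just before the statement), $\mathcal{L}_G$ is bijective as a set-theoretic map. It is moreover real-analytic, since it factors as the Hitchin section (holomorphic in the Hitchin data and real-analytic in the underlying complex structure) composed with the non-abelian Hodge correspondence of \cite{Hitchin:1986vp, D, Co, Si} (a real-analytic diffeomorphism between moduli of polystable Higgs bundles and the character variety). The task thus reduces to upgrading this smooth bijection to a diffeomorphism, i.e., to showing that $d\mathcal{L}_G$ is everywhere an isomorphism.

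First I would check that the dimensions match. Since $\dim\mathrm{Hit}(S,G)=(2\mathrm g-2)\dim G$ and $\dim G=2(d_G+1)$ for each of the rank-$2$ split simple adjoint Lie groups $\mathrm{PSL}(3,\R)$, $\mathrm{PSp}(4,\R)$, and $G_2'$, one obtains $\dim\mathrm{Hit}(S,G)=4(\mathrm g-1)(d_G+1)$. On the other hand, $\dim\mathcal{M}(S,(m_2,\dots,m_l))=(6\mathrm g-6)+2(2d_G-1)(\mathrm g-1)=4(\mathrm g-1)(d_G+1)$. So the two sides have equal real dimension, and it suffices to prove that $d\mathcal{L}_G$ is injective at every point.

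For injectivity of $d\mathcal{L}_G$, I would argue infinitesimally in the Higgs bundle picture. A kernel vector at $(c,(q_2,\dots,q_l))$ corresponds, via non-abelian Hodge, to a nontrivial infinitesimal deformation of the associated $G$-Higgs bundle in the Hitchin section with $q_1=0$ that is gauge-trivial. Linearizing Hitchin's self-duality equations and using the rigid structure of the Hitchin section, one argues that a gauge-trivial infinitesimal variation along the ``minimal-surface slice'' must preserve both the complex structure (up to an element of $\mathrm{Diff}_0(S)$) and the higher differentials, forcing the original variation in $\mathcal{M}(S,(m_2,\dots,m_l))$ to be zero. This is Labourie's immersion statement, which actually holds for general split simple $G$ of adjoint type; only in rank $2$ does one then combine it with bijectivity to conclude a diffeomorphism.

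Main obstacle: the hardest input by a comfortable margin is the rank-$2$ uniqueness of the equivariant minimal immersion that feeds into bijectivity. Labourie's proof of this uniqueness rests on a delicate analysis of the affine Toda / self-duality equations for cyclic $G$-Higgs bundles together with maximum-principle type arguments (in the spirit of \cite{ST, Dai2018}); in higher rank the uniqueness genuinely fails, by \cite{SS}, so one cannot hope to prove the theorem without invoking the rank hypothesis somewhere. Once bijectivity and the immersion property are in hand, the conclusion follows from the elementary fact that a smooth bijective immersion between manifolds of equal dimension is a diffeomorphism.
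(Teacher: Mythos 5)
This statement is quoted from \cite{Lab3} and the paper offers no proof of its own, so there is nothing internal to compare against; judged on its own terms, your assembly of the result is sound and is the standard one. The logical skeleton --- (i) bijectivity from Labourie's existence and uniqueness of the equivariant minimal immersion in rank $2$, (ii) the immersion property of $\mathcal{L}_G$, (iii) the dimension count $4(\mathrm g-1)(d_G+1)$ on both sides (which checks out for $\mathrm{PSL}(3,\R)$, $\mathrm{PSp}(4,\R)$, and $G_2'$), and (iv) the fact that a smooth bijective immersion between equidimensional manifolds is a diffeomorphism --- is exactly how the theorem is deduced from Labourie's two main results, and you correctly identify the uniqueness of the minimal surface as the genuinely hard input that fails in higher rank by \cite{SS}.

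Two small caveats. First, your parenthetical that the immersion statement ``actually holds for general split simple $G$ of adjoint type'' is imprecise: in general rank, \cite[Theorem 1.5.1]{Lab3} gives the immersion property only on the cyclic subbundle $\mathcal{M}(S,G)=\mathcal{M}(S,(m_l))$, not on all of $\mathcal{M}(S,(m_2,\dots,m_l))$; the two coincide in rank $2$, so your argument for the theorem is unaffected, but the aside overstates what is known. Second, the smoothness (let alone real-analyticity) of $\mathcal{L}_G$ is less automatic than your one-line factorization suggests --- it requires smooth dependence of the solution of Hitchin's equations on the complex structure and the differentials, via an implicit-function-theorem argument at the invertible linearization --- and indeed the present paper remarks that the real-analyticity of Labourie's parametrizations had no formal proof in the literature and is here derived as a consequence of Theorem B'. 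Smoothness suffices for the diffeomorphism conclusion, so this does not create a gap, but it is a step worth flagging rather than absorbing into a citation.
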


\begin{defn}
    Let $G$ be a split real simple Lie group of rank $2$. The complex structure induced on $\mathrm{Hit}(S,G)$ is called \textbf{Labourie's complex structure}, or the Labourie complex structure.
\end{defn}

For arbitrary rank at least $2$, we consider the subbundle of $\mathcal{M}(S,(m_2,\dots, m_l))$ of points of the form $[c,(0,\dots, 0, q_l)].$ As we mentioned in the introduction, Labourie proved in \cite[Theorem 1.5.1]{Lab3} that the restriction of $\mathcal{L}_{G}$ to this subbundle is an immersion. We package this subbundle as its own bundle, which we give its own notation: from now on, set $\mathrm{M}(S,G)=\mathrm{M}(S,m_l)$ and $\mathcal{M}(S,G)=\mathcal{M}(S,(m_l)).$ In rank $2,$ $\mathrm{M}(S,G)$ identifies with the whole bundle of minimal surfaces.

Finally, a word on the rank $2$ semisimple group $G=\mathrm{PSL}(2,\R)^2.$ A map to a product of symmetric spaces is harmonic if and only if each factor is harmonic, but the minimality depends on the choice of invariant metric $\nu$ on the product. For $G=\mathrm{PSL}(2,\R)^2,$ if $\nu_1$ is the hyperbolic metric of constant curvature $-1$ on $\mathbb{H}^2,$ then any $\nu$ is of the form $\nu=(a\nu_1,b\nu_1)$, $a,b>0.$ If $f$ is an equivariant harmonic map whose factors have Higgs fields $\phi^1$ and $\phi^2,$ then minimality is equivalent to demanding that $ap_1(\phi^1)+bp_1(\phi^2)=0$ (again, see Section \ref{sec: def of ch maps} or \cite{Li}). So as not to complicate things too much, we do Theorems A and B only for the case $a=b=1$, but one could also treat every $\nu$ without any added difficulty. For this choice, via $q\mapsto (q,-q)$, we identify the space of minimal surfaces with $\mathcal{M}(S,G):=\mathcal{M}(S,(2))$.  It follows from work of Schoen \cite{Sc} (for the case $a=b$) that when we postcompose with the holonomy map, we get a bijection (for the case $a\neq b$, it follows from Wan \cite{Wa}). 

\section{Preliminaries II}

\subsection{Holomorphic Riemannian manifolds}
\label{section: holo Riem mflds}
Let $\mathbb X$ be a complex manifold with almost complex structure $\mathbb J$. A \textbf{holomorphic Riemannian metric} $\inners$ on $\mathbb X$ is a holomorphic assignment of a non-degenerate $\C$-bilinear form on the holomorphic tangent bundle. The pair $(\mathbb X, \inners)$ is called a \textbf{holomorphic Riemannian manifold}. Such objects have been studied by many authors; see, for instance, \cite{DZ}. 

Holomorphic Riemannian manifolds come with enough structure to formulate a covariant differential calculus. A holomorphic affine connection $D$ on $T\mathbb{X}$ is an affine connection that satisfies $D\mathbb J=0$ and, for local holomorphic vector fields $Z_1,Z_2$, $D_{\mathbb JZ_1} Z_2=\mathbb J\left( D_{Z_1} Z_2\right)$. Holomorphic Riemannian manifolds $(\mathbb X, \inners)$ have a natural notion of \textbf{Levi-Civita connection $D$}, which is a holomorphic affine connection on $T\mathbb X$ that makes the metric parallel. Mimicking the Riemannian setting, one can use the Levi-Civita connection to build a $\C$-multilinear curvature tensor and, for all $2$-dimensional complex vector subspaces $V< T_p \mathbb X$ on which $\inners|_{V}$ is non-degenerate, one can define the sectional curvature. 

The most basic example of a holomorphic Riemannian manifold is $\mathbb C^n$ equipped with the canonical symmetric $\C$-bilinear form given by $\langle\underline z, \underline w\rangle=\sum_{k=1}^n z_k w_k$, for $\underline z =(z_1,\dots, z_n),\underline w =(w_1,\dots, w_n)\in \C^n$. 
$(\mathbb \C^n, \inners)$ is the unique simply connected complete holomorphic Riemannian metric with constant sectional curvature zero \cite[Theorem 2.6]{BEE}. 

In this paper, we are most interested in holomorphic Riemannian manifolds constructed as follows. Let $G$ be a semisimple Lie group with no compact factors and Lie algebra $\g$. As well, let $K<G$ be a maximal compact subgroup with Lie algebra $\kk$ and corresponding splitting $\g=\p\oplus \kk.$ Recalling that $G^{\C}$, $K^{\C}$, etc., are complexifications, we consider the quotient $G^{\C}/K^{\C}$, which is a complex manifold because the action of $K^{\C}$ on $G^{\C}$ is holomorphic. The Lie algebra $\g^{\C}$ has the splitting $\g^{\C}=\p^{\C}\oplus \kk^{\C},$ and the Maurer-Cartan form identifies $T (G^{\C}/K^{\C})$ with $G^{\C}/K^{\C}\times_{\textrm{Ad}|_{K^{\C}}} \p^{\C}.$ As in Section \ref{sec: generalities}, let $\nu$ be an $\mathrm{Ad}(G)$-invariant bilinear form on $\g$. We extend it complex linearly to a $\mathrm{Ad}(G^{\C})$-invariant complex bilinear form $\nu^{\C}$ on $\g^{\C}.$ Via the Maurer-Cartan isomorphism, the restriction of $\nu^{\C}$ to $\p^{\C}$ determines a holomorphic Riemannian metric on $G^{\C}/K^{\C},$ which we continue to denote by $\nu^{\C}$, such that the immersion $(G/K,\nu)\to (G^{\C}/K^{\C},\nu^{\C})$ is isometric, totally geodesic, and totally real. 
\begin{defn}
   We refer to $(G^{\C}/K^{\C},\nu^{\C})$ as a \textbf{holomorphic Riemannian symmetric space.} 
\end{defn}
One thing we will need to observe, similar to the Riemannian setting, is the characterization of the Levi-Civita connection in terms of principal connections. The projection $G^{\C}\to G^{\C}/K^{\C}$ makes $G^{\C}$ a principal $K^{\C}$-bundle over $G^{\C}/K^{\C}$, and the projection of the Maurer-Cartan form to $\kk^{\C}$ is a principal connection $A_{K}^{\C}$. Using $\mathrm{Ad}|_{K^{\C}}$ on $\p^{\C}$, $A_{K}^{\C}$ induces a holomorphic affine connection on the associated complex vector bundle $G^{\C}/K^{\C}\times|_{\mathrm{Ad}|_{K^{\C}}}\p^{\C},$ which identifies with the Levi-Civita connection for $\nu$ on $T(G^{\C}/K^{\C}).$ This last assertion can be checked directly (by adapting some content from \cite{BR}), or it can be seen by observing that it is equal to the Levi-Civita connection on $G/K\subset G^{\C}/K^{\C}$ and using holomorphicity. 

Perhaps the most well-studied holomorphic Riemannian symmetric spaces are the hyperboloids, \[
\mathbb X_n =\{\underline z \in \C^{n+1}\ |\ \langle \underline z, \underline z \rangle =-1 \},
\]
which inherit complex Riemannian metrics from their inclusions into $\C^{n+1}.$ It is easily seen that $\mathbb X_n$ identifies with $\mathrm{SO}(n+1,\C)/\mathrm{SO}(n,\C)$, with a suitably normalized Killing form. For $n\ge 2$, $\mathbb X_n$ is the unique complete simply connected holomorphic Riemannian manifold with constant sectional curvature $-1$ \cite[Theorem 2.6]{BEE}. It should be clear that the real $n$-dimensional hyperbolic space embeds inside $\mathbb X_n$, in a unique way, up to post-composition with ambient isometries, but note also that for all $p,q$ with $p+q=n$, $\mathbb{H}^{p,q}=\mathrm{SO}(p,q)/(\mathrm{SO}(p)\times \mathrm{SO}(q))$ does as well.

Most studies on harmonic maps to symmetric spaces focus on $Y_n=\mathrm{SL}(n,\R)/\mathrm{SO}(n,\R)$, which models positive definite symmetric matrices in $\mathrm{SL}(n,\R)$, and $Y_n'=\mathrm{SL}(n,\C)/\mathrm{SU}(n)$,  which models positive definite Hermitian matrices in $\mathrm{SL}(n,\C)$. The holomorphic Riemannian manifold associated with $Y_n$ is $\mathbb{Y}_n = \mathrm{SL}(n,\C)/\mathrm{SO}(n,\C)$, which models symmetric matrices in $\mathrm{SL}(n,\C)$. If we associate matrices with bilinear forms (so that matrices in $Y_n$ become inner products on $\R^n$), then the inclusion $Y_n\to Y_n'$ can be seen as the map induced by taking sequilinear extensions of bilinear forms, while the inclusion $Y_n\to\mathbb{Y}_n$ can be seen as taking bilinear extensions. The metric on all of these spaces at a matrix $A$ is, up to normalization, $\nu_A(X,Y)=\frac{n}{2}\mathrm{tr}(A^{-1}XA^{-1}Y).$ 

Of particular importance to us will be the space $\mathbb{X}_2=\mathbb{Y}_2.$ Along with the two models given above, $\mathbb{X}_2$ identifies with the space of oriented geodesics of hyperbolic space $\mathbb{H}^3$. Indeed, $\mathrm{SO}(3,\C)$ acts transitively on the space of oriented geodesics, and the stabilizer of any oriented geodesic is conjugate to $\mathrm{SO}(2,\C)$. By identifying geodesics in $\mathbb{H}^3$ with their endpoints in $\mathbb{CP}^1$, we obtain a biholomorphism from $$\mathbb X_2\to \mathbb{G}:=\mathbb{CP}^1\times \mathbb{CP}^1\backslash \Delta.$$ Under this biholomorphism,  the pushed-forward holomorphic Riemannian metric has the following description: given any complex affine chart $(V,z)$ on $\CP^1$, the metric on $(V\times V\setminus \Delta, (z_1,z_2))$  has the form \[
 \inners_{\mathbb G}= -\frac 4 {(z_1-z_2)^2} dz_1\cdot dz_2 \ .
 \]
See \cite[Section 2.4]{BEE} for a rigorous treatment of the discussion above. 
Several relations between immersions of surfaces into $\mathbb H^3$ and $\mathbb G$ are treated in \cite{EleSeppi}.

\subsection{Complex metrics}\label{subsection: complex metrics}
Our study will require some notions from the theory of complex metrics, and more generally of \cite{BEE} and the works that followed. Throughout the paper, given a smooth manifold $M,$ we denote the complexified tangent bundle $TM\otimes_{\R}\C$ by $\C TM,$ and likewise we denote the complexified cotangent bundle by $\C T^*M$.
\begin{defn}
    A \textbf{complex metric} on a smooth (real) manifold $M$ is a smooth section of $\mathrm{Sym}^2(\mathbb C T^*M)$ that determines a non-degenerate symmetric bilinear form in each fiber of $\C TM$.
\end{defn}
The bilinear extension of a Riemannian metric to the complexified tangent bundle is a complex metric. Throughout the paper, we will view a complex metric on a manifold $M$ as the same thing as an invariant complex metric on the universal cover $\widetilde{M},$ and we won't distinguish notation.

A lot of the usual constructions in Riemannian geometry extend to the setting of complex metrics and the proofs apply verbatim (see \cite{BEE} for all of the details). For instance, every complex metric $g$ has a natural \textbf{Levi-Civita connection}, namely, an affine connection $\nabla^g: \Gamma( \C TM) \to \Gamma(End(\C TM))$ uniquely determined by the fact that it is torsion-free and compatible with the metric, i.e., $\nabla^g g=0$. Using $g$ and $\nabla^g$, one can define and discuss gradients, the curvature tensor, Gauss curvature, Hessians, and Laplacians (always denoted $\Delta_g$) totally analogous to the Riemannian manifolds. When $M$ is a surface, we have a formula for Gauss curvature that recovers the Riemannian formula: let $\varrho: M\to \C^*$. Then, the conformal metric $\widehat g= \varrho \cdot g$ satisfies $\Delta_{\widehat g}= \frac 1 {\varrho} \Delta_g$ and the Gauss equation
    \begin{equation}\label{eq: conf curvature} 
    \mathrm K_{\widehat g}= \frac 1 \varrho (\mathrm K_g-\frac 1 2\Delta_g \log(\varrho)).
   \end{equation}
    While $\log(\varrho)$ is only defined locally and up to choices, $\Delta_g \log(\varrho)$ is globally well-defined.

In the following, we restrict to the case of surfaces $M=S$. The fact that a complex metric $g$ on $S$ is non-degenerate implies that $g$ has two isotropic directions at each point. In other words, for each point $p\in S$, the set $\{v\in \C T_p S\ |\ g_p(v,v)=0 \}$ consists of two complex lines of $\mathbb C T_p S$, corresponding to two points in $\mathbb P \C T_pS$. Observe that $\mathbb P T_p S$ embeds into the sphere $\mathbb P \C T_pS$ as an equatorial circle, which cuts $\mathbb P \C T_pS$ into two connected components.

\begin{defn}\label{def: positive}
    A complex metric $g$ is \textbf{positive} if for each $p\in S$, no isotropic direction of $g_p$ lies in $\mathbb P T_p S$, and the two isotropic directions are contained in distinct components of $\mathbb P \C T_pS\setminus \mathbb P T_p S$.
\end{defn}
Riemannian metrics complexify to positive complex metrics: the isotropic directions are the eigen-lines of the almost complex structure and are hence antipodal. The isotropic directions of a positive complex metric also related to the notion of a bi-complex structure.
\begin{defn}
    A \textbf{bi-complex} structure on $S$ is a tensor $J\in \Gamma(End(\C TS))$ such that $J^2=-id$ and whose eigenspaces, for each $p\in S,$ do not intersect $\mathbb P T_p S$ and are contained in distinct components of $\mathbb P \C T_pS\setminus \mathbb P T_p S$.  
\end{defn}
As shown in \cite[Section 6]{BEE}, a bi-complex structure is determined by its eigenspaces. As a result, the data of $J$ is equivalent to a pair of oppositely oriented complex structures $c_1$ and $\overline{c_2}$ on $S$, uniquely defined by the fact that their holomorphic tangent bundles are the linear subbundles of $\C TS$ corresponding to the eigenspaces for $i$ and $-i$ of $J$ respectively.

By associating isotropic directions to eigenlines, every positive complex metric $g$ uniquely determines a bi-complex structure $J$ such that $g(J\cdot,J\cdot)=g(\cdot,\cdot).$ If $z$ and $\overline w$ are local holomorphic coordinates for the corresponding complex structures $c_1$ and $\overline{c_2}$, then $\partial_{\overline z}$ and $\partial_w$ are the isotropic directions for $g$, and $g$ can locally be written as $g=\lambda dz d\overline w$, for some non-vanishing complex valued function $\lambda$ (see \cite[Section 6.1]{BEE} for details).
\subsection{Bers' theorem and Bers metrics}
 Given a quasi-Fuchsian representation $\rho$ preserving a Jordan curve $\gamma$ in $\mathbb{CP}^1$, $\rho$ also preserves and acts properly discontinuously on the two connected domains $\Omega_\rho^+,\Omega_\rho^-\subset \mathbb{CP}^1$ that are complementary to $\gamma$, which are called domains of discontinuity. The resulting quotients $\Omega_\rho^+/\rho$ and $\Omega_\rho^-/\rho$ represent points in $\mathcal{T}(S)$ and $\mathcal{T}(\overline{S})$ respectively. Bers'  Simultaneous Uniformization Theorem says that the process is reversible.
\begin{thm} [Bers' Simultaneous Uniformization Theorem \cite{SU}]
\label{thm: Bers thm}
	For all $(c_1, \overline{c_2})\in \mathcal{C}(S)\times \mathcal{C}(\overline{S})$, there exists a quasi-Fuchsian representation $\rho: \pi_1(S)\to \PSL(2, \mathbb C)$, unique up to conjugation, with domains of discontinuity $\Omega_\rho^+$ and $\Omega_\rho^-$, together with unique $\rho$-equivariant holomorphic  diffeomorphisms $$\vb*{f_+}(c_1,\overline {c_2}): (\widetilde S, c_1) \to \Omega_\rho^+,\qquad \qquad  \vb*{\overline{f_-}}(c_1,\overline {c_2}): (\widetilde S, \overline{c_2}) \to \Omega_\rho^-.$$
	This correspondence determines a biholomorphism \[ \TSTS\xrightarrow{\sim}\mathcal{QF}(S).\]
\end{thm}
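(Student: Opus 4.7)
The plan is to follow Bers' classical strategy via the Measurable Riemann Mapping Theorem on $\mathbb{CP}^1$. First, I would fix a reference Fuchsian representation $\rho_0:\pi_1(S)\to\PSL(2,\R)$ uniformizing a base complex structure $c_0$ on $S$, so that $\widetilde{S}$ is identified with the upper half-plane $\mathbb{H}$ and $\widetilde{\overline{S}}$ with the lower half-plane $\mathbb{H}^-$. Lifting Beltrami forms representing $c_1$ and $\overline{c_2}$ to $\mathbb{H}$ and $\mathbb{H}^-$ respectively, and extending by zero across $\widehat{\R}$, I obtain a $\rho_0$-invariant Beltrami differential $\nu$ on $\mathbb{CP}^1$ with $\|\nu\|_\infty<1$. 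Applying the Measurable Riemann Mapping Theorem then produces the unique quasiconformal self-homeomorphism $F:\mathbb{CP}^1\to\mathbb{CP}^1$ satisfying $\overline{\partial}F=\nu\,\partial F$, normalized by fixing $0,1,\infty$.

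Next, I would define $\rho(\gamma):=F\circ\rho_0(\gamma)\circ F^{-1}$ for $\gamma\in\pi_1(S)$. The $\rho_0$-invariance of $\nu$ forces each $\rho(\gamma)$ to have vanishing Beltrami coefficient, hence to lie in $\PSL(2,\C)$; the image preserves the Jordan curve $F(\widehat{\R})$ with domains of discontinuity $\Omega_\rho^{\pm}:=F(\mathbb{H}^{\pm})$, so $\rho$ is quasi-Fuchsian. The restriction of $F$ to $\mathbb{H}$ solves the Beltrami equation associated with $c_1$ and is $\rho$-equivariant, hence descends to the required $\rho$-equivariant biholomorphism $f_+(c_1,\overline{c_2}):(\widetilde{S},c_1)\to\Omega_\rho^+$, and analogously for $\overline{f_-}$. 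Uniqueness of $\rho$, $f_+$, and $\overline{f_-}$ up to the conjugation ambiguity then follows from the uniqueness clause of the Measurable Riemann Mapping Theorem, since any two candidates produce solutions to the same Beltrami equation on $\mathbb{CP}^1$ and hence agree up to post-composition with a M\"obius transformation.

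To obtain the biholomorphism $\mathcal{T}(S)\times\mathcal{T}(\overline{S})\to\mathcal{QF}(S)$, I would invoke the Ahlfors-Bers theorem on the holomorphic dependence of $F$ on the Beltrami coefficient $\nu$, which gives holomorphicity of $(c_1,\overline{c_2})\mapsto[\rho]$ at the level of Beltrami forms, and then descend via Theorem \ref{thm: earle eells} after checking $\mathrm{Diff}_0(S)\times\mathrm{Diff}_0(S)$-invariance. For bijectivity, I would argue in the reverse direction: given a quasi-Fuchsian $\rho$, the quotients $\Omega_\rho^{\pm}/\rho$ recover a pair $(c_1,\overline{c_2})$, providing the inverse. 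The main obstacle, in my view, is not the existence of $F$ itself (which is classical) but the verification that the descent to Teichm\"uller spaces is well-defined and yields a local biholomorphism onto the open subset $\mathcal{QF}(S)\subset\chi^{\mathrm{an}}(\pi_1(S),\PSL(2,\C))$; this requires carefully reconciling the Ahlfors-Bers complex-analytic framework with the complex structure on $\mathcal{T}(S)\times\mathcal{T}(\overline{S})$ and confirming that the induced map hits exactly the quasi-Fuchsian locus.
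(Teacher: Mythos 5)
The paper does not prove this statement: it is quoted as a classical theorem and attributed to Bers, so there is no internal argument to compare against. Your proposal is the standard Bers construction (doubling the Beltrami data across $\widehat{\R}$, solving the Beltrami equation on $\mathbb{CP}^1$, conjugating $\rho_0$ by the normalized solution, and invoking Ahlfors--Bers for holomorphic dependence), and it is essentially correct.

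One step you should not gloss over is the uniqueness clause. To say that two candidate triples $(\rho, f_+, \overline{f_-})$ and $(\rho', f'_+, \overline{f'_-})$ ``produce solutions to the same Beltrami equation on $\mathbb{CP}^1$'' you must first glue $f'_\pm\circ f_\pm^{-1}$ (or the analogous comparison maps) across the limit set of $\rho$ into a single quasiconformal self-map of $\mathbb{CP}^1$. This requires knowing that the limit set is a quasicircle, hence a null set that is removable for quasiconformal maps; without that, the two holomorphic diffeomorphisms on the two domains of discontinuity need not assemble into a global solution to which the uniqueness part of the Measurable Riemann Mapping Theorem applies. Similarly, in the surjectivity step you should justify that $\Omega^{\pm}_\rho/\rho$ are closed surfaces of genus $\mathrm{g}$ with opposite orientations (proper discontinuity and freeness of the action on the domains of discontinuity, plus an Euler characteristic count). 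These are standard facts, but they are the actual content hiding behind ``provides the inverse,'' and a complete write-up should include them.
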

Fix a pair $(c_1,\overline{c_2})\in \CSCS$. The two maps $f_1= \f+\ccpair$ and $\overline{f_2}=\fm \ccpair$ define an immersion $(f_1, \overline{f_2}): \widetilde S\to \mathbb G\supset \Omega^+_\rho\times \Omega^-_\rho$, and the pull-back of $\inners_{
\mathbb G}$ is the positive complex metric 
\begin{equation*}
\hpair:= -\frac 4 {(f_1-\overline{f_2})^2}df_1d\overline{f_2}.
\end{equation*}
When $c_1=c_2$, $\hpair$ is the complexification of the Riemannian metric of constant curvature $-1$ in the conformal class determined by $c_1$. In general, as a result of Theorem 6.7 in \cite{BEE}, $\hpair$ has constant curvature $-1$.  We refer to the metrics obtained in this fashion as \textbf{Bers metrics}. 

As proved in \cite{BEE}, in the smooth category, Bers metrics form a connected component of the subset of smooth complex metrics of constant curvature $-1$, and $\ccpair \mapsto \vb*h{(c_1,\overline{c_2})}$ defines a bijective correspondence between pairs of complex structures and Bers metrics. In the paper \cite{ESholodependence}, we proved a holomorphic dependence result for solutions to the Beltrami equation in higher Sobolev spaces \cite[Theorem A]{ESholodependence}, which shows that Bers metrics vary holomorphically in their Fr{\'e}chet spaces. We will make use of  a precise version of the result from \cite{ESholodependence} in Section \ref{sec: thm B}. 

Finally, we record the following formula, related to (\ref{eq: conf curvature}), for future use. It follows immediately from the combination of \cite[Theorem 6.7]{BEE} and \cite[Theorem 3.17]{RT}. 

\begin{prop}\label{prop: curvature equality}
    For $h$ a Bers metric locally of the form $h=\lambda dz d\overline{w}$, $-\frac{1}{2}\Delta_h \log \lambda = -1$.
\end{prop}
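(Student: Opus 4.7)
The proposal is to deduce the identity directly from the conformal curvature formula (\ref{eq: conf curvature}) applied to the flat background metric $g_0 = dz\, d\overline{w}$, combined with the fact that Bers metrics have constant sectional curvature $-1$ (which is the content of \cite[Theorem 6.7]{BEE} invoked in the statement).

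First, I would write $h = \lambda \cdot g_0$, viewing $h$ as a conformal rescaling of the complex metric $g_0$ by the function $\varrho = \lambda$. The key observation is that $g_0 = dz\, d\overline w$ has vanishing curvature. Strictly speaking this is a computation within the framework of complex metrics of Section \ref{subsection: complex metrics}: one checks that the Christoffel symbols of $g_0$ in the coordinates $(z, \overline w)$ vanish, so $K_{g_0}=0$. Substituting into (\ref{eq: conf curvature}) then yields
\begin{equation*}
K_h \;=\; \frac{1}{\lambda}\Bigl(K_{g_0} - \tfrac{1}{2}\Delta_{g_0}\log \lambda\Bigr) \;=\; -\frac{1}{2\lambda}\Delta_{g_0}\log \lambda.
\end{equation*}

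Next, I would use the relation $\Delta_h = \tfrac{1}{\lambda}\Delta_{g_0}$, also recorded in Section \ref{subsection: complex metrics}, to rewrite $\Delta_{g_0}\log\lambda = \lambda\, \Delta_h \log \lambda$. Substituting gives $K_h = -\tfrac{1}{2}\Delta_h \log \lambda$. Since $h$ is a Bers metric, \cite[Theorem 6.7]{BEE} asserts $K_h \equiv -1$, and the identity $-\tfrac{1}{2}\Delta_h \log \lambda = -1$ follows.

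There is no real obstacle in this argument; the only subtlety is to verify that the ingredients of (\ref{eq: conf curvature}) and the conformal Laplacian scaling apply to complex (not just Riemannian) metrics, which is addressed in Section \ref{subsection: complex metrics} and in \cite{BEE}. The role of \cite[Theorem 3.17]{RT} cited in the statement is to supply, in the complex setting, the precise version of the conformal curvature formula and the fact that $\log \lambda$ can be treated as a globally well-defined expression under $\Delta_{g_0}$, so that the local computation patches into a global identity.
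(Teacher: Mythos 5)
Your argument is correct and is essentially the paper's own route: the paper simply cites \cite[Theorem 6.7]{BEE} for $K_h\equiv -1$ and \cite[Theorem 3.17]{RT} for the curvature formula $K_h=-\tfrac{1}{2}\Delta_h\log\lambda$ of a metric $\lambda\,dz\,d\overline w$, which is exactly the conformal-change computation (flat background $dz\,d\overline w$ plus $\Delta_{\lambda g_0}=\tfrac{1}{\lambda}\Delta_{g_0}$) that you spell out. The only cosmetic point is that $(z,\overline w)$ is not literally a coordinate system on $S$, so the flatness check is best phrased in the isotropic frame $\{\partial_{\overline z},\partial_w\}$ with coframe $\{dz,d\overline w\}$, but the computation is the same.
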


\subsection{Bi-complex calculus}
\label{subsec: Bicomplex structures}
Let $J$ be a bi-complex structure on $S,$ equivalent to complex structures $c_1$ and $\overline{c_2}$ with 
local holomorphic coordinates $z$ and $\overline w$ respectively. Let $\C TS=T_{c_1}^{1,0}S\oplus T_{c_2}^{0,1}S$ be the eigenspace decomposition for $J$. Locally, this decomposition is $\mathrm{Span}(\partial_{\overline{z}})\oplus \mathrm{Span}(\partial_w
)$. This decomposition comes with projections $\Pi_1^J,\Pi_2^J$ of $\C TS$, mapping to $T_{c_1}^{1,0}$ and $T_{c_2}^{0,1}$ respectively. Using the same symbols $\Pi_1^J$ and $\Pi_2^J$ for the corresponding projections on $\C T^* S,$  they induce a further splitting on $\Omega^1(S)$, $$\Omega^1(S)=\Omega_{c_1}^{1,0}(S)\oplus \Omega_{c_2}^{0,1}(S).$$  To understand these operators, we record that for a $1$-form $\omega$, locally,
\begin{equation}\label{eqn: projected forms}
    \Pi_1^J \omega = \frac{\omega(\partial_w)}{\partial_w z}dz, \hspace{1mm} \Pi_2^J \omega = \frac{\omega(\partial_{\overline{z}})}{\partial_{\overline{z}} \overline{w}}d\overline{w},
\end{equation}
which can be verified directly.
Note that, when $c_1=c_2,$ the two splittings agree and are the ordinary splitting of $\Omega^1$ into $(1,0)$ and $(0,1)$ forms. We also extend $\Pi_1^J$ and $\Pi_2^J$ to bundle valued forms in the obvious way. 

Writing $d$ for the ordinary exterior derivative on functions, we split it as $$d=\Pi_1^J\circ d + \Pi_2^J\circ d =: \partial_{J}+\overline{\partial}_{J}.$$
Both $\partial_{J}$ and $\overline{\partial}_{J}$ extend to $\Omega^1(S)$ by the rules $\partial_{J}=d\circ \Pi_1^J$ and $\overline{\partial}_{J}=d\circ \Pi_2^J.$ Explicitly,
in local coordinates these mappings satisfy
$$\partial_{J}(fdz) = 0, \qquad \partial_{J}(fd\overline{w})=\partial_J f\wedge d\overline{w}=d(fd\overline{w})$$ and 
$$\overline{\partial}_{J}(fd\overline{w})=0, \qquad \overline{\partial}_{J}(fdz) = \overline{\partial}_{J}f \wedge dz = d(fdz).$$
More generally, if we have a complex vector bundle $E$ over $S$ with a connection $\nabla,$ we can split $\nabla$ using $J$. When we do so, we get $J$-del and $J$-del-bar operators, defined right below.

\begin{defn}
    Let $E$ be a complex vector bundle over $S$ and let $J$ be a bi-complex structure on $S$. A $J$\textbf{-del operator} $\partial_1$ on $E$ is an operator $\partial_1: \Omega^0(E)\to \Omega^1(E)$, such that, for all $s$ in $\Omega^0(E)$ and $u$ a function, 
\begin{equation}\label{eqn: weirdleibniz2}
    \partial_1(us) = (\partial_{J}u)\otimes s +u\partial_1 s\ .
\end{equation}    
Similarly, a $J$\textbf{-del-bar operator} $\overline{\partial}_2$ on $E$ is an operator $\overline{\partial}_2: \Omega^0(E)\to \Omega^1(E)$ such that, for all sections $s$ and functions $u$,  
\begin{equation}\label{eqn: weirdleibniz}
    \overline{\partial}_2(us) = (\overline \partial_{J}u)\otimes s +u\overline{\partial}_2 s\ .
\end{equation}
\end{defn}
Such operators $\partial_1$ and $\overline{\partial}_2$ extend naturally to operators $\Omega^1(E)\to \Omega^2(E)$, which we still denote by $\partial_1$ and $\overline{\partial}_2$ respectively, by 
\begin{align*}
    \partial_1(\omega\otimes s)&= \partial_{J}\omega\otimes s - \omega \wedge\partial_1s,\\
    \overline \partial_2(\omega\otimes s)&= \overline{\partial}_{J}\omega\otimes s - \omega\wedge \overline \partial_2 s, 
\end{align*}
which satisfy the analog of \eqref{eqn: weirdleibniz2} and \eqref{eqn: weirdleibniz} as well as $\partial_1\circ \partial_1=0$ and $\overline \partial_2 \circ \overline \partial_2=0$.

Finally, we state a formula on the Laplacian of a positive complex metric $g$ with bi-complex structure $J$, which we will use in the proof of Theorem C in Section \ref{sec: proof of Theorem C}. Let $dA_g$ denote the area form of $g$. Locally, if $g=\lambda dzd\overline{w},$ then $dA_g=\frac{i}{2}\lambda dz \wedge d\overline{w}$.
\begin{prop}\label{prop: Laplace formula}
    For any $C^2$ function $f:S\to \C$, 
    \[
    \overline{\partial}_J\partial_J f= \frac i 2 \Delta_g f dA_g.
    \]
\end{prop}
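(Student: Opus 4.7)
The identity is a local equality of globally-defined $2$-forms, so it suffices to verify it in a chart adapted to the bi-complex structure $J$ associated to $g$. Picking local coordinates so that $g = \lambda\, dz\, d\overline{w}$ for some non-vanishing $\lambda$, one has $dA_g = \tfrac{i}{2}\lambda\, dz\wedge d\overline{w}$, and the $J$-splitting on $1$-forms is spanned locally by $dz$ and $d\overline{w}$ (following the conventions of Section~\ref{subsec: Bicomplex structures}). Let $(\partial_z, \partial_{\overline{w}})$ be the frame of $\mathbb{C}TS$ dual to $(dz, d\overline{w})$; then the decomposition of $df$ induced by the $J$-splitting reads
\[
\partial_J f = (\partial_z f)\, dz, \qquad \overline{\partial}_J f = (\partial_{\overline{w}} f)\, d\overline{w}.
\]

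To apply $\overline{\partial}_J$ to the $1$-form $\partial_J f$, I would use the local formula $\overline{\partial}_J(F\, dz) = (\overline{\partial}_J F)\wedge dz$ recorded in Section~\ref{subsec: Bicomplex structures}, together with the commutativity of the coordinate vector fields $\partial_z$ and $\partial_{\overline{w}}$. Taking $F = \partial_z f$ yields
\[
\overline{\partial}_J \partial_J f \;=\; (\partial_{\overline{w}} \partial_z f)\, d\overline{w}\wedge dz \;=\; -(\partial_z \partial_{\overline{w}} f)\, dz\wedge d\overline{w}.
\]

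The remaining ingredient is the local Laplacian formula $\Delta_g f = \tfrac{4}{\lambda}\, \partial_z \partial_{\overline{w}} f$. A short computation of the Christoffel symbols of the Levi-Civita connection of $g$ in the frame $(\partial_z, \partial_{\overline{w}})$ shows that only $\Gamma^z_{zz} = \partial_z\log\lambda$ and $\Gamma^{\overline{w}}_{\overline{w}\overline{w}} = \partial_{\overline{w}}\log\lambda$ are non-vanishing, and both drop out upon contracting with the inverse metric whose only non-zero components are $g^{z\overline{w}} = g^{\overline{w} z} = 2/\lambda$; so the second-order part of $\Delta_g$ gives the full answer. Alternatively one may invoke the conformal-change rule $\Delta_{\rho g_0} = \rho^{-1}\Delta_{g_0}$ from Section~\ref{subsection: complex metrics} applied to the flat model $g_0 = dz\, d\overline{w}$, for which the formula is immediate. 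Substituting back,
\[
\tfrac{i}{2}\, \Delta_g f \cdot dA_g \;=\; \tfrac{i}{2}\cdot \tfrac{4}{\lambda}\, \partial_z \partial_{\overline{w}} f \cdot \tfrac{i}{2}\, \lambda\, dz\wedge d\overline{w} \;=\; -(\partial_z \partial_{\overline{w}} f)\, dz\wedge d\overline{w},
\]
which matches the expression obtained for $\overline{\partial}_J \partial_J f$, completing the proof.

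The only step that involves any genuine calculation, as opposed to mechanical manipulation with the $J$-splitting and the local rules from Section~\ref{subsec: Bicomplex structures}, is the derivation of the local Laplacian formula for the complex metric $g$; this is the point I would present most carefully in the write-up, though it parallels the familiar Riemannian computation almost verbatim.
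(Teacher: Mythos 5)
Your proof is correct and follows essentially the same route as the paper's: both arguments reduce the statement to the local identity $\Delta_g f=\tfrac{4}{\lambda}\,e_2e_1 f$, where $(e_1,e_2)$ is the frame dual to $(dz,d\overline w)$ (your ``$(\partial_z,\partial_{\overline w})$''), and then match it against the projection formulas for $\partial_J$ and $\overline\partial_J$ together with $dA_g=\tfrac{i}{2}\lambda\,dz\wedge d\overline w$. The only difference is that the paper imports the local Laplacian formula from \cite[Proposition 5.2]{ElSa}, whereas you derive it in place via the Christoffel symbols of the dual frame (whose commutativity, which you also use to swap $e_1$ and $e_2$, holds because $dz$ and $d\overline w$ are closed) --- a harmless, self-contained substitute for the citation.
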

We will use Proposition \ref{prop: Laplace formula} in the form of the following more specific identity: when $f$ lands in $\C^*$, then using Proposition \ref{prop: Laplace formula} and (\ref{eqn: projected forms}), we have $\frac i 2 \Delta_g \log f dA_g = \overline{\partial}_J\Big ( \frac{1}{f}\partial_J f\Big ).$  Note that, while $\log f$ is only locally defined, as in Section \ref{subsection: complex metrics}, $\Delta_g \log f$ is a globally defined function.
\begin{proof}
   We work in local coordinates $z$ and $\overline{w}$ for the complex structure of $f$. 
Specifically, we identify an open subset of $S$ with the unit disk $\mathbb{D},$ with standard holomorphic coordinate $z,$ and we view $w$ as a reparametrization of $\mathbb{D},$ $w=w(z).$ Let $\mu$ denote the Beltrami form of $w$, defined by $\mu =\frac{\partial_{\overline z} w}{\partial_z w},$ which satisfies $|\mu|<1.$ We write out $$\partial_w=\partial_wz\cdot \partial_z+\partial_w\overline{z}\cdot\partial_{\overline{z}}=\partial_wz(\partial_z-\overline{\mu}\partial_{\overline{z}}).$$ In \cite[Proposition 5.2]{ElSa}, we proved that 
\begin{equation*}
    \frac{ 4}{\lambda \overline{\partial_z w}} (\partial_{\overline z } (\partial_z- \overline \mu \partial_{\overline z})).
\end{equation*}
Substituting in $\frac{\partial_w}{\partial_w z} = \partial_z - \overline{\mu}\partial_{\overline{z}},$ we obtain
$$\Delta_g = \frac{4}{\lambda\overline{\partial_z w}}\partial_{\overline{z}}\left(\frac 1 {\partial_w z} \partial_w\right).$$
We then apply the formulas of (\ref{eqn: projected forms}) for the projection operators to get the result.
\end{proof}

\section{Complex harmonic $G$-bundles}\label{sec: ch G-bundles}
Throughout this section, let $S$ be a surface, which we do not assume to be compact. 

\subsection{Complex harmonic maps}\label{sec: def of ch maps}
Let $M$ be a manifold with a complex metric $g$, let $(\mathbb{X},\inners)$ be a holomorphic Riemannian manifold as in Section \ref{section: holo Riem mflds}, and let $f:(\widetilde{M},g)\to (\mathbb{X},\inners)$ be an equivariant map. The derivative $df$ defines a section of the endomorphism bundle $\C T^*\widetilde{M}\otimes f^* T\mathbb{X}$, and, as would work in a Riemannian setting, the Levi-Civita connection $\nabla^g$ for $g$ on $\C T^*\widetilde{M}$ and the pullback $\nabla^f$ of the Levi-Civita connection of $(\mathbb{X},\inners)$ determine a torsion-free connection $\widehat \nabla^f:=\nabla^{g\otimes f^*\inners}$ on the endomorphism bundle $\C T^*\widetilde{M}\otimes f^* T\mathbb{X}$ that makes $g^*\otimes f^*\inners$ parallel.
\begin{defn}\label{harmonicmaps}
An equivariant map $f: (\widetilde{M},g)\to (\mathbb{X},\inners)$ is \textbf{complex harmonic} if
    \begin{equation}\label{hmapeqncomplex}
        \mathrm{tr}_{g} \nabla^{g\otimes f^*\inners} df =0.
    \end{equation}
\end{defn}
When $g$ is the complexification of a Riemannian metric and $f$ lands in a real submanifold on which the restriction of $\inners$ is Riemannian, this is the ordinary harmonic map equation. An alternative definition, which we won't use but might be meaningful to some, is that $f$ is complex harmonic if and only if it is a critical point for the complex Dirichlet energy $$\mathcal{E}(f) = \int_{M} e_g(f) dA_g,$$ for any $g$ as above, and where $e_g(f)$ is the complex energy density $e_g(f)=\mathrm{tr}_g f^*\inners,$ which has been descended to a function on $M$. The equivalence of definitions can be verified using the standard Euler-Lagrange calculation. 

From now on, take $M$ to be the surface $S$. The ordinary harmonic map equation is conformally invariant in dimension $2$. Analogously, (\ref{hmapeqncomplex}) depends only on the conformal class of $g$. Thus, if we take $g$ to be a positive complex metric on $S$ giving complex structures $c_1$ and $\overline{c_2}$, then the equation (\ref{hmapeqncomplex}) depends only on $c_1$ and $\overline{c_2}$. We will write things like ``let $f:(\widetilde{S},c_1,\overline{c_2})\to (\mathbb{X},\inners)$ be complex harmonic," without specifying a metric in the associated conformal class.

In the Riemannian setting, a map $f$ from a Riemann surface is harmonic if and only if its holomorphic derivative lies in the kernel of $(\widehat\nabla^f)^{0,1}$ (see \cite{Li}). Using the complex bi-grading, we obtain the analogous result for complex harmonic maps. Denote $\nabla^g, \nabla^f, \widehat \nabla^f$ as above.

\begin{prop}\label{prop: harmonichol}
Let $(\mathbb{X},\inners)$ be a holomorphic Riemannian manifold. A map
    $f:(\widetilde{S},c_1,\overline{c_2})\to (\mathbb{X},\inners)$ is complex harmonic if and only if $(\Pi_2^J\circ \widehat{\nabla}^f) \Pi_1^J df=0.$
\end{prop}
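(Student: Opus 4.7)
The statement is local and conformally invariant: both sides depend only on the bi-complex structure $J$ (equivalently, on the pair $(c_1,\overline{c_2})$) and not on the specific representative $g$ in its conformal class. I would therefore work in local coordinates with $g$ a positive complex metric realizing $J$, and mirror the standard proof of the analogous fact for harmonic maps from K\"ahler surfaces, with the bi-complex splittings $\C TS=T^{1,0}_{c_1}\oplus T^{0,1}_{c_2}$ and $\C T^*S=\Omega^{1,0}_{c_1}\oplus \Omega^{0,1}_{c_2}$ from Section \ref{subsec: Bicomplex structures} playing the role of the usual Hodge decomposition.

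Decomposing $df=\Pi_1^J df+\Pi_2^J df$, the Hessian $\widehat\nabla^f df$ splits accordingly into four type components in $\C T^*S\otimes \C T^*S\otimes f^*T\mathbb X$. The key algebraic input is a bi-K\"ahler identity for the complex Levi-Civita connection of $g$: namely, that $\nabla^g$ preserves each of the subbundles $\Omega^{1,0}_{c_1}$ and $\Omega^{0,1}_{c_2}$. This is the direct analog of the K\"ahler identity $\nabla^g_{\partial_z}\partial_{\overline z}=0$ in the standard theory, and I would verify it by applying Koszul's formula in the null basis of $\C TS$, in which $g$ takes its off-diagonal hyperbolic form. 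Once in hand, it ensures that each $\widehat\nabla^f(\Pi_i^J df)$ inherits a clean type decomposition.

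Taking the $g$-trace then kills the pure-type components of $\widehat\nabla^f df$, since $g^{-1}$ pairs $\Omega^{1,0}_{c_1}$ with $\Omega^{0,1}_{c_2}$ nondegenerately and annihilates the diagonal components. Using the symmetry of $\widehat\nabla^f df$ (by torsion-freeness of $\nabla^g$ together with the holomorphic Levi-Civita connection on $\mathbb X$), the two surviving mixed-type terms $(\Pi_2^J\widehat\nabla^f)\Pi_1^J df$ and $(\Pi_1^J\widehat\nabla^f)\Pi_2^J df$ coincide, yielding
\[
\mathrm{tr}_g\widehat\nabla^f df \;=\; c\cdot (\Pi_2^J\widehat\nabla^f)\Pi_1^J df
\]
for a nowhere-vanishing local scalar $c$; the stated equivalence follows.

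The main obstacle is the verification of the bi-K\"ahler identity in this non-standard setting. The subtlety is that $z$ and $\overline w$ are not joint real coordinates on $S$ but merely a pair of complex functions which only together span $\C T^*S$, and they interact nontrivially through the chain rule (encoded by a Beltrami coefficient). For this reason I would carry out the Koszul computation consistently in the null basis $\{\partial_w,\partial_{\overline z}\}$ of $\C TS$, where $g$ takes its simplest form, and carefully track the nontrivial Lie brackets $[\partial_w,\partial_{\overline z}]$. Once this identity is established, the remaining trace manipulations are a mechanical exercise in tensor bookkeeping.
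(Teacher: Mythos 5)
Your proposal is correct and is essentially the paper's argument: the proof of Lemma \ref{lem: tensionfield} rests on exactly your ``bi-K\"ahler identity'' (that $\nabla^g$ preserves the two isotropic line fields $\partial_{\overline z}$, $\partial_w$, hence the splitting of $\C T^*S$), and then identifies $\mathrm{tr}_g\widehat\nabla^f df$ with the mixed-type component of the Hessian by direct computation in the null frame. The only cosmetic difference is that the paper obtains the key identity in one line from metric compatibility and nullity, namely $g(\nabla^g_X\partial_{\overline z},\partial_{\overline z})=\tfrac12 X\,g(\partial_{\overline z},\partial_{\overline z})=0$, rather than via the Koszul formula, which sidesteps the bracket bookkeeping you flag as the main obstacle.
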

By reversing the roles of our complex metrics, we see as well that harmonicity is equivalent to $(\Pi_1^J\circ \widehat{\nabla}^f)\Pi_2^J d f=0.$ Proposition \ref{prop: harmonichol} follows from the lemma below. 
\begin{lem}\label{lem: tensionfield}
  Denoting $g=\lambda dz d\overline w$, $$(\Pi_2^J\circ \widehat{\nabla}^f) \Pi_1^J d f= (\mathrm{tr}_g\widehat \nabla^f df) \lambda dz \otimes d\overline{w}.$$
\end{lem}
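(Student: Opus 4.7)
The plan is to verify the identity by a direct local-coordinate calculation. Work in local holomorphic coordinates $z$ for $c_1$ and $\overline w$ for $\overline{c_2}$, so that $g = \lambda\,dz\,d\overline w$, and introduce the basis $\tilde e_1 = \partial_w/\partial_w z,\ \tilde e_2 = \partial_{\overline z}/\partial_{\overline z}\overline w$ of $\C TS$, which is dual to $(dz, d\overline w)$ and spans the two isotropic line fields of $g$. In this basis $g(\tilde e_1,\tilde e_2) = \lambda/2$ and $g(\tilde e_i,\tilde e_i) = 0$, so the only non-zero entries of $g^{-1}$ are $g^{12} = g^{21} = 2/\lambda$.

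The main structural input is that the Levi-Civita connection of a positive complex metric preserves each of its isotropic line fields. Plugging $X = Y = Z = \tilde e_i$ into the Koszul formula and using $g(\tilde e_i,\tilde e_i) = 0$ gives $g(\nabla^g_{\tilde e_i}\tilde e_i,\tilde e_i) = 0$; since $\tilde e_i$ is null, its $g$-orthogonal complement equals $\mathrm{Span}(\tilde e_i)$, so $\nabla^g_{\tilde e_i}\tilde e_i \in \mathrm{Span}(\tilde e_i)$. This is the only non-trivial step in the proof, and it is what ensures that the potential $d\overline w\otimes d\overline w$ contribution on the left-hand side actually drops out.

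Writing $P = df(\tilde e_1)$ one has $\Pi_1^J df = P\,dz$. Expanding $\widehat\nabla^f(P\,dz) = (\nabla^f P)\otimes dz + P\otimes \nabla^g dz$ by Leibniz and projecting the newly introduced form factor by $\Pi_2^J$, the isotropy fact above kills the $(\tilde e_2,\tilde e_2)$-entry, leaving only a term proportional to $d\overline w\otimes dz$ whose coefficient equals $(\widehat\nabla^f df)(\tilde e_2,\tilde e_1)$ by the definition of the Hessian. For the right-hand side, the explicit form of $g^{-1}$ in the basis $(\tilde e_1,\tilde e_2)$, together with the symmetry of the Hessian, yields $\mathrm{tr}_g \widehat\nabla^f df = (4/\lambda)(\widehat\nabla^f df)(\tilde e_2,\tilde e_1)$; multiplying by $\lambda\,dz\otimes d\overline w$ and identifying the two sides as symmetric tensors produces the stated equality. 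After the Koszul step, everything is routine bookkeeping of basis changes, the Leibniz rule, and tensor-product conventions.
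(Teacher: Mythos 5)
Your argument is correct and follows essentially the same route as the paper's: a direct computation in the isotropic frame, resting on the single structural fact that the Levi-Civita connection of $g$ preserves its two null line fields (which the paper extracts from metric compatibility and the explicit Christoffel symbol $\nabla^g_{\partial_{\overline z}}\partial_w$, and you extract from the Koszul formula). The one point to tighten is that your Koszul verification is only stated for $X=Y=Z=\tilde e_i$, whereas identifying the surviving coefficient with $(\widehat\nabla^f df)(\tilde e_2,\tilde e_1)$ (rather than the Hessian of $\Pi_1^J df$ alone) also uses the mixed statement $\nabla^g_{\tilde e_2}\tilde e_1\in\mathrm{Span}(\tilde e_1)$; this follows from the identical one-line argument $g(\nabla^g_X\tilde e_1,\tilde e_1)=\tfrac12 X\bigl(g(\tilde e_1,\tilde e_1)\bigr)=0$ together with $\tilde e_1^{\perp}=\mathrm{Span}(\tilde e_1)$, so it is a matter of stating the general case rather than a real gap.
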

In the course of the proof, we use that for any locally defined complex vector field $X$, $\nabla_X^g \partial_{\overline{z}}$ is parallel to $\partial_{\overline{z}}$, and  $\nabla_X^g \partial_{w}$ is parallel to $\partial_{w}$. Indeed, by metric compatibility, $g(\nabla_X^g \partial_{\overline{z}}, \partial_{\overline{z}})=\frac{1}{2}\nabla_X^g g(\partial_{\overline{z}},\partial_{\overline{z}})=0,$ and likewise for $\partial_{w}$.
\begin{proof}
First, we expand $\mathrm{tr}_g\widehat{\nabla}df$ in the isotropic basis. We computed in \cite[Section 5.1]{ElSa} that $\nabla^g_{\partial_{\overline{z}}}\partial_w = \frac{\partial_{\overline{z}}\partial_w z}{\partial_w z} \partial_w.$ Hence,
    \begin{align*}
        \mathrm{tr}_g\widehat{\nabla}^fdf&=\frac{2}{g(\partial_{\overline{z}},\partial_w)}(\nabla^f_{\partial_{\overline{z}}}(df(\partial_w))-df(\nabla^g_{\partial{\overline{z}}}\partial_w)) \\
        &=\frac{2\partial_w z}{g(\partial_{\overline{z}},\partial_w)}(\nabla^f_{\partial_{\overline{z}}}(df(\partial_w))-\frac{\partial_{\overline{z}}\partial_w z}{\partial_w z} df(\partial_w)) \\
        &=\frac{2\partial_w z}{g(\partial_{\overline{z}},\partial_w)}\nabla^f_{\partial_{\overline{z}}}\Big ( \frac{df(\partial_w)}{\partial_wz}\Big ).
    \end{align*}
Before expanding $(\Pi_2^J\circ \widehat{\nabla}^f) \Pi_1^Jd f$, observe that $\nabla^g_{\partial_{\overline{z}}}dz=0$. Indeed, $$(\nabla^g_{\partial_{\overline{z}}}dz)(\partial_z) = \partial_{\overline{z}}(dz(\partial_z))-dz(\nabla^g_{\partial_{\overline{z}}}\partial_z) = 0,$$ and  
$$(\nabla^g_{\partial_{\overline{z}}}dz)(\partial_{\overline{z}}) = \partial_{\overline{z}}(dz(\partial_{\overline{z}}))-dz(\nabla^g_{\partial_{\overline{z}}}\partial_{\overline{z}}) = 0,$$ where we used the parallelity to say $dz(\nabla^g_{\partial_{\overline{z}}}\partial_z)=0$ and $dz(\nabla^g_{\partial_{\overline{z}}}\partial_{\overline{z}}) = 0$. Thus, using (\ref{eqn: projected forms}),
\begin{align*}
    (\Pi_2^J\circ \widehat{\nabla}^f) \Pi_1^Jd f &=\widehat{\nabla}^f_{\partial_{\overline{z}}}\Big ( \frac{df(\partial_w)}{\partial_wz}dz\Big ) \otimes\frac{d\overline{w}}{\partial_{\overline{z}}\overline{w}} \\
&=\left({\nabla^f_{\partial_{\overline{z}}}}\left( \frac{df(\partial_w)}{\partial_wz}\right)\cdot dz+\frac{df(\partial_{\overline{w}})}{\partial_w z}\nabla^g_{\partial_{\overline{z}}}dz\right)\otimes \frac{d\overline{w}}{\partial_{\overline{z}}\overline{w}}\\
    &=\nabla^f_{\partial_{\overline{z}}}\Big ( \frac{df(\partial_w)}{\partial_wz}\Big )\otimes dz \otimes \frac{d\overline{w}}{\partial_{\overline{z}}\overline{w}}.
\end{align*}
Rearranging yields $$(\Pi_2^J\circ \widehat{\nabla}^f)\Pi_1^Jd f= (\mathrm{tr}_g\widehat \nabla^f df) \cdot \frac{2\partial_{\overline{z}}\overline{w}\partial_w z}{g(\partial_{\overline{z}},\partial_w)} dz \otimes d\overline{w}= (\mathrm{tr}_g\widehat \nabla^f df) \lambda dz \otimes d\overline{w},$$ as desired.
\end{proof}
We also have a notion of Hopf differentials for complex harmonic maps. Continuing in the setting of the lemma above, $f^*\inners$ descends to $S$, and we decompose
\begin{equation}\label{pullbackmetric}
    f^*\inners = 2\langle \Pi_1^Jd f, \Pi_2^Jd f\rangle + \langle \Pi_1^Jd f, \Pi_1^Jd f\rangle+\langle \Pi_2^Jd f, \Pi_2^Jdf\rangle. 
\end{equation}
If $z$ and $\overline{w}$ are local coordinates for $c_1$ and $\overline{c_2}$ respectively, then (\ref{pullbackmetric}) is the expression for $f^*\inners$ in the local frame $\{dzd\overline{w},dz^2,d\overline{w}^2\}$ for the space of symmetric $2$-tensors on $S$. The tensors $q_1:=\langle \Pi_1^Jd f, \Pi_1^Jd f\rangle$ and $\overline{q_2}:=\langle  \Pi_2^Jd f,  \Pi_2^Jdf\rangle$ can be called the Hopf differentials of $f$. 
\begin{prop}\label{prop: hopf}
    If $f$ is complex harmonic, $q_1$ and $\overline {q_2}$ are holomorphic quadratic differentials for $c_1$ and $\overline{c_2}$ respectively. 
\end{prop}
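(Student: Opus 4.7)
The plan is to reduce the proof to a local coordinate computation, establishing the claim for $q_1$ first and then obtaining the analog for $\overline{q_2}$ by symmetry. In local coordinates $z$ holomorphic for $c_1$ and $\overline w$ holomorphic for $\overline{c_2}$, formula (\ref{eqn: projected forms}) gives $\Pi_1^J df = \tfrac{df(\partial_w)}{\partial_w z}\, dz$, so
\[
q_1 \;=\; F\, dz^{2}, \qquad F \;=\; \frac{\langle df(\partial_w),\, df(\partial_w)\rangle}{(\partial_w z)^{2}}.
\]
Since $q_1$ is tautologically a section of $\mathcal{K}_{c_1}^{\otimes 2}$, what needs to be checked is the scalar vanishing $\partial_{\overline z} F = 0$. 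I will derive this from the stronger tensorial statement $\nabla^g_{\partial_{\overline z}} q_1 = 0$: combined with the identity $\nabla^g_{\partial_{\overline z}} dz = 0$ already derived inside the proof of Lemma \ref{lem: tensionfield}, this yields $\nabla^g_{\partial_{\overline z}} q_1 = \partial_{\overline z}(F)\, dz^{2}$, from which the desired vanishing is immediate.

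Because $\widehat\nabla^f$ makes the pairing $g^{*}\otimes f^{*}\inners$ parallel, the metric-compatibility identity
\[
\nabla^g_{\partial_{\overline z}}\langle \Pi_1^J df,\, \Pi_1^J df\rangle \;=\; 2\,\langle \widehat\nabla^f_{\partial_{\overline z}}(\Pi_1^J df),\, \Pi_1^J df\rangle
\]
reduces the problem to showing $\widehat\nabla^f_{\partial_{\overline z}}(\Pi_1^J df) = 0$. This is where the harmonic equation enters. Proposition \ref{prop: harmonichol} says that the $\Omega^{0,1}_{c_2}$-component of $\widehat\nabla^f(\Pi_1^J df)$ in its newly produced $1$-form slot vanishes, so $\widehat\nabla^f(\Pi_1^J df)$ lies locally in $\Omega^{1,0}_{c_1}\otimes \C T^{*}S \otimes f^{*}T\mathbb X$: that slot is a multiple of $dz$. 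Contracting with $\partial_{\overline z}$, which $dz$ annihilates, then gives $\widehat\nabla^f_{\partial_{\overline z}}(\Pi_1^J df) = 0$, finishing the proof for $q_1$. The argument for $\overline{q_2}$ is entirely parallel, using the companion equation $(\Pi_1^J \circ \widehat\nabla^f)\,\Pi_2^J df = 0$ recorded right after Proposition \ref{prop: harmonichol}.

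I do not anticipate a real obstacle, since every ingredient, namely formula (\ref{eqn: projected forms}), metric compatibility of $\widehat\nabla^f$, the harmonic equation, and the vanishing $\nabla^g_{\partial_{\overline z}} dz = 0$, is already at hand. The one point that warrants care is parsing Proposition \ref{prop: harmonichol} as a vanishing in the $\widehat\nabla^f$-direction slot rather than in the intrinsic slot of $\Pi_1^J df$, since it is this specific reading that licenses the contraction with $\partial_{\overline z}$.
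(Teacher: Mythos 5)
Your proof is correct and follows essentially the same route as the paper: both reduce to computing the $\partial_{\overline z}$-derivative of the coefficient of $q_1$ via metric compatibility of $\widehat\nabla^f$, and both kill the resulting term by reading Proposition \ref{prop: harmonichol} as the statement $\widehat\nabla^f_{\partial_{\overline z}}(\Pi_1^J df)=0$ (the paper writes $\partial_{\overline z}\varphi = 2\langle(\widehat\nabla^f_{\partial_{\overline z}}(\Pi_1^J df))(\partial_z),(\Pi_1^J df)(\partial_z)\rangle = 0$ directly, while you package the same computation tensorially as $\nabla^g_{\partial_{\overline z}}q_1=0$ together with $\nabla^g_{\partial_{\overline z}}dz=0$). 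The point you flag as delicate — that the vanishing in Proposition \ref{prop: harmonichol} is in the covariant-derivative slot, so contracting with $\partial_{\overline z}$ is licensed — is exactly the right reading and is how the paper uses it.
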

\begin{proof}
Writing $q_1=\varphi dz^2,$ where $\varphi = \langle \Pi_1^Jdf(\partial_z), \Pi_1^Jd f(\partial_z)\rangle$ it suffices to show that $\varphi$ is holomorphic. We compute directly, using Proposition \ref{prop: harmonichol}, that
    $$\partial_{\overline z} \phee=2 \langle ( \widehat \nabla^f_{\partial_{\overline z}} (\Pi_1^Jd f))(\partial_z), ((\Pi_1^Jdf)(\partial_z)) \rangle=0.$$
The proof is analogous for $\overline{q_2}$.
\end{proof}
We say that $f$ is \textbf{admissible} if the pullback $f^*\inners$ of the holomorphic Riemannian metric is non-degenerate on $\C TM$, i.e., if it is a complex metric. Notice that, if $f$ is admissible, then the derivative induces an injective map
$\C T_pM\to T_{f(p)}\mathbb X$ at every point $p$, and the converse holds as well if $\dim_{\R}M=\dim_{\C}\mathbb X$.  By analogy with the Riemannian case, we say that $f$ is \textbf{conformal} if $f$ is admissible and $f^*\inners$ is a multiple of $g$ at every point. From (\ref{pullbackmetric}), the second condition occurs if and only if $q_1=\overline{q_2}=0$.

\begin{remark}\label{remark: harmonicity for dimension reasons}
    Suppose that $(\mathbb{X},\inners)$ has complex dimension $2$ and that $f$ is admissible. Since $df(\partial_z)$ and $df(\partial_{\overline{z}})$ then span $T\mathbb{X}$, the converse of Proposition \ref{prop: hopf} holds, i.e., the holomorphicity of $q_1$ and $\overline{q_2}$ implies harmonicity of $f$. Moreover, if we replace $c_1$ and $\overline{c_2}$ with the pullback complex metric $f^*\inners,$ then $f:(\widetilde{S},f^*\inners)\to (\mathbb{X},\inners)$ is both complex harmonic and conformal. 
\end{remark}
\begin{remark}
    By similar reasoning to above, if $\mathbb{X}$ has complex dimension $n,$ then any map from an $n$-manifold $M\to \mathbb{X}$ that induces an isomorphism $\C TM\to T\mathbb{X}$ is totally geodesic in the sense of \cite[page 23]{BEE}, and equipping $M$ with the pullback metric makes the map harmonic.
\end{remark}

\subsection{Complex harmonic $G$-bundles}\label{sec: building ch G-bundles}  
In this paper, we take the target space for our complex harmonic maps to be a holomorphic Riemannian symmetric space $(G^{\C}/K^{\C},\nu)$. As we saw, ordinary harmonic maps to Riemannian symmetric spaces of non-compact type give rise to harmonic $G$-bundles and $G$-Higgs bundles.  When we adapt the construction to $(G^{\C}/K^{\C},\nu)$, we find complex harmonic $G$-bundles. In this section, we resume all notations from Sections \ref{sec: generalities} and \ref{sec: ordinary harmonic maps}.

\subsubsection{Definition}
The definition of a complex harmonic $G$-bundle requires a bit of setup. To keep things tidier, we first state and discuss this definition independently of complex harmonic maps. We then explain the relation with complex harmonic maps in Section \ref{subsubsec: equivalence} below.  

Let $c_1$ and $\overline{c_2}$ be oppositely oriented complex structures over $S$ and let $J$ be the corresponding bi-complex structure. Fix a maximal compact subgroup $K\subset G$ and let $P_{K^{\C}}$ be a principal $K^{\C}$-bundle over $S$ with a connection $A_{P_{K^{\C}}}$. Recalling the splitting $\mathfrak g^{\C}=\mathfrak k^{\C}\oplus \mathfrak p^{\C}$, as in Section \ref{sec: ordinary harmonic maps}, we set $\mathrm{ad}\p^{\C}:=P_{K^{\C}}\times_{\mathrm{Ad}|_{K^{\C}}}\p^{\C}.$ The connection $A_{K^{\C}}$ together with the adjoint action of $K^{\C}$ on $\p^{\C}$ give rise to an affine connection $\nabla$ on $\mathrm{ad}\p^{\C}$. Applying the projections $\Pi_1^J$ and $\Pi_2^J$ to $\nabla,$ we obtain a $J$-del operator $\partial_1$ and a $J$-del-bar operator $\overline{\partial}_2$ on $\mathrm{ad}\p^{\C}$. In this sense, we will say that $\partial_1$ and $\overline{\partial}_2$ are the $J$-del and $J$-del-bar operators on $\mathrm{ad}\p^{\C}$ induced by $A_{P_{K^{\C}}}$. 
\begin{defn}
\label{def: complex harmonic G-bundle}
    A \textbf{complex harmonic $G$-bundle} over $(S,c_1,\overline{c_2})$ is data $(P_{K^{\C}},A_{P_{K^{\C}}},\phi_1,\overline{\phi_2}),$ where $K$ is a maximal compact subgroup of $G,$ $P_{K^{\C}}$ is a principal $K^{\C}$-bundle over $S$ with connection $A_{P_{K^{\C}}}$, $\phi_1\in \Omega_{c_1}^{1,0}(\mathrm{ad}\p^{\C}),\overline{\phi_2}\in \Omega_{\overline{c_2}}^{0,1}(\mathrm{ad}\p^{\C})$, such that if 
    $\partial_1,\overline{\partial}_2$ are the $J$-del and $J$-del-bar operators on $\mathrm{ad}\p^{\C}$ associated with $A_{P_{K^{\C}}},$ then
    $$\partial_1\overline{\phi_2}=0, \hspace{1mm} \overline{\partial}_2\phi_1=0,$$ and the connection on the principal $G^{\C}$-bundle $P_{K^{\C}}\times_{K^{\C}} G^{\C}$ defined by $$A_{P_{K^{\C}}} + \phi_1+\overline{\phi_2}$$ is flat.
\end{defn}
For the most part, when discussing complex harmonic $G$-bundles, we don't specify the choice of $K$. By analogy with ordinary harmonic bundles, we refer to $\phi_1$ and $\overline{\phi_2}$ as Higgs fields. The flatness condition can be expressed
$$F(A_{P_{K^{\C}}}) + [\phi_1,\overline{\phi_2}]=0,$$ resembling Hitchin's self-duality equations. 

\begin{remark}\label{rem: complex structures}
Via the Koszul-Malgrange theorem, $A_{P_{K^{\C}}}$ produces two complex structures on $P_{K^{\C}}$, one so that $P_{K^{\C}}\to (S,c_1)$ is holomorphic, and the other so that $P_{K^{\C}}\to (\overline{S},\overline{c_2})$ is holomorphic. We can see that $\phi_1$ is holomorphic for the first complex structure, and $\overline{\phi_2}$ is holomorphic for the second.
 Indeed, on the adjoint bundle, let $\nabla=\nabla_1^{1,0}+\nabla_1^{0,1}$ and $\nabla=\nabla_2^{1,0}+\nabla_2^{0,1}$ be the splitting into $(1,0)$ and $(0,1)$ components for $c_1$ and $c_2$ respectively. Then, by the formulas (\ref{eqn: projected forms}), a section $X$ of $\mathrm{ad}\p^{\C}$ satisfies $\partial_1X=0$ if and only if $\nabla_2^{1,0}X=0$ and a section $Y$ satisfies $\overline{\partial}_2 Y=0$ if and only $\nabla_1^{0,1}Y=0.$ 
\end{remark}
\begin{remark}
In Section \ref{sec: ordinary harmonic maps}, we removed some information from a harmonic $G$-bundle in order to get a $G$-Higgs bundle. It is not clear at this point if there is a similar and meaningful operation to be done for a complex harmonic $G$-bundle.
\end{remark}
There is a linear version of complex harmonic $G$-bundles. Although we barely make use of this notion, it should be helpful to have in mind, especially for those who are more familiar with ordinary Higgs bundles than $G$-Higgs bundles. We define a complex harmonic (vector) bundle to be data $(E,Q,\partial_1,\overline{\partial_2},\phi_1,\overline{\phi_2})$, where $E$ is a complex vector bundle over $S$ with a bilinear pairing $Q$, $\partial_1$ is a $J$-del operator, $\overline{\partial}_2$ is a $J$-del-bar operator, and $\phi_1\in \Omega_{c_1}^{1,0}(\mathrm{End}(E))$, $\overline{\phi_2}\in \Omega_{c_2}^{0,1}(\mathrm{End}(E))$ are $\textrm{End}(E)$-valued forms satisfying $\partial_1\overline{\phi_2}=\overline{\partial}_2\phi=0,$ and the connection $$\partial_1+\overline{\partial}_2+\phi_1+\overline{\phi_2}$$ is flat.
We show that a linear representation $T:G\to \mathrm{GL}(V)$, with $V$ a real vector space, turns any complex harmonic $G$-bundle $(P_{K^{\C}},A_{K^{\C}},\phi_1,\overline{\phi_2})$ into a complex harmonic bundle. Toward this, the map $T$ induces a linear representation on the complexification $V^{\C},$ $T^{\C}:G^{\C}\to \textrm{GL}(V^{\C})$, which takes $K^{\C}$ into a conjugate of $O(V^{\C}).$ Restricting $T^{\C}$ to $K^{\C}$, we form the associated vector bundle $E_T=P_{K^{\C}}\times_{T^{\C}} V^{\C}$. This is a vector bundle with structure group in $O(V^{\C})$, so there is a naturally defined bilinear pairing $Q_T$. Note that the pair $(E_T,Q_T)$ might have some extra structure depending on $G$. The connection $A_{K^{\C}}$ on $P_{K^{\C}}$ gives rise to $J$-del and del-bar operators $\partial_1^T$ and $\overline{\partial_2}^T$ on $E_T$, and the Higgs fields to $\mathrm{End}(E_T)$-valued $1$-forms $\phi_1^T$ and $\overline{\phi_2}^T$ in the correct subspaces of $\Omega^1(\mathrm{End}(E_T)),$ and we see that the flatness condition is automatically satisfied. Putting it all together, $(E_T,Q_T,\partial_1^T,\overline{\partial_2}^T,\phi_1^T,\overline{\phi_2}^T)$ is a complex harmonic bundle.

\subsubsection{Equivalence with complex harmonic maps}\label{subsubsec: equivalence}
The group $G^{\C}$ acts on the set of pairs $(\rho,f)$, where $\rho:\pi_1(S)\to G^{\C}$ is a representation and $f$ is a $\rho$-equivariant harmonic map to $(G^{\C}/K^{\C},\nu)$, via $g\cdot (\rho,f)=(g\rho g^{-1},g\cdot f).$ An isomorphism of complex harmonic $G$-bundles is an isomorphism of the underlying principal bundles that identifies the connections and the Higgs fields. Note that we're allowing isomorphisms to change the choice of maximal compact subgroup of $G$. We establish the following. 
\begin{thm}\label{prop: equivalence of maps and bundles}
    A pair $(\rho,f)$ as above, up to the $G^{\C}$-action, is equivalent to an isomorphism class of complex harmonic $G$-bundles.
\end{thm}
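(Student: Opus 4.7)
The plan is to mirror the construction of harmonic $G$-bundles from harmonic maps given in Section \ref{sec: ordinary harmonic maps}, substituting the bi-complex calculus of Section \ref{subsec: Bicomplex structures} for the ordinary Dolbeault decomposition. Given a pair $(\rho, f)$ with $f:(\widetilde{S},c_1,\overline{c_2})\to (G^{\C}/K^{\C},\nu)$ complex harmonic, I would form the principal $G^{\C}$-bundle $P_\rho = \widetilde{S}\times_\rho G^{\C}$ with its canonical flat connection $A$. After fixing a maximal compact $K\subset G$, the map $f$ is equivalent to a reduction of structure group $P_{K^{\C}} \subset P_\rho$, obtained by pulling back the $K^{\C}$-bundle $G^{\C}\to G^{\C}/K^{\C}$ along $f$. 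Pulling back the principal connection $A_{K}^{\C}$ (defined in Section \ref{section: holo Riem mflds} as the $\kk^{\C}$-projection of the Maurer--Cartan form) yields a connection $A_{P_{K^{\C}}}$ on $P_{K^{\C}}$, while the Maurer--Cartan isomorphism $T(G^{\C}/K^{\C})\simeq G^{\C}/K^{\C}\times_{\mathrm{Ad}|_{K^{\C}}}\p^{\C}$ identifies $df$ with a $1$-form $\psi_f \in \Omega^1(S,\mathrm{ad}\p^{\C})$. Splitting via the bi-complex structure $J$ associated with $(c_1,\overline{c_2})$, I set $\phi_1=\Pi_1^J\psi_f$ and $\overline{\phi_2}=\Pi_2^J\psi_f$, which land in the required subspaces $\Omega_{c_1}^{1,0}(\mathrm{ad}\p^{\C})$ and $\Omega_{\overline{c_2}}^{0,1}(\mathrm{ad}\p^{\C})$.

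For the reverse direction, given $(P_{K^{\C}},A_{P_{K^{\C}}},\phi_1,\overline{\phi_2})$, I would extend structure group to $P_{G^{\C}}=P_{K^{\C}}\times_{K^{\C}}G^{\C}$; the flat connection $A_{P_{K^{\C}}}+\phi_1+\overline{\phi_2}$ determines, upon choosing a basepoint, a holonomy $\rho:\pi_1(S)\to G^{\C}$ along with an isomorphism $P_{G^{\C}}\simeq \widetilde{S}\times_\rho G^{\C}$, and the reduction $P_{K^{\C}} \subset P_{G^{\C}}$ then corresponds to a $\rho$-equivariant map $f:\widetilde{S}\to G^{\C}/K^{\C}$. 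Two checks are needed. First, the complex harmonicity of $f$, i.e.\ $\mathrm{tr}_g\nabla^{g\otimes f^*\langle\cdot,\cdot\rangle}df=0$, is equivalent to $\overline{\partial}_2\phi_1=0$ (and hence also $\partial_1\overline{\phi_2}=0$): this follows from Proposition \ref{prop: harmonichol} and Lemma \ref{lem: tensionfield}, once one notes that the pullback of the Levi--Civita connection along $f$ agrees with the connection on $\mathrm{ad}\p^{\C}$ induced by $A_{P_{K^{\C}}}$ under the Maurer--Cartan identification, which is precisely the characterization of the Levi--Civita connection on $G^{\C}/K^{\C}$ recorded in Section \ref{section: holo Riem mflds}. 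Second, the flatness of $A$ on $P_\rho$ decomposes under $\g^{\C}=\p^{\C}\oplus\kk^{\C}$ into the flatness of $A_{P_{K^{\C}}}+\phi_1+\overline{\phi_2}$, with the $\p^{\C}$-piece of $A$ being exactly $\phi_1+\overline{\phi_2}$ under the bi-complex splitting.

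The remaining work, more of a bookkeeping nature than a technical obstacle, is to verify that the two constructions are mutually inverse modulo the stated equivalences. The $G^{\C}$-action on pairs $(\rho,f)$ by $g\cdot(\rho,f)=(g\rho g^{-1},g\cdot f)$ corresponds precisely to the freedom in choosing the basepoint trivialization in the holonomy construction, and also to conjugating the maximal compact $K\subset G$ to $gKg^{-1}$, which the definition of isomorphism of complex harmonic $G$-bundles accommodates (since we allow the choice of $K$ to change). The subtlety worth flagging is the role of the two complex structures: the bi-complex splitting replaces the ordinary holomorphic/antiholomorphic splitting of $\Omega^1(\mathrm{ad}\p^{\C})$, and the operators $\partial_1$ and $\overline{\partial}_2$ behave asymmetrically on the two Higgs fields; the closedness conditions $\overline{\partial}_2\phi_1=0$ and $\partial_1\overline{\phi_2}=0$ in Definition \ref{def: complex harmonic G-bundle} are precisely what the bi-complex harmonicity of $f$ delivers, consistent with the interpretation noted in Remark \ref{rem: complex structures} that $\phi_1$ is holomorphic for the complex structure on $P_{K^{\C}}$ coming from $c_1$ and $\overline{\phi_2}$ is holomorphic for the one coming from $\overline{c_2}$.
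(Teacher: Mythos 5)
Your proposal is correct and follows essentially the same route as the paper's proof: the same pullback of $A_{K}^{\C}$ along $f$, the same Maurer--Cartan identification of $df$ with a $\mathrm{ad}\p^{\C}$-valued form split by the bi-complex structure, the same appeal to Proposition \ref{prop: harmonichol} to convert harmonicity into $\overline{\partial}_2\phi_1=0$ and $\partial_1\overline{\phi_2}=0$, and the same reverse construction via holonomy and the inclusion $P_{K^{\C}}\subset P_{K^{\C}}\times_{K^{\C}}G^{\C}$. The only difference is that you spell out the flatness decomposition and the basepoint/conjugation bookkeeping slightly more explicitly than the paper does.
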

As in the real case, we omit the $\nu,$ since harmonicity does not depend on the specific choice of $\nu.$ When reading the proof below, it should be helpful to compare with the procedure in Section \ref{sec: ordinary harmonic maps}. One key difference: in Section \ref{sec: ordinary harmonic maps}, we used the harmonic map to get a $K$-bundle $Q_f$, and then we complexified to get a $K^{\C}$-bundle $P_f$ and a Higgs field. In our complex setting, there is no complexification step: the harmonic map gives a $K^{\C}$-bundle automatically. As above, let $c_1$ and $\overline{c_2}$ be complex structures on $S$, with bi-complex structure $J$. 
\begin{proof}
   Starting the same way as in Section \ref{sec: ordinary harmonic maps}, let $\rho:\pi_1(S)\to G^{\C}$ be a representation and $f:(\widetilde{S},c_1,\overline{c_2})\to G^{\C}/K^{\C}$ a $\rho$-equivariant map. We form the associated principal $G^{\C}$-bundle $P_{\rho}=\widetilde{S}\times_\rho G^{\C}$ over $S$, which comes with a flat connection $A$. The map $f$ is equivalent to a reduction of structure group to $K^{\C},$ $P_{f}\subset P_{\rho}$, which induces a linear involution of the adjoint bundle $\sigma_f:\mathrm{ad}P_{\rho}\to \mathrm{ad}P_{\rho},$ $\mathrm{ad}P_{\rho}=P_{\rho}\times_{G^{\C}} \g^{\C}$.
As in the case of ordinary harmonic maps, we use $f$ to pull back the principal connection $A_{K^{\C}}$ from the bundle $G^{\C}\to G^{\C}/K^{\C}$ to a connection $A_{f}$ on $P_{f}.$
Writing $\g^{\C}=\p^{\C}\oplus \kk^{\C},$ $A_{K^{\C}}$ induces an affine connection $\nabla$ on $\mathrm{ad}\p_f^{\C}:=P_{f}\times_{\mathrm{Ad}|_{K^{\C}}}\p^{\C}.$ Using the Maurer-Cartan form, the last bundle identifies with the pullback bundle $f^*T(G^{\C}/K^{\C})$, and the pullback of the Levi-Civita connection with $\nabla.$ Furthermore, under this isomorphism, $df$ is a $1$-form valued in $\mathrm{ad}\p_f^{\C}$, and we denote it by $\psi_f.$

Let $\partial_1$ and $\overline{\partial}_2$ be the $J$-del and $J$-del-bar operators on $\mathrm{ad}\p_f^{\C}$ induced by $A_{f}$ (that is, $\nabla$ composed with projections). Decomposing $\psi_f$ according to the splitting $\Omega_{c_1}^{1,0}(\mathrm{ad}\p_f^{\C})\oplus \Omega_{c_2}^{0,1}(\mathrm{ad}\p_f^{\C})$ as $\psi_f=\phi_1^f+\overline{\phi_2}^f,$ by Proposition \ref{prop: harmonichol}, the harmonic map equation (\ref{hmapeqncomplex}) translates to the two equations
\begin{equation*}
    \overline{\partial}_2 \phi_1^f = 0, \hspace{1mm} \partial_1\overline{\phi_2}^f=0.
\end{equation*}
Collecting our data, if $f$ is a complex harmonic map, then  $(P_f,A_f,\phi_1^f,\overline{\phi_2}^f)$ is a complex harmonic $G$-bundle. If we act on $(\rho,f)$ by an element $g$ of $G^{\C},$ then multiplication by $g$ induces an isomorphism between $P_f\subset P_\rho$ and $P_{g\cdot f}\subset P_{g\rho g^{-1}},$ and it is clear that this isomorphism identifies all of the complex harmonic $G$-bundle data.

For the other direction, totally analogous to the case of ordinary harmonic $G$-bundles, once we pick a point on $S,$ a complex harmonic $G$-bundle determines a representation to $G^{\C}$ (via the flat connection) and an equivariant complex harmonic map (via $P_{K^{\C}}$ sitting inside $P_{K^{\C}}\times_{K^{\C}} G^{\C}$), in a way that undoes our construction above. As in the real case, changing the point conjugates the representation and translates the harmonic map.
\end{proof}

Since an ordinary harmonic map gives a complex harmonic map, an ordinary harmonic $G$-bundle $(P_{K^{\C}},\phi,P_K)$ determines a complex harmonic $G$-bundle. To see how the various components interact, let $P_G=P_K\times_K G$, $P_{G^{\C}}=P_{K^{\C}}\times_{K^{\C}}G^{\C}$, and let $\sigma:\mathrm{ad}P_G\to \mathrm{ad}P_G$ be the linear involution inherited through the inclusion $P_K\subset P_G$. Then the bilinear extension $\sigma: \mathrm{ad}P_{G^{\C}}\to \mathrm{ad}P_{G^{\C}}$ is an artefact of the reduction of $P_{G^{\C}}$ to $K^{\C}$ and the sesquilinear extension $\widehat{\sigma}: \mathrm{ad}P_{G^{\C}}\to \overline{\mathrm{ad}P_{G^{\C}}}$ is the solution to the self-duality equations. The Chern connection $A_{P_K}$ of $P_K$ extends to $P_{K^{\C}}$, and $(P_{K^{\C}},A_{P_K},\phi,-\widehat{\sigma}(\phi))$ is a complex harmonic $G$-bundle.

\subsubsection{Bi-Hitchin fibration}

We're thinking of complex harmonic $G$-bundles as complexifying harmonic $G$-bundles, or, in the case of closed surfaces, stable $G$-Higgs bundles. Hence, as mentioned in the introduction, a number of constructions associated with $G$-Higgs bundles have avatars in the theory of complex harmonic $G$-bundles. Here we formally discuss bi-Hitchin fibrations (recall Section \ref{sec: complex harmonic maps intro}). As in Sections \ref{sec: complex harmonic maps intro} and \ref{sec: Labourie paper}, let $\mathcal{O}(\g)^{G}$ be the algebra of $\mathrm{Ad}(G)$-invariant polynomials on $\g$, and let $\{p_1,\dots, p_l\}$ be a minimal generating set comprised of homogeneous elements with degrees $m_1,\dots, m_l$, ordered so that $m_i\leq \deg m_{i+1}.$ 
\begin{defn}
    We define the \textbf{bi-Hitchin base} to be $$bH(c_1,\overline{c_2},G)=\bigoplus_{i=1}^l H^0(S,\mathcal{K}_{c_1}^{m_i})\oplus \bigoplus_{i=1}^l H^0(S,\mathcal{K}_{\overline{c_2}}^{m_i}).$$ The \textbf{bi-Hitchin fibration} takes a complex harmonic $G$-bundle $(P_{K^{\C}},A_{K^{\C}},\phi_1,\overline{\phi_2})$ to $$(p_1(\phi_1),\dots, p_l(\phi_1),p_1(\overline{\phi_2}),\dots, p_l(\overline{\phi_2}))\in bH(c_1,\overline{c_2},G).$$
\end{defn}
We sometimes write points in the bi-Hitchin base as $(q^1,\overline{q}^2)\in bH(c_1,\overline{c_2},G),$ $q^1=(q_1^1,\dots, q_l^1),$ $\overline{q}^2=(\overline{q}_1^2,\dots, \overline{q}_l^2).$ When only the $l$-differentials are non-zero (a case we will consider often), we might write $q^1=(0,\dots, 0, q_1),$ $\overline{q}^2=(0,\dots, 0, \overline{q_2}).$ The fact that the differentials indeed land in the bi-Hitchin base is justified by the proposition below.
\begin{prop}\label{prop: diffs are holomorphic}
    Each differential $p_i(\phi_1)$ is $c_1$-holomorphic, and each differential $p_i(\overline{\phi_2})$ is $\overline{c_2}$-holomorphic.
\end{prop}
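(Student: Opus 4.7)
The two statements are symmetric under swapping the roles of $(c_1,\phi_1)$ and $(\overline{c_2},\overline{\phi_2})$, so it suffices to show that $p_i(\phi_1)$ is $c_1$-holomorphic. The plan is to recast the hypothesis $\overline{\partial}_2 \phi_1=0$ in terms of an ordinary $(0,1)$-operator for $c_1$ on the adjoint bundle, and then use that the $\mathrm{Ad}(K^{\C})$-invariance of $p_i$ makes it a parallel tensor, so that applying $p_i$ preserves holomorphicity.

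First, I would invoke Remark \ref{rem: complex structures}: if $\nabla$ is the affine connection on $\mathrm{ad}\p^{\C}$ induced by $A_{P_{K^{\C}}}$ and $\nabla=\nabla_1^{1,0}+\nabla_1^{0,1}$ is the splitting into $(1,0)$ and $(0,1)$ parts for $c_1$, then $\overline{\partial}_2\phi_1=0$ is equivalent to $\nabla_1^{0,1}\phi_1=0$. By the Koszul--Malgrange theorem and the Chern correspondence, this says that $\phi_1$ is a holomorphic section of the holomorphic bundle $\mathrm{ad}\p^{\C}\otimes \mathcal{K}_{c_1}$ over $(S,c_1)$, where the holomorphic structure on $\mathrm{ad}\p^{\C}$ comes from $\nabla_1^{0,1}$.

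Next, I would use the invariance of $p_i$ to promote it to a holomorphic bundle map. Since $p_i\in \mathcal{O}(\g)^{G}$ is $\mathrm{Ad}(G)$-invariant, its complex-linear extension is $\mathrm{Ad}(G^{\C})$-invariant and in particular $\mathrm{Ad}(K^{\C})$-invariant on $\p^{\C}$. It therefore descends to a well-defined polynomial map of vector bundles
\[
P_i: \mathrm{Sym}^{m_i}(\mathrm{ad}\p^{\C}) \longrightarrow \mathcal{O}_S,
\]
which is parallel with respect to the induced connection. Being complex polynomial and parallel in the $(0,1)$-direction for $c_1$, $P_i$ is a holomorphic bundle map for the $c_1$-holomorphic structure.

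Finally, since $\phi_1$ is $c_1$-holomorphic as a section of $\mathrm{ad}\p^{\C}\otimes \mathcal{K}_{c_1}$, the symmetric power $\phi_1^{\otimes m_i}$ is a holomorphic section of $\mathrm{Sym}^{m_i}(\mathrm{ad}\p^{\C})\otimes \mathcal{K}_{c_1}^{m_i}$. Applying $P_i\otimes \mathrm{id}_{\mathcal{K}_{c_1}^{m_i}}$ produces $p_i(\phi_1)$ as a holomorphic section of $\mathcal{K}_{c_1}^{m_i}$, as required. The reverse statement for $p_i(\overline{\phi_2})$ follows by exchanging the roles of the two complex structures and using $\partial_1 \overline{\phi_2}=0$. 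There is no significant obstacle: the only thing to check carefully is that the equivalence in Remark \ref{rem: complex structures} correctly matches $\overline{\partial}_2$ with $\nabla_1^{0,1}$, after which everything reduces to the standard observation that invariant polynomials convert holomorphic Higgs-type fields into holomorphic differentials.
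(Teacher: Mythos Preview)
Your proof is correct and follows essentially the same approach as the paper: invoke Remark \ref{rem: complex structures} to view $\phi_1$ as a $c_1$-holomorphic section of $\mathrm{ad}\p^{\C}\otimes\mathcal{K}_{c_1}$, and then use the $\mathrm{Ad}(K^{\C})$-invariance of $p_i$ to see that it defines a holomorphic (equivalently, parallel) multilinear bundle map, so that $p_i(\phi_1)$ is holomorphic. The paper phrases the second step more tersely by identifying $\mathcal{O}(\p^{\C})$ with the symmetric algebra of $(\p^{\C})^*$ and noting that an invariant multilinear map takes holomorphic sections to holomorphic sections, but your more explicit formulation via $P_i:\mathrm{Sym}^{m_i}(\mathrm{ad}\p^{\C})\to\mathcal{O}_S$ is the same argument unpacked.
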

\begin{proof}
 Recall from Remark \ref{rem: complex structures} that $\phi_1$ and $\overline{\phi_2}$ can be viewed as holomorphic sections of holomorphic bundles over $(S,c_1)$ and $(\overline{S},\overline{c_2})$ respectively. Identifying $\mathcal{O}(\p^{\C})$ with the symmetric algebra of the dual space $(\p^{\C})^*$, any $p_i\in \mathcal{O}(\p^{\C})^{K^{\C}}$ is an $i$-multilinear map, and hence it takes holomorphic sections to holomorphic sections.
\end{proof}
When $G$ is simple, the Killing form is the unique adjoint invariant bilinear form on $\g$ up to scale, $p_1$ is a multiple of the Killing form, and no other $p_i$ has degree $2$. Recalling the expression (\ref{pullbackmetric}), a complex harmonic map to $G^{\C}/K^{\C}$ with Higgs fields $\phi_1$ and $\overline{\phi_2}$ is conformal if and only if $p_1(\phi_1)=p_1(\overline{\phi_2})=0.$

\subsection{Excursion on Lie theory}\label{sec: excursion}
Looking toward Theorem C, we recall some Lie theory. For a source on all of the content below, see \cite[Chapter 3]{OV}. We begin with root systems. Let $\g^{\C}$ be a semisimple complex Lie algebra and let $\mathfrak{h}^{\C}\subset \g^{\C}$ be a Cartan algebra. Simultaneously diagonalizing the adjoint action of $\mathfrak{h}^{\C}$ on $\g^{\C}$ yields the root space decomposition $$\g^{\C}=\oplus_{\alpha\in \Delta} \g_{\alpha},$$ where $\alpha\in (\mathfrak{h}^{\C})^*$ is a root, i.e., a weight of the adjoint representation restricted to $\mathfrak{h}^{\C}$, and $\g_{\alpha}$ is the root space $$\g_{\alpha}=\{X\in\g^{\C}: \text{ for all } h\in \mathfrak{h}^{\C}, [h,X]=\alpha(h)X\}.$$
 The roots of the Lie algebra form a root system $\Delta$ on $((\mathfrak{h}^{\C})^*,\nu)$. Starting from $\Delta,$ we can choose a set of positive roots $\Delta^+$, by which we mean a subset such that
\begin{itemize}
    \item for each $\alpha\in \Delta,$ exactly one of $\alpha$ and $-\alpha$ is contained in $\Delta^+$, and
    \item if $\alpha,\beta\in \Delta^+$ and $\alpha+\beta$ is a root, then $\alpha+\beta\in \Delta^+.$
\end{itemize}
The choice of positive roots determines a subset $\Pi$, the set of simple roots, which is characterized by being linearly independent and by the property that every element in $\Delta^+$ is a non-negative linear combination of elements in $\Pi$.

Given a root $\beta = \sum_{\alpha \in \Pi} m_\alpha \alpha,$ $m_\alpha\in\mathbb{Z}$, the height of $\beta$ is $\sum_{\alpha \in \Pi} m_\alpha.$ The highest root $\delta$ is the positive root with the largest height. We will write $\delta=\sum_{\alpha \in \Pi} n_\alpha \alpha$. It is well known that the height of $\delta$ is $d_G-1,$ where we recall that $d_G$ is our notation for the Coxeter number of $G$ (see \cite[Chapter 3.6]{OV}). The extended simple root system $\mathcal{Z}$ is $\Pi\cup\{-\delta\}.$

For every root $\alpha$ we have the vector $h_\alpha\in \mathfrak{h}^{\C}$, defined by the relation that, for all roots $\beta$, $\beta(h_\alpha) = 2\frac{\nu(\alpha,\beta)}{\nu(\alpha,\alpha)}$, where $\nu$ here is the dualized Killing form on $\g^*.$ We call the numbers $a_{\beta\alpha}= \beta(h_\alpha)$, for $\alpha,\beta\in \mathcal{Z}$, the affine Cartan numbers, and they appear in Theorem C. One can choose root vectors $x_\alpha \in \g_\alpha$, which, together with the $h_\beta$'s, $\beta\in \Pi,$ form a \textbf{Chevalley basis} for $\g$, $$\{x_\alpha,x_{-\alpha},h_\beta:\alpha\in \Delta^+, \beta\in \Pi\},$$ characterized by $[x_\alpha,x_{-\alpha}]=-h_\alpha$ and $[x_\alpha,x_\beta]=N_{\alpha,\beta}x_{\alpha+\beta}$, for some integers $N_{\alpha,\beta}$ (see \cite[Chapter 3, Theorem 1.19]{OV}). From here on out, we fix the above Chevalley basis. We take note of the following fact about $\mathcal{Z}$.
\begin{lem}\label{lem: commutators}
    Let $\epsilon_{\alpha\beta}=1$ for $\alpha=\beta$ and $0$ otherwise. If $\alpha,\beta\in\mathcal{Z}$, then, in a Chevalley basis as above, $[x_\alpha,x_{-\beta}]=-\epsilon_{\alpha\beta} h_\alpha.$  
\end{lem}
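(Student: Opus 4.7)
The strategy is to verify the identity case by case, depending on whether each of $\alpha,\beta$ lies in $\Pi$ or equals $-\delta$. The main tools are (i) the defining Chevalley relations, (ii) the standard fact that $[\g_\gamma,\g_{\gamma'}]\subseteq \g_{\gamma+\gamma'}$, which is automatically zero when $\gamma+\gamma'$ is neither zero nor a root, and (iii) the maximality of $\delta$ among positive roots.

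First I would dispatch the case $\alpha,\beta\in\Pi$. If $\alpha=\beta$, this is just the Chevalley defining relation $[x_\alpha,x_{-\alpha}]=-h_\alpha$. If $\alpha\neq\beta$, then $\alpha-\beta$ has simple-root coordinates of mixed sign, so it is not a root; therefore $\g_{\alpha-\beta}=0$ and the bracket vanishes, matching $\epsilon_{\alpha\beta}=0$.

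Next, the mixed cases involving $-\delta$. For $\alpha=-\delta$ and $\beta\in\Pi$, the commutator $[x_{-\delta},x_{-\beta}]$ lies in $\g_{-\delta-\beta}$; since $\delta$ is the highest root, $\delta+\beta$ is not a root, so neither is $-\delta-\beta$, and the bracket vanishes. Symmetrically, for $\alpha\in\Pi$ and $\beta=-\delta$, the bracket $[x_\alpha,x_\delta]$ lies in $\g_{\alpha+\delta}$, which again vanishes by maximality of $\delta$. Both cases are consistent with $\epsilon_{\alpha\beta}=0$.

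The only remaining case is $\alpha=\beta=-\delta$, where I need $[x_{-\delta},x_\delta]=-h_{-\delta}$. From the Chevalley relation $[x_\delta,x_{-\delta}]=-h_\delta$ I get $[x_{-\delta},x_\delta]=h_\delta$, while the identity $h_{-\eta}=-h_\eta$, which is immediate from the formula $\gamma(h_\eta)=2\nu(\gamma,\eta)/\nu(\eta,\eta)$ being odd in $\eta$, gives $h_\delta=-h_{-\delta}$, as required. There is no serious obstacle in this argument; the only thing to be careful about is the sign bookkeeping in the last case, which rests on the parity of $h_\eta$ under $\eta\mapsto -\eta$.
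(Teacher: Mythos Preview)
Your proof is correct and follows essentially the same approach as the paper's: both use that $[\g_\gamma,\g_{\gamma'}]\subset\g_{\gamma+\gamma'}$ vanishes unless $\gamma+\gamma'$ is a root or zero, that the difference of two distinct simple roots is never a root, and that $\delta+\gamma$ is never a root for $\gamma$ positive by maximality of $\delta$. Your treatment is slightly more thorough, since you explicitly handle the diagonal case $\alpha=\beta=-\delta$ via $h_{-\delta}=-h_\delta$, whereas the paper leaves this to the Chevalley relation without comment.
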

\begin{proof}
This is well known and quite simple (the same proof can be found in \cite{ST}).
The commutator $[\g_\alpha,\g_\beta]$ is non-zero if and only if $\alpha+\beta$ is a root. If $\alpha$ and $\beta$ are simple, then $\alpha-\beta$ cannot be a root, since if it is positive it contradicts the simplicity of $\beta$, and if it is negative then it contradicts the nsimplicity of $\alpha$. For any positive root $\alpha$, $\alpha+\delta$ is not a root, since the height would be larger than that of $\delta$.
\end{proof}

\subsection{Strategy for bi-Hitchin section}\label{sec: bi-Hitchin}
Now we assume, as in \cite{Hi}, that $G$ is a split real simple Lie group of adjoint type. In this subsection, we propose a scheme for producing complex harmonic $G$-bundles out of a pair of complex structures and a point in the bi-Hitchin base (recall also Remark \ref{rem: explanation}). In the next subsection, we specialize to points of the form $((0,\dots, 0, q_1),(0,\dots, 0,\overline{q_2}))$. 

Throughout, notations $K,\g,\p,\kk$, etc., are as in Section \ref{sec: generalities}. We fix complex structures $c_1,\overline{c_2}$ and set $\mathcal{K}_{c_1}$ and $\mathcal{K}_{\overline{c_2}}$ to be the canonical bundles.

To start the construction, we begin by defining a principal $K^{\C}$-bundle. Of course, principal $K^{\C}$-bundles over $S$ are topologically classified by characteristic classes in $H^2(S,\pi_1(K^{\C}))=\pi_1(K^{\C}),$ and we are interested in the trivial class. That being said, we do need to specify a model for this topological class; we use the same model as in the general Hitchin section of \cite{Hi}. 

\subsubsection{The principal $\mathfrak{sl}(2,\C)$}

 Recalling the construction starting in \cite[\S 5]{Hi}, let $e\in \g^{\C}$ be a principal nilpotent element. One characterization of the word ``principal" here is that the centralizer of $e$ in $\g^{\C}$ has dimension equal to the rank $l$. By the Jacobson-Morozov theorem, we can complete $e$ to a principal $\mathfrak{sl}(2,\C)$-triple $\mathfrak{s}^{\C}=\mathrm{Span}_{\C}\{x,e,\widetilde{e}\}\subset \g^{\C}$, where $x$ is semisimple, $\widetilde{e}$ is nilpotent, and we have the relations $$[x,e]=e, \hspace{1mm} [x,\widetilde{e}]=-\widetilde{e}, \hspace{1mm} [e,\widetilde{e}]=x.$$ 
 The inclusion from $\mathfrak{s}^{\C}$ to $\g^{\C}$ exponentiates to an inclusion $\iota_G:\mathrm{PSL}(2,\C)\to G^{\C}$; the $\iota_G$ from Section \ref{sec: definition teichmuller} can be constructed in this fashion, so we use the same notation. 

In the work below, it will be helpful to choose $\mathfrak{s}^{\C}$ explicitly. We choose, using the Cartan subalgebra $\mathfrak{h}^{\C}\subset \g^{\C}$ and the Chevalley base from the previous subsection (and as in \cite{Baraglia2010CyclicHB}), $$x=\frac{1}{2}\sum_{\alpha\in \Delta^+} h_\alpha = \sum_{\alpha\in \Pi} r_\alpha h_\alpha,$$ for some positive half integers $r_\alpha$ that we call the coefficients of $\Pi$, and $$e=\sum_{\alpha\in \Pi}r_\alpha^{1/2}x_{\alpha}, \hspace{1mm} \widetilde{e}=\sum_{\alpha\in \Pi}r_\alpha^{1/2}x_{-\alpha}.$$

\subsubsection{The principal $K^{\C}$-bundle}\label{sec: principal KC}

Returning to the construction of the $K^{\C}$-bundle, Hitchin builds a $G^{\C}$-bundle and then reduces the structure group. Choose a square root $\mathcal{K}_{c_1}^{1/2}$ of the canonical bundle $\mathcal{K}_{c_1},$ and define $P_{\mathrm{PSL}(2,\C)}$ to be the principal $\mathrm{PSL}(2,\C)$-bundle obtained by taking the frame bundle of $\mathcal{K}_{c_1}^{1/2}\oplus \mathcal{K}_{c_1}^{-1/2}.$  We then define the principal $G^{\C}$-bundle $P_{G^{\C}}$ over $S$ by $P_{G^{\C}}=P_{\mathrm{PSL}(2,\C)}\times_{\iota_G} G^{\C}$. Hitchin reduces the structure group to $K^{\C}$ by building an involution on the adjoint bundle. To describe that involution, we need a description of the adjoint bundle of $P_{G^{\C}}$.

Moving toward describing $\mathrm{ad}P_{G^{\C}}$, it can be checked that for any root $\beta$ of height $m$ and $y\in \g_{\beta}$, $[x,y]=m y$ (see \cite[Section 2]{Baraglia2010CyclicHB} for justification). Hence, the weight space decomposition of $\g^{\C}$ for the adjoint action of the algebra generated by $x$ is 
\begin{equation*}
    \g^{\C}=\oplus_{m=-M}^{M}\g_m,
\end{equation*}
where $\g_m=\{y\in \g: [x,y]=my\}$ is the direct sum of all of the root spaces for $\mathfrak{h}^{\C}$ of height $m$ (so $M=d-1$, where $d=d_G$).
It is explained in \cite{Hi} that
\begin{equation}\label{eq: ad P}
    \mathrm{ad}P_{G^{\C}} =\oplus_{m=-M}^M\g_m\otimes \mathcal{K}_{c_1}^m
\end{equation}
as Lie algebra bundles (note $\g_m\otimes \mathcal{K}_{c_1}^m:=(S\times \g_m)\otimes \mathcal{K}^m$ as a tensor product of bundles, but we suppress the notation). Here, the Lie bracket on a fiber of the bundle on the right hand side is given by, on pure tensors, 
\begin{equation}\label{eq: the lie bracket}
    [f_m\otimes \beta_m, f_n\otimes \beta_n]=[f_m,f_n]\otimes (\beta_m  \otimes \beta_n).
\end{equation}
To define Hitchin's involution on $\mathrm{ad}P_{G^{\C}},$ we consider another decomposition of $\g^{\C}:$ the decomposition under the adjoint representation of $\mathfrak{s}^{\C}$, $$\g^{\C}=\oplus_{i=1}^l V_i,$$ arranged so that $\dim V_i\leq \dim V_{i+1}$ for all $i=1,\dots, l-1$. Note that the number of summands is indeed $l$: for every $i,$ choose a highest weight vector $e_i\in V_i$ for the adjoint action of the algebra generated by $x$. For $i=1,$ we specifically choose $e_1=e$. As $[e,e_i]\in V_i$ would lie in a higher weight space, $[e,e_i]=0,$ and since $e_i$ is principal it follows that there are exactly $l$ $e_i$'s and that they span the centralizer of $e$. Among other facts of interest from \cite[\S 4]{Hi}, we recall here that $e_i\in \mathfrak g_{m_i-1}$ ($m_i$ as in Sections \ref{sec: Labourie paper} and \ref{sec: ch G-bundles}), $ad^k(\widetilde e)(e_i)\in \g_{m_i-1-k}$, and that $V_i$ has dimension $2m_i-1$ and is generated by $\{ad^k(\widetilde e)(e_i):k=0,\dots, 2m_i-2\}$. In \cite[\S 6]{Hi}, Hitchin shows that there is a Lie algebra involution $\sigma_0:\g^{\C}\to \g^{\C}$ uniquely characterized by $$\sigma_0(e_i)=-e_i, \hspace{1mm} \sigma_0(\widetilde{e})=-\widetilde{e}$$ and whose fixed point set is a copy of $K^{\C}$. To see why these properties define a Lie algebra involution, the Lie algebra involution property locks the formula, for every $k,$ $$\sigma_0(\mathrm{ad}(\widetilde{e})^k(e_i))=(-1)^{k+1}\mathrm{ad}(\widetilde{e})^k(e_i),$$ which thus defines $\sigma_0$ on every $V_i.$

The involution $\sigma_0$ naturally gives rise to an involution $\sigma$ on $\mathrm{ad}P_{G^{\C}},$ defined on each component in the decomposition (\ref{eq: ad P}) via $\sigma(X\otimes dz^m)=\sigma_0(X)\otimes dz^m.$ Since $G$ is of adjoint type, the involution $\sigma$ is equivalent to a reduction of structure group $P_{K^{\C}}\subset P_{G^{\C}}$ to $K^{\C}.$ This is Hitchin's $K^{\C}$-bundle on which our complex harmonic $G$-bundles will live.

Note that, by construction, $P_{G^{\C}}$ comes with a complex structure that makes $P_{G^{\C}}\to (S,c_1)$ a holomorphic $G^{\C}$-bundle. For some computations, we will pick a local trivialization of $P_{G^{\C}}\to S$ by a local holomorphic section for this complex structure. We say that the trivialization is \textbf{holomorphic} for $P_{G^{\C}}\to (S,c_1).$ 

\subsubsection{Constructing connections and Higgs fields}
Now, fix a bi-Hitchin basepoint $$(q^1,\overline{q}^2)=((q_1^1,\dots, q_l^1),(\overline{q}_1^2,\dots, \overline{q}_l^2)) \in bH(c_1,\overline{c_2},G).$$

We aim to produce a complex harmonic $G$-bundle that gives this point. Let $J$ be the bi-complex structure corresponding to $\ccpair$. 

 With the $K^{\C}$-bundle $P_{K^{\C}}$ in hand, we define the holomorphic data. Firstly, the bundle $\mathrm{ad}\p^{\C}=P_{K^{\C}}\times_{\textrm{Ad}|_{K^{\C}}}\p^{\C}$ comes with a natural $J$-del-bar operator. To see it, note that any power $\mathcal{K}_{c_1}^m$ has a $J$-del-bar operator $\overline{\partial}_{2}^m$ defined by the twisted Leibniz rule (\ref{eqn: weirdleibniz}) and demanding that $dz^m$ lies in the kernel. As well, choosing a basis for $\g_m$ identifies it with a Euclidean space, from which we can import the $J$-del-bar operator $\overline{\partial}_{J}$ defined on functions, which we continue to denote by $\overline{\partial}_{J}$, and which is independent of the choice of basis. The two operators determine a $J$-del-bar operator on $\mathrm{ad}P_{G^{\C}}$ defined component-wise: on a section $f_m\otimes \alpha_m$ of $\g_m\otimes \mathcal{K}_{c_1}^m$, $$\overline{\partial}_2 (f_m\otimes \alpha_m) = \overline{\partial}_{J} f_m\otimes \alpha_m +f_m\otimes \overline{\partial}_{2}^m\alpha_m.$$ 
Since $\overline{\partial}_2$ commutes with $\sigma$, it determines a $J$-del bar operator on $\mathrm{ad}\p^{\C}.$ 

Recalling that we have fixed data $(c_1,\overline{c_2})$ and $(q^1,\overline{q}^2)$, we define the Higgs field $\phi_1$ by $$\phi_1=\widetilde{e}+q_{1}^1e_1+\dots + q_{l}^1e_l.$$  We view $\widetilde{e}$ as a section of $(\g_{-1}\otimes \mathcal{K}_{c_1}^{-1})\otimes \mathcal{K}_{c_1}$ and similar for the others. One can routinely check that $\sigma(\phi)=-\phi,$ and hence that $\phi_1$ defines a section of $\mathrm{ad}\p^{\C}.$ Since the $q_{i}^1$'s are holomorphic, one routinely checks that $\overline{\partial}_2\phi_1=0.$ 

Since we'll refer to it, we pause to make the following definition. Taking the complex structure so that $P_{G^{\C}}\to (S,c_1)$ is holomorphic, the involution reduces this to a complex structure on $P_{K^{\C}}\to (S,c_1).$ When $c_1=c_2,$ $\overline{\partial}_2$ is the induced del-bar operator on the adjoint bundle, and $(P_{K^{\C}},\phi_1)$ defines a $G$-Higgs bundle.
\begin{defn}
    Hitchin's section from the Hitchin base $H(c_1,G)=\oplus_{i=1}^l H^0(S,\mathcal{K}_{c_1}^{m_i})$ to the set of isomorphism classes of $G$-Higgs bundles on $(S,c_1)$ is defined by associating $(q_{1}^1,\dots,q_{l}^1)$ to the class of $(P_{K^{\C}},\phi_1).$
\end{defn}

Returning to our main task, we try to complete our data to a complex harmonic $G$-bundle. First, we replace $c_1$ with $\overline{c_2}$ and go through the analogous construction of the $K^{\C}$-bundle. Given a vector space $V$ or a vector bundle $E$, we always denote the complex conjugate spaces by $\overline{V}$ and $\overline{E}$. We define a principle $G^{\C}$-bundle by taking the frame bundle of $\mathcal{K}_{\overline{c_2}}^{1/2}\oplus \mathcal{K}_{\overline{c_2}}^{-1/2}$, a $\mathrm{PSL}(2,\C)$-principal bundle, and then extending the structure group to $G^{\C}$ via $\iota_G$. Similar to above, the adjoint bundle is $$\oplus_{m=-M}^M \overline{\g_{m}}\otimes \mathcal{K}_{\overline{c_2}}^{m},$$ with the analogous Lie bracket. As above, we consider the involution $\sigma$ of the adjoint bundle induced by $\sigma_0$, but we denote it by $\overline{\sigma}$ to distinguish it from $\sigma$, which determines
a reduction of our principal $G^{\C}$-bundle to $K^{\C}$.

In the same fashion as above, the $K^{\C}$-bundle comes with a $J$-del operator on the adjoint bundle, characterized totally analogously to the corresponding bundle for $c_1$: each $\mathcal{K}_{\overline{c_2}}^{-m}$ has a $J$-del operator $\partial_{1}^{m}$ defined by (\ref{eqn: weirdleibniz2}) and demanding that $d\overline{w}^m$ is in the kernel, and $\overline{\g_{m}}$ has the operator $\partial_{J}$ induced by choosing a basis. On a section $f_{m}\otimes \beta_{m}$ of $\overline{\g_{m}}\otimes  \mathcal{K}_{\overline{c_2}}^{m}$, the operator is given by $$\partial_1(f_{m}\otimes \beta_{m})=\partial_{J}f_{m}\otimes\beta_{m}+ f_{m}\otimes \partial_1^{m}\beta_{m}.$$ As above, the operator is compatible with the reduction to $K^{\C},$ and we define a Higgs field on $\oplus_{m=-M}^M \overline{\g_{m}}\otimes \mathcal{K}_{\overline{c_2}}^{m}$ by $$\overline{\psi_2}=\widetilde{e} + \overline{q}_{1}^2e_{1}+\dots \overline{q}_{l}^2e_{l}.$$ From the $\overline{c_2}$-holomorphicity of each $\overline{q}_i^2$, $\partial_1\overline{\psi_2}=0.$ 

Now, we get to the key idea in our construction. For motivation, we first consider the case where $S$ is closed, $c_1=c_2$, and $q^1=q^2$ (the real locus). Then $(P_{K^{\C}},\phi_1)$ is a stable $G$-Higgs bundle over $(S,c_1)$. By Theorem \ref{thm: Hitchin simpsons}, there exists a unique reduction of structure to $K,$ $P_K\subset P_{K^{\C}}$, inducing a Lie algebra bundle involution $$\mathcal{I}:\mathrm{ad}P_{G^{\C}}\to \overline{\mathrm{ad}P_{G^{\C}}}$$  (in the notation of Section \ref{sec: ordinary harmonic maps}, $\mathcal{I}=\widehat{\sigma})$ such that if 
$\partial_1^{\mathcal{I}}= {\mathcal{I}}^{-1}\circ \partial_1\circ \mathcal{I}$, $\overline{\phi_2}^{\mathcal{I}}=-{\mathcal{I}}^{-1}\circ \overline{\psi}_2,$ and $A^{\mathcal{I}}$ is the principal connection induced by the affine connection $\partial_1^{\mathcal{I}}+\overline{\partial}_2$ (it is clear that this affine connection preserves the Lie bracket), then $$F(A^{\mathcal{I}}) +[\phi_1, \overline{\phi_2}^{\mathcal{I}}]=0,$$ and moreover $(P_{K^{\C}},\phi_1,P_K)$ is a harmonic $G$-bundle. Furthermore, by the discussion in Section \ref{sec: building ch G-bundles}, $(P_{K^{\C}},A^{\mathcal{I}},\phi_1,\overline{\phi_2}^{\mathcal{I}})$ is a complex harmonic $G$-bundle. In general, starting from the data $(P_{K^{\C}},\partial_1,\overline{\partial}_2,\phi_1,\overline{\psi_2})$, we propose the following.
\begin{biHitchin}
We look for a Lie algebra bundle isomorphism $$\mathcal{I}:\oplus_{m=-M}^{M}\g_m\otimes \mathcal{K}_{c_1}^m\to \oplus_{m=-M}^{M}\overline{\g_{m}}\otimes \mathcal{K}_{\overline{c_2}}^{m}$$ compatible with the reduction of $P_{G^{\C}}$ to $K^{\C}$, i.e., so that $\mathcal{I}\circ \sigma=\overline{\sigma}\circ \mathcal{I}$, and such that, with
$\partial_1^{\mathcal{I}}= {\mathcal{I}}^{-1}\circ \partial_1\circ \mathcal{I}$, $\overline{\phi_2}^{\mathcal{I}}=-{\mathcal{I}}^{-1}\circ \overline{\psi}_2$, and $A^{\mathcal{I}}$ as above, $\mathcal{I}$ makes $(P_{K^{\C}},A^{\mathcal{I}},\phi_1,\overline{\phi_2}^{\mathcal{I}})$ a complex harmonic $G$-bundle. As well, we demand that for every invariant polynomial $p$, $p(X)=p(-\mathcal{I}(X)).$
\end{biHitchin}
For any $\mathcal{I}$, it is automatic that $$\partial_1^{\mathcal{I}}\overline{\phi_2}^{\mathcal{I}}=0,$$ so we only want to choose $\mathcal{I}$ that is compatible, preserves polynomials, and satisfies the flatness condition 
\begin{equation}\label{eq: bi-Hitchin flatness}
    F(A^{\mathcal{I}}) + [\phi_1,\overline{\phi_2}^{\mathcal{I}}]=0.
\end{equation}
Stepping back for a moment, we know that, when $S$ is closed, it is possible to find such $\mathcal{I}$'s on the entire real locus. In Section \ref{sec: existence results} below, we find examples far away from the real locus. We will not attempt to use this strategy on every point in the bi-Hitchin base (see Remark \ref{rem: failure}).

Finally, note that we can just as well push $\overline{\partial}_2$ and $\phi_1$ via $\mathcal{I}$ to objects $\overline{\partial}_2^{\mathcal{I}}$ and $\psi_1^{\mathcal{I}}$ on  $\oplus_{m=-M}^{M}\overline{\g_{m}}\otimes \mathcal{K}_{\overline{c_2}}^{m}$, so that $P_{K^{\C}},\partial_1+\overline{\partial}_2^{\mathcal{I}},$ $\psi_1^{\mathcal{I}}$, and $\overline{\psi_2}$ determine a complex harmonic bundle. Working in trivializations that are holomorphic for $P_{G^{\C}}\to (S,\overline{c_2})$, if we complex conjugate all of our data, namely, $c_1,\overline{c_2}$, the trivializations, the Higgs fields, the isomorphism $\mathcal{I}$, etc., then we arrive at the expression for a complex harmonic $G$-bundle over $(S,c_2,\overline{c_1})$ with bi-Hitchin basepoint $(q^2,\overline{q}^1)$ in a trivialization that's holomorphic for $P_{G^{\C}}\to (S,c_2)$.
\begin{remark}
    When $q_1^1=\overline{q}_1^2=0,$ the complex harmonic maps constructed from solving (\ref{eq: bi-Hitchin flatness}) are conformal. Indeed, from (\ref{pullbackmetric}), when $q_1^1=\overline{q}_1^2=0,$ conformality is equivalent to the nowhere vanishing of the derivative. The presence of the $\widetilde{e}$ term in $\phi_1$ shows that this holds.
\end{remark}

\subsection{Cyclic locus and complex affine Toda equations}\label{sec: proof of Theorem C}
We continue as in the previous subsection, keeping all of the same notations, and in particular working with the fixed Chevalley base $\{x_\alpha,x_{-\alpha},h_\beta: \alpha\in \Delta^+, \beta \in \Pi\}$. We point out that $e_l$, being a highest weight vector in $V_l,$ is in the highest root space $\g_{\delta}$. Thus, we could have chosen $e_l$ so that $e_l=x_\delta$, and in this subsection we specify that we've done so and we write $x_\delta$ in place of $e_l$. 

The strategy outlined so far can be applied for any point in the bi-Hitchin base. Here is the point at which we specialize and set our bi-Hitchin basepoint to be $$(q^1,\overline{q}^2)=((0,\dots, 0, q_1),(0,\dots, 0, \overline{q_2})).$$ Now $\phi_1$ takes a simpler form: $$\phi_1=\widetilde{e}+q_1x_\delta.$$ 

\subsubsection{Assumption \ref{ass: only ass}}\label{sec: on assumption}
 We impose a constraint on the class of Lie algebra isomorphisms $\mathcal{I}$ that we will look for. Note that $\g_0=\mathfrak{h}^{\C}$. We interpret $S\times \mathfrak{h}^{\C}$ as the subbundle $S\times \g_0$ of $\mathrm{ad}P_{G^{\C}}$.
 \begin{ass}\label{ass: only ass}
 We demand that the restriction of $\mathcal{I}$ to $S\times \mathfrak{h}^{\C}$ is multiplication by $-1$.
 \end{ass}
 \begin{remark}
 The assumption above might appear ad hoc. On a closed surface, for the case $c_1=c_2$ and $q_1=q_2$, the assumption always holds, and it is a consequence of the theory of cyclic $G$-Higgs bundles (see \cite{Baraglia2010CyclicHB} or \cite{ST}). One heuristic justification for making this assumption is that if $\mathcal{I}$ lives in a holomorphic family of solutions that includes the real solutions, the assumption should hold. 
 \end{remark} We record the following essential consequences.
\begin{prop}\label{prop: h valued}\
    Under Assumption \ref{ass: only ass}, 
\begin{enumerate}
    \item For a root $\alpha$ of height $m$, $\mathcal{I}$ restricts on $\g_\alpha\otimes \mathcal{K}_{c_1}^m$ to a map $\mathcal{I}_\alpha: \g_\alpha\otimes \mathcal{K}_{c_1}^m\to \overline{\g_{-\alpha}}\otimes \mathcal{K}_{\overline{c_2}}^{-m}.$
    \item $\mathcal{I}_\alpha$ is determined entirely by $\{\mathcal{I}_{\alpha}: \alpha\in \Pi\}$ or $\{\mathcal{I}_{-\alpha}: \alpha\in \Pi\}$.
    \item The principal connection $A^{\mathcal{I}}$ is valued in $S\times\mathfrak{h}^{\C}$.
\end{enumerate}
\end{prop}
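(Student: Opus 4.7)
The plan is to exploit the fact that $\mathcal{I}$ is a Lie algebra bundle isomorphism, combined with the assumption that it acts as $-\mathrm{id}$ on the trivial subbundle $S\times \mathfrak{h}^{\C}$. For part (1), I would apply $\mathcal{I}$ to the weight relation $[h,y] = \alpha(h)\, y$ with $h\in \mathfrak{h}^{\C}$ and $y\in \g_\alpha$: bracket-preservation together with $\mathcal{I}(h)=-h$ gives $\mathrm{ad}(h)(\mathcal{I}(y)) = -\alpha(h)\,\mathcal{I}(y)$, so the image lies in the $(-\alpha)$-weight space $\overline{\g_{-\alpha}}$. Since $\overline{\g_{-\alpha}}$ appears in the target bundle only inside the summand $\overline{\g_{-m}}\otimes \mathcal{K}_{\overline{c_2}}^{-m}$ for $m = \mathrm{ht}(\alpha)$, the bundle twist is forced, which is the content of (1).

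For part (2), I would invoke the Chevalley--Serre fact that $\{x_\alpha, x_{-\alpha}: \alpha\in \Pi\}$ generate $\g^{\C}$ as a Lie algebra. Since $\mathcal{I}$ preserves brackets and is determined on $\mathfrak{h}^{\C}$ by Assumption \ref{ass: only ass}, it is fixed once its values on these simple root vectors are known. To upgrade this to the stronger claim that only $\{\mathcal{I}_\alpha: \alpha\in \Pi\}$ (or only the negative-root analogues) suffice, I would apply Lemma \ref{lem: commutators}: writing $\mathcal{I}(x_\alpha \otimes dz) = U_\alpha\, x_{-\alpha}\otimes d\overline{w}^{-1}$ and $\mathcal{I}(x_{-\alpha}\otimes dz^{-1}) = V_\alpha\, x_\alpha \otimes d\overline{w}$ in a local trivialization, compatibility of $\mathcal{I}$ with $[x_\alpha,x_{-\alpha}]=-h_\alpha$ and $\mathcal{I}(h_\alpha)=-h_\alpha$ forces $U_\alpha V_\alpha = 1$, so $V_\alpha=U_\alpha^{-1}$. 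Repeated bracketing then produces the components of $\mathcal{I}$ on every root space.

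For part (3), I would show that the affine connection $\nabla := \partial_1^{\mathcal{I}} + \overline{\partial}_2$ on $\mathrm{ad}P_{G^{\C}}$ preserves each root-space summand $\g_\alpha\otimes \mathcal{K}_{c_1}^{\mathrm{ht}(\alpha)}$. For $\overline{\partial}_2$ this is immediate in a trivialization holomorphic for $P_{G^{\C}}\to (S,c_1)$, directly from the product-rule definition of $\overline{\partial}_2$ on $\g_m\otimes\mathcal{K}_{c_1}^m$. For $\partial_1^{\mathcal{I}} = \mathcal{I}^{-1}\circ \partial_1\circ \mathcal{I}$, part (1) says $\mathcal{I}$ intertwines the root-space decompositions of source and target, and $\partial_1$ preserves each summand in a trivialization holomorphic for $P_{G^{\C}}\to(\overline S,\overline{c_2})$, so the composition preserves each summand. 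Since $\nabla$ additionally preserves the Lie bracket on $\mathrm{ad}P_{G^{\C}}$, by the characterization recalled after Definition \ref{def: principal connection} it arises from a principal connection $A^{\mathcal{I}}$; the resulting connection form acts by $\mathrm{ad}$ preserving every root space, so it lies in the centralizer of $\mathfrak{h}^{\C}$ in $\g^{\C}$, which is $\mathfrak{h}^{\C}$ itself.

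The main subtlety is in (3): the bookkeeping must be done carefully enough that the pointwise ``preserves each root space" property of $\mathrm{ad}(A^{\mathcal{I}})$ actually translates into $A^{\mathcal{I}}$ being a genuine $\mathfrak{h}^{\C}$-valued connection one-form, rather than an $\mathrm{End}$-valued one-form that merely happens to act diagonally. The other steps are essentially algebraic consequences of bracket-preservation and the weight decomposition of $\g^{\C}$.
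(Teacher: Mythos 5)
Your proposal is correct and follows essentially the same route as the paper: part (1) is the same weight-space computation using $\mathcal{I}(h)=-h$ and bracket-preservation, and part (2) is the same generation-plus-reciprocity argument ($U_\alpha V_\alpha=1$ from $[x_\alpha,x_{-\alpha}]=-h_\alpha$), which the paper phrases with generic nowhere-vanishing local sections rather than the Chevalley basis. The only divergence is in (3), where the paper evaluates $\partial_1^{\mathcal{I}}+\overline{\partial}_2$ on sections of $S\times\mathfrak{h}^{\C}$ and concludes via maximality of the abelian subalgebra, whereas you show the connection preserves the full root-space decomposition and invoke that the stabilizer of that grading (equivalently the normalizer of the Cartan — ``centralizer'' is a slight misnomer here, though the conclusion is the same) is $\mathfrak{h}^{\C}$; both are valid and essentially equivalent.
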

\begin{proof}
Toward (1), let $\Delta_m$ be the set of roots of height $m$. It is easy to see using the definition (\ref{eq: the lie bracket}) that the adjoint action of $S\times \mathfrak{h}^{\C}$ decomposes $\g_m\otimes \mathcal{K}_{c_1}^m$ into weight spaces as $\g_m\otimes \mathcal{K}_{c_1}^m=\oplus_{\alpha \in \Delta_m} \g_\alpha\otimes \mathcal{K}_{c_1}^m$, with weight $\alpha$ on $\g_\alpha\otimes \mathcal{K}_{c_1}^m$. By the same token, the adjoint action of $S\times \overline{\mathfrak{h}}^{\C}$ splits $\overline{\g_m}\otimes\mathcal{K}_{\overline{c_2}}^m$ as $\overline{\g_m}\otimes \mathcal{K}_{\overline{c_2}}^m=\oplus_{\alpha \in \Delta_m} \overline{\g_\alpha}\otimes\mathcal{K}_{\overline{c_2}}^m$, with weight $\alpha$ on $\overline{\g_\alpha}\otimes \mathcal{K}_{\overline{c_2}}^m$. Thus, for $X$ a local section of $S\times \mathfrak{h}^{\C},$ and $Y$ a local section of $\g_\alpha\otimes \mathcal{K}_{c_1}^m$, $$[X,\mathcal{I}(Y)]=[-\mathcal{I}(X),\mathcal{I}(Y)]= -\mathcal{I}([X,Y]) = -\alpha(X)\mathcal{I}(Y),$$ and hence $\mathcal{I}(Y)$ is a local section of $\overline{\g_{-\alpha}}\otimes \mathcal{K}_{\overline{c_2}}^{-m}$. The result follows.

For (2), since the root spaces $\{\g_\alpha, \g_{-\alpha}:\alpha\in \Pi\}$ generate $\g$, the collection of subbundles $\{\g_{\alpha} \otimes \mathcal{K}_{c_1}, \g_{-\alpha} \otimes \mathcal{K}_{c_1}^{-1}: \alpha\in \Pi\}$, under the bracket (\ref{eq: the lie bracket}), generates $\textrm{ad}P_{G^{\C}}.$ Since $\mathcal{I}$ is a Lie algebra homomorphism, it is clear that $\mathcal{I}$ is determined by $\{\mathcal{I}_\alpha, \mathcal{I}_{-\alpha}:\alpha\in \Pi\}.$ We show that $\mathcal{I}_\alpha$ determines $\mathcal{I}_{-\alpha}$ and vice versa. Indeed, let $X_\alpha$ and $X_{-\alpha}$ be nowhere vanishing local sections of $\g_{\alpha} \otimes \mathcal{K}_{c_1}$ and $\g_{-\alpha} \otimes \mathcal{K}_{c_1}^{-1}$ respectively. Pick nowhere vanishing local sections $Y_\alpha$ and $Y_{-\alpha}$ of $\g_{\alpha} \otimes \mathcal{K}_{\overline{c_2}}$ and  $\g_{-\alpha} \otimes \mathcal{K}_{\overline{c_2}}^{-1}$ respectively such that $[X_\alpha,X_{-\alpha}]=[Y_\alpha,Y_{-\alpha}]$. Since all of the spaces in question are $1$-dimensional, there are locally defined functions $k_\alpha$ and $k_{-\alpha}$ such that $\mathcal{I}_\alpha(X_\alpha)=k_\alpha Y_{-\alpha}$ and $\mathcal{I}_{-\alpha}(X_{-\alpha})=k_{-\alpha} Y_{\alpha}$, which completely determine $\mathcal{I}_\alpha$ and $\mathcal{I}_{-\alpha}$. We have $$[k_\alpha Y_{-\alpha},k_{-\alpha}Y_{\alpha}]=[\mathcal{I}_\alpha(X_\alpha), \mathcal{I}_{-\alpha}(X_{-\alpha})]=\mathcal{I}([X_\alpha,X_{-\alpha}])=[X_{-\alpha},X_\alpha],$$ and hence $k_\alpha = k_{-\alpha}^{-1}$, and the result follows. 

For assertion (3), let $s:V\to P_{G^{\C}}$ be any local trivialization over an open subset $V\subset S$. We write, on the adjoint bundle, over $V$, $$\partial_1^{\mathcal{I}}+\overline{\partial}_2 = d+\textrm{ad}(s^*A^{\mathcal{I}}).$$ Since $\mathcal{I}$ is the identity in $S\times \mathfrak{h}^{\C}$, on any local section $X$ of $S\times \mathfrak{h}^{\C}$, $\partial_1^{\mathcal{I}}+\overline{\partial}_2 = dX$. Thus, $$[s^*A^{\mathcal{I}},X]=\textrm{ad}(s^*A^{\mathcal{I}})(X) = 0.$$ Since $\mathfrak{h}^{\C}$ is maximal abelian and $X$ is arbitrary, it follows that $s^*A^{\mathcal{I}}$ is valued in $ S\times \mathfrak{h}^{\C},$ and (2) is immediate.
\end{proof}

Under Assumption \ref{ass: only ass}, the compatibility condition $\mathcal{I}\circ \sigma=\overline{\sigma}\circ \mathcal{I}$ is equivalent to a relation between the $\mathcal{I}_\alpha$'s. As explained in \cite{Hi} and \cite{Baraglia2010CyclicHB}, there is a permutation of the simple root set $\xi: \Pi\to \Pi$, which can be represented by an automorphism of the Dynkin diagram, such that the Lie algebra involution $\sigma_0$ satisfies $\sigma_0(h_{\alpha})=h_{\xi(\alpha)}$ and $\sigma_0(h_{-\delta})=h_{-\delta}$. The permutation $\xi$ is in fact trivial unless $G$ is of type $A_n$, $D_{2n+1}$, or $E_6$, and in all of these cases it is an involution (see \cite[Section 4]{Baraglia2010CyclicHB}). We check that for $h\in\mathfrak{h}^{\C}$ and $X_\alpha$ in a root space $\g_\alpha,$ $$[h,\sigma_0(X_\alpha)]=\sigma_0([\sigma_0(h),X_\alpha]=(\alpha\circ \sigma_0)(h)\sigma_0(X_\alpha).$$ Hence, $\sigma_0$ defines a family of isomorphisms, for $\alpha\in \Pi,$ $\g_\alpha\to \g_{\alpha\circ \sigma_0}=\g_{\xi(\alpha)},$ and $\g_{-\alpha}\to \g_{-\alpha\circ \sigma_0}=\g_{-\xi(\alpha)}$. Returning to our adjoint bundles, the compatibility condition on $\mathcal{I}$ becomes 
\begin{equation}\label{compatibility I alpha}
    \mathcal{I}_{\xi(\alpha)}\circ \sigma = \overline{\sigma}\circ \mathcal{I}_\alpha, \hspace{1mm} \mathcal{I}_{\xi(-\alpha)}\circ \sigma = \overline{\sigma}\circ \mathcal{I}_{-\alpha}
\end{equation}
for all $\alpha\in \Pi$.

Finally, by the Chevalley theorem (see \cite[Section 23.1]{Hu}), to check that $-\mathcal{I}$ preserves $\mathrm{Ad}(G^{\C})$-invariant polynomials, one only needs to check that $-\mathcal{I}$ preserves polynomials on $S\times \mathfrak{h}^{\C}$ that are invariant under the Weyl group. Our assumption makes this trivial.
\subsubsection{The proof of Theorem C}
We will see that Assumption \ref{ass: only ass} leads to the promised system of equations of Theorem C. Here is the high-level overview (compare with \cite[Section 3]{ST}). The term $F(A^{\mathcal{I}})$ is a $2$-form valued in $S\times \mathfrak{h}^{\C}\subset \mathrm{ad}P_{G^{\C}}$. We will compute below that $[\phi_1,\overline{\phi_2}^{\mathcal{I}}]$ is valued in $S\times \mathfrak{h}^{\C}$ as well. We consider the adjoint action of $S\times \mathfrak{h}^{\C}$ on $\g_{-1}\otimes \mathcal{K}_{c_1}^{-1},$ and interpret $F(A^{\mathcal{I}})$ and $[\phi_1,\overline{\phi_2}^{\mathcal{I}}]$ as $\mathrm{End}(\g_{-1}\otimes \mathcal{K}_{c_1}^{-1})$-valued $2$-forms. The splitting $\g_{-1}\otimes \mathcal{K}_{c_1}^{-1}=\oplus_{\alpha\in \Pi}\g_{-\alpha}\otimes \mathcal{K}_{c_1}^{-1}$ gives a decomposition of $\mathrm{End}(\g_{-1}\otimes \mathcal{K}_{c_1}^{-1})$ in which we can find tractable expressions for the components of the $\mathrm{End}(\g_{-1}\otimes \mathcal{K}_{c_1}^{-1})$-valued forms. Since every $\mathrm{End}(\g_{-\alpha}\otimes \mathcal{K}_{c_1}^{-1})$ is trivial, the equation (\ref{eq: bi-Hitchin flatness}) becomes a system of equations for $|\Pi|=l$ $2$-forms. Dividing by a suitably chosen $2$-form, the equation (\ref{eq: bi-Hitchin flatness}) will be equivalent to a system of $l$ functions. 

Preparing for our computation, first observe that since $\mathcal{I}|_{\g_{-1}\otimes \mathcal{K}_{c_1}^{-1}}=\sum_{\alpha \in \Pi}\mathcal{I}_{-\alpha},$ $\partial_1^{\mathcal{I}}$ restricts on the subbundle to $\sum_{\alpha\in \Pi} \partial_1^{-\alpha}$, where $\partial_1^{-\alpha}=\mathcal{I}_{-\alpha}^{-1}\circ \partial_1\circ \mathcal{I}_{-\alpha}$ is a $J$-del operator on $\g_{-\alpha}\otimes \mathcal{K}_{c_1}^{-1}$. By its definition, $\overline{\partial}_2$ preserves each $\g_{-\alpha}\otimes \mathcal{K}_{c_1}^{-1}$. Note that the Bers metric of $c_1$ and $\overline{c_2}$, seen as a section of $\mathcal{K}_{c_1}\otimes \mathcal{K}_{\overline{c_2}}$, determines an isomorphism $\kappa_0: \mathcal{K}_{c_1}^{-1}\to \mathcal{K}_{\overline{c_2}}.$ Explicitly, if the Bers metric is locally given by the symmetrized of $\lambda dz \otimes d\overline{w},$ then $\kappa_0$ takes $dz^{-1}$ to $\lambda d\overline{w}.$ Any other isomorphism of these line bundles differs by scalar multiplication by a non-vanishing function. It follows that for each $\alpha,$ since we can absorb multiplication by smooth functions from the $\mathcal{K}_{c_1}^{-1}\to \mathcal{K}_{\overline{c_2}}$ piece of $\mathcal{I}_{-\alpha}$ into the $\g_{-\alpha}\to \overline{\g_\alpha}$ piece, we can write 
\begin{equation}\label{eq: I alpha}
    \mathcal{I}_{-\alpha}=\mu_\alpha \otimes \kappa_0,
\end{equation}
where $\mu_\alpha$ is an isomorphism from $\g_{-\alpha}\to \overline{\g_{\alpha}}$. Since $x_{-\alpha}$ and $x_\alpha$ span $\g_{-\alpha}$ and $\g_{\alpha}$ respectively, there is a nowhere vanishing function $U_\alpha$ on $S$ such that $$\mu_\alpha(x_{-\alpha})=-U_\alpha x_\alpha.$$ That is, after writing (\ref{eq: I alpha}), $\mathcal{I}_{-\alpha}$ is completely determined by a function $U_\alpha$. We choose $-U_\alpha$ because, in the real case, this makes $U_\alpha$ positive. From the proof of (2) in Proposition \ref{prop: h valued}, $$\mathcal{I}_{\alpha}=\mu_\alpha^{-1}\otimes \frac{1}{\kappa_0}$$ (we point out that $\frac{1}{\kappa_0}$ is not $\kappa_0^{-1}$, it locally takes $dz\mapsto \frac{1}{\lambda}d\overline{w}^{-1})$.
Note that, by the discussion in Section \ref{sec: on assumption}, the compatibility condition (\ref{compatibility I alpha}) is equivalent to $U_\alpha=U_{\xi(\alpha)}.$ 

To warm up for the proof of Theorem C, we compute connection forms in terms of $\lambda$ and the $U_\alpha$'s.  Let $V\subset S$ be an open subset and $s:V\to P_{G^{\C}}$ a trivialization that's holomorphic for $P_{G^{\C}}\to (S,c_1)$ (recall the definition from Section \ref{sec: principal KC}). In the induced trivialization of $\g_{-1}\otimes K_{c_1}^{-1}=\oplus_{\alpha\in \Pi}\g_{-\alpha}\otimes \mathcal{K}_{c_1}^{-1},$ by part (3) of Proposition \ref{prop: h valued}, $A^{\mathcal{I}}$ yields a connection form $\mathrm{ad}(s^*A^{\mathcal{I}})$ that splits as $\mathrm{ad}(s^*A^{\mathcal{I}})=\sum_{\alpha \in \Pi}A^{\mathcal{I}_{-\alpha}}$.
\begin{prop}\label{prop: K^c connection form}
As $\g^{\C}$-valued $1$-forms on the open subset $V$, $A^{\mathcal{I}_{-\alpha}}=\partial_J \log (\lambda U_\alpha).$
\end{prop}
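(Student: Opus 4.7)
The plan is to compute the connection form $A^{\mathcal I_{-\alpha}}$ directly by applying $\partial_1^{\mathcal I}+\overline{\partial}_2$ to a distinguished local trivializing section of the line bundle $\mathfrak g_{-\alpha}\otimes\mathcal K_{c_1}^{-1}$. Since the trivialization $s:V\to P_{G^{\C}}$ is holomorphic for $P_{G^{\C}}\to(S,c_1)$, the element $x_{-\alpha}$ defines a local constant section of $\mathfrak g_{-\alpha}$, and together with the $c_1$-holomorphic frame $dz^{-1}$ of $\mathcal K_{c_1}^{-1}$ we obtain a natural local frame $\sigma_{-\alpha}:=x_{-\alpha}\otimes dz^{-1}$. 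In this frame the connection will be of the form $d+A^{\mathcal I_{-\alpha}}$, so it suffices to compute $(\partial_1^{\mathcal I}+\overline{\partial}_2)\sigma_{-\alpha}$.

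The $\overline{\partial}_2$ piece vanishes: by construction $\overline{\partial}_2$ acts componentwise on $\mathfrak g_m\otimes\mathcal K_{c_1}^m$ as $\overline{\partial}_J$ on the Lie-algebra factor (killing constants in a basis) and has $dz^m$ in the kernel on $\mathcal K_{c_1}^m$. Hence the only contribution is from $\partial_1^{\mathcal I}=\mathcal I_{-\alpha}^{-1}\circ\partial_1\circ\mathcal I_{-\alpha}$, and the computation reduces to tracking $\sigma_{-\alpha}$ through this conjugation.

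Using the factorization $\mathcal I_{-\alpha}=\mu_\alpha\otimes\kappa_0$ with $\mu_\alpha(x_{-\alpha})=-U_\alpha x_\alpha$ and $\kappa_0(dz^{-1})=\lambda\,d\overline{w}$, I first get $\mathcal I_{-\alpha}(\sigma_{-\alpha})=-\lambda U_\alpha\,(x_\alpha\otimes d\overline{w})$. Now $\partial_1$ on $\overline{\mathfrak g_\alpha}\otimes\mathcal K_{\overline{c_2}}$ kills both the constant section $x_\alpha$ and the frame $d\overline{w}$ (the latter by the defining property of $\partial_1$), so the Leibniz rule \eqref{eqn: weirdleibniz2} yields $\partial_1(\mathcal I_{-\alpha}\sigma_{-\alpha})=-\partial_J(\lambda U_\alpha)\otimes(x_\alpha\otimes d\overline{w})$. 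Applying $\mathcal I_{-\alpha}^{-1}$ divides by the scalar $-\lambda U_\alpha$ and returns to the original frame, giving
\[
\partial_1^{\mathcal I}\sigma_{-\alpha}=\frac{\partial_J(\lambda U_\alpha)}{\lambda U_\alpha}\otimes\sigma_{-\alpha}=\partial_J\log(\lambda U_\alpha)\otimes\sigma_{-\alpha}.
\]
Combining the two pieces, the connection acts as $d+\partial_J\log(\lambda U_\alpha)$ in the frame $\sigma_{-\alpha}$, which is exactly the claimed identity $A^{\mathcal I_{-\alpha}}=\partial_J\log(\lambda U_\alpha)$.

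There is no serious obstacle here; the proof is just careful bookkeeping. The only items to keep straight are: the domain/codomain of $\mathcal I_{-\alpha}:\mathfrak g_{-\alpha}\otimes\mathcal K_{c_1}^{-1}\to\overline{\mathfrak g_\alpha}\otimes\mathcal K_{\overline{c_2}}$; the kernels of $\partial_1$ and $\overline{\partial}_2$ on each tensor factor; the sign conventions in $\mu_\alpha(x_{-\alpha})=-U_\alpha x_\alpha$ and $\kappa_0(dz^{-1})=\lambda\,d\overline{w}$; and the fact that although $\log(\lambda U_\alpha)$ is only defined up to $2\pi i\mathbb Z$, the form $\partial_J\log(\lambda U_\alpha)=\partial_J(\lambda U_\alpha)/(\lambda U_\alpha)$ is globally unambiguous.
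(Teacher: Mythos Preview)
Your proof is correct and follows essentially the same approach as the paper: you evaluate $\partial_1^{\mathcal I}+\overline{\partial}_2$ on the local frame $x_{-\alpha}\otimes dz^{-1}$, observe that $\overline{\partial}_2$ and $d$ annihilate it, and then unwind $\mathcal I_{-\alpha}^{-1}\circ\partial_1\circ\mathcal I_{-\alpha}$ using the factorization $\mathcal I_{-\alpha}=\mu_\alpha\otimes\kappa_0$ exactly as the paper does. The bookkeeping of signs, kernels, and the well-definedness of $\partial_J\log(\lambda U_\alpha)$ that you flag at the end is accurate.
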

As in Section \ref{subsection: complex metrics}, recall that the logarithm is defined only locally, but the Laplacian of the logarithm is globally defined.
\begin{proof}
On  $\g_{-\alpha}\otimes \mathcal{K}_{c_1}^{-1}$, $A^{\mathcal{I}_{-\alpha}}$ is described by $$\partial_1^{-\alpha}+\overline{\partial}_2=d+ A^{\mathcal{I}_{-\alpha}}.$$ 
We just need to evaluate the connection on a local section of the form $x_{-\alpha}\otimes dz^{-1},$ where $x_{-\alpha}$ is taken from our Chevalley basis. Since both $d$ and $\overline{\partial}_2$ kill such a section, we are left to examine $\partial_1^{-\alpha}(x_{-\alpha}\otimes dz^{-1}).$ We compute
\begin{align*}
    \partial_1^{-\alpha}(x_{-\alpha}\otimes dz^{-1})&= \mathcal{I}_{-\alpha}^{-1}\circ \partial_1\circ \mathcal{I}_{-\alpha}(x_{-\alpha}\otimes dz^{-1}) \\
     &=  \mathcal{I}_{-\alpha}^{-1}\circ \partial_1(-U_\alpha x_\alpha\otimes \lambda d\overline{w})\\
     &=  \mathcal{I}_{-\alpha}^{-1}(\partial_{J}(-\lambda U_\alpha)\otimes x_\alpha)\otimes d\overline{w}\\
     &= \lambda^{-1} U_\alpha^{-1}(\partial_{J}(\lambda U_\alpha))\otimes (x_{-\alpha}\otimes dz^{-1}),
\end{align*}
which simplifies to $\partial_J \log (\lambda U_\alpha)\otimes (x_{-\alpha}\otimes dz^{-1}).$ This establishes the result.
\end{proof}
Finally, we come to the proof of Theorem C.
\begin{proof}[Proof of Theorem C]
We continue with the setup and notations exactly as above. From Theorem \ref{prop: equivalence of maps and bundles} and the constructions of the previous sections, we are required to show that, under Assumption \ref{ass: only ass}, the equation (\ref{eq: bi-Hitchin flatness}) becomes the equation (\ref{eq: Theorem C}). Refer to the strategy from the beginning of this subsubsection. Since the final equation (\ref{eq: Theorem C}) is coordinate invariant, we're welcome to work and show the equivalence of equations in a trivialization $s:V\to P_{G^{\C}}$ as above.

 Beginning our computation of the $\mathrm{End}(\g_{-\alpha}\otimes \mathcal{K}_{c_1}^{-1})$ components of a connection of the form $A^{\mathcal{I}}+\phi_1+\overline{\phi_2}^{\mathcal{I}}$, we start with the curvature term $F(A^{\mathcal{I}})$. As in the proof of Proposition \ref{prop: K^c connection form}, to compute the $\mathrm{End}(\g_{-\alpha}\otimes \mathcal{K}_{c_1}^{-1})$ component, we just need to evaluate $\overline{\partial}_2\circ \partial_1^{-\alpha}$ on a section of the form $x_{-\alpha}\otimes dz^{-1}$. Equivalently, we need to apply $\overline{\partial}_2$ to the connection form. Recalling that the Bers metric is locally $\lambda dz\otimes d\overline{w},$ by Proposition \ref{prop: Laplace formula},
     $$\overline{\partial}_2\circ \partial_1^{-\alpha}(x_{-\alpha}\otimes dz^{-1})=\overline{\partial}_2(\partial_J \log (\lambda U_\alpha)\otimes (x_{-\alpha}\otimes dz^{-1}))
     = \frac{\lambda}{4}(\Delta_h \log (\lambda U_\alpha))(dz\wedge d\overline{w})\otimes (x_{-\alpha}\otimes dz^{-1}).$$
For the commutator term, recall our choices for $e,\widetilde{e},$ and $x$, and write $$\phi_1=\sum_{\alpha\in \Pi}r_\alpha^{1/2} x_{-\alpha} + q_1 x_{\delta}, \hspace{1mm} \overline{\psi_2}=\sum_{\alpha\in \Pi}r_\alpha^{1/2} x_{-\alpha} + \overline{q_2} x_{\delta}.$$ To understand $\overline{\phi_2}^{\mathcal{I}}$, we need a good description of $\mathcal{I}^{-1}(\overline{q_2}x_\delta)$, or rather $\mathcal{I}_\delta^{-1}(\overline{q_2}x_\delta)$, where $\mathcal{I}_\delta$ is the restriction $\mathcal{I}_\delta:\g_{\delta}\otimes \mathcal{K}_{c_1}^{d-1}\to \overline{\g_{-\delta}}\otimes \mathcal{K}_{\overline{c_2}}^{-d+1}$ ($d=d_G$ is the Coxeter number). This is not hard to find: as in Section \ref{sec: excursion}, we write $\delta=\sum_{\alpha\in \Pi} n_\alpha \alpha$, with $n_\alpha>0$ and $\sum_{\alpha\in \Pi} n_\alpha = d-1.$ After factoring, similar to above, $\mathcal{I}_\delta$ can be written $$\mathcal{I}_\delta=\mu_{-\delta}\otimes \kappa_0^{-d+1},$$ for some isomorphism $\mu_{-\delta}:\g_{\delta}\otimes \mathcal{K}_{c_1}^{d-1}\to \overline{\g_{-\delta}}\otimes \mathcal{K}_{\overline{c_2}}^{-d+1}$. Using the Lie algebra homomorphism property for $\mathcal{I}$, together with the explicit expressions (\ref{eq: I alpha}) for the $\mathcal{I}_{\alpha}$'s, one can derive that $\mu_{-\delta}$ is given by $\mu_{-\delta}(x_{\delta})=U_{-\delta}x_{-\delta},$ where $$U_{-\delta}=\prod_{\alpha\in \Pi} (-U_\alpha)^{-n_\alpha}.$$ 
Now we compute $\overline{\phi_2}^{\mathcal{I}}$. Working in local coordinates, if $\overline{q_2}=\overline{q_2}(w)d\overline{w}^2$ and the Bers metric is $\lambda dz\otimes d\overline{w},$ recalling that $\kappa_0$ takes $dz^{-1}$ to $\lambda d\overline{w}$ and using our expression for $\mathcal{I}_{\alpha}$, we obtain
\begin{align*}
    \overline{\phi_2}^{\mathcal{I}} &=\sum_{\alpha\in \Pi} r_\alpha^{1/2}\lambda U_\alpha (x_\alpha\otimes dz) \otimes d\overline{w}-\lambda^{-d+1} U_{-\delta}\overline{q_2}(w)(x_{-\delta}\otimes dz^{-d+1} )\otimes d\overline{w}.
\end{align*}
Using the relations in the Chevalley base and Lemma \ref{lem: commutators}, we get
\begin{align*}
    [\phi_1,\overline{\phi_2}^{\mathcal{I}}] &=\sum_{\alpha\in \Pi} r_\alpha U_\alpha[x_{-\alpha},x_{\alpha}]\otimes\lambda dz\wedge d\overline{w}-U_{-\delta}\frac{q_1(z)\overline{q_2}(w)}{\lambda^{d}}[x_{\delta},x_{-\delta}]\otimes\lambda dz\wedge d\overline{w}\\
    &=\sum_{\alpha\in \Pi} r_\alpha  U_\alpha h_\alpha\otimes\lambda dz\wedge d\overline{w}+U_{-\delta}\frac{q_1(z)\overline{q_2}(w)}{\lambda^{d}} h_{-\delta}\otimes\lambda dz\wedge d\overline{w}.
\end{align*}
We have found an expression for $ [\phi_1,\overline{\phi_2}^{\mathcal{I}}]$ as a $(S\times \mathfrak{h}^{\C})$-valued $2$-form. For any $(S\times \mathfrak{h}^{\C})$-valued $2$-form $\Omega$, the $\mathrm{End}(\g_{-\alpha}\otimes \mathcal{K}_{c_1}^{-1})$ components of the induced $\mathrm{End}(\g_{-1}\otimes \mathcal{K}_{c_1}^{-1})$-valued form is given by $-\alpha(\Omega)$ (here, for $\eta$ a $2$-form, $\alpha(X\otimes \eta):=\alpha(X)\eta)$. This follows simply from the definition of the root space $\g_{-\alpha}$. Thus, applying $\alpha$ to $[\phi_1,\overline{\phi_2}^{\mathcal{I}}]$, which is easily done once we recall the characterization $a_{\beta\alpha}=\beta(h_\alpha) = 2\frac{\nu(\alpha,\beta)}{\nu(\alpha,\alpha)}$, and contracting with $\lambda dz\wedge d\overline{w},$ our system (\ref{eq: bi-Hitchin flatness}) becomes
$$\frac{1}{4}\Delta_h \log (\lambda U_\alpha)-\frac{1}{2}\sum_{\beta\in \Pi} a_{\alpha\beta}r_\beta U_\beta+\frac{1}{2}a_{\alpha\delta}\frac{q_1\overline{q_2}}{\lambda^d}U_{-\delta}=0, \alpha\in \Pi.$$ Rearranging and using  Proposition \ref{prop: curvature equality} ($\Delta_h \log \lambda = 2$)
we get $$\Delta_h \log U_\alpha=2\sum_{\beta\in \Pi}a_{\alpha\beta}r_\beta U_\beta -2a_{\alpha\delta}\frac{q_1\overline{q_2}}{\lambda^d}U_{-\delta} - 2, \hspace{1mm} \alpha\in \Pi,$$ as desired. 
\end{proof}

\begin{remark}\label{rem: cyclic lifts}
For a $G$-Higgs bundle, a solution to the self-duality equations reduces the underlying $K^{\C}$-bundle to a $K$-bundle. When $S$ is closed and the $G$-Higgs bundle is cyclic, the $K$-bundle reduces further to a $H$-bundle, where $H\subset K$ is a maximal torus. Equivalently, the harmonic map to $G/K$ lifts to a map to $G/H.$ This map is sometimes called the cyclic lift, and it plays an essential role in important works such as \cite{Lab3}. The cyclic lift is harmonic with respect to the pseudo-Riemannian metric induced by the Killing form (see \cite[Section 4.3]{C}).

For the complex harmonic maps constructed through Theorem C, the subbundle $S\times \mathfrak{h}^{\C}\subset \mathrm{ad}P_{G^{\C}}$ determines a reduction to $H^{\C}.$ Since the involution $\sigma$ restricts to $S\times \mathfrak{h}^{\C}$, the harmonic map to $G^{\C}/K^{\C}$ lifts to $G^{\C}/H^{\C}$. It would be interesting to study this lift further.
\end{remark}

For good measure, and since it will come up, we spell out the equations for $G=\mathrm{PSL}(n,\R)$ for $n=2$ and $3$. For $n=2$ there is just one root $\alpha=\delta,$ so $r_\alpha=\frac{1}{2}$, $a_{\alpha\alpha}=2,$ and we get the equation (\ref{eq: rank 1 case}) from the introduction. For $n=3,$ the Cartan subalgebra is modeled by traceless diagonal matrices $D=(D_{ij})_{i,j=1}^3$ and the roots are $\epsilon_{k\ell}$, $k,\ell\in \{1,2,3\},$ $k\neq \ell,$ where $\epsilon_{k,\ell}(D)=D_{kk}-D_{\ell\ell}$ (see \cite[Tables]{OV}). Choosing positive roots $\Delta^+=\{\epsilon_{k\ell} : k>\ell\}$, the simple roots are $\alpha:=\epsilon_{12}$ and $\beta:=\epsilon_{23},$ and the longest root is $\delta:=\epsilon_{13}=\epsilon_{12}+\epsilon_{23}.$ The symmetry condition forces $U_\alpha=U_\beta=:U$. We find
$r_\alpha=r_\beta=\frac{1}{2}$ and $a_{\alpha\beta}=a_{\alpha\delta}=1$, and therefore (\ref{eq: Theorem C}) becomes
\begin{equation}\label{eq: sl(3) equation}
    \Delta_h \log U = 2U - 4\frac{q_1\overline{q_2}}{\lambda^3}U^{-2}-2.
\end{equation}

\subsection{Existence of certain complex harmonic $G$-bundles}\label{sec: existence results}
In this subsection, we use Theorem C to construct complex harmonic maps. These are the complex harmonic maps that will come into play in the proofs of Theorems B and B'.

\begin{prop}
\label{prop: real Hitchin ex and un}
 In the setting of Theorem C, assume that $S$ is closed, and suppose $c_1=c_2$ and $q_1=q_2$. Then (\ref{eq: Theorem C}) admits a unique real solution.
\end{prop}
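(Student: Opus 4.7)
The plan is to observe that under the assumptions $c_1=c_2$ and $q_1=q_2$, the setup of Theorem C collapses to the classical cyclic $G$-Higgs bundle picture. As noted just after Theorem C, the Bers metric $h$ is the complexification of the hyperbolic metric of constant curvature $-1$ on $(S,c_1)$, so $\Delta_h$ becomes the ordinary hyperbolic Laplacian, and the system (\ref{eq: Theorem C}), restricted to positive real $U=(U_\alpha)$ with $U_\alpha=U_{\xi(\alpha)}$, is exactly the affine Toda system for cyclic $G$-Higgs bundles studied in \cite{Baraglia2010CyclicHB} and \cite{ST}. A positive real solution is equivalent to a $K$-reduction of the cyclic $G$-Higgs bundle $(P_{K^{\C}},\phi)$, with $\phi=\widetilde{e}+q\,x_\delta$, that is compatible with the underlying $\mathbb{Z}/d\mathbb{Z}$-symmetry of $\phi$, equivalently, to a further reduction to the maximal torus $T<K$ (compare Remark \ref{rem: cyclic lifts}).

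For existence, the $G$-Higgs bundle $(P_{K^{\C}},\phi)$ lies in the Hitchin section, hence is stable by \cite[\S 5]{Hi}. Theorem \ref{thm: Hitchin simpsons} therefore produces a $K$-reduction solving the self-duality equations \eqref{eq: self-duality real case}, and it is unique. The $\mathbb{Z}/d\mathbb{Z}$-symmetry of $\phi$ permutes the space of solutions, but by uniqueness the reduction must itself be fixed by this symmetry; a standard argument (see \cite{Baraglia2010CyclicHB} or \cite[Section 4.3]{C}) shows that a symmetric reduction of a cyclic Higgs bundle descends to a reduction to the maximal torus, which is precisely the data of a positive real solution $U$ to (\ref{eq: Theorem C}). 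Uniqueness of such a $U$ is then immediate from the uniqueness part of Theorem \ref{thm: Hitchin simpsons}.

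Alternatively, one can argue directly with PDEs, as in \cite[Theorem A]{ST}: the cooperative structure of the system, namely that the off-diagonal affine Cartan entries $a_{\alpha\beta}$ ($\alpha\neq\beta$, $\alpha,\beta\in\Pi\cup\{-\delta\}$) are non-positive while $r_\beta>0$, allows one to produce explicit sub- and super-solutions (modeled on the Fuchsian solution $U_\alpha\equiv 1$), and a monotone iteration then yields existence. Uniqueness follows from the maximum principle for weakly coupled cooperative elliptic systems (e.g.\ as in \cite{Dai2018}), applied to the differences $V_\alpha=\log(U_\alpha/U'_\alpha)$ of two solutions; a maximum of $V_\alpha$ at a point forces, through the sign structure of the linearization, that $V_\alpha\equiv 0$. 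The only genuinely delicate point is controlling the term $-2a_{\alpha\delta}\frac{|q|^2}{h^d}U_{-\delta}$, which behaves well because $a_{\alpha\delta}\le 0$ and $U_{-\delta}=\prod_{\beta\in\Pi}(-U_\beta)^{-n_\beta}$ is strictly positive on a real positive solution; this term has the correct sign to be absorbed into the Toda maximum-principle argument, as in \cite{ST}.
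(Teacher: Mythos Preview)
Your proposal is correct and follows essentially the same route as the paper: reduce to the cyclic Higgs bundle picture and invoke Theorem~\ref{thm: Hitchin simpsons} (the paper does this in two sentences, while you spell out the $\mathbb{Z}/d\mathbb{Z}$-symmetry argument forcing the torus reduction); your alternative PDE sketch via sub/super-solutions and a cooperative maximum principle is exactly what the paper mentions in the remark immediately following the proposition. One small imprecision in your alternative argument: the claim that $U_{-\delta}=\prod_{\beta\in\Pi}(-U_\beta)^{-n_\beta}$ is strictly positive for positive real $U_\beta$ depends on the parity of $d-1=\sum_\beta n_\beta$, and the ``Fuchsian solution'' is not $U_\alpha\equiv 1$ in general but rather the constant vector solving $A_{\g}U=\vec{e}$ (cf.\ Proposition~\ref{prop: existence on marginal locus}); neither point affects your primary argument.
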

\begin{proof}
    By \cite{Baraglia2010CyclicHB} and \cite{ST}, or just following our proof, we see that Hitchin's self-duality equations for the (real) $G$-Higgs bundle $(P_{K^{\C}},\phi_1)$ is equivalent to (\ref{eq: Theorem C}). The result then follows from existence and uniqueness of solutions to Hitchin's self duality equations, i.e., Theorem \ref{thm: Hitchin simpsons}.
\end{proof}
\begin{remark}
    It's certainly overkill to go through Theorem \ref{thm: Hitchin simpsons}. To prove Proposition \ref{prop: real Hitchin ex and un} independently, one can try the method of sub and super solutions for existence, and the maximum principle of Lemma \ref{lem: maximum principle} below for uniqueness.
\end{remark}
The following shows that we have solutions far away from the real locus. 
\begin{prop}\label{prop: existence on marginal locus}
   In the setting of Theorem C, let $c_1$ and $c_2$ be arbitrary and suppose that at least one of $q_1$ or $\overline{q_2}$ is identically zero. Then there is a unique constant solution $U=(U_\alpha)_{\alpha\in \Pi}$ to the system (\ref{eq: Theorem C}). The solution is real and positive and does not depend on $c_1$, $\overline{c_2}$, $q_1$, or $\overline{q_2}$. 
\end{prop}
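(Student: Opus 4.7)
The plan is to observe that the system decouples from the Bers Laplacian when $q_1\overline{q_2}\equiv 0$, reducing the PDE to a linear algebraic problem. Specifically, under the hypothesis, equation (\ref{eq: Theorem C}) simplifies to
\[
\Delta_h \log U_\alpha = 2\sum_{\beta \in \Pi} a_{\alpha\beta}\, r_\beta\, U_\beta - 2, \qquad \alpha \in \Pi,
\]
in which neither the complex structures nor the differentials appear. Substituting a constant ansatz $U_\alpha = c_\alpha \in \C^*$ kills the left-hand side, so solving the problem in constants amounts to solving the linear system
\[
\sum_{\beta \in \Pi} a_{\alpha\beta}\, r_\beta\, c_\beta = 1, \qquad \alpha \in \Pi.
\]
Independence of the solution from $c_1,\overline{c_2}, q_1, \overline{q_2}$ is immediate from this reformulation.

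For existence and uniqueness, I would invoke the invertibility of the Cartan matrix $A = (a_{\alpha\beta})_{\alpha,\beta\in\Pi}$: for any simple Lie algebra $\det A$ is a positive integer, so if $\mathbf{1}$ is the all-ones vector and $d_\beta := r_\beta c_\beta$, the vector $\mathbf d = A^{-1}\mathbf 1$ is uniquely determined, and hence so is $(c_\beta)$ since each $r_\beta > 0$. Because both $A$ and $\mathbf 1$ are real, the unique solution is automatically real.

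Positivity is the one place where structural input is used: I would appeal to the standard fact that for a simple Lie algebra, $A^{-1}$ has strictly positive entries (its columns express the fundamental weights in the basis of simple roots, with positive coefficients; see e.g.\ \cite[Chapter 3]{OV}). This forces $d_\alpha > 0$ and hence $c_\alpha > 0$ for all $\alpha \in \Pi$. This is the only nontrivial input; everything else is routine linear algebra.

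Finally, the symmetry constraint $U_\alpha = U_{\xi(\alpha)}$ holds automatically. Since $\xi$ is induced by an automorphism of the Dynkin diagram, $a_{\xi(\alpha),\xi(\beta)} = a_{\alpha\beta}$; and $\sigma_0$-invariance of $x = \sum_{\alpha \in \Pi} r_\alpha h_\alpha$, combined with $\sigma_0(h_\alpha) = h_{\xi(\alpha)}$, yields $r_{\xi(\alpha)} = r_\alpha$. The linear system is therefore $\xi$-equivariant, so its unique solution satisfies $c_\alpha = c_{\xi(\alpha)}$, completing the proof.
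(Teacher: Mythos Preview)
Your proof is correct and follows essentially the same route as the paper: both reduce to the linear system $A_{\g}\mathbf{d}=\mathbf{1}$ for the Cartan matrix (the paper reaches this after the substitution $U_\alpha\mapsto U_\alpha r_\alpha$, you via $d_\beta=r_\beta c_\beta$), and both conclude existence and uniqueness from invertibility of $A_{\g}$. Your argument is in fact more complete on two points: you justify positivity via the standard fact that $A_{\g}^{-1}$ has strictly positive entries (the paper simply asserts positivity from positive definiteness, which does not quite suffice), and you explicitly verify the $\xi$-symmetry constraint $U_\alpha=U_{\xi(\alpha)}$, which the paper's proof omits.
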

When $c_1=c_2,$ of course the solutions from Proposition \ref{prop: real Hitchin ex and un} and Proposition \ref{prop: existence on marginal locus} coincide.
\begin{proof}
For this data, it is helpful to transform the equation. Making the substitution $U_\alpha \mapsto U_\alpha r_\alpha$ (setting $r_{-\delta}=1)$, the equation (\ref{eq: Theorem C}) becomes 
\begin{equation}\label{eq: substituted Theorem C}
    \Delta_h \log U_\alpha=2\sum_{\beta\in \Pi}a_{\alpha\beta} U_\beta -2a_{\alpha\delta}\frac{q_1\overline{q_2}}{\lambda^d}U_{-\delta} - 2, \hspace{1mm} \alpha\in \Pi.
\end{equation}
Note that $U_{-\delta}$ is now $\prod_{\alpha\in \Pi} U_\alpha^{-n_\alpha} r_\alpha^{n_\alpha}$, but this is not consequential for us here. When $q_1=0$ or $\overline{q_2}=0,$ the equation (\ref{eq: substituted Theorem C})  is
    $$\Delta_h \log U_\alpha=2\sum_{\beta\in \Pi}a_{\alpha\beta} U_\beta - 2, \hspace{1mm} \alpha\in \Pi.$$ We put it in matrix form. Set $U=(U_\alpha)_{\alpha \in \Pi}$ and $\Delta_h \log U= (\Delta_h \log U_\alpha)_{\alpha\in \Pi}.$ Choosing an ordering for the simple roots allows us to build the Cartan matrix $A_{\g}$ (see \cite[Chapter 3, \S 1.7]{OV}). The equation can then be expressed as $$\Delta_h \log u = 2 A_{\g} U-2\Vec{e},$$ where $\Vec{e}=(1,\dots, 1)$.
    If we're looking for a constant solution, then the equation becomes $A_{\g}U=\Vec{e}.$ As is well known, $A_{\g}$ is positive definite, and hence there is a unique solution, which is real and positive. 
\end{proof}

\begin{prop}\label{prop: existence on bigger marginal locus}
Let $c_1$ and $c_2$ be arbitrary. Let $(q^1,\overline{q}^2)\in bH(c_1,\overline{c_2},G)$ and suppose that at least one of $q^1$ or $\overline{q}^2$ is identically zero. For this data, there exists an isomorphism solving the equation (\ref{eq: bi-Hitchin flatness}), thus producing a complex harmonic $G$-bundle with bi-Hitchin basepoint $(q^1,\overline{q}^2).$ The isomorphism does not depend on $(q^1,\overline{q}^2).$ When $S$ is closed, $c_1=c_2$, and $q^1=\overline{q}^2=0$, the complex harmonic $G$-bundle corresponds to the unique harmonic $G$-bundle coming from the self-duality equations (\ref{eq: self-duality real case}). 
\end{prop}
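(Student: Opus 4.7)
My plan is to reuse the ``cyclic'' isomorphism $\mathcal{I}_0$ already built in Proposition \ref{prop: existence on marginal locus}, which by construction is independent of the bi-Hitchin basepoint. By the $(c_1,\overline{c_2}) \leftrightarrow (c_2,\overline{c_1})$ symmetry explained at the end of Section \ref{sec: bi-Hitchin}, I will only handle the case $\overline{q}^2 = 0$; the case $q^1 = 0$ then follows by complex-conjugating all the data. The Lie-theoretic input that makes everything work is the identity $U_\alpha \equiv 1$ for every simple root $\alpha$: in the pre-substitution variables of equation \eqref{eq: Theorem C}, the constant solution from Proposition \ref{prop: existence on marginal locus} is exactly this. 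Indeed, the defining relation $[x,e]=e$ of the principal $\mathfrak{sl}_2$-triple forces $\sum_\beta a_{\alpha\beta}r_\beta = \alpha(x) = 1$, i.e., $A_\g \vec{r} = \vec{e}$, and then the substituted linear system $A_\g(r_\beta U_\beta) = \vec{e}$ forces $U_\beta \equiv 1$. Plugging $U_\alpha \equiv 1$ into the formula for $\overline{\phi_2}^{\mathcal{I}}$ derived in the proof of Theorem C, one obtains, in a local trivialization holomorphic for $P_{G^{\C}} \to (S,c_1)$,
\[
\overline{\phi_2}^{\mathcal{I}_0} \;=\; \lambda\, e\, dz\otimes d\overline{w},
\]
a scalar multiple of the principal nilpotent $e$ itself.

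Next, I will take $\phi_1 = \widetilde{e} + \sum_{i=1}^l q_i^1 e_i$ and $\overline{\psi_2} = \widetilde{e}$, and check the three conditions in Definition \ref{def: complex harmonic G-bundle}. The harmonicity $\overline{\partial}_2 \phi_1 = 0$ is immediate from the $c_1$-holomorphicity of each $q_i^1$ together with the fact that every $e_i \otimes dz^{-(m_i-1)}$ is $\overline{\partial}_2$-closed in the Hitchin decomposition. The harmonicity $\partial_1^{\mathcal{I}_0} \overline{\phi_2}^{\mathcal{I}_0} = 0$ is equivalent to $\mathcal{I}_0^{-1}(\partial_1 \widetilde{e}) = 0$, which holds tautologically. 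For the flatness equation \eqref{eq: bi-Hitchin flatness}, the centralizer property $[e,e_i] = 0$ of the principal nilpotent gives
\[
[\phi_1,\overline{\phi_2}^{\mathcal{I}_0}] \;=\; \lambda\bigl[\widetilde{e} + \textstyle\sum_i q_i^1 e_i,\, e\bigr]\, dz\wedge d\overline{w} \;=\; \lambda\,[\widetilde{e},e]\,dz\wedge d\overline{w},
\]
which is exactly the commutator that appears for the basepoint $(0,0)$. Hence \eqref{eq: bi-Hitchin flatness} reduces verbatim to the case already verified in Proposition \ref{prop: existence on marginal locus}. The closing assertion, about $S$ closed with $c_1=c_2$ and $q^1=\overline{q}^2=0$, follows by combining Proposition \ref{prop: real Hitchin ex and un} with the harmonic-to-complex-harmonic passage recalled at the end of Section \ref{sec: building ch G-bundles}.

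The main obstacle, and the crux of the argument, is noticing the ``cyclic miracle'' $U_\alpha \equiv 1$: only this specific value places $\overline{\phi_2}^{\mathcal{I}_0}$ inside the one-dimensional subspace of $\g_1$ that commutes with $e$, namely $\mathrm{span}(e)$ itself. It is precisely this placement that neutralizes all the intermediate-degree Higgs-field terms $\sum_{i=1}^{l-1} q_i^1 e_i$ in the commutator. Once this is spotted, the verification above is just a direct computation in the Chevalley basis.
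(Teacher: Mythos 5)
Your proof is correct, but the key step runs on a different engine than the paper's. The paper also reuses the basepoint-independent isomorphism $\mathcal{I}$ from Proposition \ref{prop: existence on marginal locus} and reduces everything to showing the commutator term is unchanged; however, it does so by a weight argument that never evaluates the constants $U_\alpha$: using part (1) of Proposition \ref{prop: h valued}, the term $[\widetilde{e},\mathcal{I}^{-1}(\overline{q_i}e_i)]$ (and, after conjugating by $\mathcal{I}$, the term $[q_ie_i,\mathcal{I}^{-1}(\widetilde{e})]$) lives in $V_i$ at an $x$-weight strictly below the lowest weight of $V_i$, hence vanishes. You instead pin down the constant solution exactly: since $x=\sum_\beta r_\beta h_\beta=\rho^\vee$ satisfies $\alpha(x)=1$ for every simple $\alpha$, the system $\sum_\beta a_{\alpha\beta}r_\beta U_\beta=1$ has the unique solution $U_\alpha\equiv 1$, so $\overline{\phi_2}^{\mathcal{I}_0}$ is a scalar multiple of $e$ and the centralizer relation $[e,e_i]=0$ kills the extra terms. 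Both routes are valid, but be aware of two things. First, your argument genuinely requires all $U_\alpha$ to be equal: the individual brackets $[e_i,x_\alpha]$ are generally nonzero (e.g.\ $[e,x_{\alpha_1}]\in\g_\delta$ for $A_2$), so only the proportionality of $\sum_\alpha U_\alpha r_\alpha^{1/2}x_\alpha$ to $e$ saves you; the paper's weight argument is insensitive to the actual values of the $U_\alpha$ and is therefore more robust to normalization conventions. Second, your identity $U_\alpha\equiv 1$ contradicts a remark in the paper's appendix asserting that the $U_\alpha$'s fail to be all equal outside types $A_1$ and $A_2$; your computation via $A_\g\vec r=\vec e$ is the one forced by the grading relation $[x,y]=(\mathrm{ht}\,\beta)\,y$ that the whole construction depends on, so the identity stands, but this tension is worth flagging explicitly if you keep this route. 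Your dispatch of the two harmonicity conditions, the reduction of the $(0,\overline q^2)$ case to the $(q^1,0)$ case by conjugating all data, and the closing appeal to Proposition \ref{prop: real Hitchin ex and un} for the real closed case all match the paper.
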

\begin{proof}
Given $c_1,c_2,$ let $\mathcal{I}$ be the common Lie algebra bundle isomorphism built from the solution vector $U=(U_\alpha)_{\alpha\in \Pi}$ from Proposition \ref{prop: existence on marginal locus}. We will show that this isomorphism solves (\ref{eq: bi-Hitchin flatness}) for the data in question, independent of $q^1,\overline{q}^2$. The curvature $F(A^\mathcal{I})$ of course does not depend on $(q^1,\overline{q}^2)$, and we already know by the proposition above that if we take $(q^1,\overline{q}^2)=(0,0),$ then $\mathcal{I}$ solves the equation (\ref{eq: bi-Hitchin flatness}). Hence, it suffices to show that for arbitrary $(q^1,\overline{q}^2),$ the commutator of the sections $\phi_1$ and $\overline{\phi_2}^{\mathcal{I}}$ built using $(q^1,\overline{q}^2)$ and $\mathcal{I}$ is identical to the commutator term for the case $(q^1,\overline{q}^2)=(0,0)$. We recall that for $(q^1,\overline{q}^2)=(0,0),$ this commutator is $[\widetilde{e},-\mathcal{I}^{-1}(\widetilde{e})].$

Recall the splitting $V=\oplus_{i=1}^l V_i$ from Section \ref{sec: bi-Hitchin}, and that $e_i$ is a highest weight vector for the adjoint action of $x$ on $V_i,$ with weight $m_i$. For each $i$, since $x$ preserves $V_i$, $V_i$ decomposes into a sum of root spaces, which, by basic representation theory, come in pairs $\g_\alpha$ and $\g_{-\alpha}$.

With this in mind, we calculate the commutator term. For a basepoint of the form $(0,\overline{q}^2),$ the commutator term is $$[\phi_1,-\mathcal{I}^{-1}(\overline{\psi_2})]=[\widetilde{e},-\mathcal{I}^{-1}(\widetilde{e})]+\sum_{i=1}^{l}[\widetilde{e},-\mathcal{I}^{-1}(\overline{q_i}e_i)].$$ For each $i$, by (1) in Proposition \ref{prop: h valued} and the facts mentioned above, in a trivialization that's holomorphic for $P_{G^{\C}}\to (S,c_1),$ $\mathcal{I}^{-1}(\overline{q_i}e_i)$ is a $1$-form valued in $V_i,$ and hence $[\widetilde{e},-\mathcal{I}^{-1}(q_ie_i)]$ is also valued in $V_i$ (since $\widetilde{e}$ is in the principal $\mathfrak{sl}(2,\C)$).
On the other hand, using (1) in Proposition \ref{prop: h valued} again, $[\widetilde{e},-\mathcal{I}^{-1}(q_ie_i)]$, if non-zero, has weight $-m_i-1$ for the action of $x$. Since $-m_i$ is the lowest weight, $[\widetilde{e},-\mathcal{I}^{-1}(q_ie_i)]=0$. We deduce that $[\phi_1,-\mathcal{I}^{-1}(\overline{\psi_2})]=[\widetilde{e},-\mathcal{I}^{-1}(\widetilde{e})]$, as desired.

For a basepoint $(q^1,0),$ the term in question is $$[\phi_1,-\mathcal{I}^{-1}(\overline{\psi_2})]=[\widetilde{e},-\mathcal{I}^{-1}(\widetilde{e})]+\sum_{i=1}^{l}[e_iq_i,-\mathcal{I}^{-1}(\widetilde{e})].$$ For every $i,$ writing $[e_iq_i,-\mathcal{I}^{-1}(\widetilde{e})]=\mathcal{I}^{-1}([\mathcal{I}(e_iq_i),\widetilde{e}]),$ the same reasoning as above shows that $[e_iq_i,-\mathcal{I}^{-1}(\widetilde{e})]$ is identically zero. Thus, yet again, $[\phi_1,-\mathcal{I}^{-1}(\overline{\psi_2})]=[\widetilde{e},-\mathcal{I}^{-1}(\widetilde{e})]$, and the proof is complete.
\end{proof}

\begin{remark}\label{rem: failure}
We do not attempt to solve (\ref{eq: Theorem C}) everywhere. As we'll see in Section 6, we are most interested in solutions to (\ref{eq: Theorem C}) such that the linearization is an isomorphism of the relevant Sobolev spaces. For $G=\mathrm{PSL}(2,\R)$, taking $c_1=c_2$ and $q:=q_1=-q_2$, the equation (\ref{eq: rank 1 case}) becomes
\begin{equation}\label{eq: n=2 case bad}
    \Delta_h \log U = 2U-2|q|_h^2U^{-1}-2.
\end{equation}
Given any $q$ and the path $t\mapsto tq$, $t\geq 0,$ it is known that there is no real solution to (\ref{eq: n=2 case bad}) for large enough $t$ and that, for small $t,$ there is a unique real analytic path of solutions $U_t$ starting at $U_0=1$ such that the linearization of (\ref{eq: n=2 case bad}) eventually has a kernel (see \cite{Uh}). Morally, the issue is that, for $|q|^2$ large, the linearization resembles the operator $\Delta - kI,$ $k<0$, which, for certain $k$, has kernel (eigenvalues of the Laplacian). For $G=\mathrm{PSL}(3,\R)$, $c_1=c_2$ and $q_1=-q_2$, the equation (\ref{eq: Theorem C}) has the same bad behaviour (see \cite[Section 6.6]{ElSa}).
\end{remark}

\subsection{Examples, mostly low rank}\label{sec: low rank examples}
\subsubsection{$G=\mathrm{PSL}(2,\R)$: Bers' simultaenous uniformization maps}\label{sec: Bers' maps}
For $G=\textrm{PSL}(2,\R),$ $G^{\C}/K^{\C}$ identifies with the image of $\mathbb{G}=\mathbb{X}_2$ in projective space. By Remark \ref{remark: harmonicity for dimension reasons}, the maps $f:(\widetilde{S},c_1,\overline{c_2})\to \mathbb{G}$ produced from Bers' Simultaneous Uniformization Theorem are conformal and complex harmonic. They correspond to taking $c_1$ and $\overline{c_2}$ arbitrary and $q_1=q_2=0$ in Theorem C, solving (\ref{eq: Theorem C}) with $U=1$, and then lifting the corresponding complex harmonic maps. From these observations, we can view Bers' simultaenous uniformization as an analogue of Labourie's parametrizations: every quasi-Fuchsian representation carries a special equivariant conformal complex harmonic map into $\mathbb{G}$ and these maps lead to a parametrization of the space of quasi-Fuchsian representations.

\subsubsection{$G=\mathrm{PSL}(3,\R)$: complex affine spheres}
\label{subsec: PSL(3,R) example}
Minimal immersions for Hitchin representations to the $\mathrm{SL}(3,\R)$-symmetric space are equivalent to hyperbolic affine spheres in $\R^3$ (note $\textrm{PSL}(3,\R)=\textrm{SL}(3,\R))$. Indeed, Labourie introduced the Blaschke lift, which produces a minimal immersion out of a hyperbolic affine sphere \cite[Section 9.3]{Lab2}, and then Baraglia used Higgs bundles to show how a minimal immersion encodes an affine sphere \cite[Section 3.4]{BaragliaThesis}. 

We describe the Blaschke lift. Any hyperbolic affine sphere $f$ centered at $0$ is determined by a Riemann surface structure $(S,c)$ together with a metric in the conformal class defined by $c$ called the Blaschke metric. A minimal immersion to the $\mathrm{SL}(n,\R)$-symmetric space is equivalent to the data of a Riemannian metric over every point on $\widetilde{S}$. Out of an affine sphere, Labourie defines the minimal immersion as follows. Globally trivialize $T\R^3,$ so that the right projection of $df(T\widetilde{S})$ is viewed as a subspace of $\R^3$. At a point $p,$ we define a metric $H(p)$ by taking the Blaschke metric on the tangent space $df_p(T\widetilde{S})\subset \R^3$ and then declaring the position vector of $f$ to be orthogonal and norm $1$.

In the paper \cite{ElSa}, we introduced complex affine spheres, which are special equivariant immersions $\widetilde{S}\to \C^3$ that generalize real affine spheres. To build a complex affine sphere, one starts from complex structures $c_1$ and $\overline{c_2}$ and holomorphic cubic differentials for these complex structures, and then solves an equation (see \cite[Section 3]{ElSa}). It is easily verified that the equation encoding complex affine spheres in \cite{ElSa} is equivalent to the equation (\ref{eq: sl(3) equation}) for $G=\mathrm{PSL}(3,\R).$ It is very likely that Theorem C produces conformal complex harmonic maps that are equivalent to complex affine spheres. Indeed, we can try to replicate the real correspondence. Complex affine spheres encode two complex structures $c_1$ and $\overline{c_2}$, as well as a complex metric that generalizes the Blaschke metric. Putting the complexified tangent space inside the (trivialized) tangent bundle of $\C^3$, we can define a Blaschke lift in the analogous fashion, producing a map from $\widetilde{S}$ to $\mathrm{SL}(3,\C)/\mathrm{SO}(3,\C).$

We do not try to prove this lift is harmonic; doing so would require introducing the formalism of complex affine spheres, and would take us too far away from the aims of the paper. However, we can say that, since the holomorphic extension of Labourie's parametrization is unique, by Theorem B and Theorem A from \cite{ElSa}, the infinitesimally rigid complex affine spheres from \cite{ElSa} are equivalent to the conformal harmonic maps to $\mathrm{PSL}(3,\C)/\mathrm{PSO}(3,\C)$ from Theorem B.

In the paper \cite{RT}, Rungi and Tamburelli introduce complex minimal Lagrangian immersions in the bi-complex hyperbolic space, which are determined by the analogous holomorphic data and the same equation. These immersion are probably also equivalent to the complex harmonic maps, but we don't pursue this direction here.

\subsubsection{More on cyclic lifts} 
Minimal immersions for Hitchin representations into $\mathrm{PSp}(4,\R)$ are equivalent to certain maximal immersions to $\mathbb{H}^{2,2}$, and there is a similar story for $G_2'$. In view of the discussion on $\mathrm{PSL}(3,\R),$ it seems probable that the conformal harmonic maps from Theorem C of these rank $2$ Lie groups are equivalent to some other special immersions inside complex homogeneous spaces. In fact, beyond rank $2,$ for certain $G$, cyclic $G$-Higgs bundles are known to give rise to special immersions (see \cite{Nie} for $\mathrm{SO}(n,n+1)$ and \cite{RTpara} for $\mathrm{SL}(2n+1,\R)$). The existence of these immersions can be seen through the cyclic lift (recall Remark \ref{rem: cyclic lifts}). It is likely that the lift to $G^{\C}/H^{\C}$ described in Remark \ref{rem: cyclic lifts} plays a similar role in the theory of complex harmonic maps.

\subsubsection{Minimal surfaces in $\mathbb{H}^3$}
For $G=\textrm{PSL}(2,\R)$, return to Theorem C and choose $c:=c_1=c_2$, $q:=q_1=-q_2.$ The equation (\ref{eq: Theorem C}) agrees with the Gauss equation for finding a minimal immersion $\widetilde{S}\to \mathbb{H}^3$ with first fundamental form $\mathrm{I}$ in the conformal class specified by $c$, and shape operator $\mathrm{B}$ so that the second fundamental form is $\mathrm{II}:=\mathrm{I}(B\cdot,\cdot) = \frac 1 2 (q +\overline{q})$ (see \cite{Uh}). Suppose that we are given a (real) solution $U$, providing a minimal immersion $\sigma$. By Theorem C, $U$ also engenders a complex harmonic map to $G^{\C}/K^{\C}$.
We explain that this complex harmonic map is related to $\sigma$ via a Gauss map.

For an oriented immersion $\sigma_0: \widetilde S\to \Hyp^3$ with normal vector field $N(p)$, we can define a normal map from $\widetilde{S}$ to the unit tangent bundle $T^1{\Hyp^3}$ by $p\mapsto (\sigma(p),N(p))$. Recalling the interpretation of $\mathbb{G}$ as the space of oriented geodesics, we have the Gauss map
$\widehat\sigma_0: \widetilde S\to \mathbb G$ that assigns to each $p$ the oriented line determined by $(\sigma(p),N(p))$. We can show without using Theorem C that $\widehat{\sigma}$ is complex harmonic. From \cite[Proposition 4.12]{BEE}, for our minimal map $\sigma,$ the pull-back through the Gauss map $\widehat \sigma$ of the holomorphic Riemannian metric of $\mathbb G$ is \[
\widehat{\sigma}^*\inners_{\mathbb{G}}= \mathrm I((id+iJB)\cdot,(id+iJB)\cdot)= \mathrm I -\mathrm I(B^2\cdot,\cdot)+ q-\overline q,
\]
where $J$ is the almost complex structure associated with $c$. By \cite[Proposition 4.12]{BEE}, $\widehat \sigma^*\inners_{\mathbb G}$ is a complex metric if and only if the curvature of $\mathrm{I}$ is nowhere zero. By the Gauss equation, the curvature of $\mathrm{I}$ is less than $-1$ \cite[Theorem 4.2]{Uh}, and hence $\widehat \sigma^*\inners_{\mathbb G}$ is indeed a complex metric. Consequently, $\widehat \sigma$ is admissible. Since $B$ is traceless, $\mathrm I-\mathrm I(B^2\cdot, \cdot)$ is a conformal multiple of $\mathrm I$, and thus, by Remark \ref{remark: harmonicity for dimension reasons}, the holomorphicity of $q$ ensures that $\widehat{\sigma}$ is harmonic.

To relate $\widehat{\sigma}$ with the complex harmonic map from Theorem C, say, $f$, note that if $h$ is the conformal hyperbolic metric on $(S,c)$, then $\mathrm{I}-\mathrm{I}(\mathrm{B}^2,\cdot) = 2(U + 4U^{-1}|q|^2)h$ (see \cite{Uh}). If $(P_{K^{\C}},A^{\mathcal{I}},\phi_1,\overline{\phi_2}^{\mathcal{I}})$ is the complex harmonic $G$-bundle associated with $f$, then, via the Maurer-Cartan form, the expression (\ref{pullbackmetric}) for the pullback metric can be rewritten
\begin{equation}\label{eq: pullback mc}
f^*\nu=2\nu(\phi_1,\overline{\phi_2}^{\mathcal{I}})+\nu(\phi_1,\phi_1)+\nu(\overline{\phi_2}^{\mathcal{I}},\overline{\phi_2}^{\mathcal{I}}).
\end{equation}
Computing each term in (\ref{eq: pullback mc}) then gives
$f^*\nu=\widehat{\sigma}^*\inners_{\mathbb{G}}.$ Since the two maps have the same intrinsic data, by \cite[Corollary 4.2]{BEE}, $\hat{\sigma}$ identifies with a lift of $f$. 

\subsubsection{Minimal surfaces in $G^{\C}/K_{G^{\C}}$}\label{subsubsect: Riem symmetric space of GC} The example above generalizes for all groups. In Theorem C, choose $c:=c_1=c_2$ and $q:=q_1=-q_2$ and assume that the solution to (\ref{eq: Theorem C}) is real. In this case, the isomorphism $\mathcal{I}$ is a Cartan involution on $\mathrm{ad}P_{G^{\C}}$, and hence, since $G$ is adjoint type, determines a reduction of $P_{G^{\C}}$ to the maximal compact subgroup $K_{G^{\C}}$ of $G^{\C}$, say, $P_{K_{G^{\C}}}\subset P_{G^{\C}}$. This reduction yields a lift of the complex harmonic map to $G^{\C}/(K^{\C}\cap K_{G^{\C}})$ (to link back to the $\textrm{PSL}(2,\R)$ case, note that there, $T^1\mathbb{H}^3$ doubly covers $G^{\C}/(K^{\C}\cap K_{G^{\C}})$). Composing with the projection from $G^{\C}/(K^{\C}\cap K_{G^{\C}})$ to the Riemannian symmetric space $G^{\C}/K_{G^{\C}}$, we claim that the resulting map from $(\widetilde{S},c,\overline{c})\to G^{\C}/K_{G^{\C}}$ is a harmonic map in the usual sense. 
Firstly, we write out the corresponding flat connection as 
\begin{equation}\label{eq: flat connection q q bar}
    A=A^{\mathcal{I}}+\phi_1+\overline{\phi_2}^{\mathcal{I}}= A^{\mathcal{I}}+(\widetilde{e}+qx_\delta)+(-\mathcal{I}^{-1}(\widetilde{e})+\overline{q}\mathcal{I}^{-1}(x_\delta)).
\end{equation}
We then decompose (\ref{eq: flat connection q q bar}) into the sum of an $\mathcal{I}$-invariant piece and an $\mathcal{I}$-anti-invariant piece as
$$A=(A^{\mathcal{I}}+ qx_\delta+ \overline{q}\mathcal{I}^{-1}(x_\delta)) + (\widetilde{e}-\mathcal{I}^{-1}(\widetilde{e}))=: A_q+\psi.$$ 
The anti-invariant piece, which we labeled $\psi,$ is the tangent map of the immersion to $G^{\C}/K_{G^{\C}}$. Since $\mathcal{I}(x_\delta)$ is a $1$-form valued in $\g_{-\delta}\otimes \overline{\mathcal{K}}_c^{-r-1},$ $[ \overline{q}\mathcal{I}(x_\delta),\widetilde{e}]=0,$ and hence $A_q$ determines a holomorphic structure on $P_{G^{\C}}$ such that $\widetilde{e}=\psi^{1,0}$ is holomorphic. In other words, using this complex structure, $(P_{G^{\C}},\widetilde{e},P_{K_{G^{\C}}})$ is a harmonic $G^{\C}$-bundle. As in Section \ref{sec: ordinary harmonic maps}, these observations ensure that the immersion is harmonic. Note that it is quite a special harmonic map: the Higgs bundle is nilpotent and the map is conformal (hence, a minimal immersion).

\subsubsection{Minimal Lagrangians in $\mathbb{CH}^2$}
Keeping with the example above, we find that for $G^{\C}=\mathrm{PSL}(3,\C),$ the minimal immersion in fact lands in the Riemannian symmetric space of the real form $\mathrm{PU}(2,1),$ i.e., the complex hyperbolic space $\mathbb{CH}^2.$ Such a result is certainly expected in view of \cite{RT} and \cite[Theorem C]{ElSa}. The easiest way to see this is to observe that the data above in fact yields a $\mathrm{PU}(2,1)$-harmonic bundle. Perhaps most familiar to the reader, if we choose an irreducible $3$-dimensional linear representation of $G^{\C}$, the holomorphic vector bundle associated with our principal bundle is $\mathcal{K}\oplus\mathcal{O}\oplus \mathcal{K}^{-1}$, where $\mathcal{K}=\mathcal{K}_c$ is the canonical bundle (see \cite[\S 3]{Hi}). Following the explanation from \cite[Section 2.2]{LMHiggs}, it suffices to observe that the vector bundle splits as $V\oplus L,$ where $V$ is rank $2$ and $L$ is a line bundle, and the Higgs field of the $G^{\C}$-Higgs bundle induces an $\textrm{End}(V\oplus L)$-valued $1$-form that splits as a sum of a $1$-form valued in $\mathrm{Hom}(V,L),$ and a $1$-form valued in $\mathrm{Hom}(L,V).$ For us, the splitting is given by $V\oplus L = (\mathcal{K}\oplus \mathcal{K}^{-1})\oplus \mathcal{O}$, and the condition on the Higgs field is easily checked. Equivariant minimal immersions to $\mathbb{CH}^2$ are described via Higgs bundles in \cite{LMHiggs}, and our immersions come from taking $D_1=D_2=0$ in Theorem 2.3 of \cite{LMHiggs}. As well, as explained in \cite{LMHiggs}, the minimal immersion is also Lagrangian, and there is a geometric way to read off the cubic differential.

\subsubsection{$G=\mathrm{PSL}(2,\R)^2$}
For semisimple Lie groups, we can carry out the construction of Section \ref{subsubsect: Riem symmetric space of GC} on each component separately. A conformal complex harmonic map $(\widetilde{S},c,\overline{c})\to \mathbb{G}\times \mathbb{G}$ whose components have Hopf differentials $(q,-\overline{q})$ and $(-q,\overline{q})$ therefore corresponds to a pair of minimal immersions to $\mathbb{H}^3\times \mathbb{H}^3$ with opposite second fundamental forms $\frac{1}{2}(q+\overline{q})$ and $-\frac{1}{2}(q+\overline{q})$. 

It is well known that the Gauss map of a spacelike (maximal) immersion in the $3d$ anti-de Sitter space $\textrm{AdS}^3$ produces a (minimal) Lagrangian immersion in $\mathbb{H}^2\times \mathbb{H}^2$. In work in progress with co-authors, the first author is studying a Gauss map for immersions into $\textrm{PSL}(2,\C)$ (note $\textrm{AdS}^3$ embeds in $\textrm{PSL}(2,\C)$) that produces maps to $\mathbb{G}\times \mathbb{G}$. It is likely that this will give another way to think about conformal complex harmonic maps to $\mathbb{G}\times \mathbb{G}$.

\section{Complex harmonic maps and Opers}\label{sec: opers}
In this section, we show how $G^{\C}$-opers can be seen as complex harmonic maps, and we prove Theorem D. We keep all notations and conventions from the previous section. 
\subsection{$G^{\C}$-opers}\label{sec: introducing opers}
We first give a rapid introduction to the theory of $G^{\C}$-opers, including holomorphic connections. For more on this theory, we refer the reader to the paper \cite{CSopers}, which also highlights relations to higher Teichm{\"u}ller theory.

\subsubsection{Holomorphic connections}\label{sec: hol connections}
A holomorphic connection on a holomorphic principal $G^{\C}$-bundle $P_{G^{\C}}$ over a Riemann surface is just a connection in the sense of Definition \ref{def: principal connection}, i.e., a $\g^{\C}$-valued $1$-form with some properties, which is also holomorphic. Since there are no $(2,0)$-forms on a Riemann surface, a holomorphic connection on a Riemann surface is flat. 

We record for future use that the passage from holomorphic connections to flat connections is in some sense reversible. Let $P_{G^{\C}}$ be a smooth principal $G^{\C}$-bundle over a Riemann surface with a connection $A$. Via the Koszul-Malgrange theorem, equip $P_{G^{\C}}$ with the unique complex structure such that $A$ is of type $(1,0).$ Then it is easily checked that, for this complex structure, $A$ is holomorphic if and only if it is flat.
 
Since it might be more familiar to the reader, we often write out the associated data on the adjoint bundle. Let $P_{G^{\C}}$ be a holomorphic principal $G^{\C}$-bundle on a Riemann surface with canonical bundle $\mathcal{K}$. Given a representation on a complex vector space $\xi:G^{\C}\to \mathrm{GL}(V)$ with associated holomorphic vector bundle $V_\xi=P_{G^{\C}}\times_{\xi} V$, let $\mathcal{O}_{V_\xi}$ be the sheaf of holomorphic sections. Any holomorphic connection $A$ induces an affine holomorphic connection $\partial_A$ on $V_\xi$, which by definition is a $\C$-linear operator $\partial_A: \mathcal{O}_{V_\xi}\to \mathcal{O}_{V_\xi}\otimes \mathcal{K}$ such that for locally defined holomorphic sections $s$ and locally defined holomorphic functions $f$, $$\partial_A(fs) =\partial f \otimes s + f\otimes \partial_A(s).$$ In general, if $\overline{\partial}$ is the del-bar operator on $V_\xi$, then $\nabla_A:= \overline{\partial}+\partial_A$ is a flat connection. 

\subsubsection{$G^{\C}$-opers}
\label{subsubsec: opers}

Let $P_{G^{\C}}$ be a holomorphic principal $G^{\C}$-bundle, $H\subset G^{\C}$ a closed complex Lie subgroup, and $P_H\subset P_{G^{\C}}$ a reduction of structure group to $H.$ The map $A: TP_{G^{\C}}\to \g^{\C}$ fits into a composition $$TP_H\to TP_{G^{\C}}  \xrightarrow{A} S\times \g^{\C} \to S\times \g^{\C}/\mathfrak{h},$$  which yields a $(S\times \g^{\C}/\mathfrak{h})$-valued $1$-form $A_{\mathfrak{h}}$ on $P_H$ called the second fundamental form of $A$ relative to the reduction $P_H.$ The group $H$ acts on $\g^{\C}/\mathfrak{h}$ by quotienting the adjoint action, and $\C^*$ acts on $\g^{\C}/\mathfrak{h}$ by multiplication. Let $\mathcal{O}\subset \g^{\C}/\mathfrak{h}$ be a $H\times \C^*$ invariant subset.
\begin{defn}
    We say that $A$ has \textbf{relative position} $\mathcal{O}$, written $\mathrm{pos}_{P_H}(A)=\mathcal{O}$, if for all non-zero tangent vectors $v$ of $P_H,$ $A_{\mathfrak{h}}(v)\in \mathcal{O}$.
\end{defn}

Now choose a Borel subgroup $B\subset G^{\C}$ with Lie algebra $\mathfrak{b}\subset \g^{\C}$ and a Cartan subgroup $H^{\C}\subset G^{\C}$ contained inside $B$. Since all pairs $H^{\C}$ and $B$ are equivalent under the adjoint action of $G^{\C},$ we can specify that $H^{\C}$ is the same Cartan from Section \ref{sec: ch G-bundles}, with height grading $\g=\oplus_{m=-M}^M\g_m$ from Section \ref{sec: bi-Hitchin} (constructed as the weight decomposition for the element $x$), and that $\mathfrak{b}=\oplus_{m=0}^M \g_m$. This grading defines a $B$-invariant filtration 
\begin{equation}\label{eq: first filtration}
    \g^M\subset \g^{M-1}\subset \dots \subset \g^0\subset \g^{-1}\subset \dots \subset \g^{-M}=\g^{\C},
\end{equation}
 with $\g^j = \oplus_{i=j}^M \g_i$ (note $\g^0=\mathfrak{b}$). Quotienting by $\mathfrak{b}$ yields a $B$-invariant filtration: $$\g^{-1}/\mathfrak{b}\subset \g^{-2}/\mathfrak{b}\subset \dots \subset \g^{\C}/\mathfrak{b}.$$ Note that any choice of $H^{\C}\subset G^{\C}$ would have in fact given the same filtration \cite[Section 2.1 and Example 2.2]{CSopers}. The result below follows from a theorem of Vinberg (see \cite[Theorem 10.19]{Knapp}) and leads into the definition of a $G^{\C}$-oper.

\begin{thm}
    There is a unique open and dense $B$-orbit $\mathcal{O}\subset \g^{-1}/\mathfrak{b}$ with respect to the adjoint $B$-action.
\end{thm}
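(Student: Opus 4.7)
The plan is to reduce the statement to a concrete linear-algebraic assertion about the action of the Cartan $H^{\C}$ on the $-1$ weight space $\g_{-1}$. First, I would note that the decomposition $\g^{-1} = \mathfrak{b} \oplus \g_{-1}$ gives a canonical $B$-equivariant identification $\g^{-1}/\mathfrak{b} \cong \g_{-1}$, and that $\g_{-1}$ decomposes as $\bigoplus_{\alpha \in \Pi} \g_{-\alpha}$ since the root spaces of height $-1$ are precisely the negative simple root spaces (each one-dimensional).

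Next I would observe that the unipotent radical $N \subset B$, with Lie algebra $\bigoplus_{m \geq 1} \g_m$, acts trivially on $\g_{-1}$: for $X \in \g_m$ with $m \geq 1$, the bracket $[X,\g_{-1}] \subset \g_{m-1} \subset \mathfrak{b}$, so it dies in the quotient. Hence the $B$-action on $\g^{-1}/\mathfrak{b}$ factors through the adjoint action of the Cartan $H^{\C}$ on $\g_{-1}$.

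The heart of the argument is to exhibit one open orbit and then invoke irreducibility. For the principal nilpotent $\widetilde{e} = \sum_{\alpha\in \Pi} r_\alpha^{1/2} x_{-\alpha}$, I would compute the derivative of the orbit map $H^{\C} \to \g_{-1}$, $h\mapsto \mathrm{Ad}(h)\widetilde{e}$, at the identity. This derivative sends $X \in \mathfrak{h}^{\C}$ to $[X,\widetilde{e}] = -\sum_{\alpha\in\Pi} r_\alpha^{1/2}\,\alpha(X)\, x_{-\alpha}$. Since the simple roots $\Pi$ form a basis of $(\mathfrak{h}^{\C})^*$ and $\dim \mathfrak{h}^{\C} = l = \dim \g_{-1}$, this map is a linear isomorphism, so the orbit of $[\widetilde{e}]$ is open in $\g_{-1}$.

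Finally, uniqueness follows from irreducibility: $\g_{-1}$ is an irreducible affine variety, so any two non-empty open subsets meet, and in particular two open orbits would coincide; an open orbit in an irreducible variety is automatically dense, since its complement is a union of lower-dimensional orbits (closed in each $H^{\C}$-stable stratification). The main subtlety I foresee is not really an obstacle here but rather a bookkeeping point: verifying that $N$ acts trivially relies on the specific grading $\g^j = \bigoplus_{i \geq j}\g_i$ used to define the filtration (\ref{eq: first filtration}), and making sure this is the filtration meant in the statement. Once that is clear, the argument is essentially a direct computation of one differential plus the irreducibility observation.
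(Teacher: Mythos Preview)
Your argument is correct. The paper does not actually prove this statement; it simply records it as a consequence of a theorem of Vinberg (citing Knapp, Theorem 10.19) and then separately notes the explicit description of $\mathcal{O}$ as the set of elements whose projection onto every negative simple root space is nonzero.

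Your route is more elementary and self-contained: you reduce the $B$-action on $\g^{-1}/\mathfrak{b}$ to the $H^{\C}$-action on $\g_{-1}$ by checking that the unipotent radical acts trivially, then exhibit an open orbit by computing a single differential, and finish with irreducibility. This recovers exactly the description the paper quotes, namely $\mathcal{O} = \{\sum_{\alpha\in\Pi} c_\alpha x_{-\alpha} : c_\alpha \neq 0\}$. The advantage of your approach is that it is transparent and requires no external machinery; the advantage of invoking Vinberg is that it situates the statement in the general theory of graded Lie algebras and $\theta$-groups, where such open-orbit results hold in far greater generality. Two small stylistic points: the phrase ``canonical $B$-equivariant identification $\g^{-1}/\mathfrak{b}\cong\g_{-1}$'' is slightly premature, since $\g_{-1}$ is not $B$-stable under $\mathrm{Ad}$---but your subsequent argument (that $N$ acts trivially and $H^{\C}$ preserves the grading) makes this precise; and for density you only need that a nonempty open set in an irreducible variety is dense, so the remark about lower-dimensional orbits is unnecessary.
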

The most hands-on way to check the relative position of a holomorphic connection is to choose a trivialization and examine the connection form. By the definitions, $\g^{-1}/\mathfrak{b}$ identifies with the sum of the negative simple root spaces. An element in $\g^{-1}$ determines an element in $\mathcal{O}$ if and only if its projection onto every negative simple root space is non-zero (see \cite[Example 2.2]{CSopers}).

\begin{defn}
    A \textbf{$G^{\C}$-oper} $(P_{G^{\C}},P_B,A)$ over the Riemann surface $(S,c)$ is the data of a principal $G^{\C}$-bundle $P_{G^{\C}}$ over $(S,c)$ carrying a holomorphic connection $A$ and equipped with a holomorphic reduction $P_B\subset P_{G^{\C}}$ to a Borel subalgebra $B\subset G^{\C}$ such that $\mathrm{pos}_{P_B}(A)=\mathcal{O}$.
\end{defn}
Geometrically, the reduction $P_B$ gives a holomorphic map to $G^{\C}/B,$ which is equivariant with respect to the holonomy of $A$. The holonomy of $A$ is referred to as the \textbf{holonomy of the} $G^{\C}$\textbf{-oper}.
The following result from \cite[Section 3]{BD} (see also \cite[\S 1]{BD2}) will be used to show that our maps $\mathcal{B}_G$ and $\mathcal{L}_G^{\C}$ of Theorems B and B' map into the smooth locus of the character variety.
\begin{prop}\label{prop: smooth locus}
    Holonomy representations of $G^{\C}$-opers are irreducible and simple.
\end{prop}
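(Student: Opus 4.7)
The plan is to prove irreducibility first and then deduce simplicity as a consequence. For irreducibility, I would argue by contradiction: suppose the holonomy $\rho$ of a $G^{\C}$-oper $(P_{G^{\C}}, P_B, A)$ factors through a proper parabolic subgroup $Q \subsetneq G^{\C}$ containing $B$ (the general case reduces to this by conjugating). Then the associated flat $G^{\C}$-bundle admits a flat reduction $P_Q \subset P_{G^{\C}}$ to $Q$, and after adjusting the $B$-reduction by the $Q$-action one may assume $P_B \subset P_Q$. Because the connection $A$ restricted to $P_Q$ takes values in $\mathfrak{q}$, its second fundamental form $A_{\mathfrak{b}}: TP_B \to \g^{\C}/\mathfrak{b}$ lands in the image of $\mathfrak{q}/\mathfrak{b}$ inside $\g^{\C}/\mathfrak{b}$. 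But $\mathfrak{q}/\mathfrak{b}$ is a proper $B$-invariant subspace of $\g^{\C}/\mathfrak{b}$ and therefore misses the unique open $B$-orbit $\mathcal{O}$, contradicting $\mathrm{pos}_{P_B}(A) = \mathcal{O}$.

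For simplicity, suppose $g \in G^{\C}$ centralizes $\rho$. Then $g$ induces a flat automorphism $\Phi_g$ of $P_{G^{\C}}$. I would show $\Phi_g$ necessarily preserves the $B$-reduction $P_B$, essentially because any holomorphic $B$-reduction for which $A$ has relative position $\mathcal{O}$ is rigid in the sense of Beilinson--Drinfeld. Working locally in a Drinfeld--Sokolov gauge, where $A$ takes the canonical form $d + (\widetilde{e} + \sum_i q_i e_i)\,dz$ with the $q_i$ holomorphic, a gauge transformation preserving both this connection form and the underlying $B$-reduction must in particular commute with the principal nilpotent $\widetilde{e}$. Since $\widetilde{e}$ is principal nilpotent and $G^{\C}$ acts on $\g^{\C}$ through the adjoint representation with trivial kernel modulo the center, the centralizer of $\widetilde{e}$ in $G^{\C}/Z(G^{\C})$ is trivial, forcing $g \in Z(G^{\C})$.

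The main obstacle will be the alignment step in the irreducibility argument: showing that up to a holomorphic adjustment one may take $P_B \subset P_Q$. This requires pairing the holomorphic equivariant map $\phi_B: \widetilde{S} \to G^{\C}/B$ coming from $P_B$ with the locally constant equivariant map $\phi_Q: \widetilde{S} \to G^{\C}/Q$ coming from the flat $Q$-reduction, and using that the fiber of $G^{\C}/B \to G^{\C}/Q$ is $Q/B$ to absorb the discrepancy. An alternative, cleaner route would be to invoke directly the rigidity theorem of Beilinson--Drinfeld (\cite{BD}, \cite{BD2}), which asserts that the forgetful map from the moduli of $G^{\C}$-opers to the character variety is a closed immersion with image in the smooth locus $\chi^{\mathrm{an}}(\pi_1(S), G^{\C})$; this smoothness statement packages both irreducibility and simplicity at once.
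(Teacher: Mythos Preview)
The paper does not prove this proposition at all; it simply cites Beilinson--Drinfeld \cite[Section 3]{BD} and \cite[\S 1]{BD2}. Your closing ``alternative route''---invoking BD's result that opers land in the smooth locus of the character variety---is exactly what the paper does, and is the intended proof.

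Your direct sketch, by contrast, has real gaps. For irreducibility, the alignment step $P_B \subset P_Q$ that you flag as ``the main obstacle'' is indeed a genuine obstacle: the holomorphic $B$-reduction and the flat $Q$-reduction have no a priori relation, and your proposed absorption via the fibre $Q/B$ amounts to asking that the composite $\widetilde{S} \xrightarrow{\phi_B} G^{\C}/B \to G^{\C}/Q$ agree with the locally constant map $\phi_Q$, which is not automatic and in fact is close in difficulty to the statement you are trying to prove. For simplicity, your key claim is false: the centralizer of a principal nilpotent $\widetilde{e}$ in the adjoint group is \emph{not} trivial---it is an abelian unipotent group of dimension equal to the rank $l$ (this is precisely the ``principal'' characterization used in Section~\ref{sec: bi-Hitchin}). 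What \emph{is} trivial is the centralizer of the full principal $\mathfrak{sl}(2,\C)$-triple, so a correct argument would need to show that a flat automorphism commutes not just with $\widetilde{e}$ but also with, say, the grading element $x$; this requires more than what you wrote.
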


As the main example, we construct Beilinson-Drinfeld's $G^{\C}$-opers (see \cite{BD} and \cite[Section 5.1]{CSopers}), where $G^{\C}$ is a complex simple Lie group of adjoint type. These $G^{\C}$-opers are parametrized by the Hitchin base for the split real form, which we denote by $G$. We return to the notations of Section \ref{sec: ch G-bundles}. Fixing a Riemann surface $(S,c)$ with canonical bundle $\mathcal{K}$, consider the bundle $P_{G^{\C}}$ from Section \ref{sec: ch G-bundles}. The subbundle of the adjoint bundle $$\oplus_{i=0}^{M}\g_i\otimes \mathcal{K}^i\subset \mathrm{ad}P_{G^{\C}}=\oplus_{m=-M}^M\g_m\otimes \mathcal{K}^m$$ is the adjoint bundle of a reduction to a Borel $P_B\subset P_{G^{\C}}$. Let $(P_{K^{\C}},\phi_0)$ be the unique nilpotent Higgs bundle in the Hitchin section (that is, $\phi_0=\Tilde{e}$). Recall from Section \ref{sec: ch G-bundles} that $P_{K^{\C}}$ is a reduction of $P_{G^{\C}}$. For the stable $G$-Higgs bundle $(P_{K^{\C}},\phi_0)$, let $P_K$ be the reduction from Theorem \ref{thm: Hitchin simpsons}, with Chern connection $A_{P_K}$ and involution $\widehat{\sigma}$ of $\mathrm{ad}P_{G^{\C}}$. For every $q=(q_1,\dots, q_{l})\in H(c,G)$, we set $A_q$ to be the flat connection $$A_q=A_{P_K}-\widehat{\sigma}(\phi_0)+\phi_0+q_1e_1+\dots + q_l e_l.$$ We then equip $P_{G^{\C}}$ with the unique holomorphic structure such that $A_q$ of type $(1,0)$ and thus holomorphic. We point out that in this holomorphic structure, $P_B$ is a holomorphic reduction. 
\begin{defn}\label{def: BD oper}
    \textbf{Beilinson-Drinfeld's $G^{\C}$-oper} associated with $q=(q_1,\dots, q_{m_l})\in H(c,G)$ is the above data $(P_{G^{\C}},P_B,A_q)$.
\end{defn}
On the adjoint bundle, the del bar operator is $\overline{\partial}_P = \overline{\partial}_2-\widehat{\sigma}(\phi_0)$, and the affine holomorphic connection is $\widehat{\sigma}(\overline{\partial}_2)+\phi_0+q_1e_1+\dots + q_l e_l$. From our description of elements of $\g^{-1}$ projecting to $\mathcal{O}$, and the explicit expression for $\widetilde{e}$, it is easily seen that $\mathrm{pos}_{P_B}(A_q)=\mathcal{O}$. 

In Definition \ref{def: BD oper}, the $q_1$ is always a quadratic differential, and when $G^{\C}=\mathrm{PSL}(2,\C),$ it is a positive scalar multiple of the Schwarzian derivative of the associated complex projective structure. Beilinson-Drinfeld in fact prove that the map $q\mapsto (P_{G^{\C}},P_B,A_q)$ defines a parametrization of the moduli space of $G^{\C}$-opers by the Hitchin base, although the identification depends on choices (see \cite{BD} or \cite[\S 3]{BD2} for the precise statement). Recall that $G$ is assumed to be of adjoint type; in general, the moduli space of $G^{\C}$-opers might be disconnected, but every component can be identified with the Hitchin base in a similar fashion. 

From the expressions for the flat connections, the following special case (the case $c_1=c_2$) of Theorem D is immediate.
\begin{prop}\label{prop: BD oper}
    Let $c:=c_1=c_2$, let $(q,0)\in bH(c,\overline{c},G).$ The holonomy of the equivariant complex harmonic map to $G^{\C}/K^{\C}$ obtained through Proposition \ref{prop: existence on bigger marginal locus} agrees with that of Beilinson-Drinfeld's $G^{\C}$-oper on $(S,c)$ associated with $q\in H(c,G)$.
\end{prop}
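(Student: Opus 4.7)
The plan is to verify that, in the case $c_1 = c_2 =: c$ and $(q^1, \overline{q}^2) = (q, 0)$, the flat connection on $P_{G^{\C}}$ produced by Proposition \ref{prop: existence on bigger marginal locus} coincides, on the nose, with the flat connection $A_q$ used in Definition \ref{def: BD oper} to build Beilinson--Drinfeld's $G^{\C}$-oper. Since holonomy representations (up to conjugation) are determined by the underlying flat connection on $P_{G^{\C}}$, the agreement of holonomy classes then drops out for free.

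First, I would pin down the Lie algebra bundle isomorphism $\mathcal{I}$ furnished by Proposition \ref{prop: existence on bigger marginal locus}. The key observation already built into that proposition is that $\mathcal{I}$ is independent of the bi-Hitchin basepoint $(q^1, \overline{q}^2)$, so it suffices to determine it when $(q^1, \overline{q}^2) = (0,0)$. In that degenerate case the proposition explicitly says that the resulting complex harmonic $G$-bundle is the one coming from the unique solution to Hitchin's self-duality equations (\ref{eq: self-duality real case}) for the nilpotent Higgs bundle $(P_{K^{\C}}, \phi_0 = \widetilde{e})$ on $(S, c)$. Consequently $\mathcal{I}$ is precisely the involution $\widehat{\sigma}: \mathrm{ad}P_{G^{\C}} \to \overline{\mathrm{ad}P_{G^{\C}}}$ associated with Hitchin's reduction $P_K \subset P_{K^{\C}}$, and the induced principal connection $A^{\mathcal{I}}$ is the Chern connection $A_{P_K}$ of that reduction (extended to $P_K \times_K G$, hence to $P_{G^{\C}}$).

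Next, I would write out the flat connection associated with the complex harmonic $G$-bundle for the basepoint $(q, 0)$. By the recipe of Section \ref{sec: bi-Hitchin}, the Higgs fields are
\[
\phi_1 = \widetilde{e} + q_1 e_1 + \dots + q_l e_l, \qquad \overline{\phi_2}^{\mathcal{I}} = -\mathcal{I}^{-1}(\widetilde{e}) = -\widehat{\sigma}(\phi_0),
\]
and therefore the associated flat connection on $P_{G^{\C}}$ is
\[
A^{\mathcal{I}} + \phi_1 + \overline{\phi_2}^{\mathcal{I}} = A_{P_K} - \widehat{\sigma}(\phi_0) + \phi_0 + q_1 e_1 + \dots + q_l e_l.
\]
Comparing with Definition \ref{def: BD oper}, the right-hand side is exactly $A_q$. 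Since the underlying smooth $G^{\C}$-bundle is in both constructions Hitchin's bundle $P_{G^{\C}}$ from Section \ref{sec: ch G-bundles}, we conclude that the two flat connections are equal, and passing to holonomies (ignoring basepoints, as is done when identifying a flat irreducible connection with a point in the character variety) yields the proposition.

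Since the text explicitly says this special case is \emph{immediate} from the formulas for the flat connections, there is no real analytic or geometric obstacle here; the only ``hard'' point is the identification $\mathcal{I} = \widehat{\sigma}$, and that is the content of the last sentence of Proposition \ref{prop: existence on bigger marginal locus}. All the rest is a direct comparison of two expressions built from the same ingredients $(A_{P_K}, \widehat{\sigma}, \phi_0, q_1 e_1, \dots, q_l e_l)$.
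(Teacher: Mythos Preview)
Your proposal is correct and follows essentially the same route as the paper: identify $\mathcal{I}$ with the self-duality involution $\widehat{\sigma}$ (the paper writes $\mathcal{I}=-\widehat{\sigma}$, a harmless sign convention), deduce $A^{\mathcal{I}}=A_{P_K}$, write out $\phi_1=\widetilde{e}+q_1e_1+\dots+q_le_l$ and $\overline{\phi_2}^{\mathcal{I}}=-\widehat{\sigma}(\phi_0)$, and observe that the resulting flat connection is literally $A_q$. There is nothing to add.
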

\begin{proof}
Let $\mathcal{I}$ be the isomorphism solving (\ref{eq: bi-Hitchin flatness}) from Proposition \ref{prop: existence on bigger marginal locus}, giving complex harmonic $G$-bundle $(P_{K^{\C}},A^{\mathcal{I}},\phi_1,\overline{\phi_2}^{\mathcal{I}})$. From Proposition \ref{prop: existence on bigger marginal locus}, $\mathcal{I}=-\widehat{\sigma}$, and we see that $A^{\mathcal{I}}=A_{P_K}$. Also, note that $\phi_1=\phi_0$ (both are $\widetilde{e}$). Therefore, the flat connection $A^{\mathcal{I}}+\phi_1+\overline{\phi_2}^{\mathcal{I}}$ is
\begin{equation}\label{eq: surprise oper}
A_{P_K} + \phi_0+q_1e_1+\dots+q_le_l-\widehat{\sigma}(\phi_0)=A_q
\end{equation}
\end{proof}
The geometric explanation behind the equality (\ref{eq: surprise oper}) is that, via the reduction $P_{H^{\C}}$ of the usual bundle $P,$ the complex harmonic map lifts to $G^{\C}/H^{\C},$ which then projects to $G^{\C}/B,$ giving the holomorphic map associated with the oper.
\begin{remark}
    In the language of \cite{CSopers}, our $G^{\C}$-opers are $(G^{\C},B)$-opers. We expect that all $(G^{\C},P)$-opers in the sense of \cite{CSopers} can be realized as complex harmonic maps.
\end{remark}

\subsection{Proof of Theorem D}\label{sec: proof of Theorem D}
Having introduced the necessary background, and informed by Proposition \ref{prop: BD oper}, we now prove Theorem D. 
\begin{proof}[Proof of Theorem D]
        Part (1) is Proposition \ref{prop: existence on bigger marginal locus}. For part (2), we begin with the $(q^1,0)$ case. Let $c_1,\overline{c_2}$ be oppositely oriented complex structures on $S$ and let $(P_{K^{\C}},A^{\mathcal{I}},\phi_1,\overline{\phi_2}^{\mathcal{I}})$ be any complex harmonic $G$-bundle over $(S,c_1,\overline{c_2})$ arising from a solution $\mathcal{I}$ to (\ref{eq: bi-Hitchin flatness}). Since the connection $A=A^{\mathcal{I}}+\phi_1+\overline{\phi_2}^{\mathcal{I}}$ on $P_{G^{\C}}=P_{K^{\C}}\times_{K^{\C}}G^{\C}$ is flat, as discussed at the beginning of Section \ref{sec: introducing opers}, it determines a holomorphic structure on $P_{G^{\C}}\to (S,c_1)$ in which $A$ is a holomorphic connection.

Let $P_B\subset P_{G^{\C}}$ be the reduction to a Borel from the subsection above.
We now assume that our Hitchin basepoint is $(q^1,0)\in bH(c_1,\overline{c_2},G)$ and that our complex harmonic $G$-bundle is the one obtained through Proposition \ref{prop: existence on bigger marginal locus}. We claim that $(P_{G^{\C}},P_B,A)$ is a $G^{\C}$-oper, and to do so, we only need to show that $\mathrm{pos}_{P_B}(A)=\mathcal{O}$. We check, in a trivialization that is holomorphic for $P_{G^{\C}}\to (S,c_1),$ that the connection form is in $\mathcal{O}$. In such a trivialization, the connection form is the sum of the connection form of $A^{\mathcal{I}}$ with the Higgs fields. We know by (3) in Proposition \ref{prop: h valued} that the connection form of $A^{\mathcal{I}}$ is valued in $S\times \mathfrak{h}^{\C}$. We also have from the explicit expression 
$$
    \overline{\phi_2}^{\mathcal{I}}=\sum_{\alpha\in \Pi} r_\alpha^{1/2}\lambda U_\alpha (x_\alpha \otimes dz)\otimes d\overline{w}$$
(here, $U_\alpha$ is a constant that does not depend on $c_1,\overline{c_2},$ or $q^1$) that $\overline{\phi_2}^{\mathcal{I}}\in \Omega^1(S,\g_1\otimes \mathcal{K}_{c_1})$. Recalling the filtration (\ref{eq: first filtration}), if $\mathfrak{b}$ is the Lie algebra of $B$, then both $\mathfrak{h}^{\C}$ and $\g_1$ lie in $\mathfrak{b}$. It follows that neither $A^{\mathcal{I}}$ nor $\overline{\phi_2}^{\mathcal{I}}$ contribute to the computation of $\mathrm{pos}_{P_B}(A).$ Writing $\phi_1=\widetilde{e}+q_1e_1+\dots + q_le_l,$ as with Beilinson-Drinfeld's opers, the explicit expression for $\widetilde{e}$ shows that $\mathrm{pos}_{P_B}(A)=\mathcal{O}$, and hence we have an oper. 
        
For the $(0,\overline{q}^2)$ case, we can argue similarly to above, except choosing a holomorphic structure on $P_{G^{\C}}$ that makes $P_{G^{\C}}\to (S,\overline{c_2})$ holomorphic. As at the end of Section \ref{sec: bi-Hitchin}, our complex harmonic $G$-bundle data is now written $(P_{K^{\C}},(A')^{\mathcal{I}},\psi_1^{\mathcal{I}},\overline{\psi_2})$. To see that we have an oper, similar to above, $(A')^{\mathcal{I}}$ and $\psi_1^{\mathcal{I}}$ do not factor into computing the relative position, and the explicit expression $\overline{\psi}_2=\widetilde{e}+\overline{q_1}e_1+\dots + \overline{q_l}e_l$ shows that the relative position is indeed $\mathcal{O}$.
\end{proof}

With the proof of Theorem D complete, as promised in the introduction, we discuss some related objects. There is an $S^1$-action on the set of harmonic $G$-bundles: $\theta\cdot (P_{K^{\C}},\phi,P_K)=(P_{K^{\C}},e^{i\theta}\phi,P_K)$. The $S^1$-action extends to a $\C^*$-action on the set of complex harmonic $G$-bundles: $\xi\cdot (P_{K^{\C}},A_{K^{\C}},\phi_1,\overline{\phi_2})=(P_{K^{\C}},A_{K^{\C}},\xi\phi_1,\xi^{-1}\overline{\phi_2})$. This action descends to the bi-Hitchin base: $\xi\cdot (q^1,\overline{q}^2)=(\xi\cdot q^1,\xi\cdot\overline{q}^2),$ where $$(\xi\cdot (q_1^1,\dots, q_l^1),\xi \cdot (\overline{q}_1^2,\dots, \overline{q}_l^2))=((\xi^{m_1} q_1^1,\dots, \xi^{m_l}q_l^1),(\xi^{-m_1}\overline{q}_1^2,\dots, \xi^{-m_l}\overline{q}_l^2)).$$ Note that if we can solve (\ref{eq: bi-Hitchin flatness}) for a point $(q^1,\overline{q}^2)\in bH(c_1,\overline{c_2},G)$, then, for all $\xi\in \C^*,$ the same isomorphism solves (\ref{eq: bi-Hitchin flatness}) for  $\xi\cdot (q^1,\overline{q}^2)$.

Intriguingly, when $\phi_1$ and $\overline{\phi_2}$ come from an ordinary harmonic $G$-bundle and $S$ is closed, the flat connections associated with a family $\{(P_{K^{\C}},A_{K^{\C}},\xi\phi_1,\xi^{-1}\overline{\phi_2}): \xi\in \C^*\}$ form (the restriction to $\C^*$ of) a real twistor line in the space of $\lambda$-connections (a model for the twistor space of the hyperK{\"a}hler moduli space of Higgs bundles). For definitions and exposition on the $\lambda$-connections and such, we suggest the paper \cite{CW}. Although we don't pursue the direction in this paper, the appearance of the twistor lines seems worthy of investigation.

Let $S$ be closed. In \cite{Gai}, starting from a stable $G$-Higgs bundle $(P_{K^{\C}},\phi)$, Gaiotto proposed (for $G=\textrm{SL}(n,\C)$) to keep $\hbar=R^{-1}\xi$ fixed and to consider the family of flat connections $$A_{R,\hbar}= A_R+\hbar^{-1}\phi - \hbar R^2\hat{\sigma}_R(\phi),$$ where $A_R$ and $\hat{\sigma}_R$ are the Chern connection and involution respectively corresponding to solving Hitchin's self-duality equations for $(P_{K^{\C}},R\phi)$. In fact, Gaiotto was mainly interested in punctured surfaces, and in that case the starting assumptions are different, but here we keep to closed surfaces. Gaiotto conjectured that if we start with a Higgs bundle in the Hitchin section and take $R\to 0$, then, up to gauge, the flat connections converge to one of Beilinson-Drinfeld's opers, with the Hitchin base parameter equal to the Hitchin basepoint of $\hbar^{-1} \phi$. This conjecture was proved in \cite{DFKMMN} (closed case, for any $G$) and \cite{collier2024conformallimitsparabolicslnchiggs} (punctures). Our work gives a new perspective on Gaiotto's conjecture: if $(P_{K^{\C}},\phi)$ is in the Hitchin section and $A_R$ and $\hat{\sigma}_R$ are as above, then each $(P_{K^{\C}}, A_R, \hbar^{-1}\phi, \hbar R^2\hat{\sigma}_R(\phi))$ is a complex harmonic $G$-bundle and the bi-Hitchin basepoints limit to one of the form $(q^1,0)$. One can see in the proof of the conjecture in Theorem 4.11 in \cite{DFKMMN} that as $R\to 0,$ up to gauge, the data $A_R$, $\hbar^{-1}\phi,$ and $\hbar R^2\hat{\sigma}_R(\phi)$ converge to the corresponding data for the complex harmonic $G$-bundle with bi-Hitchin basepoint $(q^1,0)$ obtained through Proposition \ref{prop: existence on bigger marginal locus}. In other words, the complex harmonic $G$-bundles converge to one of ours. Our work on the general points $(c_1,q^1,\overline{c_2},0)$ and $(c_1,0,\overline{c_2},\overline{q}^2)$ suggest a complex generalization of Gaiotto's Conjecture.

\section{Construction of the maps $\mathcal{B}_G$ and $\mathcal{L}_{G}^{\C}$}\label{sec: thm B}

In this section, we use complex harmonic $G$-bundles to construct the maps $\mathcal{B}_G$ and $\mathcal L_G^{\C}$ of Theorems B and B' respectively. The main focus is to construct $\mathcal{L}_G^{\C}$ for a general Lie group. In rank $2$, we get $\mathcal{B}_G$ from $\mathcal{L}_G^{\C}$ by using Labourie's theorem to identify $\mathrm{Hit}(S,G)$ with $\mathcal{M}(S,G).$ For the general strategy, we refer back to Section \ref{sec: outline}. 
Most of the constructions are quite general, so at certain points we work in more generality than is needed for the specific problem at hand.

Throughout, let $S$ be a closed oriented surface of genus $\mathrm g\geq 2$. We fix a Riemannian metric and a connection on $S$ in order to define $L^p$-spaces $L^p$ and Sobolev spaces $W^{k,p}$. We interpret spaces of $C^\infty$ functions as complex Fr{\'e}chet spaces, built using families of Sobolev norms.

\subsection{Spaces of solutions to complex equations}
Fix a basepoint complex structure $c_0$ on $S$ and identify $\mathcal{C}(S)$ with the complex Fréchet space $\mathcal{C}(S,c_0)$ of smooth Beltrami forms on $(S,c_0).$ Likewise, identify $\mathcal{C}(\overline{S})$ with $\mathcal{C}(\overline{S},\overline{c_0}).$ For each $s\in\mathbb{Z}_+,$ we further consider the complex Banach spaces $\mathcal{C}^s(S,c_0)$ and $\mathcal{C}^s(\overline{S},\overline{c_0})$ of $W^{s,\infty}$ Beltrami forms. The specific choice $W^{s,\infty}$ is informed by holomorphic dependence results for the Beltrami equation (see \cite{ESholodependence}).

Let $(d_i)$ be an ordered set of integers and recall the bundle $\mathrm{M}(S,(d_i))$ from Section \ref{sec: Labourie paper}. For each $s\in \mathbb{Z}_+,$ we also consider the analogous bundle $\mathrm{M}^s(S,(d_i))$ over $\mathcal{C}^s(S,c_0).$
Let $s,k,l\in \mathbb{Z}_+$, with $k>2$, and consider a holomorphic map between Banach manifolds 
\[
\mathscr T: \mathrm M^s(S,(d_i))\times \mathrm M^s(\overline S,(d_i)) \times W^{k,2}(S,\C^l)\to W^{k-2,2}(S,\C^l) .\ 
\]
We denote 
\begin{align*}
    \mathrm{SOL}_{ \mathscr T,s, k}(S)&=\{\sigma=(c_1, q_1, \overline {c_2}, \overline {q_2},  u)\in \mathrm M^s(S,(d_i))\times \mathrm M^s(\overline S,(d_i)) \times W^{k,2}(S,\C^l)\ |\ \mathscr T(\sigma)=0\}\\
    \mathrm{SOL}^*_{ \mathscr T,s, k}(S)&=\{\sigma\in \mathrm{SOL}_{ \mathscr T,s, k}(S)\ |\ \partial_{u} \mathscr T: W^{k,2}(S,\C^l)\to W^{k-2,2}(S,\C^l) \text{ is an isomorphism} \},
\end{align*}
where $\partial_{u} \mathscr T$ is the linearization in the direction of $u,$ $$\partial_{u} \mathscr T(\dot{u}) = \frac{d}{dt}\Big|_{t=0}\mathscr T(c_1,q_1, \overline {c_2},\overline{q_2},u+t\dot{u}).$$
We make the following assumption on $\mathscr T$: we assume that there exists a sequence $(s_n, k_n)$ with $s_n, k_n \nnearrow  \infty$ such that
\begin{equation}
\label{eq: assumption nesting}
 \CAS^*_{ \mathscr T,s_n, k_n}(S)\supseteq \CAS^*_{ \mathscr T,s_n, k'}(S) \text{ for all $k'$}.
\end{equation}
Observe that, under the assumption \eqref{eq: assumption nesting}, we also have 
$$\mathrm{SOL}^*_{ \mathscr T, s_n, K_n}(S)\supseteq \mathrm{SOL}^*_{ \mathscr T, s_{n+1}, K_n}(S)\supseteq \mathrm{SOL}^*_{ \mathscr T, s_{n+1}, K_{n+1}}(S).$$
Finally, define 
\begin{align*}
    \mathrm{SOL}_{\mathscr T}(S)&=\{\sigma=(c_1, q_1, \overline {c_2}, \overline {q_2},  u)\in \mathrm M(S,(d_i))\times \mathrm M(\overline S,(d_i)) \times C^{\infty}(S,\C^l)\ |\ \mathscr T(\sigma)=0\} =\bigcap_{n}  \mathrm{SOL}_{ \mathscr T, s_n, K_n}(S) \\
    \mathrm{SOL}_{\mathscr T}^*(S)&=\{\sigma\in \mathrm{SOL}_{\Tau} |\ \partial_{u} \mathscr T: C^{\infty}(S,\C^l)\to C^{\infty}(S,\C^l) \text{ is an isomorphism} \}=\bigcap_{n }  \mathrm{SOL}^*_{ \mathscr T, s_n, K_n}(S).
\end{align*}

\begin{prop}
    In the above set-up, $\mathrm{SOL}_{ \mathscr T,s,K}^*(S)$ is a complex Banach manifold, $\mathrm{SOL}_{\mathscr T}^*(S)$ is a complex Fréchet manifold, and the projections \begin{align*}
\pi_0&:\mathrm{SOL}_{\mathscr T, s,K}^*(S)\to \mathrm M^s(S, (d_i))\times \mathrm M^s(\overline S, (d_i)) \\
\pi_0&:\mathrm{SOL}_{\mathscr T}^*(S)\to \mathrm M(S, (d_i))\times \mathrm M(\overline S, (d_i))
    \end{align*}
    are local biholomorphisms.
\end{prop}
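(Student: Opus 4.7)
The plan is to treat the two statements in order: first establish the Banach manifold statement via the holomorphic implicit function theorem on each Sobolev layer, and then pass to the Fréchet limit using the nesting assumption \eqref{eq: assumption nesting}.

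First I would fix a point $\sigma_0=(c_1^0,q_1^0,\overline{c_2}^0,\overline{q_2}^0,u_0)\in \mathrm{SOL}^*_{\mathscr T,s,k}(S)$. Since $\mathscr T$ is a holomorphic map between complex Banach manifolds (the base $\mathrm M^s(S,(d_i))\times \mathrm M^s(\overline S,(d_i))$ is a holomorphic Banach bundle over $\mathcal{C}^s(S,c_0)\times \mathcal{C}^s(\overline S,\overline{c_0})$, hence a Banach manifold) and the partial derivative $\partial_u \mathscr T$ at $\sigma_0$ is a toplinear isomorphism from $W^{k,2}(S,\C^l)$ to $W^{k-2,2}(S,\C^l)$, the holomorphic implicit function theorem for Banach manifolds (see, e.g., \cite[\S I.5.7]{EE} or any standard reference) produces an open neighbourhood $\mathcal U_0$ of $(c_1^0,q_1^0,\overline{c_2}^0,\overline{q_2}^0)$ and a unique holomorphic map $\Psi_{s,k}:\mathcal U_0\to W^{k,2}(S,\C^l)$ with $\Psi_{s,k}(c_1^0,q_1^0,\overline{c_2}^0,\overline{q_2}^0)=u_0$ such that the zero set of $\mathscr T$ near $\sigma_0$ is exactly the graph of $\Psi_{s,k}$. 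Shrinking $\mathcal U_0$ if necessary so that $\partial_u\mathscr T$ remains an isomorphism along the graph, this graph lies entirely in $\mathrm{SOL}^*_{\mathscr T,s,k}(S)$. The collection of such graphs provides a holomorphic atlas on $\mathrm{SOL}^*_{\mathscr T,s,k}(S)$, and by construction $\pi_0$ is a local biholomorphism inverted by $(c_1,q_1,\overline{c_2},\overline{q_2})\mapsto(c_1,q_1,\overline{c_2},\overline{q_2},\Psi_{s,k}(c_1,q_1,\overline{c_2},\overline{q_2}))$.

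For the Fréchet statement, I would use the sequence $(s_n,k_n)$ from \eqref{eq: assumption nesting}. Starting from $\sigma_0\in \mathrm{SOL}^*_{\mathscr T}(S)\subset \bigcap_n \mathrm{SOL}^*_{\mathscr T,s_n,k_n}(S)$, the previous step produces holomorphic local inverses $\Psi_{s_n,k_n}$ to $\pi_0$ on neighbourhoods $\mathcal U_n$, all mapping to $u_0$. Uniqueness in the implicit function theorem, combined with the nesting $\mathrm{SOL}^*_{\mathscr T,s_n,k_n}(S)\supseteq \mathrm{SOL}^*_{\mathscr T,s_n,k'}(S)$ for all $k'$, forces $\Psi_{s_n,k_n}$ and $\Psi_{s_{n+1},k_{n+1}}$ to agree on $\mathcal U_n\cap \mathcal U_{n+1}$ after regarding both as maps into the smaller of the two Sobolev target spaces. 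Setting $\mathcal V:=\bigcap_n \mathcal U_n\cap \big(\mathrm M(S,(d_i))\times \mathrm M(\overline S,(d_i))\big)$ and $\Psi:=\Psi_{s_n,k_n}|_{\mathcal V}$ (independent of $n$), every value of $\Psi$ lies in $\bigcap_n W^{k_n,2}(S,\C^l)=C^\infty(S,\C^l)$ by Sobolev embedding. The map $\Psi$ is then a map between Fréchet spaces whose composition with each continuous Banach projection $C^\infty\hookrightarrow W^{k_n,2}$ is holomorphic; this is the standard characterization of a Fréchet-holomorphic map. Hence the graphs of $\Psi$ give a holomorphic atlas for $\mathrm{SOL}^*_{\mathscr T}(S)$ and $\pi_0$ is a local biholomorphism.

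The step I expect to require the most care is verifying the compatibility of the Banach-level charts $\Psi_{s_n,k_n}$ across Sobolev layers. The precise content of assumption \eqref{eq: assumption nesting} is exactly what rules out pathological behaviour where a solution could leave $\mathrm{SOL}^*$ upon dropping regularity; without it, one could not conclude that the pointwise intersection $\bigcap_n\Psi_{s_n,k_n}$ is defined on a nonempty open set in the Fréchet topology. Once this compatibility is in hand, everything else is bookkeeping: holomorphicity descends from each Banach layer, and the Fréchet chart is literally the inverse limit of the Banach charts, making $\pi_0$ a local biholomorphism essentially by construction.
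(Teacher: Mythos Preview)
Your proof is correct and follows essentially the same approach as the paper. The paper packages the Banach step slightly differently---it applies a transversality/inverse function theorem to the map $\Phi=(\pi_0,\mathscr T)$ rather than invoking the implicit function theorem directly for $\mathscr T$---but this is equivalent to what you do; for the Fr\'echet step the paper simply defers to the nested-chart/projective-limit construction in \cite[Section~6.1]{ElSa}, which is exactly the compatibility-across-layers argument you sketched.
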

\begin{proof}
The map \begin{align*}
\Phi: \mathrm M^{s}(S, (d_i))\times \mathrm M^{s}(\overline S, (d_i))\times W^{k,2}(S,\C^l)&\to \mathrm M^{s}(S,(d_i))\times \mathrm M^{s}(\overline S,(d_i))\times W^{k-2,2}(S,\C^l)\\
(c_1, q_1, \overline {c_2}, \overline{q_2},  u)&\mapsto (c_1, q_1, \overline {c_2},  \overline{q_2}, \mathscr T(c_1,  q_1, \overline {c_2}, \overline{q_2},  u) )
\end{align*} is holomorphic. By assumption, for all $\sigma\in \mathrm{SOL}_{\mathscr{T},s,K}^*(S)$, $\partial_u \mathscr T$ is an isomorphism, and hence so is $d_\sigma \Phi$. By the Transversality Theorem for Complex Banach manifolds, $\mathrm{SOL}_{\mathscr T, s,K}^*(S)$ is a complex Banach submanifold of $\mathrm M^{s}(S, (d_i))\times \mathrm M^{s}(\overline S, (d_i))\times W^{k,2}(S,\C^l)$.

Since $\mathscr T|_{\mathrm{SOL}_{\mathscr T,s_n, K_n}^*(S)}\equiv 0$, $d_\sigma\Phi= (d_\sigma \pi_0, 0)$, and hence $d_\sigma \pi_0: T\mathrm{SOL}_{\mathscr T, s_n,K_n}^*(S)\to  T(\mathrm M^{s_n}(S,(d_i))\times \mathrm M^{s_n}(\overline S, (d_i)))$ is an isomorphism, which implies that $\pi_0$ is a local biholomorphism.

By taking nested intersections of well-chosen charts for each $\mathrm{SOL}_{\mathscr T, s_n,K_n}^*(S)$ in order to define a Fréchet manifold structure, or going through a formal projective limit construction, one obtains the result for the Fréchet $C^{\infty}$ spaces (see \cite[Section 6.1]{ElSa} for full explanation on these constructions).
\end{proof}
We now pass to finite-dimensional quotients. As in Section \ref{sec: Labourie paper}, let $\mathcal M(S,(d_i))$ and $\mathcal M(\overline S,(d_i))$ be the quotients of $\mathrm M(S, (d_i))$ and $\mathrm M(\overline S, (d_i))$ by the actions of $\mathrm{Diff}_0(S)$, which are holomorphic bundles on $\mathcal T(S)$ and $\mathcal T(\overline S)$ respectively.
\begin{defn}
    Let $\mathrm{V}\subset \mathrm{SOL}^*_{\Tau}(S)$ be an open subset. We define $\mathcal{V}$ as the quotient of $\mathrm{V}$ that contracts to points the connected components of the fibers of the projection $[\pi_0]: \mathrm{V}\to \mathcal M(S,(d_i))\times \mathcal M(\overline S,(d_i))$. We denote the induced projection by $\pi: \mathcal{V}\to \mathcal M(S,(d_i))\times \mathcal M(\overline S,(d_i))$.
\end{defn}
\begin{prop}
    $\mathcal{V}$ has a complex manifold structure for which the projection $\pi: \mathcal{V}\to \mathcal M(S,(d_i))\times \mathcal M(\overline S,(d_i))$ is a local biholomorphism.
\end{prop}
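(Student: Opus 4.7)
The plan is to build holomorphic charts on $\mathcal V$ out of local holomorphic sections of the bundle $\mathrm M\to \mathscr M$, where we abbreviate $\mathrm M = \mathrm{M}(S,(d_i))\times \mathrm{M}(\overline S,(d_i))$ and $\mathscr M = \mathcal M(S,(d_i))\times \mathcal M(\overline S,(d_i))$, by lifting these sections through the local biholomorphism $\pi_0:\mathrm V\to \mathrm M$.

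First, fix $\sigma_0 \in \mathrm V$ and set $\underline{\sigma_0} = \pi_0(\sigma_0)$. By Theorem \ref{thm: earle eells} (Earle--Eells), the quotient $\mathrm M \to \mathscr M$ is a holomorphic principal $(\mathrm{Diff}_0(S))^2$-bundle, and the analogous result extends from $\mathcal C(S)$ to $\mathrm{M}(S,(d_i))$ as in the discussion of Section \ref{sec: Labourie paper}. Hence there is a local holomorphic section $s:U \to \mathrm M$ defined on a neighborhood $U\subset \mathscr M$ of $[\underline{\sigma_0}]$ with $s([\underline{\sigma_0}]) = \underline{\sigma_0}$. Since $\pi_0$ is a local biholomorphism at $\sigma_0$, after possibly shrinking $U$ we lift $s$ uniquely to a holomorphic map $\widetilde s:U \to \mathrm V$ with $\pi_0\circ \widetilde s = s$ and $\widetilde s([\underline{\sigma_0}]) = \sigma_0$.

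Next, I declare the charts. Let $\widetilde s_\sharp : U \to \mathcal V$ be the composition of $\widetilde s$ with the quotient projection $\mathrm V \to \mathcal V$. Injectivity is immediate: if $\widetilde s_\sharp(x_1) = \widetilde s_\sharp(x_2)$, then applying $\pi$ gives $[\pi_0]\circ \widetilde s(x_1) = [\pi_0]\circ \widetilde s(x_2)$, but $[\pi_0]\circ \widetilde s = [s] = \mathrm{id}_U$, so $x_1 = x_2$. This gives a bijection $\widetilde s_\sharp: U \to \widetilde s_\sharp(U) \subset \mathcal V$, which I propose as a chart. Given two such charts $\widetilde s_\sharp^1:U_1\to \mathcal V$ and $\widetilde s_\sharp^2:U_2\to \mathcal V$ built from sections $s_1,s_2$ whose images in $\mathcal V$ overlap, on the overlap the transition function sends $x_1$ to the unique $x_2$ with $\widetilde s_\sharp^1(x_1) = \widetilde s_\sharp^2(x_2)$; applying $\pi$ forces $x_1 = x_2$, so the transition is (a restriction of) the identity and in particular is holomorphic. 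Consequently these charts assemble into a complex atlas on $\mathcal V$, and the projection $\pi:\mathcal V\to \mathscr M$ reads as the identity in each chart, proving that $\pi$ is a local biholomorphism.

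The main thing to verify carefully is that this atlas really does cover $\mathcal V$ compatibly and that the resulting topology agrees with the natural quotient topology on $\mathcal V$. The quotient-topology compatibility follows since the quotient map $\mathrm V\to \mathcal V$ is open (it factors through $[\pi_0]$, which is an open local biholomorphism composed with the open quotient $\mathrm M\to \mathscr M$), and the image $\widetilde s_\sharp(U)\subset \mathcal V$ is an open neighborhood of $[\sigma_0]$. The verification that distinct points $\sigma_0,\sigma_0' \in \mathrm V$ with $[\sigma_0]=[\sigma_0'] \in \mathcal V$ yield compatible charts is handled by the same identity-transition argument above: any two lifts $\widetilde s, \widetilde s'$ produce sections of $\pi$ over the same open $U\subset \mathscr M$ (after restricting), so must descend to the same chart on $\mathcal V$ in a neighborhood of $[\sigma_0]$. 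This step---ensuring the atlas descends consistently to the genuine quotient without introducing non-Hausdorff behavior---is the most delicate point, but it follows directly from the definition of $\mathcal V$ as the quotient by connected components of fibers of $[\pi_0]$ together with the local biholomorphism property of $\pi_0$.
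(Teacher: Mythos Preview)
Your approach is essentially the same as the paper's: both build charts on $\mathcal V$ by pulling back the local holomorphic product/section structure of the principal $(\mathrm{Diff}_0(S))^2$-bundle $\mathrm M\to\mathscr M$ through the local biholomorphism $\pi_0$. The paper works with local trivializations $A\times B\subset \mathscr M\times(\mathrm{Diff}_0(S))^2$ rather than local sections, which makes your final-paragraph verification transparent: the quotient $\widehat W_\sigma\to W_\sigma$ becomes visibly the projection $A_1\times B_1\times A_2\times B_2\to A_1\times A_2$ with connected $B_i$, so distinct representatives of a point in $\mathcal V$ automatically yield the same chart. Your claim that this step ``follows directly'' is correct but leans on the connectedness of the $\mathrm{Diff}_0(S)$-fibers (to propagate ``same connected component of the $[\pi_0]$-fiber'' to nearby points), and your justification that the quotient map $\mathrm V\to\mathcal V$ is open is garbled---$[\pi_0]$ factors through $\mathcal V$, not the other way around---though openness is not actually needed for the statement as proved in the paper.
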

\begin{proof}
We can construct a covering of $\mathrm M(S, (d_i))$ by open subsets that split as $A\times B\subset \mathcal M(S,(d_i))\times \mathrm{Diff}_0(S)$ and such that the projections $A\times B\to A$ are holomorphic (see \cite[Section 6.2]{ElSa}). Around each $\sigma\in \mathrm{SOL}_{\Tau}^*(S)$, we can choose a connected neighbourhood $\widehat W_{\sigma}$ so that $\pi$ restricts to a biholomorphism onto a product of open sets of the form above:  \[\pi: {\widehat W_\sigma}\xrightarrow{\sim}  A_1\times B_1\times A_2\times B_2\subset \mathrm M(S, (d_i))\times\mathrm M(\overline S,(d_i)), \] where $A_1\subset \mathcal M(S,(d_i))$, $A_2\subset \mathcal M(\overline S,(d_i))$, and $B_1, B_2\subset \mathrm{Diff}_0(S)$ are open subsets. 
   Denoting the quotient map $\mathrm{V}\to \mathcal{V}$ by $\mathrm p_\sim$, we have the following commutative diagram:
    \begin{equation}
    \label{eq: diagram quotient CAS}
    \begin{tikzcd}
    \widehat W_{\sigma} \arrow[r, " \pi \quad \sim"] \arrow[d, "\mathrm p_{\sim}"] \arrow[rd, , "{[ \pi]}" ] &A_1\times B_1\times A_2\times B_2 \arrow[d, "\text{projection}"]\\
        W_{\sigma}:=\mathrm p_\sim(\widehat W_{\sigma}) \arrow[r, " \pi'\quad \sim "] &A_1 \times A_2
    \end{tikzcd}
    \end{equation}
It is easy to check that the collection of subsets $\{W_\sigma\}_{\sigma\in \mathrm{V}}$ of $\mathcal{V}$ defines a base for a topology on $\mathcal{V}$. Moreover, using the one-to-one restrictions of $ \pi$ to each $W_\sigma$ and the fact that the projection is holomorphic, we get a holomorphic atlas on $\mathcal{V}$ by pulling back the complex structure on $\mathcal M (S,(d_i))\times \mathcal M(\overline S, (d_i))$: by the diagram \eqref{eq: diagram quotient CAS}, $ \pi$ is a local biholomorphism and $\mathrm p_{\sim}$ is holomorphic.
\end{proof}

\subsection{Complex Lie derivatives} \label{sec: complex lie derivatvies}
Since $\mathcal{M}(S,(d_i))$ is the quotient of $\mathrm{M}(S,(d_i))$ by the natural action of $\mathrm{Diff}_0(S)$, proving Theorem B will require us to study the action of $\mathrm{Diff}_0(S)\times \mathrm{Diff}_0(S)$ on products such as $\mathrm{M}(S,(d_i))\times \mathrm{M}(\overline{S},(d_i))$. In our previous paper \cite{ElSa}, we developed tools to study this type action in detail. In particular, considerations on the infinitesimal level led us to complex Lie derivatives, which we recall here. The main application here is Proposition \ref{prop: descends through quotient}.

For a complex structure $c\in \mathcal C(S)$ with corresponding almost complex structure $J$, recall that we denote the projections of $\C TS$ onto $T_c^{1,0}(S)$ and $T_c^{0,1}S$ as $\Pi_1^J$ and $\Pi_2^J$ respectively. For all tensors $\upalpha$ on $S$ and $X\in \Gamma(\C TS)$ we define the \textbf{complex Lie derivative} of $\upalpha$ in the direction of $X$ as the complex tensor $\mathscr L_X \upalpha= \mathscr L_{Re(X)}\upalpha+ i\mathscr L_{Im(X)}\upalpha$. We can also define the complex Lie derivative of a complex structure, originally defined in \cite{ElSa}, as follows. First, for all $X\in \Gamma(TS)$, define $\mathscr L_{\mathrm X} c\in T_c \mathcal C(S)$ as the element obtained by differentiating the orbit map at c from $\Diff_0(S) \to \mathcal C(S)$ and evaluating the derivative at $\mathrm X\in  \Gamma(TS)\cong T_{id}\Diff_0(S)$. For $X\in \Gamma(\C TS)$, define
\[
\mathscr L_X c:= \mathscr L_{\Pi_1^J(X)+ \overline{\Pi_1^J(X)}}c.
\]
 One motivation for this definition is that, for all
$\upalpha\in H^0(S,\mathcal K^d_c)$, we have $\mathscr L_X \upalpha= \mathscr L_{\Pi_1(X)+ \overline{\Pi_1(X)}}\ \upalpha$.  A further motivation is that $\mathscr L_X c =0$ if and only if $X\in \Gamma(T^{0,1}_c(S))$, that is, if and only if $\Pi_1^J(X)=0$. In \cite[Proposition 4.3]{ElSa}, we proved the following.
\begin{prop}
\label{prop: tangent model CSCS}
For all $\ccpair\in\CSCS$, the map
    \begin{equation*}
    \begin{split}
        \Gamma(\C TS)&\to T_{c_1}\mathcal C(S)\times T_{\overline {c_2}}\mathcal C(\overline S)\\
        X &\mapsto (\mathscr L_X c_1, \mathscr L_X \overline{c_2})
        \end{split}
    \end{equation*}
    defines a $\C$-linear isomorphism onto the tangent space of the $\Diff_0(S)\times\Diff_0(S)$-orbit of $(c_1, \overline {c_2})$. As a result, we have a complex splitting $T_{\ccpair} \mathcal C(S)\times \mathcal C(\overline S)\cong T_{[c_1]}\mathcal T(S)\times T_{[\overline{c_2}]} \mathcal T(\overline S)\times \Gamma(\C TS)$. 
    \end{prop}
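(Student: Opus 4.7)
The plan is to establish, in order, complex linearity of $\Phi: X \mapsto (\mathscr L_X c_1, \mathscr L_X \overline{c_2})$, injectivity, surjectivity onto the orbit tangent space, and finally the splitting as a formal corollary. The main engine throughout is the bi-complex decomposition $\C TS = T_{c_1}^{1,0} S \oplus T_{\overline{c_2}}^{1,0} S$ corresponding to the pair $(c_1, \overline{c_2})$, introduced in Section \ref{subsection: complex metrics}; one isolated subtle point will be complex linearity.

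For $\C$-linearity, $\R$-linearity is automatic from the ordinary Lie derivative, so the task reduces to compatibility with the scalar $i$. Writing $Z = \Pi_1^{J_1}(X)$, the definition of $\mathscr L_X c_1$ together with the identity $J_1(Z+\overline Z) = iZ - i\overline Z$ will reduce this to showing $\mathscr L_{J_1 Y} c_1 = J_1 \cdot \mathscr L_Y c_1$ for real $Y \in \Gamma(TS)$, where the almost complex structure on $T_{c_1}\mathcal C(S)$ is $\mathbb J \dot c := c_1 \dot c$. A direct computation shows that for any $V \in \Gamma(TS)$,
\[
(\mathscr L_{J_1 Y} c_1)(V) - J_1(\mathscr L_Y c_1)(V) = [J_1 Y, J_1 V] - J_1[J_1 Y, V] - J_1[Y, J_1 V] - [Y,V] = N_{J_1}(Y,V),
\]
which vanishes by integrability of $c_1$. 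The same argument handles $\overline{c_2}$.

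For injectivity, the definition of the complex Lie derivative of a complex structure immediately gives $\mathscr L_X c_1 = 0$ iff $X \in \Gamma(T_{c_1}^{0,1} S)$, and $\mathscr L_X \overline{c_2} = 0$ iff $X \in \Gamma(T_{\overline{c_2}}^{0,1} S) = \Gamma(T_{c_2}^{1,0} S)$. Conjugating the bi-complex decomposition yields $\C TS = T_{c_1}^{0,1} S \oplus T_{\overline{c_2}}^{0,1} S$, so these two subbundles are complementary and $\ker \Phi = 0$. For surjectivity onto the orbit tangent space, recall that this space is by definition $\{(\mathscr L_{Y_1} c_1, \mathscr L_{Y_2}\overline{c_2}): Y_1, Y_2 \in \Gamma(TS)\}$. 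Given real $Y_1, Y_2$, I will use the decomposition $\C TS = T_{c_1}^{0,1} S \oplus T_{\overline{c_2}}^{0,1} S$ again to write uniquely $Y_2 - Y_1 = U - V$ with $U \in \Gamma(T_{c_1}^{0,1} S)$, $V \in \Gamma(T_{\overline{c_2}}^{0,1} S)$, and then set $X := Y_1 + U = Y_2 + V$; the kernel computation from the previous sentence guarantees $\Phi(X) = (\mathscr L_{Y_1} c_1, \mathscr L_{Y_2}\overline{c_2})$.

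The splitting then follows formally: Theorem \ref{thm: earle eells} realizes $\mathcal C(S) \to \mathcal T(S)$ as a holomorphic $\Diff_0(S)$-principal bundle, whence the quotient of $T_{(c_1,\overline c_2)}(\mathcal C(S) \times \mathcal C(\overline S))$ by the orbit tangent space is canonically $T_{[c_1]}\mathcal T(S) \times T_{[\overline c_2]}\mathcal T(\overline S)$ and the short exact sequence splits $\C$-linearly; combining this with the $\C$-linear isomorphism $\Phi$ produces the claimed decomposition. I expect the main obstacle to be the verification of complex linearity: the ad hoc-looking definition of $\mathscr L_X c$ is in fact tailored precisely so that the Nijenhuis identity gives compatibility with $\mathbb J$, and getting the sign conventions straight there requires the most care. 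Everything else is linear algebra built on the non-degeneracy of the bi-complex structure.
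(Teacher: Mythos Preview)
Your argument is correct. The paper does not actually prove this proposition here; it is quoted from the authors' earlier work \cite{ElSa} (see the sentence ``In \cite[Proposition 4.3]{ElSa}, we proved the following'' immediately preceding the statement), so there is no in-paper proof to compare against. Your approach---using the Nijenhuis identity for $\C$-linearity, the transversality $T_{c_1}^{0,1}S \cap T_{\overline{c_2}}^{0,1}S = 0$ coming from the conjugate of the bi-complex decomposition for injectivity, and the same decomposition to manufacture a preimage for surjectivity---is a clean and self-contained argument that matches the spirit of the surrounding discussion in Section~\ref{sec: complex lie derivatvies}. One small remark: the assertion that $\mathscr L_X c_1 = 0$ iff $X \in \Gamma(T_{c_1}^{0,1}S)$ implicitly uses that a closed surface of genus $\mathrm g \ge 2$ has no nonzero holomorphic vector fields (so that $\mathscr L_Y J_1 = 0$ for real $Y$ forces $Y=0$); this is of course standard, and the paper records the equivalence just before the proposition.
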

    For the bundles $\mathrm{M}(S,(d_i))$, we get the following corollary.
    \begin{cor}\label{cor: tangent to orbit in bundles}
    The map
     \begin{equation*}
    \begin{split}
        \Gamma(\C TS)&\to T_{(c_1, q_1)}\mathrm M(S,(d_i))\times T_{(\overline {c_2}, \overline{q_2})}\mathrm M(\overline S,(d_i))\\
        Z &\mapsto (\mathscr L_Z c_1, (\mathscr L_Z q_{d_i}^1), \mathscr L_Z \overline{c_2}, (\mathscr L_Z \overline{q}_{d_i}^2)) 
        \end{split}
    \end{equation*}
is a linear isomorphism onto the tangent space of the $\Diff_0(S)\times\Diff_0(S)$-orbit of $\left(c_1, (q_{d_i}^1), \overline {c_2}, (\overline{q}_{d_i}^2)\right)$. 
      
    \end{cor}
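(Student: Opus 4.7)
The plan is to reduce the statement to Proposition \ref{prop: tangent model CSCS} via the $\mathrm{Diff}_0(S)\times \mathrm{Diff}_0(S)$-equivariant projection $\pi: \mathrm M(S,(d_i))\times \mathrm M(\overline S,(d_i))\to \mathcal C(S)\times \mathcal C(\overline S)$. First I would verify that the candidate map lands in the tangent to the orbit by rewriting the image of $Z\in \Gamma(\C TS)$ as the infinitesimal action of a pair of real vector fields, using the compatibility of the complex Lie derivative with holomorphic differentials. Bijectivity will then be read off by pushing forward to the base and invoking Proposition \ref{prop: tangent model CSCS}, together with the absence of non-zero holomorphic vector fields on a closed surface of genus at least $2$.

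\textbf{Landing in the orbit tangent.} Given $Z\in \Gamma(\C TS)$, let $J_1, J_2$ be the almost complex structures associated with $c_1, \overline{c_2}$, and set
\[
X_Z := \Pi_1^{J_1}(Z)+\overline{\Pi_1^{J_1}(Z)}, \qquad Y_Z := \Pi_2^{J_2}(Z)+\overline{\Pi_2^{J_2}(Z)},
\]
which are real vector fields on $S$. By the very definition of $\mathscr L_Z c_1$ and $\mathscr L_Z \overline{c_2}$ recalled above Proposition \ref{prop: tangent model CSCS}, we have $\mathscr L_Z c_1 = \mathscr L_{X_Z}c_1$ and $\mathscr L_Z \overline{c_2}= \mathscr L_{Y_Z}\overline{c_2}$. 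The motivational identity $\mathscr L_Z q = \mathscr L_{X_Z}q$ for a $c_1$-holomorphic differential $q$, likewise recorded before Proposition \ref{prop: tangent model CSCS}, applies verbatim to each $q^1_{d_i}$, and, mutatis mutandis, to each $\overline{q}^2_{d_i}$ with $Y_Z$. Consequently, the image of $Z$ equals the infinitesimal action of $(X_Z, Y_Z)\in \Gamma(TS)\times \Gamma(TS)$ at $(c_1, q_1, \overline{c_2}, \overline{q_2})$, which lies in the tangent to the orbit.

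\textbf{Bijectivity.} The composition of the map of the corollary with $d\pi$ is the map $Z\mapsto (\mathscr L_Z c_1, \mathscr L_Z \overline{c_2})$ of Proposition \ref{prop: tangent model CSCS}, which is injective; hence so is the map of the corollary. For surjectivity, let $(\mathscr L_X c_1, (\mathscr L_X q^1_{d_i}), \mathscr L_Y \overline{c_2}, (\mathscr L_Y \overline{q}^2_{d_i}))$ be an arbitrary tangent vector to the orbit, with $X, Y\in \Gamma(TS)$. By Proposition \ref{prop: tangent model CSCS}, there is a unique $Z\in \Gamma(\C TS)$ with $\mathscr L_Z c_1 = \mathscr L_X c_1$ and $\mathscr L_Z \overline{c_2}=\mathscr L_Y \overline{c_2}$. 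Combining this with the previous step yields $\mathscr L_{X_Z}c_1=\mathscr L_X c_1$ and $\mathscr L_{Y_Z}\overline{c_2}=\mathscr L_Y \overline{c_2}$, so that $X-X_Z$ and $Y-Y_Z$ are real vector fields whose $(1,0)$-parts are holomorphic on $(S,c_1)$ and $(\overline S,\overline{c_2})$ respectively. For genus $\mathrm g\geq 2$ these holomorphic parts vanish, so $X=X_Z$ and $Y=Y_Z$, and the image of $Z$ matches the given tangent vector on all four components.

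\textbf{Main obstacle.} There is no significant new analytic input beyond Proposition \ref{prop: tangent model CSCS}: the substance is the compatibility of $\mathscr L_Z$ with holomorphic differentials, which is used to package the derivatives of $q^1_{d_i}$ and $\overline{q}^2_{d_i}$ into a bona fide infinitesimal orbit action. The only mildly delicate step is matching $X_Z$ with $X$ (and $Y_Z$ with $Y$) in the surjectivity argument, but this relies only on the standard fact that $H^0(S, T^{1,0}_{c_1}S)=0$ when $\mathrm g\geq 2$.
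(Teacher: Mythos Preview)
Your proof is correct and is essentially a careful unpacking of what the paper leaves implicit: the paper states the result as a direct corollary of Proposition~\ref{prop: tangent model CSCS} without proof, and your argument (reducing to the base via the equivariant projection, then using the identity $\mathscr L_Z q=\mathscr L_{X_Z}q$ for holomorphic differentials and the vanishing of $H^0(S,T^{1,0}_cS)$ in genus $\geq 2$) is exactly how one fills in those details. One minor notational quibble: in the paper's conventions $\Pi_1^{J},\Pi_2^{J}$ denote projections onto the $(1,0)$ and $(0,1)$ parts, so for the complex structure $\overline{c_2}$ you want $\Pi_1^{J_2}$ rather than $\Pi_2^{J_2}$ in defining $Y_Z$, but this does not affect the substance of the argument.
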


Finally, in the real analytic category, the Cauchy-Kovalevskaya Theorem allows one to (short-time) flow tensors along paths of complex vector fields (see \cite[Section 4]{ElSa}), which we will do in the proof of Theorems B and B'. More precisely: if $t\mapsto X_t$ is a real analytic path of locally defined real analytic complex vector fields, and $\upalpha$ is a locally defined real analytic complex tensor, then for small time there exists a unique path of tensors $t\mapsto \upalpha_t$, such that
\[\begin{cases}
    \dot \upalpha_t=\mathscr L_{X_t} \upalpha_t\\
    \upalpha_0=\upalpha.
\end{cases}\ 
\]
Moreover, the path $t\mapsto \upalpha_t$ is a real analytic path of real analytic tensors.

Now, return to the setting and notations from the previous subsection.
Along with the action on bundles such as $\mathrm{M}(S,(d_i)),$ the group $\mathrm{Diff}_0(S)$ acts on spaces of functions $u:S\to \C$ by pre-composition. Thus, we can talk about $\Tau$ being $\mathrm{Diff}_0(S)$-equivariant, i.e., satisfying $\Tau(\upphi^*c_1, \upphi^*q_1,\upphi^*\overline {c_2}, \upphi^* \overline {q_2},  \upphi^* u)= \upphi^* \mathscr T(c_1, q_1, \overline {c_2}, \overline {q_2},  u)$ for all $\upphi\in \mathrm{Diff}_0(S)$. This property implies that $\mathrm{Diff}_0(S)$ acts on $\mathrm{SOL}_{\mathscr{T}}^*$. In this situation, Proposition \ref{prop: mathscr L and SOL} below provides a useful infinitesimal description of the fibers of $\mathrm{SOL}_{\Tau}^*(S)\to \mathcal{SOL}_{\Tau}(S)$. 
\begin{prop}
\label{prop: mathscr L and SOL}
Assume $\Tau$ is $\mathrm{Diff}_0(S)$-equivariant and let $V\subset \mathrm{SOL}^*_{\Tau}(S)$ be a $\mathrm{Diff}_0(S)$-invariant open subset. Let $\sigma= (c_1, q_1, \overline {c_2}, \overline{{q_2}}, u)\in V$ and let $\dot \sigma$ be a tangent vector in $T_{\sigma}V$. Then $\dot \sigma$ lies in the kernel of the differential of the projection $V\to \mathcal V$ if and only if there exists $X\in \Gamma(\C TS)$ such that $$\dot \sigma= (\mathscr L_{X}c_1,  \mathscr L_{X} q_1, \mathscr L_{X}\overline{c_2}, \mathscr L_{X}\overline {q_2}, \mathscr L_{X}u).$$
\end{prop}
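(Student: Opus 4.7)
The plan is to reduce the statement to Corollary \ref{cor: tangent to orbit in bundles}, using the invertibility of $\partial_u\mathscr T$ that characterizes points of $\mathrm{SOL}^*_{\mathscr T}(S)$. The key observation is that the kernel of $d(V\to\mathcal V)$ at $\sigma$ consists of those $\dot\sigma\in T_\sigma V$ whose image under $d\pi_0$ lies in the tangent space at $(c_1,q_1,\overline{c_2},\overline{q_2})$ to the $\mathrm{Diff}_0(S)\times \mathrm{Diff}_0(S)$-orbit, because the fibers of $V\to\mathcal V$ are exactly the connected components of the fibers of $[\pi_0]$.

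For the ``if'' direction, let $X\in \Gamma(\C TS)$ and set $\dot\sigma = (\mathscr L_Xc_1,\mathscr L_X q_1,\mathscr L_X\overline{c_2},\mathscr L_X\overline{q_2},\mathscr L_X u)$. First I would check that $\dot\sigma\in T_\sigma V$: differentiating the equivariance relation $\mathscr T(\upphi^*c_1,\upphi^*q_1,\upphi^*\overline{c_2},\upphi^*\overline{q_2},\upphi^*u)=\upphi^*\mathscr T(c_1,q_1,\overline{c_2},\overline{q_2},u)=0$ along the real and imaginary flows of $X$ gives $d_\sigma\mathscr T(\dot\sigma)=\mathscr L_X\mathscr T(\sigma)=0$. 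Then by Corollary \ref{cor: tangent to orbit in bundles}, the first four components of $\dot\sigma$ form a vector tangent to the $\mathrm{Diff}_0(S)\times\mathrm{Diff}_0(S)$-orbit through $(c_1,q_1,\overline{c_2},\overline{q_2})$, so $d\pi_0(\dot\sigma)$ projects to zero in $T_{([c_1,q_1],[\overline{c_2},\overline{q_2}])}\bigl(\mathcal M(S,(d_i))\times\mathcal M(\overline S,(d_i))\bigr)$, confirming that $\dot\sigma$ lies in the kernel.

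For the ``only if'' direction, suppose $\dot\sigma=(\dot c_1,\dot q_1,\dot{\overline{c_2}},\dot{\overline{q_2}},\dot u)$ is in the kernel. Then $(\dot c_1,\dot q_1,\dot{\overline{c_2}},\dot{\overline{q_2}})$ is tangent to the $\mathrm{Diff}_0(S)\times\mathrm{Diff}_0(S)$-orbit, so by Corollary \ref{cor: tangent to orbit in bundles} there exists a unique $X\in \Gamma(\C TS)$ with $\dot c_1=\mathscr L_X c_1$, $\dot q_1=\mathscr L_X q_1$, $\dot{\overline{c_2}}=\mathscr L_X\overline{c_2}$, $\dot{\overline{q_2}}=\mathscr L_X\overline{q_2}$. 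Consider the difference
\[
\eta:=\dot\sigma - (\mathscr L_Xc_1,\mathscr L_Xq_1,\mathscr L_X\overline{c_2},\mathscr L_X\overline{q_2},\mathscr L_X u)=(0,0,0,0,\dot u-\mathscr L_X u).
\]
By the ``if'' direction, the subtracted vector lies in $T_\sigma V$, so $\eta\in T_\sigma V$ too. Writing out the linearized equation $d_\sigma\mathscr T(\eta)=0$ and using that the first four components of $\eta$ vanish, we get $\partial_u\mathscr T(\dot u-\mathscr L_X u)=0$. Since $\sigma\in \mathrm{SOL}^*_{\mathscr T}(S)$, the operator $\partial_u\mathscr T$ is an isomorphism, and hence injective; therefore $\dot u=\mathscr L_X u$, which completes the identification.

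There is no serious obstacle here: the only subtlety is verifying that the diagonal equivariance of $\mathscr T$ is enough to guarantee $\mathscr L_X\sigma\in T_\sigma V$ even though a generic $X$ encodes two independent real infinitesimal diffeomorphisms (one acting on $c_1$ and one on $\overline{c_2}$); this is handled by the complex Lie derivative formalism combined with holomorphic extension of $\mathscr T$, which allows us to differentiate along both the real and imaginary parts of $X$.
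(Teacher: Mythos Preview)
Your proof is correct and follows essentially the same approach as the paper: use Corollary~\ref{cor: tangent to orbit in bundles} to produce $X$, use $\mathrm{Diff}_0(S)$-equivariance of $\mathscr T$ together with the fact that $T_\sigma V$ is a complex subspace to see that $\mathscr L_X\sigma\in T_\sigma V$, and then compare $\dot\sigma$ with $\mathscr L_X\sigma$. The only cosmetic difference is that where you invoke injectivity of $\partial_u\mathscr T$ directly to kill $\dot u-\mathscr L_X u$, the paper instead cites the (equivalent, already established) fact that $d\pi_0\colon T_\sigma V\to T_{\pi_0(\sigma)}(\mathrm M(S,(d_i))\times\mathrm M(\overline S,(d_i)))$ is injective.
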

\begin{proof}
Denote $\dot \sigma=(\dot c_1, \dot q_1,\dot{\overline{c_2}}, \dot{\overline{q_2}}, \dot u)$. By Corollary \ref{cor: tangent to orbit in bundles}, there exists a unique $X\in \Gamma(\C TS)$ such that $(\dot c_1,  \dot q_1, \dot{\overline{c_2}}, \dot{\overline{q_2}})= (\mathscr L_{X}c_1, \mathscr L_{X} q_1,  \mathscr L_{X}\overline{c_2},\mathscr L_{X}{q_2})$. The fact that $\Tau$ is equivariant for the action of $\Diff_0(S)$ implies that, for all $\mathrm X\in \Gamma(TS)$, $(\mathscr L_{\mathrm X} c_1,\mathscr L_{\mathrm X} q_1, \mathscr L_{\mathrm X} \overline{c_2},  \mathscr L_{\mathrm X} \overline {q_2}, \mathscr L_{\mathrm X} u)\in T_{\sigma}V$. Since $V$ is a complex submanifold, by seeing $X=Re(X)+i Im(X)$, we have that $(\mathscr L_{ X} c_1, \mathscr L_{ X} q_1, \mathscr L_{ X} \overline{c_2},\mathscr L_{ X} \overline {q_2}, \mathscr L_{ X} u)\in T_{\sigma}V$. Since the differential of the projection $\pi_0:V\to \mathrm M(S, (d_i))\times\mathrm M(\overline S, (d_i))$ is injective, we conclude that $\dot u= \mathscr L_{X}u$.
\end{proof}

Finally, we use complex Lie derivatives to prove a general diffeomorphism invariance result.
\begin{prop}\label{prop: descends through quotient}
Let $M$ be a complex manifold, $V\subset \mathrm{SOL}_{\mathscr{T}}^*(S)$ a $\mathrm{Diff}_0(S)$-invariant open subset, and $\widetilde{F}:V\to M$ a holomorphic map. Then $\widetilde{F}$ descends to a holomorphic map $F:\mathcal{V}\to M$ if and only if $\mathscr{T}$ is $\mathrm{Diff}_0(S)$-equivariant and $\widetilde{F}$ is invariant under the diagonal action of $\mathrm{Diff}_0(S)\times \mathrm{Diff}_0(S)$.
\end{prop}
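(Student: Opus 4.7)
The strategy is to use Proposition \ref{prop: mathscr L and SOL}, which identifies $\ker d\pi$ with the image of the complex Lie derivative map, and then to exploit the holomorphicity of $\widetilde{F}$ to convert invariance under the real diagonal $\mathrm{Diff}_0(S)$-action into the vanishing of $d\widetilde{F}$ along \emph{all} complex Lie derivative directions. The main technical leverage point is the $\C$-linearity of the complex Lie derivative map provided by Proposition \ref{prop: tangent model CSCS}.

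For the $(\Leftarrow)$ direction, assume $\mathscr{T}$ is $\mathrm{Diff}_0(S)$-equivariant (so the diagonal action preserves $V$) and that $\widetilde{F}$ is invariant under this action. For any real vector field $\mathrm{X}\in \Gamma(TS)$ with flow $\phi_t$, differentiating the identity $\widetilde{F}(\phi_t^*\sigma)=\widetilde{F}(\sigma)$ at $t=0$ gives
\[
d_\sigma\widetilde{F}\bigl(\mathscr{L}_\mathrm{X} c_1,\,\mathscr{L}_\mathrm{X} q_1,\,\mathscr{L}_\mathrm{X}\overline{c_2},\,\mathscr{L}_\mathrm{X}\overline{q_2},\,\mathscr{L}_\mathrm{X} u\bigr)=0.
\]
Since $\widetilde{F}$ is holomorphic, $d_\sigma \widetilde{F}$ is $\C$-linear; and by Proposition \ref{prop: tangent model CSCS}, together with the (tautological) $\C$-linear extension to the holomorphic differential and function components, the map $X\mapsto (\mathscr{L}_X c_1,\mathscr{L}_X q_1,\mathscr{L}_X\overline{c_2},\mathscr{L}_X\overline{q_2},\mathscr{L}_X u)$ is $\C$-linear on $\Gamma(\C TS)$. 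Therefore the above vanishing extends to all $X\in \Gamma(\C TS)$, and Proposition \ref{prop: mathscr L and SOL} identifies this subspace with $\ker d_\sigma \pi$. Hence $d\widetilde{F}$ annihilates $\ker d\pi$, so $\widetilde{F}$ is locally constant on each fiber of $\pi$ and thus constant on each connected component of each fiber of $[\pi_0]$; by the definition of $\mathcal{V}$, it factors through a well-defined map $F:\mathcal{V}\to M$. Holomorphicity of $F$ is immediate from diagram \eqref{eq: diagram quotient CAS}: over a chart $W_\sigma\simeq A_1\times A_2$, any constant holomorphic section $W_\sigma\to \widehat W_\sigma$ of $\pi$ realizes $F|_{W_\sigma}$ as the composition of a holomorphic section with the holomorphic $\widetilde{F}$.

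For the converse, if $\widetilde{F}=F\circ \pi$, then $\widetilde{F}$ is constant on fibers of $\pi$ by construction. Given $\phi\in\mathrm{Diff}_0(S)$, choose a path $\phi_t$ in $\mathrm{Diff}_0(S)$ from $\mathrm{id}$ to $\phi$; the curve $t\mapsto \phi_t^*\sigma$ remains in $V$ by the standing $\mathrm{Diff}_0(S)$-invariance and projects to a constant point of $\mathcal{M}(S,(d_i))\times\mathcal{M}(\overline S,(d_i))$. Consequently $\sigma$ and $\phi^*\sigma$ lie in the same connected component of the same $[\pi_0]$-fiber, that is, in the same fiber of $\pi$, so $\widetilde{F}(\phi^*\sigma)=\widetilde{F}(\sigma)$, which is the asserted diagonal invariance. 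The $\mathrm{Diff}_0(S)$-equivariance of $\mathscr{T}$ is encoded in the hypothesis that $V\subset \mathrm{SOL}^*_{\mathscr T}(S)$ be $\mathrm{Diff}_0(S)$-invariant, since this requires the diagonal action to preserve the zero set of $\mathscr{T}$ together with the nondegeneracy of $\partial_u\mathscr T$.

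The only genuinely non-formal ingredient is the $\C$-linearity of the complex Lie derivative map on the complex-structure components $c_1,\overline{c_2}$, which is precisely the content of Proposition \ref{prop: tangent model CSCS}; once this is granted, everything else is structural and follows by chaining together Proposition \ref{prop: mathscr L and SOL}, the local product diagram \eqref{eq: diagram quotient CAS}, and the definition of $\mathcal{V}$.
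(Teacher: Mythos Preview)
Your proof is correct and follows essentially the same route as the paper: differentiate the diagonal $\mathrm{Diff}_0(S)$-invariance to kill $d\widetilde F$ along real Lie-derivative directions, then use holomorphicity of $\widetilde F$ together with the $\C$-linear extension $\mathscr L_X=\mathscr L_{\mathrm{Re}(X)}+i\mathscr L_{\mathrm{Im}(X)}$ to extend to all $X\in\Gamma(\C TS)$, which by Proposition~\ref{prop: mathscr L and SOL} spans $\ker d\pi$. Your treatment is in fact slightly more explicit than the paper's (you spell out the factoring via \eqref{eq: diagram quotient CAS} and the holomorphicity of $F$); the only quibble is your justification of the $\mathscr T$-equivariance in the converse, which does not really follow from invariance of a single open $V$---but the paper itself simply declares this direction ``obvious'' and in practice treats the equivariance of $\mathscr T$ as a standing hypothesis.
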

\begin{proof}
    It is obvious that the equivariance and invariance are necessary, so assume that we have things. The invariance can be restated as saying that $\widetilde{F}$ is constant along the orbits of the diagonal in $\Diff_0(S)\times\Diff_0(S)$. Differentiating, we obtain that at $(c_1,q_1, \overline {c_2},q_2,u)\in \mathrm{SOL}_{\mathscr{T}}^*(S),$ for all $X\in \Gamma(TS),$ $d\widetilde{F} (\mathscr L_{\mathrm X}c_1,   \mathscr L_{\mathrm X}q_1,\mathscr L_{\mathrm X}\overline{c_2},\mathscr L_{\mathrm X}\overline{q_2}, \mathscr L_{\mathrm X}u)=0$. Since $\widetilde{F}$ is holomorphic, for all $X\in \Gamma(\C TS)$, still at the point $(c_1,q_1, \overline {c_2},q_2,u),$
\begin{align*}
d\widetilde{F}(\mathscr L_{ X}c_1, \mathscr L_{ X}q_1, \mathscr L_{ X}\overline{c_2}, \mathscr L_{ X}\overline{q_2}, &\mathscr L_{ X} u)
=d\widetilde{F}(\mathscr L_{ Re(X)}c_1, \mathscr L_{ Re(X)} q^1, \mathscr L_{ Re(X)} \overline{c_2},  \mathscr L_{Re(X)}\overline{q_2},\mathscr L_{ Re(X)} u) \\
&+id\widetilde{F}(\mathscr L_{ Im(X)}c_1, \mathscr L_{ Im(X)} q^1, \mathscr L_{ Im(X)} \overline{c_2},  \mathscr L_{Im(X)}\overline{q_2}, \mathscr L_{ Im(X)} u) =0.
\end{align*}
It follows that $\widetilde{F}$ is indeed constant along the $\mathrm{Diff}_0(S)\times \mathrm{Diff}_0(S)$ orbits, and hence descends to the quotient.
\end{proof}

\subsection{Complex affine Toda equations and holonomy}\label{sec: equations and holonomy}

We use the constructions of the above subsections in the context of complex harmonic $\GC$-bundles. We start with the following general fact, which we prove below.
\begin{prop}\label{prop: Fredholm and such}
Assume that $k>2$, $s>k+2$, and that $\mathscr T$ has the form 
 \begin{equation*}     
\mathscr{T} (c_1, q_1, \overline {c_2}, \overline{q_2}, u)= \Delta_{\hpair} u + F(c_1,q_1, \overline {c_2},\overline{{q_2}},u),
 \end{equation*} 
 for some holomorphic map $$F:  \mathrm M^s(S, (d_i))\times \mathrm M^s(\overline S, (d_i)) \times W^{k,2}(S,\C^l)\to W^{k-2,2}(S,\C^l).$$
 Then $\mathscr{T}$ is holomorphic, the assumption \eqref{eq: assumption nesting} holds, and for all $\sigma=(c_1, q_1, \overline {c_2}, \overline {q_2},  u)\in \mathrm{SOL}_{ \mathscr T,s, k}(S)$, the linearization $\partial_u\mathscr{T}$ is Fredholm of index $0$. 
\end{prop}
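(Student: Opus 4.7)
The plan is to verify the three assertions separately, with the holomorphic dependence of Bers metrics from \cite{ESholodependence} and the Fredholm theory of Bers Laplacians from \cite{ElSa} doing most of the work. For the holomorphicity of $\mathscr T$, since $F$ is holomorphic by assumption, it will suffice to check that $(c_1, \overline{c_2}, u) \mapsto \Delta_{h(c_1,\overline{c_2})} u$ is holomorphic into $W^{k-2,2}(S,\C^l)$. By \cite{ESholodependence}, the Bers metric depends holomorphically on $(c_1, \overline{c_2}) \in \mathcal{C}^s(S,c_0) \times \mathcal{C}^s(\overline{S}, \overline{c_0})$ in the $W^{s,\infty}$-topology, so the coefficients $\lambda, \mu$ appearing in the local form \eqref{eq: bers Laplacian loc} of $\Delta_h$ depend holomorphically on $(c_1, \overline{c_2})$. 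The assumption $s > k+2$ will guarantee that multiplication by these coefficients defines bounded operators on the relevant Sobolev scales, and the composition of holomorphic maps is holomorphic.

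For the Fredholm statement, I will first linearize at $\sigma$ to get $\partial_u \mathscr T|_\sigma = \Delta_h + \partial_u F|_\sigma$. The principal symbol of $\Delta_h$ read off from \eqref{eq: bers Laplacian loc} is $\tfrac{4}{\lambda}\xi_{\overline{z}}(\xi_z - \overline{\mu}\xi_{\overline{z}})$, which is nonzero for all nonzero real cotangent vectors $\xi$ (using $\|\mu\|_{L^\infty}<1$), so $\Delta_h$ is complex elliptic of order $2$. The Fredholm theory for such operators on closed surfaces, developed in \cite{ElSa}, will give that $\Delta_h: W^{k,2}(S,\C^l) \to W^{k-2,2}(S,\C^l)$ is Fredholm. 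To pin down the index I would deform within the class of complex elliptic Bers Laplacians to the ordinary Laplace operator (take $\mu=0$, $\lambda$ real positive), which is self-adjoint of index $0$; homotopy invariance of the Fredholm index will then give index $0$ for $\Delta_h$. The perturbation $\partial_u F|_\sigma$ has order strictly less than $2$ (it comes from a smooth pointwise dependence of $F$ on $u$), so it factors through $W^{k-1,2}$ and is compact into $W^{k-2,2}$ by Rellich; compact perturbations preserve Fredholmness and the index.

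For the nesting assumption \eqref{eq: assumption nesting}, given $\sigma \in \mathrm{SOL}^*_{\mathscr T, s_n, k'}(S)$, I will apply elliptic regularity to $\Delta_h u = -F(c_1,q_1,\overline{c_2},\overline{q_2},u)$ — bootstrapping in the standard way, the condition $s_n > k_n + 2$ ensuring the coefficients of $\Delta_h$ and the data feeding $F$ are regular enough — to upgrade $u$ into $W^{k_n,2}$. The isomorphism property will then transfer upward: the kernel of $\partial_u \mathscr T|_\sigma$ on $W^{k',2}$ already consists of smooth functions by elliptic regularity applied to the homogeneous linearized equation, so triviality of the kernel on $W^{k',2}$ will give triviality on $W^{k_n,2}$, and combined with index $0$ this forces invertibility on the higher scale. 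The main obstacle in the plan is keeping the regularity bookkeeping clean during the bootstrap, which is what the quantitative condition $s > k+2$ is designed to handle; beyond that, everything reduces to the complex ellipticity of $\Delta_h$ and the holomorphic dependence of Bers metrics, both of which are already in hand.
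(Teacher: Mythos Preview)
Your proposal is correct and follows essentially the same route as the paper. The paper packages the Fredholm and bootstrap arguments together by first proving a single elliptic estimate $\|\dot u\|_{W^{k+2,2}} \leq C(\|\dot u\|_{W^{k,2}} + \|(\partial_u\mathscr T)\dot u\|_{W^{k,2}})$ from the interior estimates of \cite{ElSa}, and then reading off both Fredholmness (via standard closed-range arguments, with index $0$ left implicit) and the regularity bootstrap from this one inequality; your separation into symbol ellipticity plus homotopy-to-the-real-Laplacian for the index, and then a separate bootstrap for the nesting, is an equivalent organization of the same ingredients.
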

Note we're assuming that $F$ does not depend on the first derivatives of $u$. With a bit more work, using interpolation inequalities (see \cite[Theorem 10.2.29]{Ni}), one could relax that assumption. By Fredholmness, in the definition of  $\mathrm{SOL}^*_{ \mathscr T,s, K}(S)$, the ``is an isomorphism" can be replaced with ``is bijective," i.e., the bi-continuity is for free. Since the Fredholm index is $0,$ the word ``bijective" can be further replaced by ``injective." 

The proposition is a consequence of the holomorphicity result from the paper \cite{ESholodependence}, and the elliptic estimates for $\Delta_h$ that we proved in \cite{ElSa} (see \cite[Section 2]{ElSa} for more general statements). Denote $W^{k,p}$ norms on an open subset $V$ by $||\cdot||_{W^{k,p}(V)}$.
\begin{prop}\label{interiorestimate}
Let $V\subset V'\subset S$ be open disks with $\overline{V}\subset V'$ and set $k>2$ and $s>k+2$. Let $h=\vb*h(c_1, \overline{c_2})$ be a Bers metric with $c_1,c_2\in \mathcal{C}^s(S,c_0)$, and let $u\in L^{2}(V',\C)$ and $v\in W^{k,2}(V',\C)$, with $u$ satisfying $\Delta_h u=v$ in the weak sense. Then $u\in W^{k+2,2}(V,\C)$ and there exists $C=C(V,V',c_1,\overline{c_2},k)$ such that 
$$
    ||u||_{W^{k+2,2}(V)}\leq C(||u||_{W^{k,2}(V')}+||v||_{W^{k,2}(V')}).$$
    \end{prop}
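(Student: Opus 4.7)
The proposition asserts a classical interior elliptic estimate (plus interior regularity) for the second-order complex elliptic operator $\Delta_h$. Since a more general statement is already in \cite{ElSa}, the plan is to give a streamlined proof that reduces to well-known elliptic theory applied to the explicit local form of $\Delta_h$.

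The plan is to argue in three steps. First, by compactness, I cover $\overline{V}$ by finitely many small disks $B_j \Subset B_j' \Subset V'$ admitting holomorphic coordinates $z$ for $c_1$, so that on each $B_j'$ one has $h = \lambda\, dz\, d\overline{w}$ for smooth $\lambda$ with $\lambda \neq 0$ and Beltrami coefficient $\mu$ with $\|\mu\|_{L^\infty} < 1$, and
\[
\Delta_h = -\tfrac{4}{\lambda}\partial_{\overline z}\bigl(\partial_z - \overline{\mu}\,\partial_{\overline z}\bigr).
\]
The hypothesis $s > k+2$ and the continuous dependence of the Bers metric on $(c_1,\overline{c_2})$ (cf.\ \cite{ESholodependence}) guarantee that $\lambda, \mu \in W^{s,\infty}(B_j')$, with uniform bounds depending only on $c_1,\overline{c_2}$. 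Separating real and imaginary parts, $\Delta_h$ becomes a real $2\times 2$ second-order system whose principal symbol is non-vanishing (genuine ellipticity follows from $|\mu| < 1$), so it falls under the scope of standard interior elliptic theory (for instance Agmon--Douglis--Nirenberg, or more modern $W^{k,p}$ theory).

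Second, I bootstrap regularity. Since the top-order coefficients are Lipschitz (in fact $W^{s,\infty}$), a Nirenberg difference-quotient argument with cut-off functions promotes the weak solution $u \in L^2$ to $W^{2,2}_{\mathrm{loc}}$, with an estimate
\[
\|u\|_{W^{2,2}(B_j)} \leq C\bigl(\|u\|_{L^2(B_j')} + \|v\|_{L^2(B_j')}\bigr),
\]
where $C$ depends only on the $W^{1,\infty}$-norms of the coefficients (hence on $c_1,\overline{c_2}$) and on $B_j,B_j'$. To raise regularity, I apply a derivative $\partial^\alpha$ with $|\alpha| \leq k$ to the equation $\Delta_h u = v$ and use the commutator identity $\Delta_h(\partial^\alpha u) = \partial^\alpha v + [\Delta_h, \partial^\alpha]u$. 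The commutator involves derivatives of $\lambda^{-1}$ and $\mu$ of order $\leq |\alpha|+1 \leq k+1 < s$ paired with derivatives of $u$ of order $\leq |\alpha|+1$; these are absorbed by Moser-type product estimates in $W^{k,2}$ (valid because $k > 2$ means $W^{k,2} \hookrightarrow C^0$ in two dimensions, so $W^{k,2}$ is an algebra under the action of $W^{s,\infty}$). Iterating on a shrinking nested sequence of disks $B_j \Subset \cdots \Subset B_j'$ and applying the base case at each step yields the estimate at level $k+2$.

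Third, I sum over the finitely many charts in the covering, using a smooth partition of unity subordinate to $\{B_j\}$ to patch the local estimates into a global one on $V$, with constant depending on the covering and on $c_1,\overline{c_2}$. The only mildly delicate step is verifying that the constant in the base-case elliptic estimate depends only on $\|\lambda\|_{W^{1,\infty}}$, $\|\mu\|_{W^{1,\infty}}$, and the ellipticity constant $1-\|\mu\|_{L^\infty}^2$; but this is exactly the content of the standard elliptic theory for systems in non-divergence form with Lipschitz coefficients. The rest is routine. The main obstacle---if any---is bookkeeping the explicit dependence of the constant on $(c_1,\overline{c_2})$, but this is transparent from the explicit expression of $\Delta_h$ and the chosen covering.
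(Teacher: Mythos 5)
The paper does not actually prove this proposition: it is recalled verbatim from \cite{ElSa} (the sentence preceding it points to \cite[Section 2]{ElSa} for the more general statements, and the only new analytic input here is the holomorphic dependence of $h$ on $(c_1,\overline{c_2})$ from \cite{ESholodependence}). So your task was really to supply a standalone proof, and the architecture you chose — localize to coordinate disks, use the explicit non-divergence form $\Delta_h=-\tfrac{4}{\lambda}\partial_{\overline z}(\partial_z-\overline\mu\,\partial_{\overline z})$ with $\lambda\neq 0$, $\|\mu\|_{L^\infty}<1$, bootstrap with commutators, and patch with a partition of unity — is the standard and correct route; the constant dependence on $(c_1,\overline{c_2})$, $V$, $V'$, $k$ comes out exactly as you say.

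Two points deserve more care. First, your base case is imprecise: the hypothesis is only $u\in L^{2}(V')$ with $\Delta_h u=v$ in the weak (distributional) sense, and the Nirenberg difference-quotient argument needs $u\in W^{1,2}_{\mathrm{loc}}$ to even get started, so "difference quotients promote $L^2$ to $W^{2,2}_{\mathrm{loc}}$" is not a proof as written. The cleanest fix uses the displayed factorization itself: set $T:=(\partial_z-\overline\mu\,\partial_{\overline z})u$, a distribution of order one; the equation gives $\partial_{\overline z}T=-\tfrac{\lambda}{4}v\in W^{k,2}_{\mathrm{loc}}$, and since $\partial_{\overline z}$ is first-order elliptic, interior regularity yields $T\in W^{k+1,2}_{\mathrm{loc}}$ with the corresponding estimate; then $\partial_z-\overline\mu\,\partial_{\overline z}$ is again first-order elliptic (because $\|\mu\|_{L^\infty}<1$) with coefficients of class better than $C^{k+1}$, so a second application gives $u\in W^{k+2,2}_{\mathrm{loc}}$ and the stated bound. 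This handles the low a priori regularity of $u$ directly and replaces both your base case and your bootstrap. Second, the claim $\lambda,\mu\in W^{s,\infty}$ on the chart is slightly optimistic — these coefficients involve the quasiconformal coordinate $w$ and the Bers uniformizing data, so one loses a little regularity relative to the $W^{s,\infty}$ Beltrami forms — but all that is needed is roughly $k+1$ bounded derivatives, which the hypothesis $s>k+2$ comfortably provides; you should state it at that level rather than asserting the full $W^{s,\infty}$ regularity.
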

By a basic covering argument, one can replace $V$ and $V'$ with $S$, at the cost of enlarging the constant $C$. In the proof below, we work with a constant $C$, always depending on specified parameters, which is allowed to change during the course of the proof. 
\begin{proof}[Proof of Proposition \ref{prop: Fredholm and such}]
To see that $\mathscr{T}$ is holomorphic, we just need to observe that $(c_1,\overline{c_2},u)\mapsto \Delta_{\hpair} u$ is holomorphic. Since, by \cite[Theorem B]{ESholodependence}, $\hpair$ varies holomorphically with $(c_1,\overline{c_2})$ in the relevant Fréchet space of complex metrics, holomorphicity of $(c_1,\overline{c_2},u)\mapsto \Delta_{\hpair} u$ follows from the Koszul formula for the Levi-Civita connection of $\hpair$.
 
For the other two claims we will show that there exists $C=C(\overline{c_1},q_1, \overline {c_2},\overline{q_2},k)>0$ such that for all $\dot{u}\in C^\infty(S,\C^l)$,
\begin{equation}\label{eq: elliptic estimate multivariable}
    ||\dot{u}||_{W^{k+2,2}}\leq C(||\dot{u}||_{W^{k,2}}+||(\partial_u\mathscr{T})(\dot u)||_{W^{k,2}}).
\end{equation}
Here, we write $||\cdot||_{W^{k,2}}$ for $||\cdot||_{W^{k,2}(S)}.$ By totally standard arguments (see, for instance, \cite[\S 10.4]{Ni}), the Fredholmness for $\partial_u \mathscr{T}$ will follow. As well, with \eqref{eq: elliptic estimate multivariable}, verifying the assumption (\ref{eq: assumption nesting}) is simple: let $\sigma=(c_1,q_1, \overline {c_2},\overline{q_2},u) \in \mathrm{SOL}_{ \mathscr T,s, k}(S)$, so that $u\in W^{k,2}(S,\C^l)$. Then \eqref{eq: elliptic estimate multivariable} shows that $u\in W^{k+1,2}(S,\C^l)$, and continuing inductively we obtain that $u\in W^{s-1,2}(S,\C^l).$ Hence, (\ref{eq: assumption nesting}) holds for $(s_n, k_n)=(n, n+1)$.

We're left to actually prove (\ref{eq: elliptic estimate multivariable}). Toward this, taking $\dot{u}\in C^\infty(S,\C^l$), since $F$ is holomorphic,
    $$||\partial_u F (c_1,q_1, \overline {c_2},\overline{{q_2}},\dot{u})||_{W^{k,2}}\leq C(c_1,q_1, \overline {c_2},\overline{q_2})||\dot{u}||_{W^{k,2}}.$$ 
Hence, if $u=(u_1,\dots, u_l)$ and $F=(F_1,\dots, F_l),$ by Proposition \ref{interiorestimate}, for every $i,$
    \begin{align*}
      ||\dot{u}_i||_{W^{k+2,2}}&\leq C(||\dot{u}_i||_{W^{k,2}}+||\Delta_h u_i||_{W^{k,2}}) \\
      &\leq C(||\dot{u}_i||_{W^{k,2}}+||\Delta_h u_i + \partial_u F_i(c_1,q_1, \overline {c_2},\overline{{q_2}},\dot{u})||_{W^{k,2}} + ||\partial_u F_i(c_1,q_1, \overline {c_2},\overline{{q_2}},\dot{u})||_{W^{k,2}})  \\
      &\leq C(||\dot{u}_i||_{W^{k,2}}+||\Delta_h u_i + \partial_u F_i(c_1,q_1, \overline {c_2},\overline{{q_2}},\dot{u})||_{W^{k,2}}),
    \end{align*}
    where in the last line we enlarged $C$. Summing up over the indices, we obtain the estimate (\ref{eq: elliptic estimate multivariable}), and hence the proposition follows.
\end{proof}

Now, let $G$ be a split real simple Lie group of adjoint type and of rank at least $2$, or the semisimple Lie group $\mathrm{PSL}(2,\R)^2$. Let $d=d_G$ be the Coxeter number of $G$ (recall that for $G=\mathrm{PSL}(2,\R)^2$, we set $d_G=2$). In our framework above, we now set $(d_i)_{i=1}^k=(d),$ so that the bundles in question are $\mathrm{M}(S,G)$ and $\mathcal{M}(S,G)$ (recall the notation from Section \ref{sec: Labourie paper}). Fix an ordering of the simple roots $\Pi=\{\alpha_1,\dots, \alpha_l\},$ so that we can consider an ordered tuple of functions $(u_\alpha)_{\alpha\in \Pi}$ as a function from $S$ to $\C^l.$ For $\mathrm{PSL}(2,\R)^2$, we make a little abuse of notation and just set $\Pi=\{\alpha\}$. Working in the setup of Proposition \ref{prop: Fredholm and such}, consider the holomorphic operator $$\mathscr T_G: \mathrm M(S,G) \times  \mathrm M(\overline S, G) \times W^{k,2}(S,\C^l)\to W^{k-2,2}(S,\C^l)$$ defined by $$\mathscr{T}_G (c_1, q_1, \overline {c_2}, \overline{q_2}, u)= \Delta_{\hpair} u + F(c_1,q_1, \overline {c_2},\overline{{q_2}},u),$$ where
\begin{equation}\label{eq: the F}
    F(c_1,q_1, \overline {c_2},\overline{{q_2}},u)= (\Delta_h  u_\alpha-4\sum_{\beta\in \Pi}\nu(\alpha,\beta)r_\beta e^{u_\beta} +4\nu(\alpha,\delta)\frac{q_1\overline{q_2}}{h^d}\prod_{\gamma\in \Pi} (-\frac{e^{- n_\gamma u_{\gamma}}}{\nu(\gamma,\gamma)^{n_\gamma}} )+ 2)_{\alpha\in \Pi}.
\end{equation}
 Solving for $F=0$ gives solutions to (\ref{eq: Theorem C}) that admit global logarithms (of course, the real solutions have this property);
$F$ is related to (\ref{eq: Theorem C}) by the substitution $u_\alpha\mapsto e^{u_\alpha} \nu(\alpha,\alpha)=U_\alpha$. Normalizing by $\nu(\alpha,\alpha)$ is minor, but makes the equation easier to work with on the real locus.

We verify that $F$ is holomorphic. By Hartog's Theorem on separate holomorphicity, it suffices to check in $c_1,q_1, \overline {c_2},\overline{q_2}$, and $u$ separately. The expression above is clearly holomorphic in the input data $q_1,\overline{q_2},$ and $u,$ and for $c_1$ and $\overline{c_2}$, the result follows from the holomorphicity of the Bers metric map from \cite{ESholodependence}. Hence, we can apply Proposition \ref{prop: Fredholm and such} to $\mathscr{T}_G$ and define the corresponding spaces $\mathrm{SOL}_{\mathscr T_G, l, 
K}(S)$, $\mathrm{SOL}^*_{\mathscr T_G, l, 
K}(S)$ together with the $C^\infty$ analogues $\mathrm{SOL}_G(S):=\mathrm{SOL}_{\mathscr T_G}(S)$ and $\mathrm{SOL}^*_G(S):=\mathrm{SOL}_{\mathscr T_G}^*(S)$.
Observe that $\Tau_G$ is $\Diff_0(S)$-equivariant, and that $\Diff_0(S)$ acts on both $\mathrm{SOL}_G(S)$ and $\mathrm{SOL}^*_G(S)$.

Now, by Theorem C, every element in $\mathrm{SOL}_G(S)$ defines a complex harmonic $G$-bundle on the same underlying principal $K^{\C}$-bundle $P_{K^{\C}}$, and the holonomy of the corresponding flat connection on $P_{G^{\C}}=P_{K^{\C}}\times_{K^{\C}} G^{\C}$ defines a map $\widetilde{\mathrm{hol}}: \mathrm{SOL}_G(S)\to \mathrm{Hom}(\pi_1(S), G^{\C})/G^{\C}$ (non-Hausdorff conjugation quotient). The case $G=\mathrm{PSL}(2,\R)^2$ needs a bit more elaboration: for quadratic differentials $q_1,\overline{q_2}$, the equation (\ref{eq: Theorem C}) is the same for $(q_1,\overline{q_2})$ and $(-q_1,-\overline{q_2}),$ so in this case we take the complex harmonic $G$-bundle corresponding to the product of two maps with differentials $(q_1,\overline{q_2})$ and $(-q_1,-\overline{q_2})$ respectively. This symmetry in the differentials makes the complex harmonic map conformal (and recall from Section \ref{sec: bi-Hitchin} that we have conformality in the other cases too). Let $V\subset \mathrm{SOL}^*_G(S)$ be the subset of points such that the holonomy of the corresponding complex harmonic $G$-bundle is irreducible and simple, and restrict $\widetilde{\mathrm{hol}}$ to $V$. 
\begin{prop}
\label{prop: hol descends to mathbb SOL}
    The subset $V$ is open, and the map $\widetilde{\mathrm{hol}}:V\to \chi^{\mathrm{an}}(\pi_1(S),G)$ is holomorphic.
\end{prop}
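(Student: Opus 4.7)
The plan is to leverage the explicit description of the flat connections provided by Theorem C, together with standard holomorphic dependence results for holonomy of ODEs with parameters. First, I would observe that Theorem C associates to each $\sigma = (c_1, q_1, \overline{c_2}, \overline{q_2}, u) \in \mathrm{SOL}^*_G(S)$ an explicit flat connection $A_\sigma = A^{\mathcal{I}_\sigma} + \phi_1 + \overline{\phi_2}^{\mathcal{I}_\sigma}$ on the fixed principal $G^{\C}$-bundle $P_{G^{\C}}$, where the isomorphism $\mathcal{I}_\sigma$ is built from the functions $U_\alpha = e^{u_\alpha}\nu(\alpha,\alpha)$. By Proposition \ref{prop: K^c connection form}, the connection form $A^{\mathcal{I}_\sigma}$ is locally given by expressions of the form $\partial_J \log(\lambda U_\alpha)$, which depend holomorphically on $\sigma$: the Bers metric $\lambda$ depends holomorphically on $(c_1,\overline{c_2})$ by the main result of \cite{ESholodependence}, while $U_\alpha$ depends holomorphically on $u$. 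The Higgs fields $\phi_1$ and $\overline{\phi_2}^{\mathcal{I}_\sigma}$ depend holomorphically on $(q_1, \overline{q_2}, \sigma)$ by the analogous reasoning. Therefore $\sigma \mapsto A_\sigma$ is a holomorphic map into an appropriate complex Banach (or Fréchet) space of connections of Sobolev class $W^{k-1,2}$.

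For the openness of $V$, I would invoke the fact that the irreducible and simple locus $\mathrm{Hom}^*(\pi_1(S), G^{\C})/G^{\C} = \chi^{\mathrm{an}}(\pi_1(S), G^{\C})$ is open inside the non-Hausdorff quotient $\mathrm{Hom}(\pi_1(S), G^{\C})/G^{\C}$ (this is essentially by definition and is standard). Since the pointed holonomy map is continuous in the $C^0$-topology on connections and our connection depends continuously on $\sigma$, the preimage $V = \widetilde{\mathrm{hol}}^{-1}(\chi^{\mathrm{an}})$ is open.

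For holomorphicity on $V$, I would fix a basepoint $p \in S$ and a frame for $P_{G^{\C}}|_p$, which turns the pointed holonomy into an honest map $\widetilde{\mathrm{hol}}^p: \mathrm{SOL}^*_G(S) \to \mathrm{Hom}(\pi_1(S,p), G^{\C})$ with values in $G^{\C}$ for each fixed loop. For every $\gamma \in \pi_1(S,p)$, parallel transport of $A_\sigma$ along a smooth representative of $\gamma$ solves a linear ODE on $G^{\C}$ with coefficients depending holomorphically on the Banach parameter $\sigma$; by the standard holomorphic parameter-dependence theorem for ODEs (applied in Banach spaces, valid because $k > 2$ puts the coefficients into $C^0$ and hence the solutions exist and depend $C^1$-holomorphically on parameters), the map $\sigma \mapsto \widetilde{\mathrm{hol}}^p_\gamma(\sigma)$ is holomorphic. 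Post-composing with the holomorphic quotient $\mathrm{Hom}^*(\pi_1(S), G^{\C}) \to \chi^{\mathrm{an}}(\pi_1(S), G^{\C})$ (a holomorphic principal $\mathrm{PGL}$-type bundle on its image) yields the holomorphicity of $\widetilde{\mathrm{hol}}|_V$. The main technical obstacle is the bookkeeping of Sobolev regularity: one must check that the connection form really lies in a space where ODE theory applies uniformly in $\sigma$, and that the holomorphic dependence is preserved through the projective/nested limits used to define $\mathrm{SOL}^*_G(S)$. This is handled by working at fixed Sobolev level $(s,k)$ with $s > k + 2$, $k > 2$, where Proposition \ref{prop: Fredholm and such} and Sobolev embedding together give the required continuous-in-$\sigma$, $C^1$-in-space regularity, and then passing to the $C^\infty$ limit.
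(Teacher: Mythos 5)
Your overall route is the same as the paper's: write out the explicit flat connection from Theorem C, use the holomorphic dependence of the Bers metric from \cite{ESholodependence} (together with the evident holomorphicity of $U_\alpha=e^{u_\alpha}\nu(\alpha,\alpha)$ in $u$) to see that the connection data vary holomorphically, feed this into the standard holomorphic parameter-dependence of the parallel-transport ODE to get a holomorphic map to $\mathrm{Hom}$, and deduce openness of $V$ from openness of the irreducible and simple locus. All of that matches the paper's argument.

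There is, however, one step you skip which is precisely the point the paper has to deal with, and as written your proof has a hole there. You assert that $\sigma\mapsto A_\sigma$ is ``a holomorphic map into an appropriate complex Banach (or Fréchet) space of connections'' on ``the fixed principal $G^{\C}$-bundle $P_{G^{\C}}$.'' But the explicit formula \eqref{eq: flat connection SOL} for $A_\sigma$, as well as the local expressions $\partial_J\log(\lambda U_\alpha)$ from Proposition \ref{prop: K^c connection form}, are written in a trivialization that is \emph{holomorphic for} $P_{G^{\C}}\to (S,c_1)$, hence depends on the point $\sigma$; moreover the identification $\mathrm{ad}P_{G^{\C}}\cong \oplus_m \g_m\otimes\mathcal{K}_{c_1}^m$ itself varies with $c_1$. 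Connection forms attached to different $\sigma$ therefore a priori live in different frames, and the statement ``$A_\sigma$ depends holomorphically on $\sigma$'' does not yet typecheck; nor can you fix a point and frame of $P_{G^{\C}}$ once and for all to run the holonomy ODE without addressing this. The fix (the paper's ``minor complication'') is to choose a reference $c_0$ with global coordinate $z_0$ and transport everything to a common trivialization via the bundle automorphism induced by $dz_{c_1}\mapsto \frac{\vb*h(c_0,\overline{c_0})}{\vb*h(c_1,\overline{c_0})}dz_0$, checking that this identification varies holomorphically in $c_1$ (again by \cite{ESholodependence}) and that, although it moves the chosen point of $P_{G^{\C}}$, it does so holomorphically, so the pointed holonomy still depends holomorphically on $\sigma$. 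With that insertion your argument goes through; the Sobolev bookkeeping you flag as the main obstacle is comparatively harmless, since the paper works directly in the $C^\infty$ Fréchet setting and only $C^1$ control of the connection forms is needed for the ODE.
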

Knowing that $V$ is open, we denote $\mathcal{SOL}_G(S):=\mathcal{V}$.
By Proposition \ref{prop: descends through quotient}, the map $\widetilde{\mathrm{hol}}$ descends to a holomorphic map $\mathrm{hol}:\mathcal{SOL}_G(S)\to \chi^{\mathrm{an}}(\pi_1(S),G).$

To prove the proposition, we do not need to cite any general result asserting that flat connections identify biholomorphically with representations. It suffices to argue directly and show that, if we pick a cover by trivializations, the connection forms vary holomorphically as $C^1$ $1$-forms valued in $\g$. Indeed, picking a point in $P_{G^{\C}},$ given the connection forms, the pointed holonomy is determined by solving a well-known differential equation for a map from $[0,1]\to G$ that depends only on the connection form (see \cite[Chapter 11.6 and 13.8]{Taubes}), whose solutions are understood to depend holomorphically on the connection forms (see \cite[Chapter 8.3]{Taubes}). It follows that for every homotopy class $\gamma\in \pi_1(S)$, the parallel transport of our flat connection gives a holomorphic map to $G$. Patching these maps together over all of $\pi_1(S)$ gives a holomorphic map to $\mathrm{Hom}$. It follows that $V$ is open, since the irreducible and simple locus in $\mathrm{Hom}$ is open. We can then restrict to $V,$ and $\widetilde{\mathrm{hol}}$ is the composition with the quotient map to $\chi^{\mathrm{an}}.$

We record for use below that, in a trivialization that is holomorphic for $P_{G^{\C}}\to (S,c_1)$, for simple $G,$ the flat connection associated with $\sigma=(c_1,q_1, \overline {c_2},\overline{q_2},u) \in \mathrm{SOL}_G(S)$ is
\begin{equation}\label{eq: flat connection SOL}
   A_\sigma := A^{\mathcal{I}}+ (\widetilde{e}+q_1x_\delta) + (\sum_{\alpha \in \Pi} \lambda U_\alpha r_\alpha^{1/2}(x_\alpha \otimes dz) \otimes d\overline{w}+ \lambda^{-d+1} U_{-\delta}\overline{q_2}(w)(x_{-\delta}\otimes dz^{-d+1} )\otimes d\overline{w}),
\end{equation}
where $\mathcal{I}$ is the relevant isomorphism solving (\ref{eq: bi-Hitchin flatness}) and, as above, $U_\alpha=e^{u_\alpha}\nu(\alpha,\alpha)$. For $\mathrm{PSL}(2,\R)^2,$ we take a product of two flat connections of the form above.
\begin{proof}[Proof of Proposition \ref{prop: hol descends to mathbb SOL}]
 By fixing $c_0\in \mathcal C(S)$ and a global holomorphic coordinate $z_0:\widetilde S\to \mathbb H$, we biholomorphically identify $\mathcal C(S)$ with the $\pi_1(S)$-invariant Beltrami differentials on $(\widetilde S,c_0)$, which allows us to holomorphically associate to each $c_1\in \mathcal C(S)$ a global coordinate $z_{c_1}$ on $\widetilde S$. 

  To prove the proposition, we follow the procedure outlined directly above, showing that, in a fixed trivialization, the connection forms vary holomorphically. One minor complication is that, for each $c_1$, \eqref{eq: flat connection SOL} describes a connection form using a trivialization that depends on $c_1$. To move to a common trivialization, the isomorphism 
    \begin{equation*}
    \begin{split}
    \mathcal K_{c_1} &\to \mathcal K_{c_0}\\
dz_{c_1}&\mapsto \frac{\vb*{h}(c_0, \overline{c_0})}{\vb* h(c_1, \overline{c_0})} dz_{0},
\end{split}
\end{equation*}
induces an automorphism of $P_{G^{\C}}$ (recall the construction from Section \ref{sec: bi-Hitchin}) that takes holomorphic trivializations for $P_{G^{\C}}\to (S,c_1)$ to holomorphic trivializations for $P_{G^{\C}}\to (S,c_0)$. Note that, in the procedure above, we track the connection forms and a point on $P_{G^{\C}}$. The isomorphisms move the point on $P_{G^{\C}},$ but they do so in a holomorphic fashion, so we still just need to check that the connection forms vary holomorphically.

Working in the fixed trivialization, the full connection form is the connection form of $A^{\mathcal{I}}$ plus the Higgs field pieces. By \cite[Theorem B]{ESholodependence}, the map $\ccpair\mapsto \hpair=\lambda(z_{c_1},\overline{z_{c_2}})dz_{c_1}d\overline{z_{c_2}}$ is holomorphic in the relevant Fréchet space of $C^\infty$ tensors, and it follows that the Higgs field pieces vary holomorphically.

For $A^{\mathcal{I}},$ it follows from Proposition \ref{prop: h valued} part (2) that the connection forms on $\g_{-\alpha}\otimes \mathcal{K}_{c_0}^{-1}$, $\alpha \in \Pi,$ determine the connection forms on the whole bundle. Thus, it suffices to verify that the restrictions to $\g_{-\alpha}\otimes \mathcal{K}_{c_0}^{-1}$, $\alpha\in \Pi,$ described explicitly in Proposition \ref{prop: K^c connection form}, vary holomorphically.
Using \cite[Theorem B]{ESholodependence} again, one easily sees that $\ccpair\mapsto\lambda(z_{c_1},\overline{z_{c_2}})$ and 
$$
\partial_J \log(\lambda): \  \ccpair\mapsto \frac{\partial_{\overline{z_{c_2}}} \log(\lambda(z_{c_1},\overline{z_{c_2}}))}{\partial_{\overline{z_{c_2}}} z_{c_1} }
$$ are holomorphic. Using the explicit description for the connection forms from Proposition \ref{prop: K^c connection form}, we indeed deduce the holomorphicity. This completes the proof.
\end{proof}

\subsection{The real locus}\label{sec: the real locus}
Let $(c, q)\in \mathrm M(S,G)$. According to Proposition \ref{prop: real Hitchin ex and un}, there exists a unique $u\in C^{\infty}(S,\mathbb R^l)$ such that $(c,q,\overline c, \overline q, u)\in \mathrm{SOL}_G(S)$. By construction, the corresponding complex harmonic $G$-bundle has Hitchin holonomy in the real group $G.$ The map $\mathcal{L}_G$ from the introduction is obtained by associating  $(c, q)$ to the holonomy and then factoring through the $\mathrm{Diff}_0(S)$ action. In order to put this map into the framework with which we are proving Theorems B and B', we prove the following. 
\begin{prop}
\label{prop: real locus}
   With notations as above, $(c,q,\overline c, \overline q, u)\in \mathrm{SOL}^*_G(S)$. 
\end{prop}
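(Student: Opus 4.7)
The proof reduces to injectivity of the linearization via Fredholm theory, which I then establish by a maximum principle argument adapted to the real locus.

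First, by Proposition \ref{prop: Fredholm and such}, the linearization $\partial_u\mathscr T_G: W^{k,2}(S,\C^l)\to W^{k-2,2}(S,\C^l)$ is Fredholm of index zero at every point of $\mathrm{SOL}_{\mathscr T_G, s,k}(S)$, so to prove $(c,q,\overline c,\overline q,u)\in \mathrm{SOL}^*_G(S)$ it suffices to show that the linearization at this point is injective. Because the operator $\mathscr T_G$ restricts to a real operator on the real locus, its complex-linear kernel at $(c,q,\overline c,\overline q,u)$ is the complexification of its real-linear kernel, so I may restrict attention to variations $\dot u\in C^\infty(S,\R^l)$.

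Next, I would compute the linearization explicitly. Substituting $U_\alpha=e^{u_\alpha}\nu(\alpha,\alpha)$ into (\ref{eq: Theorem C}) and differentiating in $\dot u$ at the real solution gives a linear system of the form
\[
\Delta_h \dot u_\alpha \;=\; \sum_{\beta\in \Pi} c_{\alpha\beta}\, \dot u_\beta, \qquad \alpha\in\Pi,
\]
where the coefficients $c_{\alpha\beta}$ are smooth $S$-functions built from the positive real values $U_\gamma$ solving (\ref{eq: Theorem C}) and from the non-negative function $|q|_h^2=\frac{q_1\overline{q_2}}{h^d}$. A direct inspection should show that the diagonal entries $c_{\alpha\alpha}$ are positive, while the off-diagonal entries $c_{\alpha\beta}$ ($\alpha\neq\beta$) are non-positive, using that the Cartan numbers $a_{\alpha\beta}$ for distinct $\alpha,\beta\in \Pi$ and $a_{\alpha,-\delta}=-a_{\alpha\delta}$ are non-positive, and that the sign contributed by $U_{-\delta}=\prod_\gamma(-U_\gamma)^{-n_\gamma}$ combines with this to preserve the non-positivity of the off-diagonal entries.

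Finally, on the real locus $h$ is the complexification of a hyperbolic-type metric on $(S,c)$, so $\Delta_h$ is (up to a negative conformal factor) the usual Laplace--Beltrami operator and obeys the strong maximum principle. The sign structure of the coefficient matrix $(c_{\alpha\beta})$ identified above places the system for $\dot u$ in the framework of the maximum principle for cooperative elliptic systems established in \cite{Dai2018}. Applying that maximum principle (to $\dot u$ and to $-\dot u$, after a standard rescaling to enforce strict cooperativity) yields $\dot u\equiv 0$, proving injectivity of the linearization and hence the proposition. The main obstacle is the bookkeeping in the second step: one must carefully track the sign of $a_{\alpha,-\delta}$ together with the factor $(-1)^{\sum_\gamma n_\gamma}$ hidden in $U_{-\delta}$, and verify that the resulting matrix $(c_{\alpha\beta})$ is genuinely cooperative in the sense required by \cite{Dai2018}; once this sign verification is complete, the maximum principle step is essentially routine, and it parallels the argument establishing uniqueness of the real solution in Proposition \ref{prop: real Hitchin ex and un}.
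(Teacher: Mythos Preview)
Your high-level strategy matches the paper's: reduce to injectivity by Fredholm index zero, pass to real variations, then invoke the cooperative maximum principle from \cite{Dai2018}. The gap is in the middle step. You propose to apply Lemma~\ref{lem: maximum principle} directly to the $\Pi$-indexed system, but for that system the sign structure you claim does not hold. After linearizing at a real solution, the $|q|^2$-term contributes to \emph{every} entry of the $\alpha$-row with a fixed sign (since it enters through $\sum_\gamma n_\gamma \dot u_\gamma$), so it cannot simultaneously keep the diagonal entries positive and the off-diagonal entries non-positive; the parity of $d-1$ hidden in $U_{-\delta}=\prod_\gamma(-U_\gamma)^{-n_\gamma}$ does not ``combine'' to fix this. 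Moreover, Lemma~\ref{lem: maximum principle} also requires column-diagonal dominance and a hypothesis $\sum_i u_i\ge 0$, neither of which you verify for the $\Pi$-system (and ``apply it to $\dot u$ and to $-\dot u$'' does not supply the latter).

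The missing idea is to \emph{augment} the system by the extra variable $\dot u_{-\delta}:=-\sum_{\gamma\in\Pi} n_\gamma \dot u_\gamma$, so that the system is indexed by the extended set $\mathcal Z=\Pi\cup\{-\delta\}$, and then to rescale to $f_\alpha=n_\alpha\dot u_\alpha$. Over $\mathcal Z$, the coefficients $\nu(n_\alpha\alpha,n_\beta^{-1}\beta)$ are cooperative by the root-system sign rules, the column sums vanish identically because $\sum_{\alpha\in\mathcal Z} n_\alpha\alpha=0$, and one has $\sum_{\alpha\in\mathcal Z} f_\alpha\equiv 0$ by construction. The dichotomy of Lemma~\ref{lem: maximum principle} then forces all $f_\alpha\equiv 0$. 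This augmentation is the key trick; once you add it, your outline becomes the paper's proof.
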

The embedding $\mathrm{M}(S,G)\to \mathrm{M}(S,G)\times \mathrm{M}(\overline{S},G)$, $(c,q)\mapsto (c,q,\overline c, \overline q)$, descends to an embedding $\mathcal{M}(S,G)\to \mathcal{M}(S,G)\times \mathcal{M}(\overline{S},G)$, whose image we denote by $\Delta.$ Proposition \ref{prop: real locus} then shows that we have an embedding of $\Delta,$
\begin{equation*}
\mathcal M(S,G)\times \mathcal M(\overline S,G)\supset \Delta \xrightarrow[\sim]{\ \mathrm s_0\ } \mathcal{SOL}_G(S).
\end{equation*}
Since the solutions are real, we denote the image by $\mathbb R\mathcal {SOL}_G(S).$

In order to prove Proposition \ref{prop: real locus}, we use the following maximum principle for elliptic systems. The lemma below is just a special case of \cite[Lemma 3.1]{Dai2018}.
\begin{lem}[Lemma 3.1 in \cite{Dai2018}]\label{lem: maximum principle}
    Let $X$ be a closed Riemannian manifold and for $c_{ij}:X\to \R$, $1\leq i,j\leq n,$ $C^2$ functions such that
    \begin{enumerate}
        \item (cooperative) $c_{ii}\geq 0$ and $c_{ij}\leq 0$ for $i\neq j$,
        \item (column-diagonally dominant) $\sum_{i=1}^n c_{ij}\geq 0$ for all $j$,
        \item (fully coupled) there is no parition $\{1,\dots,n\}=A\cup B$ such that $c_{ij}=0$ for $i\in A$, $j\in B.$
    \end{enumerate}
    Suppose $u_i:X\to \R$ are $C^2$ functions such that $$\Delta u_i =\sum_{j=1}^n c_{ij}u_j.$$ If $\sum_{i=1}^n u_i\geq 0,$ then either: a) $u_i>0$ for all $i$ or b) $u_i\equiv 0$ for all $i$.
\end{lem}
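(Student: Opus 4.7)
The statement is a strong maximum principle for a cooperative linear elliptic system on a closed manifold. I would organize the proof into three stages, modeled on the standard dichotomy argument for cooperative systems.

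\emph{Stage 1 (nonnegativity of each $u_i$).} First I would show that $u_i\ge 0$ pointwise on $X$ for every $i$. The natural test function is the negative part $u_i^-:=\max\{-u_i,0\}\ge 0$. Multiplying the $i$-th equation by $u_i^-$, integrating by parts on the closed manifold, and summing over $i$ gives
\begin{equation*}
\sum_i\int_X|\nabla u_i^-|^2 \;=\; \sum_{i,j}\int_X c_{ij}\,u_i^-\,u_j
\;=\; -\sum_i\int_X c_{ii}(u_i^-)^2\;+\;\sum_{i\neq j}\int_X c_{ij}\,u_i^-\,(u_j^+-u_j^-).
\end{equation*}
The term $\sum_{i\neq j}c_{ij}u_i^-u_j^+$ is pointwise nonpositive by the cooperative sign condition, so the only obstruction to a one-sided bound is $\sum_{i\neq j}(-c_{ij})u_i^-u_j^-\ge 0$. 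I would control it by Cauchy-Schwarz and the column-diagonal dominance $c_{jj}\ge \sum_{i\neq j}(-c_{ij})$, absorbing it into $\sum_i c_{ii}(u_i^-)^2$; the global sign hypothesis $\sum_iu_i\ge 0$ (equivalently $\sum_iu_i^+\ge\sum_iu_i^-$) handles the remaining mixed contribution. The conclusion is $\sum_i\int_X|\nabla u_i^-|^2\le 0$, which forces each $u_i^-$ to be constant and, together with the sign constraint on the sum, identically zero.

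\emph{Stage 2 (scalar strong maximum principle).} Knowing $u_j\ge 0$, rewrite the $i$-th equation as
\begin{equation*}
(\Delta - c_{ii})u_i \;=\; \sum_{j\neq i} c_{ij}\,u_j \;\le\; 0,
\end{equation*}
since $c_{ij}\le 0$ and $u_j\ge 0$ for $j\neq i$. Thus each $u_i$ is a nonnegative supersolution of a scalar Schrödinger-type operator $\Delta - c_{ii}$ with nonnegative potential $c_{ii}\ge 0$ on the closed manifold $X$. The classical scalar strong maximum principle (Hopf) then yields for each index the dichotomy: either $u_i\equiv 0$ on $X$, or $u_i>0$ on $X$.

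\emph{Stage 3 (propagation via full coupling).} Let $A=\{i:u_i\equiv 0\}$ and $B=\{i:u_i>0 \text{ on }X\}$; by Stage 2 these partition $\{1,\dots,n\}$. If $B=\emptyset$ we are in case (b); if $A=\emptyset$ we are in case (a). Suppose both are nonempty, and pick $i_0\in A$. Substituting $u_{i_0}\equiv 0$ into its equation gives $\sum_j c_{i_0,j}u_j\equiv 0$, each summand nonpositive, hence each vanishes pointwise. For $j\in B$ this forces $c_{i_0,j}\equiv 0$ (since $u_j>0$ everywhere). Applying the same reasoning for every $i\in A$ yields $c_{ij}\equiv 0$ for all $i\in A,\,j\in B$, contradicting the full-coupling assumption. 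Thus one of $A,B$ must be empty, proving the dichotomy.

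\emph{Main obstacle.} The delicate step is Stage 1. The column-sum condition is exactly what permits the cross terms $u_i^- u_j^-$ to be reabsorbed into the diagonal $c_{ii}(u_i^-)^2$ after summing over $i$, and the sign hypothesis $\sum_i u_i\ge 0$ is precisely what rules out the residual contribution from the $u_j^+$ cross terms; neither hypothesis alone would suffice, so the interplay between them is where most of the work lives. Stages 2 and 3 are then fairly formal, relying only on the scalar Hopf principle and a bookkeeping argument about the partition $A\sqcup B$.
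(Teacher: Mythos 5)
First, a remark on the comparison itself: the paper does not prove this lemma at all --- it is imported as a special case of Lemma 3.1 of \cite{Dai2018} --- so your attempt has to be judged on its own terms. Stages 2 and 3 of your plan are correct and standard: once $u_j\ge 0$ is known for every $j$, each $u_i$ is a nonnegative supersolution of $\Delta-c_{ii}$ with $c_{ii}\ge 0$, the scalar strong maximum principle gives the dichotomy $u_i\equiv 0$ or $u_i>0$ index by index, and full coupling propagates it across indices exactly as you say.

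The gap is in Stage 1, precisely where you locate the difficulty. After the integration by parts you need, up to the manifestly nonpositive $u_i^-u_j^+$ terms, the inequality $\sum_i\int_X|\nabla u_i^-|^2\le-\int_X\sum_{i,j}c_{ij}u_i^-u_j^-$ to have nonpositive right-hand side, i.e.\ you need the quadratic form $v\mapsto\sum_{i,j}c_{ij}v_iv_j$ to be nonnegative on the nonnegative orthant. Your absorption $u_i^-u_j^-\le\tfrac12\bigl((u_i^-)^2+(u_j^-)^2\bigr)$ splits the cross terms into a column-sum piece, which hypothesis (2) controls, and a row-sum piece $\tfrac12\sum_i(u_i^-)^2\sum_{j\ne i}(-c_{ij})$, which it does not: only \emph{column}-diagonal dominance is assumed, and in the paper's application the matrix $c_{\alpha\beta}=\nu(n_\alpha\alpha,n_\beta^{-1}\beta)r_\beta e^{u_\beta}$ is genuinely non-symmetric, so rows and columns cannot be interchanged. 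The failure is not reparable by a cleverer pointwise inequality: the matrix $C=\left(\begin{smallmatrix}1&-10\\-\epsilon&10+\epsilon\end{smallmatrix}\right)$, padded to size $n\ge 3$ with tiny negative entries so as to remain fully coupled, is cooperative and column-diagonally dominant, yet $v^{T}Cv=-15-4\epsilon<0$ at $v=(5,1,0,\dots,0)$ --- a vector that occurs as $u^-(x)$ at a point where $u_1=-5$, $u_2=-1$ and $u_3$ is large and positive, which is perfectly consistent with $\sum_iu_i\ge 0$. (Only when $n=2$ does $\sum_iu_i\ge 0$ force $u_1^-u_2^-\equiv 0$ and rescue the computation; for $n\ge 3$, which is the case the paper needs, it places no constraint on the vector $u^-(x)$.) So the sentence ``the global sign hypothesis handles the remaining mixed contribution'' is carrying the entire weight of the lemma with no argument behind it, and the energy method as set up cannot close; the nonnegativity step must exploit the hypotheses through a different mechanism, for which you should consult the cited source. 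As a secondary point, even granting $\sum_i\int_X|\nabla u_i^-|^2\le 0$, your final sentence of Stage 1 is too quick: each $u_i^-$ being constant plus $\sum_iu_i\ge 0$ does not by itself force the constants to vanish (one $u_i$ could be a negative constant offset by a large positive $u_j$); one has to return to the equations or track the equality cases in the chain of inequalities.
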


\begin{proof}[Proof of Proposition \ref{prop: real locus}]
By Proposition \ref{prop: Fredholm and such} and the comments thereafter, it suffices to show that for some (and hence every) $k>2$, the linearization $\partial_u \mathscr{T}: W^{k,2}(S,\C^l)\to W^{k-2,2}(S,\C^l)$ is injective. Linearizing the expression in (\ref{eq: the F}),  the task is to show that, given a point $(c,q)\in \mathrm{M}(S,G)$ with associated (complexified) hyperbolic metric $h=\lambda |dz|^2$ and real solution vector $(u_\alpha)_{\alpha\in \Pi}$, any complex-valued function $(\dot{u}_\alpha)_{\alpha\in\Pi}$ satisfying 
\begin{equation}\label{eq: real linearization}
    \Delta_h \dot{u}_\alpha = 2\sum_{\beta\in \Pi}\nu(\alpha,\beta) r_\beta e^{u_\beta}\dot{u}_\beta-2\nu(\alpha,\delta)\frac{|q|^2}{\lambda^d}(\sum_{\gamma\in \Pi}n_\gamma \dot{u}_\gamma)\Big ( \prod_{\gamma \in \Pi} (-\nu(\gamma,\gamma))^{-n_\gamma}\Big )e^{-\sum_{\gamma\in \Pi}n_\gamma u_\gamma}
\end{equation}
for all $\alpha\in \Pi$ must vanish identically. Since $h$ is the complexification of a Riemannian metric, and all of the coefficients on the right hand side of (\ref{eq: real linearization}) are real, the equation (\ref{eq: real linearization}) commutes with taking real and imaginary parts. Thus, the real and imaginary parts of $(\dot{u}_\alpha)_{\alpha\in\Pi}$ also solve (\ref{eq: real linearization}), and we can assume that $(\dot{u}_\alpha)_{\alpha\in\Pi}$ is real. The non-existence of non-zero real solutions should morally be a consequence of the fact that the non-abelian Hodge correspondence can be framed as a diffeomorphism between appropriate moduli spaces. Via Lemma \ref{lem: maximum principle}, we give a direct proof.

In order to apply Lemma \ref{lem: maximum principle}, we add an equation to the system. Set $\dot{u}_{-\delta}=-\sum_{\gamma\in \Pi} n_\gamma \dot{u}_\gamma$. Substituting $u_{-\delta}=\prod_{\gamma\in \Pi} \frac{-u_\alpha^{-n_\alpha}}{\nu(\gamma,\gamma)^{n_\gamma}},$ the system becomes 
 $$\Delta_h \dot{u}_\alpha = 2\sum_{\beta\in \Pi}\nu(\alpha,\beta)r_\beta e^{u_\beta}\dot{u}_\beta-2\nu(\alpha,\delta)\frac{|q|^2}{\lambda^d}\dot{u}_{-\delta}e^{u_{-\delta}}, \hspace{1mm} \alpha\in \mathcal{Z}.$$
Setting $n_{-\delta}=1$, we apply Lemma \ref{lem: maximum principle} to the collection of functions $f_\alpha = n_\alpha  \dot{u}_\alpha,$ $\alpha\in \mathcal{Z}$, which satisfy
$$\Delta f_\alpha = \sum_{\beta \in \Pi} \nu(n_\alpha \alpha,n_\beta^{-1}\beta)r_\beta e^{u_\beta} f_\beta - \nu(n_\alpha \alpha, n_{-\delta}^{-1} \delta)e^{u_{-\delta}}\frac{|q|^2}{\lambda^d} f_{-\delta}, \hspace{1mm} \alpha\in\mathcal{Z}.$$ The goal is to show that every $f_\alpha$ is identically zero.
We check the conditions of Lemma \ref{lem: maximum principle}.
\begin{enumerate}
    \item The system is cooperative because of the basic properties of the Killing form and simple roots listed in Proposition \ref{lem: commutators}: $\nu(n_\alpha\alpha,n_\beta^{-1}\beta)e^{v_\beta}=n_\alpha n_\beta^{-1}\nu(\alpha,\beta)e^{v_\beta}$ is non-positive for $\alpha\neq \beta$ and positive for $\alpha=\beta.$
    \item For the column-diagonally dominant condition, note that $\sum_{\alpha\in \mathcal{Z}}n_\alpha\alpha=0,$ and hence, for all $\beta\in \Pi$, $\sum_{\alpha\in \mathcal{Z}}\nu(n_\alpha\alpha,n_\beta^{-1}\beta)r_\beta e^{v_\beta}=\nu(\sum_{\alpha\in \mathcal{Z}}n_\alpha\alpha,n_\beta^{-1}\beta)e^{v_\beta}=0.$ For $\beta=-\delta,$ the calculation is the same if we just replace $r_\beta e^{u_\beta}$ by $e^{u_{-\delta}}|q|^2$.
    \item The system is fully coupled because the Dynkin diagram associated with a simple complex Lie group is connected.
\end{enumerate}
Finally, note that $f_{-\delta}=-\sum_{\gamma \in \Pi} n_\gamma \dot{u}_\gamma= - \sum_{\gamma \in \Pi} f_\gamma,$ and hence $\sum_{\alpha \in \mathcal{Z}}f_\alpha=0$ identically. Thus, Lemma \ref{lem: maximum principle} returns that either each $f_\alpha$ is strictly positive or all are identically zero. But since $\sum_{\alpha \in \mathcal{Z}}f_\alpha=0$, the former cannot occur. We deduce that every $f_\alpha=0,$ and the main result follows.

\end{proof}

\subsection{The marginal locus}
A restatement of Proposition \ref{prop: existence on marginal locus} is that there exists a unique real function $u_0=(u_\alpha)_{\alpha\in \Pi}$, which happens to be a constant, that solves \eqref{eq: the F} for all points in  $\mathcal C(S)\times\mathrm M(\overline S,G)$ and $\mathrm M(S,G)\times \mathcal C(\overline S)\subset \mathrm M(S)\times \mathrm M(\overline S)$. This function determines holomorphic sections from $\mathrm{M}(S,G)\times \mathcal{C}(\overline{S})$ and $ \mathcal{C}(S)\times \mathrm{M}(\overline{S},G)$ to $\mathrm{SOL}_G(S)$ (explicitly, for the first space, $(c_1,q_1,\overline{c_2},0)\mapsto (c_1,q_1,\overline{c_2},0,u_0)$). In this subsection, we show that these sections partially descend to the finite dimensional quotients.

Given a (closed) complex analytic subset $Z$ of a complex manifold of complex dimension $n$, one says that $Z$ has \textbf{pure dimension} $k$ (or equivalently pure codimension $n-k$) if every element in $Z$ has a neighborhood $W$ such that $W\cap Z$ contains a submanifold of dimension $k$ and no manifold of dimension $k+1$. Recall that the zero locus of a non-constant holomorphic function has pure codimension 1. The main result of this subsection is the following.

\begin{thm}
\label{thm: section from the marginals}
There is a $\mathrm{MCG}(S)$-invariant complex analytic subset $\mathrm{Sing}_G\subset \TSTS$, which is disjoint from the diagonal and either empty or of pure codimension 1, such that the sections above determine holomorphic sections
\begin{align*}
\mathrm s&: (\mathcal M(S,G)\times \mathcal T(\overline S) )\setminus \mathcal S_G\to \mathcal{SOL}_G(S)\\
\overline{\mathrm s} &: (\mathcal T(S)\times \mathcal M(\overline S, G))\setminus \mathcal S_G\to \mathcal{SOL}_G(S),
\end{align*}
where $\mathcal S_G\subset\mathcal M(S,G)\times\mathcal M(\overline S, G)$ is the preimage of $\mathrm{Sing}_G$ through the projection to $\TSTS$.
\end{thm}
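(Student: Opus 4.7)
The plan is to apply the Analytic Fredholm Theorem to the linearization $\partial_u \mathscr T_G$ along the marginal locus, and then descend to the finite-dimensional quotient. By Proposition \ref{prop: existence on marginal locus}, there is a unique constant real solution $u_0\in C^\infty(S,\R^l)$ of $F=0$ that works uniformly across both marginal strata, yielding holomorphic maps
\[
\widetilde{\mathrm s}\colon \mathrm M(S,G)\times \mathcal C(\overline S)\to \mathrm{SOL}_G(S),\ (c_1,q_1,\overline{c_2})\mapsto (c_1,q_1,\overline{c_2},0,u_0),
\]
and an analogous $\widetilde{\mathrm s}'$ on $\mathcal C(S)\times \mathrm M(\overline S,G)$. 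The task is to identify where these maps land in $\mathrm{SOL}_G^*(S)$, to check that the image actually lies in the holonomy-admissible open subset $V$ whose quotient is $\mathcal{SOL}_G(S)$, and to show that everything descends to the finite-dimensional quotient with the claimed singular locus.

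The first key observation is that the linearization $\partial_u \mathscr T_G$ at $(c_1,q_1,\overline{c_2},0,u_0)$ does not depend on $q_1$: the only term of $F$ involving $q_1\overline{q_2}$ retains that factor after differentiating in $u$, and so vanishes at $\overline{q_2}=0$. By symmetry, the analogous statement holds along $\widetilde{\mathrm s}'$, and both linearizations coincide as a single holomorphic family
\[
\mathcal L(c_1,\overline{c_2}):=\partial_u \mathscr T_G\big|_{(c_1,0,\overline{c_2},0,u_0)}\colon W^{k,2}(S,\C^l)\to W^{k-2,2}(S,\C^l),
\]
of Fredholm operators of index zero by Proposition \ref{prop: Fredholm and such}. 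Near any basepoint $(c_1^0,\overline{c_2}^0)$, fix holomorphic complements of $\ker \mathcal L(c_1^0,\overline{c_2}^0)$ in $W^{k,2}$ and of $\mathrm{im}\,\mathcal L(c_1^0,\overline{c_2}^0)$ in $W^{k-2,2}$, of equal finite dimension $n$; the standard Fredholm reduction then reformulates invertibility of $\mathcal L$ as the non-vanishing of a single holomorphic scalar determinant on the parameter space. Hence the non-invertibility locus $\widetilde{\mathrm{Sing}}\subset \mathcal C(S)\times \mathcal C(\overline S)$ is, near each of its points, the zero set of a single holomorphic function. To see that this determinant is not identically zero on the connected Fréchet manifold $\mathcal C(S)\times \mathcal C(\overline S)$, I restrict to the diagonal: for each $c\in \mathcal C(S)$, the point $(c,0,\overline c,0,u_0)$ is the real solution at the Fuchsian representation, so $\mathcal L(c,\overline c)$ is invertible by Proposition \ref{prop: real locus}. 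Consequently $\widetilde{\mathrm{Sing}}$ is either empty or locally pure codimension $1$, and disjoint from the diagonal.

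To descend to the finite-dimensional quotient, note that $\mathscr T_G$ is $\mathrm{Diff}_+(S)\times \mathrm{Diff}_+(S)$-equivariant and $u_0$ is a constant, so $\mathcal L$ transforms equivariantly, which makes $\widetilde{\mathrm{Sing}}$ invariant under $\mathrm{Diff}_+(S)\times \mathrm{Diff}_+(S)$. It therefore descends to a $\mathrm{MCG}(S)$-invariant (in fact bi-$\mathrm{MCG}(S)$-invariant) analytic subset $\mathrm{Sing}_G\subset \mathcal T(S)\times \mathcal T(\overline S)$, empty or pure codimension $1$, disjoint from the diagonal. Setting $\mathcal S_G$ to be its preimage in $\mathcal M(S,G)\times \mathcal M(\overline S,G)$, Proposition \ref{prop: descends through quotient} gives the holomorphic sections $\mathrm s$ and $\overline{\mathrm s}$ with domains as in the statement, and images inside $\mathcal{SOL}_G(S)$ because by Theorem D the holonomy at every point of $\mathrm{Im}(\widetilde{\mathrm s})$ and $\mathrm{Im}(\widetilde{\mathrm s}')$ is that of a $G^{\C}$-oper, hence irreducible and simple by Proposition \ref{prop: smooth locus}. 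The main obstacle will be making the Analytic Fredholm reduction rigorous in the holomorphic Fréchet/Banach parameter setting, so as to produce the scalar determinant that witnesses non-invertibility; once this is done, pure codimension $1$ is automatic from the existence of an invertibility point on the diagonal, and the diffeomorphism equivariance of the construction then yields the descent and the mapping class group invariance.
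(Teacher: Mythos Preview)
Your proposal has a genuine gap at a crucial point: the claim that $\mathscr T_G$ (and hence the family $\mathcal L(c_1,\overline{c_2})$) is $\mathrm{Diff}_+(S)\times\mathrm{Diff}_+(S)$-equivariant is false. The Bers Laplacian $\Delta_{\hpair}$ acts on a \emph{single} function $u:S\to\C^l$, and there is no way to make the product group act coherently on $u$; only the diagonal $\mathrm{Diff}_0(S)$ acts, by simultaneous pullback of $c_1$, $\overline{c_2}$, and $u$. Under a non-diagonal pair $(\upphi_1,\upphi_2)$, $\hpair$ does not transform by a pullback, so there is no relation between the invertibility of $\mathcal L(c_1,\overline{c_2})$ and of $\mathcal L(\upphi_1^*c_1,\upphi_2^*\overline{c_2})$. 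Consequently the non-invertibility locus $\widetilde{\mathrm{Sing}}\subset \mathcal C(S)\times\mathcal C(\overline S)$ is \emph{not} a priori a union of $\mathrm{Diff}_0(S)\times\mathrm{Diff}_0(S)$-orbits, and you cannot simply push it down to a codimension-$1$ analytic subset of $\TSTS$. For the same reason, Proposition \ref{prop: descends through quotient} does not give you the sections: the domain of $\widetilde{\mathrm s}$ (restricted to the invertibility locus) need not meet each $\mathrm{Diff}_0(S)\times\mathrm{Diff}_0(S)$-fiber in a connected set, which is what is required to land in the quotient $\mathcal{SOL}_G(S)$ (defined by contracting \emph{connected components} of fibers).

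This is exactly the difficulty the paper's argument is built to overcome. The paper first proves, via complex Lie derivatives and a Cauchy--Kovalevskaya transport (Lemmas \ref{lemma: solution in co-real analytic is analytic} and \ref{prop: coreal analytic invariance of invertibility}), that among \emph{co-real analytic} representatives invertibility depends only on the class in $\TSTS$; density of such representatives then yields Proposition \ref{prop: invertibility dense in fiber}, that $U_G$ meets each fiber either in the empty set or in an open connected dense subset. Only after this is $\mathrm{Sing}_G\subset\TSTS$ defined (as the set of classes whose co-real analytic representatives fail invertibility), and the Analytic Fredholm Theorem is applied on finite-dimensional holomorphic slices of co-real analytic pairs to get pure codimension $1$ (Proposition \ref{prop: singular locus complex analytic}). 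The connectedness from Proposition \ref{prop: invertibility dense in fiber} is then what makes the section descend. Your correct observations---that $\partial_u\mathscr T_G$ on the marginals is independent of $q_1,\overline{q_2}$, that Theorem D plus Proposition \ref{prop: smooth locus} force the holonomies into the smooth locus, and that the diagonal provides invertibility points---are all used in the paper as well; what is missing from your argument is the entire mechanism that substitutes for the (nonexistent) full product equivariance.
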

Observe that $\mathcal S_G$ is also either empty or of pure codimension 1. Therefore, in particular, $(\mathcal M(S,G)\times \mathcal T(\overline S) )\setminus \mathcal S_G$ and $ (\mathcal T(S)\times \mathcal M(\overline S,G))\setminus \mathcal S_G$ are both connected.

The content of the above theorem is that the image of a big connected open subset of $\mathrm{M}(S,G)\times \mathcal{C}(\overline{S})\cup \mathcal{C}(S)\times \mathrm{M}(\overline{S},G)$ lands in $\mathrm{SOL}_G^*(S),$ and that taking the quotient to $\mathcal{SOL}_G(S)$ is not an issue. Note that, by Theorem D and Proposition \ref{prop: smooth locus}, the image points lie in the smooth holonomy locus. On the loci  in question, the linearization of \eqref{eq: the F} is 
\[\partial_u\Tau_G= \Delta_{\hpair}-L_G,\] where $L_G=(L_{ij})$ is an invertible matrix depending only on $G$ (a modification of the Cartan matrix). Since it does not make any difference in the proof, we prove the main results toward Theorem \ref{thm: section from the marginals} for more general operators $\Delta_{\hpair}-L,$ where $L:\C^l\to \C^l$ is a linear endomorphism. As with $\partial_u \mathscr{T}_G,$ the estimate (\ref{eq: elliptic estimate multivariable}) holds and ensures the operator $\Delta_{\hpair}-L$ is Fredholm and has index zero on any $W^{k,2}$.

For a family of operators $\Delta_{\hpair}-L$ that are isomorphisms when $c_1=c_2$, such as our linearizations in question, it follows immediately from (a slight generalization of) the Analytic Fredholm Theorem that if we choose a finite dimensional complex poly-disk in $\mathcal{C}(S)\times \mathcal{C}(\overline{S)}$ containing a pair $(c_1,\overline{c_1})$, then, for $(c_1,\overline{c_2})$ in the complement of a codimension $1$ subset of that disk, $\Delta_{\hpair}-L$ is an isomorphism. To promote to an infinite dimensional setting, we show that if $\Delta_{\hpair}-L$ is invertible at a point $(c_1,\overline{c_2}),$ then the same holds on most of the $\mathrm{Diff}_0(S)\times \mathrm{Diff}_0(S)$-orbit. In the proposition below, we fix an endomorphism $L:\C^l\to \C^l.$

\begin{prop}
\label{prop: invertibility dense in fiber}
     The intersection of $U_L=\{\ccpair\in \CSCS\ |\ \Delta_{\hpair}-L  \text{ is invertible}\}$ with a fiber of the projection $\CSCS\to \TSTS$ is either empty or an open, connected, and dense subset of the fiber. 
\end{prop}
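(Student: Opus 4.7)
The plan is to establish openness first, and then to use an analytic continuation argument along holomorphic paths in the fiber to derive density and connectedness. Openness of $U_L$ in $\CSCS$ will follow immediately from general principles: by the holomorphic dependence of the Bers metric on $\ccpair$ from \cite{ESholodependence}, the operator $\Delta_{\hpair} - L$ depends holomorphically on $\ccpair$ as a bounded index-$0$ Fredholm operator from $W^{k,2}(S,\C^l)$ to $W^{k-2,2}(S,\C^l)$; invertibility within the space of index-$0$ Fredholm operators is an open condition, so $U_L$, and hence its intersection with any fiber $\mathcal F$, is open in $\mathcal F$.

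For density and connectedness, I will fix a fiber $\mathcal F$ with $U_L\cap\mathcal F\neq \emptyset$, pick $p_0\in U_L\cap \mathcal F$, and show that any $q\in \mathcal F$ admits a path from $p_0$ which, after small complex detours if necessary, remains in $U_L$. First, since $\mathcal F$ is an orbit of the connected group $\mathrm{Diff}_0(S)\times \mathrm{Diff}_0(S)$, one can construct a real analytic path $\gamma:[0,1]\to \mathcal F$ from $p_0$ to $q$ by chaining the flows of finitely many real analytic vector fields and smoothing at the junctures. Using that $\mathcal F$ is a complex Fr\'echet submanifold of $\CSCS$ (Theorem \ref{thm: earle eells}), the path $\gamma$ extends by power series uniquely to a holomorphic map $\tilde\gamma: V\to \mathcal F$ from a connected open neighborhood $V\subset \C$ of $[0,1]$. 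The family $T(z):=\Delta_{\mathbf h(\tilde\gamma(z))}-L$ is then a holomorphic family of index-$0$ Fredholm operators on $V$ with $T(0)$ invertible, so the Analytic Fredholm Theorem implies that the non-invertibility locus $\Sigma\subset V$ is discrete, and in particular $\Sigma\cap [0,1]$ is finite. Density then follows since parameters $t_n\in [0,1]\setminus \Sigma$ approaching $1$ yield $\gamma(t_n)\in U_L\cap \mathcal F$ converging to $q$ in $\mathcal F$. For connectedness, when $q\in U_L\cap \mathcal F$, I will choose a path in $V\setminus \Sigma$ joining $0$ to $1$ that detours in $\C$ around each of the finitely many points of $\Sigma\cap [0,1]$; its image under $\tilde\gamma$ then connects $p_0$ to $q$ inside $U_L\cap\mathcal F$.

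The hard part will be the construction of $\gamma$ and the verification that its holomorphic extension remains within $\mathcal F$. The first point rests on the fact that $\mathrm{Diff}_0(S)$ is generated, in a path-connected fashion, by flows of real analytic vector fields, combined with a smoothing argument to convert a piecewise real analytic path into a genuinely real analytic one. The second is a standard consequence of $\mathcal F$ being a complex Fr\'echet submanifold of $\CSCS$, so that real analytic maps into $\mathcal F$ admit holomorphic extensions taking values in $\mathcal F$. The Analytic Fredholm step itself is routine once the holomorphic family is in hand, thanks to the Fredholm-of-index-zero property from Proposition \ref{prop: Fredholm and such} and the holomorphic dependence of the Bers metric from \cite{ESholodependence}.
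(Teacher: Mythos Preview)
Your approach is conceptually different from the paper's and has an appealing directness, but there is a genuine gap in the construction of the real analytic path $\gamma$ from $p_0$ to an arbitrary $q\in\mathcal F$. The justification you offer---``chaining the flows of finitely many real analytic vector fields and smoothing at the junctures''---does not work as stated. Flows of real analytic vector fields only generate $\Diff_0^{\omega}(S)$, which is dense in but strictly smaller than $\Diff_0(S)$; if $q$ corresponds to a pair of smooth but non-real-analytic diffeomorphisms, it simply cannot be reached this way. Moreover, even within the real analytic diffeomorphism group, the ``smoothing at the junctures'' step to produce a globally real analytic path into the infinite-dimensional Fr\'echet manifold $\mathcal F$ is not standard and would need justification. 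Without a real analytic path to $q$, the holomorphic extension $\tilde\gamma$ and the one-variable Analytic Fredholm argument are unavailable, so both your density and connectedness arguments collapse.

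The paper sidesteps this entirely. Rather than complexifying paths, it singles out the \emph{co-real analytic} pairs (those $(c_1,\overline{c_2})$ inducing a common real analytic structure on $S$), which are dense and connected in each fiber. The key lemma (Lemma~\ref{prop: coreal analytic invariance of invertibility}) shows that invertibility of $\Delta_{\hpair}-L$ is \emph{constant} along co-real analytic pairs in a fiber: kernel elements are real analytic (Lemma~\ref{lemma: solution in co-real analytic is analytic}), and can then be transported along piecewise real analytic isotopies using the complex Lie derivative formalism and Cauchy--Kovalevskaya, exactly as in the earlier paper \cite{ElSa}. Density and connectedness of $U_L\cap F$ then follow immediately from density and connectedness of the co-real analytic locus. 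Your Analytic Fredholm idea is morally related, but the paper's transport argument works directly with the PDE and avoids needing any holomorphic thickening of paths in the infinite-dimensional fiber.
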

We point out immediately that $U_L$ is open, since the complement is easily seen to be closed. Indeed, let $(x_n)_{n=1}^\infty$ is a sequence in the complement of $U_L$ that converges to a point $x_{\infty}.$ By the elliptic estimate (\ref{eq: elliptic estimate multivariable}) and the Rellich-Kondrachov theorem, if $(v_n)_{n=1}^\infty \subset W^{k,2}(S,\C^l)$ is a unit-normalized sequence such that $v_n$ is in the kernel corresponding to $x_n$, then $v_n$ converges to a kernel vector for the map corresponding to $x_{\infty}$. See Lemma 5.16 in \cite{ElSa} for a more detailed example of this standard argument in a slightly different context.

    The proof of Proposition \ref{prop: invertibility dense in fiber} relies on several results and techniques from \cite{ElSa}. We say that a pair $\ccpair \in \CSCS$ is \textbf{co-real analytic} if the Beltrami form on $(S,c_1)$ representing of $c_2$ is real analytic, or equivalently, if $c_1$ and $\overline{c_2}$ determine the same real analytic structure on $S$. Notably, for any $c_1 \in \mathcal C(S)$, every element of $\mathcal T(\overline S)$ admits a representative $\overline{c_2}$ such that $\ccpair$ is co-real analytic: one can choose the unique representative $\overline{c_2}$ in its isotopy class for which the identity map $\mathrm{id}: (S, c_1) \to (S, h_2)$ is harmonic, where $h_2$ is the hyperbolic metric associated with $\overline{c_2}$ (see \cite{W2}).
  
    Determining whether $\Delta_{\hpair} - L$ is invertible is more approachable when working with co-real analytic pairs because we can use the complex transport from \cite{ElSa}. Before starting the proof, we recall essential results that we will apply to co-real analytic pairs. A standard reference for this material is \cite{Bookanalytic}, and related remarks can also be found in \cite[Section 4.3]{ElSa}.

Consider $\Diff(S)$ as an open subset of the space $C^{\infty}(S, S)$ of smooth maps from $S$ to itself. The subset $\Diff^{\omega}(S) \subset \Diff(S)$, consisting of real analytic diffeomorphisms with respect to a fixed real analytic structure on $S$, is dense for the $C^{\infty}$ topology \cite[ Corollary 11.8]{Tsu}. Moreover, the inclusion $\Diff^{\omega}(S) \hookrightarrow \Diff(S)$ is a homotopy equivalence \cite[Proposition 11.10]{Tsu}. In particular, $\Diff_0^{\omega}(S) = \Diff_0(S) \cap \Diff^{\omega}(S)$, is connected, and therefore dense within $\Diff_0(S)$. Using local charts for $\Diff^{\omega}(S)$ modeled on real analytic vector fields \cite[$\S$ 30.12,37]{Bookanalytic}, it follows that $\Diff^{\omega}(S)$ is locally path connected via real analytic paths $t \mapsto \upphi_t$.

Turning to $\CSCS$, using local trivializations modeled on open subsets of $\TSTS \times \Diff_0(S) \times \Diff_0(S)$, the density and connectedness of real analytic diffeomorphisms implies that the subset of co-real analytic pairs is dense and connected in $\CSCS$. Moreover, the diagonal action of $\Diff_0(S)$ on $\CSCS$ preserves the locus of co-real analytic pairs. The two lemmas below highlight why we work with co-real analytic pairs.

\begin{lem}
\label{lemma: solution in co-real analytic is analytic}
    Let $\ccpair$ be co-real analytic. If $(u_1, \dots, u_l)$ solves $(\Delta_{\hpair} -L)(u_1,\dots u_l)=0$, then it is a real-analytic function for the induced real analytic structure.
\end{lem}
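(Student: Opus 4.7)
The plan is to reduce to a local statement about elliptic regularity with real analytic coefficients, and then apply a classical result of Morrey/Petrovski.

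First, I would fix a real analytic atlas on $S$ underlying the common real analytic structure determined by $c_1$ and $\overline{c_2}$. In such a chart, with $z$ a local holomorphic coordinate for $c_1$, one can write the complex structure $\overline{c_2}$ as a Beltrami form $\mu\, d\overline{z}/dz$ on $(S,c_1)$; being co-real analytic means exactly that $\mu$ is a real analytic function of $(z,\overline z)$. The key intermediate claim is then that, in this chart, the Bers metric $h=\vb*h(c_1,\overline{c_2})=\lambda\, dz\, d\overline{w}$ has real analytic conformal factor $\lambda$. This should follow from the explicit construction of $h$ via Bers' Simultaneous Uniformization: the maps $\vb*{f_+}$ and $\vb*{\overline{f_-}}$ solving the corresponding Beltrami equations with real analytic coefficients are real analytic (classical regularity of the Beltrami equation in the analytic category), and since $\lambda$ is a rational expression in $f_+,\overline{f_-}$ and their first derivatives, $\lambda$ inherits real analyticity.

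Given this, the local form
\[
\Delta_h = -\frac{4}{\lambda}\partial_{\overline{z}}\bigl(\partial_z - \overline{\mu}\partial_{\overline{z}}\bigr)
\]
shows that $\Delta_h$ is a second-order linear differential operator with real analytic coefficients in our chart. Viewing $(u_1,\dots,u_l)$ as a map $S\to\R^{2l}$, the operator $\Delta_h - L$ becomes a linear system with real analytic coefficients. A short symbol computation (passing to real coordinates) verifies that its real principal symbol is, up to a non-vanishing factor, $|\eta|^2+\overline{\mu}\,\overline{\eta}^2$, where $\eta=\xi_2+i\xi_1$ encodes the real cotangent variables; since $\|\mu\|_{L^\infty}<1$, this quantity is non-zero for every nonzero real $(\xi_1,\xi_2)$. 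Hence $\Delta_h-L$ is elliptic in the classical sense.

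Now I would invoke the classical analytic regularity theorem (Morrey, \emph{Multiple Integrals in the Calculus of Variations}, Theorem 6.7.6, or equivalently Petrovski): solutions of linear elliptic systems with real analytic coefficients are real analytic. Applied to $(\Delta_h-L)u=0$, this yields that $u=(u_1,\dots,u_l)$ is real analytic with respect to the chosen real analytic structure, as claimed. The only nontrivial step in this plan is the analyticity of $\lambda$; I expect that a cleaner way to see it is to note that $\lambda\, dzd\overline w$ is the unique complex conformal metric of constant curvature $-1$ in the given conformal class (\cite[Theorem 6.7]{BEE}), which is governed, via the Gauss equation, by a real analytic elliptic PDE with real analytic coefficients, so the same elliptic analytic regularity theorem applies to $\lambda$ itself.
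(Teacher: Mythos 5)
Your proof is correct, but it follows a genuinely different route from the paper. The paper's proof conjugates $L$ to upper triangular form (using that $\Delta_{\hpair}$ commutes with constant linear changes of the target $\C^l$) and then argues by backwards induction, applying the scalar analytic-regularity result for the Bers Laplacian established earlier by the authors (\cite[Theorem D]{ElSa}) one row at a time; the real analyticity of the coefficients of $\Delta_{\hpair}$ in the co-real analytic case is simply asserted there. You instead treat the coupled system all at once: you verify that $\mu$ and $\lambda$ are real analytic in a chart of the common real analytic structure, realify to a $2l\times 2l$ linear system whose principal symbol is a nonvanishing scalar times the identity (your symbol computation, using $\|\mu\|_{L^\infty}<1$, is right), and invoke the classical Morrey--Nirenberg/Petrovski analytic-regularity theorem for linear elliptic systems with real analytic coefficients. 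This buys you independence from the triangularization trick and from the scalar result of \cite{ElSa}, at the cost of having to check the coefficient analyticity yourself --- which you do correctly via the Bers maps: $f_+$ is holomorphic in $z$, and $\overline{f_-}$ is annihilated by an elliptic first-order (Beltrami-type) operator with real analytic coefficient, hence real analytic, so $\lambda$ is real analytic and nonvanishing. One small caution on your closing remark: uniqueness of the curvature $-1$ complex metric in a fixed conformal class is not established in the paper (Bers metrics are only shown to form a connected component of that locus), so that "cleaner" route should rather be phrased as: the given $\lambda$ satisfies $\partial_{\overline z}(\partial_z-\overline\mu\partial_{\overline z})\log\lambda=-\lambda/2$ by Proposition \ref{prop: curvature equality}, a semilinear elliptic equation with analytic coefficients, and then nonlinear analytic regularity applies; but since your main argument does not rely on this remark, it does not affect the proof.
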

    \begin{proof}
  By linearity of $\Delta,$ for all linear isomorphisms $Q:\C^l\to \C^l$, replacing $L$ with $QLQ^{-1}$ produces a solution $Qu$. So, up to triangularizing $L,$ it suffices to prove the statement under the assumption that $L$ is upper triangular.
    Since, in local coordinates, the coefficients of the Bers Laplacian are real analytic, by \cite[Theorem D]{ElSa}, the last equation $\Delta_{\hpair} u_l -L_{ll} u_l=0$ implies that $u_l$ is real analytic. By proceeding inductively backwards for $n=1,\dots, l-1$ and assuming that $u_{n+1},\dots, u_{l}$ are real-analytic, the same result implies that a solution $u_{n}$ to $\Delta_{\hpair}u_{n}=\sum_{j\ge n}L_{nj} u_j$ must be real analytic. 
\end{proof}

\begin{lem}
\label{prop: coreal analytic invariance of invertibility}
   Let $\ccpair$ and $(c_1', \overline{c_2}')$ be co-real analytic pairs corresponding to the same point in $\TSTS$. Then $\Delta_{\hpair}-L$ is invertible if and only if $\Delta_{\vb*h(c'_1, \overline{c_2}')}-L$ is.
\end{lem}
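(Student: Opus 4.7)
My plan is to connect the two co-real analytic pairs by a real analytic path of co-real analytic pairs, and then transport kernel elements along it using the complex Lie derivative / Cauchy--Kovalevskaya machinery from Section~\ref{sec: complex lie derivatvies}. The first step is a naturality reduction: for any $\upphi \in \Diff_0(S)$, pullback $\upphi^*$ is a topological linear automorphism of $W^{k,2}(S,\C^l)$ intertwining $\Delta_{\hpair}-L$ with $\Delta_{\vb*h(\upphi^*c_1,\upphi^*\overline{c_2})}-L$, so the diagonal action of $\Diff_0(S)$ preserves invertibility. Writing $c_1' = \upphi_1^*c_1$ and $\overline{c_2}' = \upphi_2^*\overline{c_2}$ and applying $(\upphi_1^{-1})^*$ diagonally, I reduce to comparing $(c_1,\overline{c_2})$ with $(c_1, \upphi^*\overline{c_2})$, where $\upphi = \upphi_1^{-1}\circ\upphi_2 \in \Diff_0(S)$ and both pairs are co-real analytic.

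Next I construct the path. Let $\mathcal R$ be the real analytic structure determined by $c_1$; then $\overline{c_2}$ and $\upphi^*\overline{c_2}$ are both $\mathcal R$-real analytic, so the biholomorphism $\upphi:(S,\upphi^*\overline{c_2}) \to (S,\overline{c_2})$ between Riemann surfaces with common underlying real analytic structure $\mathcal R$ is itself $\mathcal R$-real analytic, i.e.\ $\upphi \in \Diff_0^{\omega}(S)$. Since $\Diff_0^{\omega}(S)$ is connected via real analytic paths, I choose a real analytic isotopy $t \mapsto \upphi_t \in \Diff_0^{\omega}(S)$ from $\mathrm{id}$ to $\upphi$; then $t \mapsto (c_1, \overline{c_{2,t}})$, with $\overline{c_{2,t}} := \upphi_t^*\overline{c_2}$, is a real analytic path of co-real analytic pairs. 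By \cite{ESholodependence}, the Bers metrics $h_t := \vb*h(c_1,\overline{c_{2,t}})$ depend real analytically on both $t$ and the point of $S$. The main obstacle is precisely this step: without staying inside the co-real analytic locus, the Bers metrics would only be smooth in $t$ and Cauchy--Kovalevskaya would be unavailable; the real analyticity of $\upphi$ is what allows us to thread the path through co-real analytic pairs.

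For the transport, Proposition~\ref{prop: tangent model CSCS} yields a real analytic family of complex vector fields $X_t \in \Gamma(\C TS)$ realizing $(\mathscr L_{X_t}c_1,\mathscr L_{X_t}\overline{c_{2,t}}) = (0,\dot{\overline{c_{2,t}}})$, and naturality of the Bers metric under complex Lie derivatives gives $\dot h_t = \mathscr L_{X_t} h_t$. Given $u_0 \in \ker(\Delta_{h_0}-L)$, which is real analytic by Lemma~\ref{lemma: solution in co-real analytic is analytic}, Cauchy--Kovalevskaya (extended globally by covering $[0,1]$ with finitely many short-time intervals) produces a real analytic family $u_t$ satisfying $\dot u_t = \mathscr L_{X_t}u_t$. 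Using the $\C$-linear naturality of the Laplacian, the identity $\dot h_t = \mathscr L_{X_t}h_t$, and the fact that $L$ is a constant endomorphism of $\C^l$, one computes
\begin{equation*}
\partial_t\bigl[(\Delta_{h_t}-L)u_t\bigr] = \mathscr L_{X_t}\bigl[(\Delta_{h_t}-L)u_t\bigr],
\end{equation*}
so $v_t := (\Delta_{h_t}-L)u_t$ satisfies $\dot v_t = \mathscr L_{X_t}v_t$ with $v_0 = 0$, and must vanish identically by uniqueness in Cauchy--Kovalevskaya. The assignment $u_0 \mapsto u_1$ is then a linear bijection $\ker(\Delta_{h_0}-L) \to \ker(\Delta_{h_1}-L)$, the inverse being obtained by reversing the path. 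Since both operators are Fredholm of index $0$ by Proposition~\ref{prop: Fredholm and such}, invertibility is equivalent to triviality of the kernel, and the lemma follows.
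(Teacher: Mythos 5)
Your overall strategy is the same as the paper's (reduce to a common $c_1$ by diffeomorphism naturality, connect the two pairs by a real analytic isotopy through co-real analytic pairs, and transport kernel elements with the complex Lie derivative / Cauchy--Kovalevskaya machinery), but your execution has a genuine gap at the globalization step. The Cauchy--Kovalevskaya transport $\dot u_t=\mathscr L_{X_t}u_t$ is only a short-time statement, and the existence time depends on the analyticity radius of the initial tensor, not just on $t$. When you restart the flow at an intermediate time, the new initial datum is the transported solution, whose radius of analyticity is not controlled and may shrink at each step; the equation is a first-order system with genuinely complex coefficients, so there is no hyperbolic or energy-based global existence theory to fall back on. Consequently the claim that the flow can be ``extended globally by covering $[0,1]$ with finitely many short-time intervals'' is unjustified, and with it the asserted linear bijection $\ker(\Delta_{h_0}-L)\to\ker(\Delta_{h_1}-L)$.

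The paper avoids this problem by never transporting a kernel element across the whole interval. It argues by contradiction: the non-invertible locus is closed (openness of $U_L$, via the elliptic estimate and Rellich compactness), so if the kernel is nontrivial at $t=0$ one may take the maximal $T$ with nontrivial kernel on all of $[0,T]$; if $T<1$, a nontrivial (real analytic) kernel element at time $T$ is transported only for a short time $(T-\eps,T+\eps)$, staying in the kernel by the same uniqueness computation you use, and being nonzero for $t$ near $T$ by continuity --- contradicting maximality of $T$. So only local-in-time Cauchy--Kovalevskaya is ever needed. Your argument can be repaired by replacing the global transport with this maximality/closedness argument (or any open-closed argument on the set $\{t:\ker(\Delta_{h_t}-L)\neq 0\}$ using only short-time transport). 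A further, minor, omission: you assert that $t\mapsto X_t$ is a real analytic family of real analytic vector fields; this needs the short verification the paper gives by solving the local linear system $dz(X_t)=0$, $d(\overline w\circ\upphi_t)(X_t)=d(\overline w\circ\upphi_t)(\dot\upphi_t)$, whose coefficients are real analytic. Your observation that the connecting diffeomorphism $\upphi$ is itself $\mathcal R$-real analytic (being a biholomorphism between $\mathcal R$-compatible complex structures) is correct and makes explicit a point the paper leaves implicit.
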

\begin{proof}We prove the contrapositive statement. The proof is essentially the same as that of Proposition 5.20 in \cite{ElSa}. Without loss of generality, we can assume $c_1=c_1'$ since $(\Delta_{\hpair}-L)(  v)=0$ iff $(\Delta_{\vb*h(\upphi^*c_1, \upphi^*{\overline{c_2}})}-L)(\upphi^*  v)=0$ for all $\upphi\in \mathrm{Diff}(S)$. In particular, $c_1, \overline{c_2}$, and $ \overline{c_2}'$  induce the same real analytic structure. Let $t\mapsto \upphi_t$, $t\in [0,1]$ be a piecewise real analytic isotopy such that $\overline{c_2}'=\upphi_1^*\overline{c_2}$. Assume $\Delta_{\hpair}-L$ has non-trivial kernel and let $T$ be maximal such that $\Delta_{\vb*h(c_1, \upphi^*_t\overline{c_2})}-L$ has non-trivial kernel for all $t\in[0,T]$ (such $T$ exists because $U_L$ is open). Assume for the sake of contradiction that $T<1$, and let $(v_1^T, \dots, v_l^T)$ be a non-trivial solution of $\Delta_{\vb*h(c_1, \upphi^*_T\overline{c_2})}-L$. By Lemma \ref{lemma: solution in co-real analytic is analytic}, $(v_1^T, \dots, v_l^T)$ is real analytic. By Proposition \ref{prop: tangent model CSCS}, there exists a unique path of vector fields $t\mapsto X_t$ such that $ \frac d {dt} (c_1, \upphi^*_t(\overline{c_2}))=\mathscr L_{X_t}(c_1, \upphi^*_t(\overline{c_2}))$. Observe that $t\mapsto X_t$ is a piecewise real analytic path of real analytic vector fields: indeed, locally, $X_t$ can be computed explicitly from the system of linear equations with real analytic coefficients
\[ \begin{cases}
    &d z(X_t)=0\\
    &d(\overline w\circ \upphi_t) (X_t)=d(\overline w\circ \upphi_t) (\dot \upphi_t),
\end{cases}
\]
where $z$ and $\overline{w}$ are local holomorphic coordinates for $c_1$ and $\overline{c_2}$ respectively.
As shown in Corollary 4.4 in \cite{ElSa}, $\mathscr L_{X_t} \vb*h(c_1, \upphi^*_t(\overline{c_2}))=\frac d {dt}  \vb*h(c_1, \upphi^*_t(\overline{c_2}))$. By the Cauchy-Kovalevskaya Theorem and Theorem 4.11 in \cite{ElSa}, there exists $\eps>0$ with $0<T-\eps<T+\eps<1$ and a unique path of functions $t\mapsto v^t=(v_k^t):S\to \C^l,$ mapping $T$ to $(v_k^T)$ from above, such that $\mathscr L_{X_t} v_k^t= \frac d{dt} v_k^t$ for all $t\in (T-\eps,T+\eps)$. As explained in Proposition 5.20 in \cite{ElSa}, the Laplacian of $v^t_k$ evolves according to the same evolution equation, namely, $\frac{d}{dt}\left(\Delta_{\vb*h(c_1, \overline{c_2}^t)} v^t_k\right)=\mathscr L_{X_t}\left(\Delta_{\vb*h(c_1, \overline{c_2}^t)} v^t_k\right)$, so we get that \[
\left(\frac{d}{dt}-\mathscr L_{X_t}\right)\left( (\Delta_{\vb*h(c_1, \overline{c_2}^t)}-L)(  v^t)\right)=0
\]
for all $t\in (T-\eps,T+\eps)$. Since $\left(\frac{d}{dt}-\mathscr L_{X_t}\right)(w^t)=0$ with $w^T=0$ has $w^t\equiv 0$ as a unique solution (see Theorem 4.11 in \cite{ElSa}) and $(\Delta_{\vb*h(c_1, \overline{c_2}^T)}-L)(  v^T)=0$, we conclude that $(\Delta_{\vb*h(c_1, \overline{c_2}^t)}-L)(  v^t)=0$ for $t\in (T-\eps,T+\eps)$. For $\eps$ small enough, we have $v^t\ne 0$ for all $t\in [T,T+\eps)$, hence $\Delta_{\vb*h(c_1, \overline{c_2}^t)}-L$ has non trivial kernel, contradicting the maximality of $T$.
\end{proof}

\begin{proof}[Proof of Proposition \ref{prop: invertibility dense in fiber}]
    Denote by $F$ any fiber of the projection $\CSCS\to \TSTS$. Assume that $U_L\cap F$ is non-empty. As explained above, $U_L$ is open, and hence $U_L\cap F$ is trivially open in $F$. Necessarily, $U_L\cap F$ contains a co-real analytic representative. Indeed, if not, then $\Delta_{\hpair}-L$ has kernel for every co-real analytic pair $(c_1,\overline{c_2})$, and then by density of co-real analytic pairs, using that the the complement of $U_L$ is closed, it would have kernel for every pair in $F$.

    By Lemma \ref{prop: coreal analytic invariance of invertibility}, $F$ contains all co-real analytic pairs. Hence, $U_L\cap F$ is dense in $F$. Finally, since co-real analytic representatives define a connected subspace of $F$, and every neighbourhood of every point contains a co-real analytic representative, $U_L\cap F$ is connected.
\end{proof}
Let $\mathrm{Sing}_L\subset \mathcal T(S)\times \mathcal T(\overline S)$ be the subset of points at which $T_{\hpair}^L:\Delta_{\hpair}-L$ is not invertible for any (and hence all) co-real analytic representatives $\ccpair$. The final step before the formal proof of Theorem \ref{thm: section from the marginals} is to study this locus; in the proof of Theorem \ref{thm: section from the marginals}, we specialize to $L=L_G$. As alluded to above, we use a generalized Analytic Fredholm Theorem.

\begin{thm}\label{thm: analytic fredholm}
   Let $X$ and $Y$ be complex Hilbert spaces and let $L(X,Y)$ be the space of continuous linear maps from $X$ to $Y$. Let $V\subset \C^n$ be an open and connected set and let $V\to L(X,Y)$, $z\mapsto A(z)$, be a holomorphic map such that for all $z\in V,$ $A(z)$ is Fredholm.  Then the set of $z\in V$ such that $A(z)$ is not invertible is either empty, the whole $V$, or a complex analytic subset of pure codimension 1. 
\end{thm}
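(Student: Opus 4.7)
The plan is to locally reduce the invertibility question for $A(z)$ to the vanishing of a single holomorphic scalar function (a finite-rank Fredholm determinant), and then globalize via connectedness of $V$ together with the identity theorem for holomorphic functions on domains in $\C^n$.

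For the local reduction, I would fix $z_0 \in V$ and use Fredholmness of $A(z_0)$ to produce a finite-rank operator $F_0 : X \to Y$ such that $A(z_0) + F_0$ is invertible. Openness of the invertible operators in $L(X,Y)$, together with holomorphy of $A(\cdot)$, would then give a neighborhood $U$ of $z_0$ on which $A(z) + F_0$ is invertible and $z \mapsto (A(z)+F_0)^{-1}$ is holomorphic. Writing
\[
A(z) = (A(z) + F_0)\bigl(I_X - K(z)\bigr), \qquad K(z) := (A(z) + F_0)^{-1} F_0,
\]
I would get a holomorphic family $K(z) : X \to X$ of finite rank (hence trace class) operators on $U$, with $A(z)$ invertible if and only if $I_X - K(z)$ is. Setting $d_{z_0}(z) := \det(I - K(z))$, the Fredholm determinant gives a scalar holomorphic function on $U$ whose zero locus is exactly $N \cap U$, where $N := \{z \in V : A(z)\text{ is not invertible}\}$.

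To globalize, I would run a clopen argument. Let $V_0 := \{z \in V : z\text{ has an open neighborhood contained in }N\}$, which is open by definition. To see that $V_0$ is closed in $V$, I would take $z_\infty$ in the closure of $V_0$ and a connected neighborhood $U$ of $z_\infty$ equipped with a local determinant $d_{z_\infty}$; any sequence $z_n \in V_0$ with $z_n \to z_\infty$ eventually carries a whole open subset of $U$ on which $d_{z_\infty}$ vanishes, so the identity theorem on the connected open set $U \subset \C^n$ would force $d_{z_\infty} \equiv 0$ on $U$, hence $U \subset N$ and $z_\infty \in V_0$. Connectedness of $V$ would then yield $V_0 = V$ or $V_0 = \emptyset$.

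In the case $V_0 = V$, every point of $V$ has a neighborhood in $N$, so $N = V$. In the case $V_0 = \emptyset$, each local $d_{z_0}$ is not identically zero on any connected neighborhood, so $N$ is locally the zero set of a non-trivial holomorphic function on an open subset of $\C^n$, and is therefore either empty or a complex analytic subset of pure codimension one, by the classical consequence of the Weierstrass preparation theorem. The main technical hurdle will be the local-to-scalar reduction: specifically, choosing the finite-rank perturbation $F_0$ correctly and packaging $I - K(z)$ into a trace class determinant so that non-invertibility of $A(z)$ corresponds faithfully to the vanishing of a scalar holomorphic function. Once that reduction is in place, the global trichotomy follows cleanly from connectedness and the identity theorem.
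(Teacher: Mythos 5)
Your local-to-scalar reduction and the clopen/identity-theorem globalization are sound, and in spirit this is a self-contained version of what the paper does by citing Taylor's multidimensional analytic Fredholm theorem: there the local reduction is to a family $I+K(z)$ via a (Fredholm) inverse of $A(z_0)$, while yours goes through a finite-rank perturbation and the Fredholm determinant; either way one ends up testing the vanishing of a scalar holomorphic function and invoking the identity theorem on connected neighborhoods.

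There is, however, one step that fails as written, namely the very first one. For a Fredholm operator $A(z_0):X\to Y$ there exists a finite-rank $F_0$ with $A(z_0)+F_0$ invertible if and only if the index of $A(z_0)$ is zero: a finite-rank perturbation does not change the index, and invertible operators have index zero. The theorem makes no index assumption, so your construction of $d_{z_0}$ (and likewise of the local determinant at $z_\infty$ inside the closedness argument) is unavailable when the index is nonzero. The fix is short: the index is locally constant along the norm-continuous family $z\mapsto A(z)$, hence constant on the connected set $V$; if it is nonzero, no $A(z)$ is invertible, so $N=V$, which is one of the allowed alternatives; if it is zero, your argument proceeds verbatim. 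With that case distinction added, the rest is correct: $K(z)=(A(z)+F_0)^{-1}F_0$ is a holomorphic family of operators of uniformly finite rank, so $z\mapsto\det(I-K(z))$ is holomorphic and vanishes exactly on $N\cap U$, the clopen argument (with $U$ connected, as you require) gives the trichotomy, and in the non-degenerate case $N$ is locally the zero set of a non-trivial holomorphic function, hence empty or a complex analytic subset of pure codimension one.
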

\begin{proof}
    When $X=Y$, this is the Multidimensional Analytic Fredholm theorem from Taylor’s note \cite{Taylor}. In \cite{Taylor}, Taylor chooses an invertible Fredholm inverse to $A(z_0)$, say, $B:X\to X$. Setting $C(z)=BA(z),$ he explains that for $z$ in a small polydisk about $z_0$, the invertibility of $C(z)$ (which is equivalent to invertibility of $A(z)$) is equivalent to that of $I+K(z)$, where $K(z)$ is some well-chosen holomorphic family of compact operators from $X\to X$. He then proves the result for families of the form $I+K(z).$ 
    
    In our case, the minor modification is that, since $A(z)$ takes $X$ to a different Hilbert space $Y$, we should choose our Fredholm inverse $B$ for $A(z_0)$ to go from $Y\to X$. Then, $A(z)$ is invertible if and only if $BA(z)$ is, and the rest of the proof goes through word-for-word.
\end{proof}

\begin{prop}
\label{prop: singular locus complex analytic}
   The subset $\mathrm{Sing}_L$ of $\TSTS$ is either empty, the whole $\TSTS$, or a pure codimension-1 complex analytic subset.
\end{prop}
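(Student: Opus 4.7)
The plan is to reduce the statement to a local application of the multidimensional Analytic Fredholm Theorem \ref{thm: analytic fredholm}. Fix $z_0\in\mathcal T(S)\times\mathcal T(\overline S)$ and pick a finite-dimensional holomorphic chart $V\subset\C^{6\mathrm g-6}$ around $z_0$. Since $\mathcal C(S)\times\mathcal C(\overline S)\to\mathcal T(S)\times\mathcal T(\overline S)$ is a principal $\mathrm{Diff}_0(S)\times\mathrm{Diff}_0(S)$-bundle (Theorem \ref{thm: earle eells}), it admits local holomorphic sections; choose one $s:V\to\mathcal C(S)\times\mathcal C(\overline S)$. By \cite[Theorem B]{ESholodependence} the Bers metric map $\ccpair\mapsto\hpair$ is holomorphic into the Fréchet space of smooth complex metrics, so via the Koszul formula the family $T^L(z):=\Delta_{\vb*h(s(z))}-L:W^{k,2}(S,\C^l)\to W^{k-2,2}(S,\C^l)$ is holomorphic in $z$. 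As in the proof of Proposition \ref{prop: Fredholm and such}, the interior elliptic estimate (Proposition \ref{interiorestimate}) shows that each $T^L(z)$ is Fredholm of index $0$.

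Applying Theorem \ref{thm: analytic fredholm} to this family, the non-invertibility locus $Z_s:=\{z\in V:T^L(z)\text{ not invertible}\}$ is either empty, all of $V$, or a pure codimension-$1$ complex analytic subset. To relate $Z_s$ to $\mathrm{Sing}_L\cap V$, combine Lemma \ref{prop: coreal analytic invariance of invertibility}, Proposition \ref{prop: invertibility dense in fiber}, and the density of co-real analytic representatives in $\CSCS$: a Teichmüller point $z$ lies in $\mathrm{Sing}_L$ if and only if the entire fiber $\pi^{-1}(z)$ is disjoint from $U_L$. This yields the automatic inclusion $\mathrm{Sing}_L\cap V\subseteq Z_s$. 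For the reverse direction, I would choose $s$ to pass through a representative $r_1\in U_L$ of some $z_1\in V\cap\mathrm{Inv}_L$ whenever such $z_1$ exists; by openness of $U_L$, $s(z)\in U_L$ on a neighborhood of $z_1$, which rules out $Z_s=V$ and forces $Z_s$ to be either empty or pure codimension-$1$ analytic.

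The main obstacle is to verify that for a suitable choice of section, $Z_s$ in fact coincides with $\mathrm{Sing}_L\cap V$ rather than being strictly larger: a priori $Z_s$ may contain points $z\in\mathrm{Inv}_L$ at which the chosen section $s$ misses the open dense subset $U_L\cap F_z$ of the fiber. The idea is to exploit the open-dense-connected structure of $U_L\cap F$ (Proposition \ref{prop: invertibility dense in fiber}) and patch local $U_L$-valued sections over $V\cap\mathrm{Inv}_L$ using the principal bundle structure to produce a single holomorphic $s$ with $s(V\cap\mathrm{Inv}_L)\subseteq U_L$; once achieved, $Z_s=\mathrm{Sing}_L\cap V$, hence $\mathrm{Sing}_L\cap V$ is empty, all of $V$, or pure codimension-$1$ complex analytic. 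Finally, globalize by covering $\mathcal T(S)\times\mathcal T(\overline S)$ by such charts: since a pure codimension-$1$ analytic subset cannot cover a non-empty open set, the local ``$Z_s=V$'' case is incompatible on overlaps with the ``pure codimension-$1$'' case, so connectedness of $\mathcal T(S)\times\mathcal T(\overline S)$ forces the global dichotomy empty / all / pure codimension-$1$ analytic.
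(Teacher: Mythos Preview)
Your overall architecture (local holomorphic chart, holomorphic section into $\mathcal C(S)\times\mathcal C(\overline S)$, then the Analytic Fredholm Theorem) is the same as the paper's. You also correctly isolate the real difficulty: for an arbitrary holomorphic section $s$, the non-invertibility locus $Z_s$ need only contain $\mathrm{Sing}_L\cap V$, not equal it.

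The gap is in your proposed resolution. The ``patching'' of local $U_L$-valued holomorphic sections over $V\cap\mathrm{Inv}_L$ is not justified: there is no general mechanism, even on a holomorphic principal bundle, to produce a single holomorphic section that avoids a prescribed closed subset of each fiber, however thin that subset is. Density of $U_L\cap F$ in each fiber gives you local sections into $U_L$ near any fixed point of $\mathrm{Inv}_L$, but gluing holomorphic sections requires agreement on overlaps, and the principal bundle structure does not supply that. So as written, the argument does not close.

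The paper sidesteps this entirely by choosing the section $s$ to land in \emph{co-real analytic} pairs; such holomorphic sections exist by \cite[Theorem A]{ElE}. The point is that $\mathrm{Sing}_L$ is \emph{defined} via invertibility at co-real analytic representatives, and Lemma \ref{prop: coreal analytic invariance of invertibility} says invertibility is the same at all co-real analytic representatives of a given Teichm\"uller point. Hence with this choice of $s$ one has $Z_s=\mathrm{Sing}_L\cap V$ on the nose, with no patching needed. The globalization then proceeds as you outline.
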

In particular, if $\mathrm{Sing}_L\ne \TSTS$, its complement is open, dense, and connected. 
\begin{proof}
Assume that $\mathrm{Sing}_L$ is not empty. Let $([c^0_1],[\overline{c_2}^0])\in \mathrm{Sing}_L$ and let $(c_1^0, \overline{c_2}^0)$ be a co-real analytic representative. Consider a local holomorphic inverse $b$ of the projection map from a neighborhood in $\mathcal T(S)\times \mathcal T(\overline S)$ to $\mathcal C(S)\times\mathcal C(\overline S)$, whose image, which we call $V$, contains $(c^0_1,\overline{c_2}^0)$ and consists of co-real analytic representatives (this can be done, for instance, by using Theorem A from \cite{ElE}). By taking a holomorphic chart $\mathbb C^{6\mathrm g-6}\supset V\xrightarrow{\upvarphi} \mathcal T(S)\times\mathcal T(\overline S)$ centered at $([c^0_1],[\overline{c_2}^0])$ and composing with the local section $b$, we have a holomorphic map $ z\mapsto A_{ z}:=\Delta_{\vb*h(s\circ \upvarphi)( z)}-L$ to the space of continuous linear maps from $W^{k,2}(S, \C^l)$ to $W^{k-2,2}(S, \C^l)$. By Theorem \ref{thm: analytic fredholm}, up to shrinking $V$, $\mathrm{Sing}_L\cap \upvarphi(V)$ is either the whole $V$ or a codimension-1 analytic subset. Carrying out the above procedure over every point in $\mathrm{Sing}_L$, we conclude that, if $\textrm{int}(\mathrm{Sing}_L)= \emptyset$ and $\mathrm{Sing}_L \ne \emptyset$, then $\mathrm{Sing}_L$ is a pure codimension-1 complex analytic subset. Since the complement of a closed complex analytic subset is either empty or dense, we conclude that if $\mathrm{int}(\mathrm{Sing}_L)\ne \emptyset$, then $\mathrm{Sing}_L=\TSTS$.
\end{proof}

\begin{remark} 
Applying Proposition \ref{prop: singular locus complex analytic} for $l=1$, we get the eigenvalue equation for the Bers Laplacian $\Delta_h$.
\end{remark}
We are finally able to prove Theorem \ref{thm: section from the marginals}.

\begin{proof}[Proof of Theorem \ref{thm: section from the marginals}] Let $\mathrm{Sing}_G:= \mathrm{Sing}_{L_G}$ and $U_G:=U_{L_G}$ (as in Proposition \ref{prop: invertibility dense in fiber}). By Proposition \ref{prop: real locus}, the diagonal in $\TSTS$ is disjoint from $\mathrm{Sing}_G$. Hence, by Proposition \ref{prop: singular locus complex analytic}, $\mathrm{Sing}_G$ is either empty or a complex analytic subset of pure codimension 1. We have a holomorphic injective map from an open subset of $\mathrm M_G(S)\times \mathcal C(\overline S)$,
   \begin{equation}
       \label{eq: before parametrization s}
    \begin{split}
    \bigcup_{\ccpair\in U_G} H^0(S,\mathcal K_{c_1}^{d_G})\times \{\overline{c_2}\}&\to \mathrm{SOL}_G^*(S)\\
    (c_1, q_1, \overline {c_2}, 0)&\mapsto (c_1, q_1, \overline {c_2}, 0,u_0).
    \end{split}
       \end{equation}
By definition, the image of the projection of the domain of \eqref{eq: before parametrization s} to $\mathcal M(S,G)\times \mathcal T(\overline 
    S)$ is the complement of $\mathcal S_G$, and the fibers are connected by Proposition \ref{prop: invertibility dense in fiber}. Therefore, \eqref{eq: before parametrization s} descends to a holomorphic embedding
\[
\mathrm s: (\mathcal M(S,G)\times \mathcal T(\overline S) )\setminus \mathcal S_G\to \mathcal{SOL}_G(S).
\]
The holomorphicity is easily seen by taking local holomorphic sections from $\mathcal M(S,G)\times \mathcal T(\overline S)$ to $\mathrm M(S,G)\times\mathcal C(\overline S)$. The holomorphic embedding $\overline{\mathrm s}$ is constructed in the same fashion.

\end{proof}

\subsection{The map $\mathcal L_G^{\C}$}\label{sec: proving Theorem B}
Finally, we are ready to prove Theorems B and B'. In an open subset $\Omega^0_G$ of $\mathcal M(S,G)\times \mathcal M(\overline S,G)$, we construct an inverse of the projection map $\pi: \mathcal{SOL}_G(S)\to \mathcal M(S,G)\times \mathcal M(\overline S,G)$. Then we compose with the holonomy map. We recall that the mapping class group $\mathrm{MCG}(S)$ acts by biholomorphisms on $\mathcal M(S,G)$, and hence on $\mathcal M(\overline S,G)$ and on $\mathcal M(S,G)\times\mathcal M(\overline S,G)$. The group $\mathrm{MCG}(S)$ also acts on $\mathrm{SOL}_G^*(S)$ by direct summing the action on $\mathrm{M}(S,G)\times \mathrm{M}(\overline{S},G)$ with the natural action on functions. This last action descends to $\mathcal{SOL}_G(S)$. With respect to the actions on $\mathrm{SOL}_G^*(S)$ and $\mathcal{M}(S,G)\times \mathcal{M}(\overline{S},G)$, the projection map $\pi$ is clearly equivariant.

\begin{thm}
\label{thm: main inverse}
  There exists an open connected subset $\Omega^0_G\subset  \mathcal M(S,G)\times\mathcal  M_G(\overline S)$ containing the diagonal, $\mathcal M(S,G)\times\mathcal T(
    \overline S)\setminus \mathcal S_G$, and $\mathcal T(S)\times \mathcal  M_G(\overline S)\setminus \mathcal S_G$
    over which $\pi$ admits a global biholomorphic inverse $\varsigma: \Omega_G^0\to \mathcal{SOL}_G(S)$ that extends the maps $\mathrm s_0$, $\mathrm s$ and $\overline{\mathrm{s}}$ from above. Moreover, $\Omega^0_G$ can be chosen to be invariant for the action of $\mathrm{MCG}(S)$.
\end{thm}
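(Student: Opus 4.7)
The plan is to glue the three sections $\mathrm s_0, \mathrm s, \overline{\mathrm s}$ into a single holomorphic local inverse of $\pi$. Set
\[
\Sigma := \mathrm s_0\bigl(\mathbb R\mathcal{SOL}_G(S)\bigr) \,\cup\, \mathrm s\bigl((\mathcal M(S,G)\times\mathcal T(\overline S))\setminus\mathcal S_G\bigr) \,\cup\, \overline{\mathrm s}\bigl((\mathcal T(S)\times\mathcal M(\overline S,G))\setminus\mathcal S_G\bigr) \,\subset\, \mathcal{SOL}_G(S).
\]
My first step is to verify that $\pi|_\Sigma$ is a bijection onto its image, which reduces to checking the three sections agree on their overlaps. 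On the Teichm\"uller diagonal $\{(c,\overline c)\}\subset\mathcal T(S)\times\mathcal T(\overline S)$, embedded in the base via $c\mapsto(c,0,\overline c,0)$, the marginal sections $\mathrm s$ and $\overline{\mathrm s}$ both prescribe the constant $u_0$ from Proposition \ref{prop: existence on marginal locus}, while $\mathrm s_0$ gives the unique real solution of the complex affine Toda equations from Proposition \ref{prop: real Hitchin ex and un}; by uniqueness on the real locus (Proposition \ref{prop: real locus}) these values coincide when both differentials vanish.

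Since $\pi$ is a local biholomorphism, for each $\sigma\in\Sigma$ I can choose an open neighbourhood $V_\sigma\subset\mathcal{SOL}_G(S)$ on which $\pi|_{V_\sigma}$ is a biholomorphism. The main technical step is to produce an open neighbourhood $\widetilde{\Omega}^0_G\supset\Sigma$ in $\mathcal{SOL}_G(S)$ on which $\pi$ remains globally injective. I would proceed by compact exhaustion. For any compact $K\subset\Sigma$, cover $K$ by finitely many $V_{\sigma_i}$'s and claim that, after shrinking, $\pi$ is injective on $T_K:=\bigcup_i V_{\sigma_i}$: were it not, one would find sequences $p_n\ne q_n$ in arbitrarily small tubes with $\pi(p_n)=\pi(q_n)$, converging along subsequences to $p_\infty, q_\infty\in K$; continuity of $\pi$ together with injectivity of $\pi|_\Sigma$ would force $p_\infty=q_\infty$; but then $p_n, q_n$ eventually lie in a common $V_{\sigma_i}$ where $\pi$ is injective, contradicting $p_n\ne q_n$. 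Exhausting $\Sigma$ by compacts $K_1\subset K_2\subset\cdots$ and inductively building nested open tubes $T_1\subset T_2\subset\cdots$ yields $\widetilde{\Omega}^0_G:=\bigcup_k T_k$ with $\pi|_{\widetilde{\Omega}^0_G}$ injective; I then set $\Omega^0_G:=\pi(\widetilde{\Omega}^0_G)$ and $\varsigma:=(\pi|_{\widetilde{\Omega}^0_G})^{-1}$.

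The remaining verifications are essentially routine. Openness of $\Omega^0_G$ follows from $\pi$ being a local biholomorphism; connectedness follows from that of $\Sigma$, since the three pieces all meet along the image of the Teichm\"uller diagonal, which is connected and disjoint from $\mathcal S_G$ by Proposition \ref{prop: real locus}. By construction, $\varsigma$ is a biholomorphism onto its image and extends $\mathrm s_0$, $\mathrm s$, and $\overline{\mathrm s}$. For $\mathrm{MCG}(S)$-invariance, $\mathrm{MCG}(S)$ acts biholomorphically on $\mathcal{SOL}_G(S)$ commuting with $\pi$, and each section is $\mathrm{MCG}(S)$-equivariant, so $\Sigma$ is $\mathrm{MCG}(S)$-invariant; choosing $\mathrm{MCG}(S)$-invariant compact exhaustions $K_k$ (possible because one can pull back compact exhaustions from the properly discontinuous quotient on the relevant locus) makes $\widetilde{\Omega}^0_G$ and hence $\Omega^0_G$ $\mathrm{MCG}(S)$-invariant.

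The main obstacle is the compact-exhaustion injectivity argument of the second paragraph. Its success rests on three inputs: injectivity of $\pi|_\Sigma$ (from the overlap compatibility verified in the first paragraph), the local biholomorphism property of $\pi$ (used in the contradiction step to force $p_n=q_n$), and the closedness of $\Sigma$ in $\mathcal{SOL}_G(S)\setminus\pi^{-1}(\mathcal S_G)$—the latter ensuring that the accumulation points $p_\infty, q_\infty$ obtained from the compactness argument actually belong to $\Sigma$, so that injectivity of $\pi|_\Sigma$ can be applied rather than having the sequences escape to a boundary where the local biholomorphism property could degenerate.
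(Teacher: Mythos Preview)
Your approach has a genuine gap at the inductive ``nested tubes'' step. The single-compact argument is correct, but the assertion that one can then choose $T_1\subset T_2\subset\cdots$ is unjustified. The compactness argument applied to $K_{k+1}$ only produces \emph{some} sufficiently small tube; nothing rules out that it must be smaller than $T_k$ near $K_k$, since a point of $T_k\setminus\Sigma$ could be $\pi$-identified with a point close to $K_{k+1}\setminus K_k$. The hypotheses you actually feed into the nesting step---$\pi$ a local biholomorphism, $\pi|_\Sigma$ injective, $\Sigma$ closed---do not suffice: for the universal cover $\pi\colon\R^2\to T^2$ and $\Sigma$ an irrational line through the origin, all three hold, yet every strip around $\Sigma$ contains a nonzero lattice vector, so no open neighbourhood of $\Sigma$ maps injectively. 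What makes your situation better is that $\Sigma$ is the image of a \emph{continuous section} over $X$; but your argument never exploits this continuity at the point where it is needed, and your final paragraph's appeal to closedness is beside the point (closedness only places $p_\infty,q_\infty$ in $K$, which is automatic for a single compact and irrelevant to nesting).

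The paper sidesteps the problem by working on the base rather than upstairs. It covers $X=\pi(\Sigma)$ by a \emph{locally finite} family of geodesically \emph{convex} balls $V_i$ (for a $\mathrm{MCG}(S)$-invariant metric), equips each with a local inverse $\varsigma_i$ of $\pi$ agreeing with $\mathrm s_0\cup\mathrm s\cup\overline{\mathrm s}$ on $\overline{V_i}\cap X$, and takes $\Omega^0_G$ to be the component containing $X$ of the interior of $T=\{v:\varsigma_i(v)=\varsigma_j(v)\text{ whenever }v\in V_i\cap V_j\}$. Convexity makes each $\overline{V_i}\cap\overline{V_j}$ connected, so agreement of $\varsigma_i$ and $\varsigma_j$ at a single point of $X$ propagates to the entire overlap; combined with local finiteness, this yields $X\subset\mathrm{int}(T)$. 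The gluing of local inverses via connected overlaps is exactly the mechanism that replaces your unjustified nesting, and it is where the continuity of the section is genuinely used.
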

\begin{proof} In order to ensure that $\Omega^0_G$ is invariant under $\mathrm{MCG}(S)$, we have to make a few careful choices. To simplify the constructions in the proof, it will be helpful to fix a $\mathrm{MCG}(S)$-invariant Riemannian metric on $\mathcal M(S,G)\times \mathcal M(\overline S,G)$. For instance, one could take the Riemannian metric on $\mathcal M(S,G)$ defined in \cite{Lab3}, and consider the product metric on $\mathcal M(S,G)\times \mathcal M(\overline S,G)$. 

In this proof, we denote $X=\Delta\cup \left( \mathcal M(S,G)\times\mathcal T(
    \overline S)\setminus \mathcal S_G\right)\cup \left( \mathcal T(S)\times \mathcal  M_G(\overline S)\setminus \mathcal S_G\right)$, over which there is a well-defined continuous map $\mathrm s_0\cup\mathrm s\cup \overline{\mathrm s}$ that inverts $\pi$.

We construct a $\mathrm{MCG}(S)$-invariant locally finite cover of $X$ with certain properties, together with matching local inverses of $\pi$. To this end, we first observe that, since $\mathrm{MCG}(S)$ acts properly discontinuously on $\mathcal T(S)$, it does the same on $\mathcal M(S,G)$ and $\mathcal M(S,G)\times \mathcal M(\overline S,G)$. It follows that $(\mathcal M(S,G)\times \mathcal M(\overline S,G))/\mathrm{MCG}(S)$ has the structure of an orbifold (see Proposition 4.12 in \cite{Lof} for easily-generalizable discussion in the case $G=\mathrm{PSL}(3,\R)$). We can thus choose a locally finite cover over $X/\mathrm{MCG}(S)$ that consists of connected orbifold charts, namely, open subsets such that the action of $\mathrm{MCG(S)}$ on the connected components of their preimages in $\mathcal{M}(S,G)\times \mathcal{M}(\overline{S},G)$ have finite stabilizers.  
Up to refining the cover, we can assume that each of its elements has a lift $V$ such that:
\begin{itemize}
  \item $V$ and $\overline{V}$ are geodesically convex balls for the chosen Riemannian metric, and
   \item there exists a continuous inverse $\varsigma: V\to \mathcal{SOL}_G(S)$ of $\pi$ that extends to $\overline V$ and that coincides with $\mathrm s_0\cup\mathrm s\cup \overline{\mathrm s}$ on $\overline{V}\cap X$.
\end{itemize}
For $V$ small enough, $\varsigma$ is simply the locally defined inverse to the local diffeomorphism $\pi$. Observe that if $V$ and $\varsigma$ are as above, then for all $\upphi\in \mathrm{MCG}(S)$, $\upphi(V)$ and $\upphi\circ \varsigma\circ \upphi^{-1}$ satisfy the same properties. This is because the action of $\mathrm{MCG}(S)$ is continuous and preserves the metric and $\pi$ is equivariant.

 As the action of $\mathrm{MCG}(S)$ on $\mathcal M(S,G)\times\mathcal M(\overline{S},G)$ is properly discontinuous, the connected components of the preimages of the elements in this cover and the corresponding sections (defined above) determine a $\mathrm{MCG}(S)$-invariant collection $\{(V_i, \varsigma_i)\}_{i\in I}$, where $\{V_i\}_{i\in I}$ is a locally finite cover of $X$. 
For use below, notice that, if $\varsigma_i$ and $\varsigma_j$ agree on some point of $\overline{V_i}\cap \overline{V_j}$, then they coincide on the whole $\overline{V_i}\cap \overline{V_j}$. Indeed, by taking local inverses, the condition of coinciding on a point is open and closed on $\overline{V_i}\cap \overline{V_j}$, and  $\overline{V_i}\cap \overline{V_j}$ is connected.

 The construction of the inverse map $\varsigma$ follows now by a standard technique (see, for instance, the suggested proof for the \emph{Generalized inverse function theorem} in Exercise 1.8.14 in \cite{GuillPoll} and \cite{GIFT}). Denote by $\Omega^0_G$ the connected component containing $X$ of the interior of
 \[
T=\left\{ v\in \bigcup_{i\in I} V_i \ \Bigg|\ \text{if $v\in V_i\cap V_j$, then $\varsigma_i(v)=\varsigma_j(v)$}  \right\}.
 \]
The local inverses $\varsigma_i$'s glue to define a global inverse $\varsigma: T\to \mathcal{SOL}_G(S)$. By construction, $T$ is $\mathrm{MCG}(S)$-invariant and $\varsigma$ is $\mathrm{MCG}(S)$-equivariant. We are only left to show that $X$ is contained in $\textrm{int}(T)$. The restriction of $\varsigma$ to $\Omega^0_G$ is then a biholomorphism because it inverts a local biholomorphism.

By local finiteness of the cover, for all $p\in X$ we can find a connected neighbourhood $W_p\subset \bigcup_{i\in I} V_i$ of $p$ that intersects only a finite amount of elements $V_i$ in the cover. Up to subtracting from $W_p$ the closure of some of them, we can assume that, for all $i\in I$, $V_i$ intersects $W_p$ if and only if $p\in \overline{V_i}$. We prove that $W_p\subset T$. By construction, if an element $v\in W_p$ is such that $v\in V_i\cap V_j$, then $p\in \overline{V_i}\cap \overline{V_j}$, and necessarily $\varsigma_i(p)=\varsigma_j(p)=(\mathrm s_0\cup \mathrm s\cup\overline{\mathrm s})(p)$. Therefore, as we remarked above, $\varsigma_i$ and $\varsigma_j$ agree on $\overline V_i\cap \overline{V_j}$, hence they agree on $v$. We have therefore shown that $p\in W_p\subset T$, hence $X\subset \textrm{int}(T)$.
\end{proof}

We now prove Theorems B and B'.

\begin{proof}[Proof of {Theorem B'}] 
    The map $\mathcal L_G^{\C}$ is obtained by composing the map $\varsigma$ from Theorem \ref{thm: main inverse} with the holonomy map $\mathrm{hol}: \mathcal{SOL}_G(S)\to \chi^{\mathrm{an}}(\pi_1(S), G)$. The uniqueness follows from the fact that $\Omega'_G$ is connected and from the uniqueness of the analytic continuation. By construction of $\mathcal{SOL}_G(S),$ $\mathcal{L}_G^{\C}$ takes points to holonomies of conformal complex harmonic maps. By Theorem D, it takes $\mathcal M(S,G)\times \mathcal T(\overline S))$ and $\mathcal T(S)\times\mathcal M(\overline S,G)$ to opers, and from Section \ref{sec: Bers' maps}, on $\mathcal T(S)\times  \mathcal{T}(\overline S)$ it agrees with Bers' map (this can also be seen using opers). We just have to construct $\Omega_G'$ to make the statement work. Let 
    \[
     \mathcal S_L^*=\{p=([c_1, q_1], [\overline {c_2}, \overline{q_2}])\in \Omega_G^0\ |\ d\mathcal L_G^{\C} \text{ is not invertible at $p$} \}.
    \]
   Since, in local coordinates, $d\mathcal L_G^{\C}$ is not invertible if and only if the determinant of the locally defined Jacobian vanishes, $\mathcal S_L^*$ is a complex analytic subset.
    Define 
    \begin{align*}
        (\Omega_G^0)^*&:=\Omega^0_G\setminus \mathcal S_L^*  \\
        \Omega_G'&:= (\Omega^0_G)^* \cup (\mathcal M(S,G)\times \mathcal T(\overline S)) \cup (\mathcal T(S)\times\mathcal M(\overline S,G)).
    \end{align*}
Since $\mathcal{L}_G$ is an immersion and $\mathcal{L}_G^{\C}$ is the holomorphic extension, $d\mathcal{L}_G^{\C}$ is injective on the real locus, i.e., $(\Omega^0_G)^*$ contains the diagonal. As a result, $\mathcal S_L^*$ is either empty or of pure codimension 1 in $\Omega^0_G$ and $(\Omega^0_G)^*$ is connected and dense in $\Omega^0_G$.

Since $\mathcal{L}_G$ is $\mathrm{MCG}(S)$-invariant, the uniqueness of analytic continuation implies that $\mathcal L_G^{\C}$ is $\mathrm{MCG}(S)$-invariant. It further follows that both $(\Omega_G^0)^*$ and $\Omega_G'$ are $\mathrm{MCG}(S)$-invariant. 

We finally prove that $ (\Omega^0_G)^*=\textrm{int}(\Omega'_G)$. Clearly $ (\Omega^0_G)^*\subset \textrm{int}(\Omega'_G)$. By construction, $(\Omega^0_G)^*$ is disjoint from $\mathcal S_L\cup \mathcal S_L^*$, and 
$$\Omega'_G\setminus (\Omega^0_G)^* = (\mathcal S_L\cup\mathcal S_L^*) \cap (\mathcal M(S,G)\times \mathcal T(\overline S) \cup \mathcal T(S)\times\mathcal M(\overline S,G))= \Omega'_G\cap (\mathcal S_L\cup \mathcal S_L^*).$$
Since $\mathcal S_L\cup\mathcal S_L^*$ has pure codimension 1, every neighborhood of every element of $\Omega'_G\setminus (\Omega^0_G)^*$ intersects $\mathcal S_L\cup\mathcal S_L^*\setminus  (\mathcal M(S,G)\times \mathcal T(\overline S) \cup \mathcal T(S)\times\mathcal M(\overline S,G))$, and hence no neighborhood of any element in $\Omega'_G\setminus (\Omega^0_G)$ is contained in $\Omega'_G$. This proves that $\textrm{int}(\Omega'_G)\subset(\Omega^0_G)^*$.

\end{proof}

\begin{proof}[Proof of Theorem B]
The set $\Omega_G$ and the map $\mathcal{B}_G$ are obtained by precomposing $\mathcal L_G^{\C}$ from Theorem B' with the biholomorphism $(\mathcal L_G^{-1}, \overline{\mathcal L_G^{-1}}): \mathrm{Hit}(S,G)\times \mathrm{Hit}(\overline S, G)\to \mathcal M(S,G)\times \mathcal M(\overline S,G)$.
\end{proof}
   As discussed in the introduction, for $G=\mathrm{PSL}(2,\R)^2$ and $\mathrm{PSL}(3,\R)$, we can extend $\Omega_G$ to include all of $\mathrm{Hit}(S,G)\times \mathcal T(\overline S)$ and $\mathcal T(S)\times \mathrm{Hit}(\overline S,G)$. This is because, by linearizing (\ref{eq: the F}), we see that, in these cases, $L_G=2I_2$, and hence by \cite[Theorem E]{ElSa}, $\mathrm{Sing}_{G}=\emptyset$.

Finally, recall our discussion on Anosovness in Section \ref{sec: complex harmonic maps intro}. By Theorem D, $\mathcal{B}_{G}$ takes the marginals to holonomies of opers. These opers are rather special. For $G=\mathrm{PSL}(2,\R)^2$, we show below that the holonomies are not always Anosov. For studying other Lie groups, the appendix might of use. For definitions surrounding Anosov representations, see \cite{Wie}. We just point out that for $\mathrm{PSL}(2,\C)$, a representation is Anosov if and only if it is quasi-Fuschian.

   Let $G=\mathrm{PSL}(2,\R)^2$. The data $\varsigma([c,q_1],[\overline{c},0])\in \mathcal{SOL}_G(S)$ corresponds to an equivariant minimal immersion to $\mathbb{G}\times \mathbb{G}$ with holonomy $\rho=(\rho_L,\rho_R)$ and such that the left factor has Hopf differentials $q_1$ and $0$, and the right factor has $-q_1$ and $0$. By Theorem D, recalling the comments proceeding Definition \ref{def: BD oper}, $\rho_L$ and $\rho_R$ are also the holonomies of complex projective structures on $(S,c)$ with Schwarzian derivatives $-\frac{1}{2}q_1$ and $\frac{1}{2}q_1$ respectively. Due to the well-known Nehari bound, once $q_1$ is large enough, neither $\rho_L$ nor $\rho_R$ are quasi-Fuchsian, i.e., Anosov. Since any parabolic subgroup for $\mathrm{PSL}(2,\C)^2$ is just a products of Borels or a product of a Borel with $\mathrm{PSL}(2,\C)$, $\rho$ is not Anosov for any parabolic subgroup. Thus, once we go far enough from the products of quasi-Fuchsian representations, $\mathcal{B}_G$ lands outside of the Anosov locus.

\section{Goldman's symplectic form}\label{sec: Goldman form thma A}
In this section, we prove Theorems A, A', and E. For Theorems A and A', in Section \ref{sec: proof of compatibility} we prove the compatibility condition. Then in Sections \ref{sec: variations on fuchsian} and \ref{sec: signature}, we carry out the signature computation. In Section \ref{sec: anti-conjugate}, we introduce $\mathcal{AC}(S,G)$ and prove Theorem E.

\subsection{Goldman's symplectic form}\label{sec: G symplectic form}
Let $\nu$ be a real-valued non-degenerate symmetric bilinear form on the Lie algebra of a real semisimple Lie group $G$ that's invariant under the adjoint action of $G$. Starting from the data of a pair $(G,\nu)$, Goldman used the cup product in group cohomology together with $\nu$ to define a real analytic symplectic form on $\chi^{\mathrm{an}}(\pi_1(S),G),$ referred to as the \textbf{Goldman symplectic form} \cite{Golform}. 

When $G$ is complex, we can consider instead a complex valued non-degenerate bilinear form $\nu$, and Goldman's construction goes through more or less the same and produces a holomorphic symplectic form on $\chi^{\mathrm{an}}(\pi_1(S),G)$ (on the group cohomology level, one takes complex coefficients instead of real coefficients). For clarity, we refer to a holomorphic symplectic form obtained in this fashion as a \textbf{complex Goldman symplectic form}. If $V\subset \chi^{\mathrm{an}}(\pi_1(S),G)$ is admissible, and if $\nu$ is the complex bilinear extension of a real-valued non-degenerate symmetric bilinear form on the Lie algebra of $G$, then the complex Goldman form on $\chi^{\mathrm{an}}(\pi_1(S),G^{\C})$ associated with $\nu$ restricts to the Goldman form on $V$ associated with $G$ and the real-valued form.

When transported to the space of flat $G$-bundles, Goldman's (real or complex) form is the (real or complex) Atiyah-Bott symplectic form \cite{Golform}, and for this reason it is often referred to as the Goldman-Atiyah-Bott symplectic form. In this paper, we work only on the flat connection side, but we retain the name ``Goldman form." 

Finally, let us recall the definitions of Goldman's symplectic forms. We define the complex Goldman symplectic form $\omega_{G}^{\C}$ on $\mathcal{A}(S,G^{\C})$, and then the Goldman symplectic form $\omega_G$ for $G$ is defined similarly. In fact, on $\chi^{\mathrm{an}}(\pi_1(S),G)$, we will be concerned only with admissible subsets, and on such a subset $V$, identified with a subset of $\mathcal{A}(S,G^{\C}),$ we can define $\omega_G$ to be the restriction of $\omega_{G}^{\C}$. We choose $\nu$ to be a product of Killing forms on the simple factors (possibly with different scalings). Since it shouldn't cause much confusion, we ignore the choice of scalings when saying ``the Goldman symplectic form." Given $[(P,A)]\in \mathcal{A}(S,G^{\C})$ represented by $(P,A)\in \mathcal{F}^*(G^{\C})$, the tangent space $T_{[(P,A)]}\mathcal{A}(S,G^{\C})$ identifies with the first cohomology of the complex of $\mathrm{ad}P=P\times_{\mathrm{Ad}} \g^{\C}$-valued differential forms on $S,$ with differential arising from the flat connection $A$ (see \cite{Golform}, \cite{Atiyah1983TheYE}). On pairs $[\dot A_1], [\dot A_2]\in T_{[(P,A)]}\mathcal{A}(S,G^{\C})$ represented by $\mathrm{ad}P$-valued $1$-forms $\dot{A}_1$ and $\dot{A_2}$, the complex Goldman symplectic form $\omega_{G}^{\C}$ is defined by 
\[
\omega_G^{\C} ([\dot A_1], [\dot A_2])= \int_S \nu(\dot A_1,\dot A_2)\ .
\]

\subsection{Proof of compatibility}\label{sec: proof of compatibility}
Throughout this section, let $G$ be the adjoint form of a split real simple Lie group $G$ of rank at least $2,$ or $\mathrm{PSL}(2,\R)^2$. We resume the notations of Sections \ref{sec: bi-Hitchin},  \ref{sec: proof of Theorem C}, and 6. We keep the Chevalley base $\{x_\alpha,x_{-\alpha},h_\beta: \alpha \in \Delta^+,\beta\in \Pi\}$ of Section \ref{sec: excursion}.

For the rest of the paper, we identify tangent vectors in $\chi^{\mathrm{an}}(\pi_1(S),G^{\C})$ with variations in the space of flat $G$-connections $\mathcal A(S,G)$. Using the map $\mathcal{L}_{G}^{\C}:\Omega_G' \to \chi^{\mathrm{an}}(\pi_1(S),G^{\C})$ of Theorem B', we pull back $\omega_{G}^{\C}$ to $\textrm{int}(\Omega'_G)$. The $2$-form $(\mathcal{L}_{G}^{\C})^*\omega_{G}^{\C}$ of Theorem B' is non-degenerate in a neighbourhood of Teichm{\"u}ller space (and in rank $2$, in a neighbourhood of the diagonal), and even if a priori it degenerates somewhere, it still makes sense to say that a submanifold is Lagrangian. The following proposition is the key to the compatibility in Theorems A and A'.
\begin{prop}\label{prop:lagrangians}
      The intersections of $\textrm{int}(\Omega_G')$ with submanifolds of the form $\mathcal M(S,G)\times \{[\overline{c_2}, \overline{q_2}]\}$ and $\{[c_1, q_1]\}\times \mathcal M(\overline S,G)$ are $(\mathcal{L}_G^{\C})^*\omega_G^{\C}$-Lagrangian.
\end{prop}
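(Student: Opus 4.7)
Since the complex dimension of $\mathcal M(S,G)\times\{[\overline{c_2},\overline{q_2}]\}$ equals $\dim_{\C}\mathcal M(S,G)$, which is precisely half of $\dim_{\C}\mathrm{int}(\Omega_G')$, it suffices to prove isotropy; the Lagrangian property then follows. The argument for $\{[c_1,q_1]\}\times\mathcal M(\overline S,G)$ is symmetric, so I treat only the first submanifold.

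The plan is a direct computation using the explicit flat connection \eqref{eq: flat connection SOL}. Fix $\sigma=(c_1,q_1,\overline{c_2},\overline{q_2},u)\in\mathrm{SOL}_G^*(S)$ representing the point and take two tangent vectors at it; since $(\overline{c_2},\overline{q_2})$ is held fixed, they lift to variations of the form $\dot\sigma_k=(\dot c_{1,k},\dot q_{1,k},0,0,\dot u_k)$, $k=1,2$, where each $\dot u_k$ satisfies the linearization of \eqref{eq: Theorem C}. Each $\dot\sigma_k$ induces a variation $\dot A_k$ of the flat connection $A_\sigma$, and the quantity to evaluate is $\int_S \nu(\dot A_1\wedge\dot A_2)$. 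The strategy is to decompose each $\dot A_k$ according to the splitting $A_\sigma=A^{\mathcal I}+\phi_1+\overline{\phi_2}^{\mathcal I}$ and the height grading $\mathrm{ad}P_{G^{\C}}=\bigoplus_m \g_m\otimes\mathcal K_{c_1}^m$, and then to use the Killing-form orthogonality $\nu(\g_m,\g_{m'})=0$ for $m+m'\ne 0$ to collapse the integrand to a few potentially nonzero terms. In view of the bi-complex types ($A^{\mathcal I}$ and $\phi_1$ of type $(1,0)_{c_1}$, by Proposition \ref{prop: K^c connection form}, and $\overline{\phi_2}^{\mathcal I}$ of type $(0,1)_{\overline{c_2}}$), together with the ingredient that wedge products of forms of matching bi-complex type vanish, the surviving pointwise contributions come from the cross-pairings between $\dot\phi_{1,k}$ and $\dot{\overline\phi}_{2,j}^{\mathcal I}$.

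The main task is then to show that these cross-pairings integrate to zero. The essential tools are: the holomorphicity identities $\overline\partial_2\phi_1=0$ and $\partial_1\overline{\phi_2}^{\mathcal I}=0$ from the definition of a complex harmonic $G$-bundle, together with their linearizations; the explicit description of the components of $A^{\mathcal I}$ from Proposition \ref{prop: K^c connection form}; the linearized complex affine Toda system satisfied by $\dot u_k$; and Stokes' theorem on the closed surface $S$ to convert the surviving pairings into integrals of exact forms. Conceptually, this is the bi-complex analogue of the classical fact that the Hitchin section is Lagrangian for the Goldman form on the Higgs bundle moduli space.

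The main obstacle will be the bookkeeping. Although $(\overline{c_2},\overline{q_2})$ is held fixed, the flat connection in \eqref{eq: flat connection SOL} depends on the non-holomorphic ingredients $\lambda=\lambda(c_1,\overline{c_2})$ and $U_\alpha=\nu(\alpha,\alpha)e^{u_\alpha}$, both of which vary non-trivially under $(\dot c_1,\dot u)$; moreover, varying $c_1$ also perturbs the bi-complex splitting itself, producing correction terms that must be reconciled carefully with the structure constants of the Chevalley basis. As an independent sanity check, on the locus $\overline{q_2}=0$ the image of the submanifold under $\mathcal L_G^{\C}$ lies in the Beilinson-Drinfeld oper locus by Theorem D, which is known to be Lagrangian in the character variety; hence isotropy is already a priori confirmed on this slice, providing a cross-check for the general computation.
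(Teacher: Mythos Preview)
Your overall strategy---decompose the variation of the flat connection according to $A^{\mathcal I}+\phi_1+\overline{\phi_2}^{\mathcal I}$ and exploit Killing orthogonality of root spaces together with bi-complex type considerations---is exactly the right idea, and it is what the paper does. However, you have made a tactical choice that turns an almost trivial computation into a genuinely hard one: you fix $(\overline{c_2},\overline{q_2})$ and vary $(c_1,q_1)$. The paper does the opposite, first treating $\{[c_1,q_1]\}\times\mathcal M(\overline S,G)$, i.e.\ fixing $(c_1,q_1)$.

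The point is that with $c_1$ fixed, one works in a trivialization holomorphic for $P_{G^{\C}}\to(S,c_1)$ that \emph{does not move along the path}; none of the ``correction terms from perturbing the bi-complex splitting'' that you rightly flag as an obstacle ever appear. In that trivialization $A^t=A_0^t+\phi_1+\overline{\phi_2}^{\,t}$ with $\phi_1$ constant in $t$, so $\dot A=\dot A_0+\dot{\overline{\phi_2}}$. By Proposition~\ref{prop: h valued}(3), $A_0^t\in\Omega^{1,0}_{c_1}(S,S\times\mathfrak h^{\C})$ for every $t$, hence $\dot A_0$ is a $(1,0)_{c_1}$-form valued in $\mathfrak h^{\C}$, while $\dot{\overline{\phi_2}}$ is valued in $\bigoplus_{\alpha\in\Pi}\g_\alpha\oplus\g_{-\delta}$. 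For two such variations $\dot A,\dot B$, root-space orthogonality kills every term in $\nu(\dot A\wedge\dot B)$ except $\nu(\dot A_0\wedge\dot B_0)$, and that is a wedge of two $(1,0)_{c_1}$-forms, hence zero \emph{pointwise}. No Stokes' theorem, no linearized Toda system, no integration by parts is needed at all.

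For the submanifold you chose to attack directly, $\mathcal M(S,G)\times\{[\overline{c_2},\overline{q_2}]\}$, the paper does not redo the computation. Instead it invokes the anti-holomorphic involution $\mathrm{Conj}\bigl([c_1,q_1],[\overline{c_2},\overline{q_2}]\bigr)=\bigl([c_2,q_2],[\overline{c_1},\overline{q_1}]\bigr)$, observes that $\mathcal L_G^{\C}=\tau^\chi\circ\mathcal L_G^{\C}\circ\mathrm{Conj}$ on a $\mathrm{Conj}$-invariant neighbourhood of the diagonal (both sides are holomorphic and agree on the real locus), and then uses analytic continuation to transport the Lagrangian property from one family of slices to the other. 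So the case you dismissed as ``symmetric'' is in fact the one amenable to direct computation; swap which factor you fix first and the proof collapses to a few lines.
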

\begin{proof}
We do the case where $G$ is simple in full detail, and then we explain the minor modifications for $G=\mathrm{PSL}(2,\R)^2.$ We first prove the statement for a submanifold of the form $\textrm{int}(\Omega_G')\cap (\{[c_1, q_1]\}\times \mathcal M(\overline S,G))$.  
Let $\sigma=(c_1,q_1,\overline{c_2},\overline{q_2},u)\in \mathrm{SOL}_G^*(S)$ be such that $[\sigma]\in \mathcal{SOL}_G(S)$ lies in the image of $\varsigma$, defined in Theorem \ref{thm: main inverse}, and consider a $C^1$ path $\sigma_t=(c_1, q_1, \overline{c_2}^t, \overline{q_2}^t, u_t)\in \mathrm{SOL}_G^*(S)$ passing by $\sigma$ at $t=0$, which defines a path of flat principal $G^{\C}$-connections $A^t=A_{\sigma_t}$ on $P_{G^{\C}}$.
In the usual notations, let $\mathcal{I}_t$ be the Lie algebra bundle isomorphism solving (\ref{eq: bi-Hitchin flatness}) for $\sigma_t$, and we write. 
\begin{equation}
\label{eq: variation in Lagrangian marginal}
    A^t= {A_0^t} + \phi_1 +\overline{\phi_2}^{t},
\end{equation}
where ${A_0^t}=A^{\mathcal{I}_t}$ and $\overline{\phi_2}^{t}=-\overline{\phi_2}^{\mathcal{I}_t}$.
Taking the derivative at $t=0$,
$$\dot{A}=\dot{A_0}+\dot {\overline{\phi_2}},$$ for some $\mathrm{ad}P_{G^{\C}}$-valued $1$-forms $\dot{A}_0$ and $\dot {\overline{\phi_2}}.$ 
To see the variations more explicitly, we work in a trivialization of $P_{G^{\C}}$ that is holomorphic for $P_{G^{\C}}\to (S,c_1).$ By part (3) of Proposition \ref{prop: h valued}, in such a trivialization, each $A_0^t$ lies in $\Omega_{c_1}^{1,0}(S,S\times\mathfrak{h}^{\C})$. Hence, the same holds for $\dot A_0$. For the variation of the Higgs field, first write $U_t=(U_\alpha^t)_{\alpha\in \Pi}$ with $U_\alpha^t = U_\alpha+t\dot{U}_\alpha + o(t),$ and similarly for the $U_{-\delta}$ term. As in Section \ref{sec: proof of Theorem C}, but in our current notation, we can write
$$\overline{\phi_2}^{t}=\sum_{\alpha\in \Pi} r_\alpha^{1/2}\lambda U_\alpha^t (x_\alpha\otimes dz) \otimes d\overline{w}+\lambda^{-d+1} U_{-\delta}^t\overline{q_2}(w)(x_{-\delta}\otimes dz^{-d+1} )\otimes d\overline{w}.$$
We then differentiate to obtain
    $$\dot {\overline{\phi_2}}=\sum_{\alpha\in \Pi} r_\alpha^{1/2}\lambda \dot{U}_\alpha (x_\alpha\otimes dz) \otimes d\overline{w}+\lambda^{-d+1} \dot{U}_{-\delta}\overline{q_2}(w)(x_{-\delta}\otimes dz^{-d+1} )\otimes d\overline{w},$$
Similarly, let $B^t=B_0^{t}+\phi + \overline{\phi'_2}^{t}$ be another path of flat connections corresponding to a path in our submanifold as above, with variation
 $$\dot{B}=\dot B_0+\dot{\overline{\phi'_2}}.$$ In our current notation, the complex Goldman symplectic form on $\dot{A}$ and $\dot{B}$ is defined by the integration 
 \begin{equation}\label{eq: goldman integral}
     \omega_{G}^{\C}(\dot{A},\dot{B})=\int_S \nu(\dot{A},\dot{B}).
 \end{equation}
To show that our submanifold is Lagrangian for $(\mathcal{L}_G^{\C})^*\omega_{G}^{\C}$, we need to prove that the integral (\ref{eq: goldman integral}) is always zero. We establish the slightly stronger result that $\nu(\dot{A},\dot{B})=0$. Computing the Killing form on these variations is simple: using that root spaces $\g_\alpha$ and $\g_\beta$ are Killing orthogonal unless $\alpha+\beta=0$, we obtain $$\nu(\dot A_0,\dot{\overline{\phi_2'}})=\nu(\dot B_0,\dot{\overline{\phi_2}})= \nu(\dot{\overline{\phi_2}},\dot{\overline{\phi_2'}})=0,$$ and hence that 
\begin{equation}\label{eq: pairing simplified}
\nu(\dot{A},\dot{B})=\nu(\dot{A}_0,\dot{B}_0).
\end{equation}
Since, in our trivializations of $P_{G^{\C}}$, $\dot A_0$ and $\dot B_0$ both live in $\Omega_{c_1}^{1,0}(S,S\times \mathfrak{h}^{\C})$, the pairing on the right hand side of (\ref{eq: pairing simplified}) is zero. We deduce that $\nu(\dot{A},\dot{B})=0$, and the result is established.

Having proved that every submanifold of the form $\textrm{int}(\Omega_G')\cap (\{[c_1, q_1]\}\times \mathcal M(\overline S,G))$ is Lagrangian, to prove the analogous statement for submanifolds of the form $\textrm{int}(\Omega_G')\cap (\mathcal M(S,G)\times \{[\overline{c_2}, \overline{q_2}]\})$, we consider the self-map $\mathrm{Conj}$ of $\mathcal M(S,G)\times \mathcal M(\overline S,G)$ given by $\mathrm{Conj}([c_1,q_1],[\overline{c_2},\overline{q_2}])=(([c_2,q_2],[\overline{c_1},\overline{q_1}])$. We didn't construct $\textrm{int}(\Omega_G')$ to be invariant under $\mathrm{Conj}$, but it wouldn't have been difficult to do so. Let $\Omega_G''\subset \Omega_G'$ be a $\mathrm{Conj}$-invariant open subset (which exists, since the diagonal is invariant). Set $\tau:G^{\C}\to G^{\C}$ to be the anti-holomorphic involution defining the real form $G$, which induces an involution $\tau^{\chi}$ of the $G^{\C}$-character variety. Then, on $\Omega_G''$, $ \mathcal{L}_{G}^{\C}=\tau^\chi\circ \mathcal{L}_{G}^{\C}\circ \mathrm{Conj}.$ Indeed, both maps are holomorphic and agree with $\mathcal{L}_G$ on the diagonal. It follows that $(\mathcal{L}_{G}^{\C})^*\omega_{G}^{\C}$ is Lagrangian on $\Omega_G''\cap (\mathcal M(S,G)\times \{[\overline{c_2}, \overline{q_2}]\})$. By analytic continuation, the Lagrangian property persists when we replace $\Omega_G''$ by $\textrm{int}(\Omega_G')$.

    Finally, we address the case $G=\mathrm{PSL}(2,\R)^2$. Now, the connections and Higgs fields split as sums of connections and Higgs fields respectively for $\mathrm{PSL}(2,\R)$. The above formulas for variations are identical, except every term splits as a sum. These sum-splittings are orthogonal for the Goldman pairing, and hence the arguments above, done on the two component separately, go through with zero change. 
\end{proof}

The fact that $\omega_G(\mathcal J_G\cdot, \mathcal J_G\cdot)=\omega_G(\cdot,\cdot)$ is now an immediate consequence of Proposition \ref{prop:lagrangians} and of general facts about complex geometry, which we explain now. Let $M$ be a complex manifold with almost complex structure $\mathcal J_0$ and let $\omega_0$ be a real analytic symplectic form on $M,$ which we don't assume to be compatible with $\mathcal J_0$. The complexification of $M$ is the complex manifold $M\times \overline{M}$ whose almost complex structure is $\mathcal J_0\times (-\mathcal J_0).$ We consider the totally real embedding $\delta: M\to M\times \overline M$ defined by $\delta(p)=(p,p).$ By analyticity, in a neighbourhood $U$ of $\delta(M)$ inside $M\times \overline{M}, $ $\delta_*\omega_0$ extends uniquely to a holomorphic symplectic form $\omega_0^{\C}$. 
\begin{prop}\label{prop:compatible}
    The form $\omega_0$ and the complex structure $\mathcal J_0$ are compatible if and only if, for all $p\in M$, the submanifolds $V\cap (M\times \{p\})$ and $V\cap (\{p\}\times \overline{M})$ of $V$ are $\omega_0^{\C}$-Lagrangian.
\end{prop}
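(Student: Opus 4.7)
The plan is to reduce the statement to a pointwise linear algebra identity along the diagonal, and then upgrade from the diagonal to a full neighborhood by exploiting the uniqueness of holomorphic extensions from a totally real submanifold.

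First I would work at a fixed base point $(p,p)\in \delta(M)$. Setting $V=T_pM$ and $J=\mathcal{J}_0|_p$, the tangent space $T_{(p,p)}(M\times \overline M)=V\oplus V$ carries the almost complex structure $(J,-J)$, so its $(1,0)$ part splits canonically as
\[
T^{1,0}_{(p,p)}(M\times\overline M)=V^{1,0}\oplus V^{0,1},
\]
where $V^{1,0},V^{0,1}\subset V\otimes_\R\C$ are the $\pm i$ eigenspaces of $J$. In the same splitting, $T^{1,0}_{(p,p)}(M\times\{p\})=V^{1,0}\oplus\{0\}$ and $T^{1,0}_{(p,p)}(\{p\}\times \overline M)=\{0\}\oplus V^{0,1}$. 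Writing the holomorphic bilinear form $\omega_0^{\C}|_{(p,p)}$ as $B=B_{11}+B_{12}+B_{21}+B_{22}$ with respect to this splitting, the two Lagrangian conditions at the diagonal are exactly $B_{11}=0$ and $B_{22}=0$.

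Next I would use the pullback identity $\delta^*\omega_0^{\C}=\omega_0$ to identify the diagonal blocks. Since $\omega_0^{\C}$ is a holomorphic $(2,0)$-form, for $u,v\in T_pM$ it satisfies
\[
\omega_0(u,v)=\omega_0^{\C}\bigl(\delta_*u,\delta_*v\bigr)=B\bigl((u^{1,0},u^{0,1}),(v^{1,0},v^{0,1})\bigr),
\]
where $u=u^{1,0}+u^{0,1}$ is the Dolbeault decomposition. Comparing with the $\C$-bilinear extension $\omega_0 = \omega_0^{2,0}+\omega_0^{1,1}+\omega_0^{0,2}$, I read off $B_{11}=\omega_0^{2,0}|_p$ and $B_{22}=\omega_0^{0,2}|_p$. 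Since compatibility of $\omega_0$ with $\mathcal{J}_0$ is classically equivalent to $\omega_0^{2,0}=\omega_0^{0,2}=0$, this already gives the equivalence of compatibility with the Lagrangian conditions \emph{at every diagonal point}.

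It remains to upgrade the Lagrangian condition at $(p,p)$ to the Lagrangian condition at every point of $V\cap(M\times\{p\})$ and $V\cap(\{p\}\times\overline M)$. For this, I would pick local holomorphic coordinates $(z,\overline w)$ on $M\times\overline M$ and write
\[
\omega_0^{\C}=\sum_{i<j}A_{ij}(z,\overline w)\,dz^i\wedge dz^j+\sum_{i,j}B_{ij}(z,\overline w)\,dz^i\wedge d\overline w^j+\sum_{i<j}C_{ij}(z,\overline w)\,d\overline w^i\wedge d\overline w^j.
\]
The Lagrangian condition for $V\cap(M\times\{p\})$ at the point $(q,p)$ is $A_{ij}(q,\overline w_p)=0$, and similarly $C_{ij}$ controls the second family. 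The diagonal analysis above shows that compatibility is equivalent to $A_{ij}(z,\overline z)=C_{ij}(z,\overline z)=0$ for every $z$. Now the $A_{ij}$ and $C_{ij}$ are holomorphic functions on $V$, and the diagonal $\delta(M)=\{\overline w=\overline z\}$ is a maximally totally real submanifold. By the uniqueness of holomorphic extensions from a totally real submanifold, any holomorphic function vanishing on $\delta(M)$ vanishes on a full neighborhood of $\delta(M)$ in $V$. Hence $A_{ij}$ and $C_{ij}$ vanish identically on $V$, giving the Lagrangian condition everywhere on the two families. Conversely, if the Lagrangian conditions hold for all $p$, they hold in particular at $(p,p)$, and step two gives compatibility at $p$.

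The main obstacle is bookkeeping in the second paragraph: one must keep straight that the complex structure on the second factor is $-J$, which is exactly what produces the matching of the second diagonal block $B_{22}$ with $\omega_0^{0,2}$ rather than $\omega_0^{2,0}$. Once this identification is pinned down, both the diagonal equivalence and the analytic continuation step are routine.
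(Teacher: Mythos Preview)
Your proof is correct and follows essentially the same route as the paper's. Both arguments identify the Lagrangian condition for the two families with the vanishing of the $(2,0)$ and $(0,2)$ coefficient functions of $\omega_0^{\C}$, use analytic continuation from the totally real diagonal to pass between ``vanishing on $\delta(M)$'' and ``vanishing on all of $V$'', and then invoke the standard fact that compatibility of $\omega_0$ with $\mathcal{J}_0$ is equivalent to $\omega_0$ being a $(1,1)$-form. The paper works directly in local holomorphic coordinates $(z,\overline z)$ throughout, whereas you first do the linear-algebra identification at a point via the eigenspace splitting and then switch to coordinates for the extension step, but this is only a presentational difference.
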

The ``only if" direction is contained in Loustau-Sanders \cite[Theorem 3.8]{LouS} (assuming $(M,\mathcal J_0,\omega_0)$ is K{\"a}hler, which is unnecessary), and we only use the ``if" direction for Theorem \ref{thm: Hit is Kähler}. For completeness, we include the full statement and proof.
\begin{proof}
Given local holomorphic coordinates $(z_1,\dots, z_n)$ for $M$ {around $p$},  $(z_1,\dots, z_n,\overline{z}_1,\dots, \overline{z}_n)$ are local holomorphic coordinates for $M\times \overline{M}$ {around $(p,p)$}.
In such coordinates, we can write
$$\omega_0^{\C}=-\frac{1}{2i}(\varphi_{jk}dz_j\wedge dz_k +\varphi_{j\overline{k}}dz_j\wedge d\overline{z}_k+\varphi_{\overline{j}\overline{k}}d\overline{z}_j\wedge d\overline{z}_k)$$
for some holomorphic coefficient functions $\varphi_{jk},\varphi_{j\overline k}, \varphi_{\overline{jk}}$.

 It is easily checked that the submanifolds  $V\cap (M\times \{p\})$ and $V\cap (\{p\}\times \overline{M})$ are $\omega_0^{\C}$-Lagrangian if and only if, for every $j,k$, $\varphi_{jk}=\varphi_{\overline{jk}}=0,$ which, by analytic continuation, occurs if and only if $\varphi_{jk}$ and $\varphi_{\overline{jk}}$ vanish on $\delta(M)$, i.e., $\omega_0=\delta^*\omega^{\C}_0$ is a $(1,1)$-form for $\mathcal J_0$.

 To deduce the Proposition, we take note of the basic fact that $\omega_0$ is a $(1,1)$-form for $\mathcal J_0$ if and only if $\omega_0$ and $\mathcal J_0$ are compatible.
To see this, write $\omega_0=\upbeta_{jk} dz_j \wedge d z_k+ \upbeta_{j\overline  k}dz_j\wedge d\overline z_k+ \upbeta_{\overline{jk}}d\overline z_j\wedge d\overline z_k$, where $\omega_0$ being a real form implies $\overline{\upbeta_{jk}}=\upbeta_{\overline{jk}}$. Computing the 2-form $\omega_0(\mathcal J_0\cdot, \mathcal J_0\cdot)-\omega_0(\cdot, \cdot)$ in the real basis $\{\partial_{z_j}+ \partial_{\overline z_j}, i\partial_{z_k}-i\partial_{\overline z_k} \}$, we conclude that $\omega_0$ and $\mathcal J_0$ are compatible if and only if $\overline{\upbeta_{jk}}=-{\upbeta_{\overline{jk}}}$ for all $j,k$, hence if and only if $\upbeta_{jk}=\upbeta_{\overline{jk}}=0$. 
\end{proof}

Returning to our setting, Propositions \ref{prop:lagrangians} and \ref{prop:compatible} immediately imply the compatibility condition for Theorem A'. Indeed, consider the diagonal immersion $\delta: \mathcal M(S,G) \to \textrm{int}(\Omega_G')\subset \mathcal M(S,G)\times \mathcal M(\overline S,G)$. Since $\mathcal{L}_{G}^{\C}$ is holomorphic, the holomorphic extension of the real analytic symplectic form $\delta_*\omega_G$ to $\textrm{int}(\Omega_G')$ is $(\mathcal{L}_{G}^{\C})^*\omega_G^{\C}$.
Since $\mathcal L_G^{\C}\circ \delta=\mathcal L_G$, by Propositions \ref{prop:lagrangians} and \ref{prop:compatible}, the complex structure on $\mathcal M(S,G)$ is compatible with $(\mathcal L_G^{\C})^*\omega_G^{\C}= \delta^* (\mathcal L_{G}^{\C})^* \omega_G^{\C}$. In the rank $2$ setting, for $\mathrm{Hit}(S,G),$ by pushing forward the structures through $\mathcal L_G$, we get the following.

\begin{cor}
\label{cor: compatibility on HitG}
    Assuming $G$ is rank $2$, on $\mathrm{Hit}(S,G)$, the Goldman symplectic form and the Labourie complex structure are compatible.  
\end{cor}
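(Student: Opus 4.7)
The plan is to deduce the corollary directly from Theorem A' (equivalently, from the compatibility statement that was just established on $\mathcal{M}(S,G)$ via Propositions \ref{prop:lagrangians} and \ref{prop:compatible}) by transporting the structures through Labourie's parametrization. The key input that distinguishes rank $2$ from the general case is that, by Labourie's theorem recalled in Section \ref{sec: Labourie paper}, the holonomy map $\mathcal{L}_G: \mathcal{M}(S,G)\to \mathrm{Hit}(S,G)$ is a diffeomorphism (not merely an immersion). This upgrades the relation between the two spaces from a local to a global one.

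First, I would recall that $\mathcal{J}_G$ on $\mathrm{Hit}(S,G)$ is defined as the pushforward of the almost complex structure $\mathcal{J}'_G$ on $\mathcal{M}(S,G)$ via $\mathcal{L}_G$; in particular $\mathcal{L}_G^*\mathcal{J}_G=\mathcal{J}'_G$. Next, the compatibility established in Theorem A' reads
\begin{equation*}
    (\mathcal{L}_G^*\omega_G)(\mathcal{J}'_G\cdot,\mathcal{J}'_G\cdot) \;=\; \mathcal{L}_G^*\omega_G
\end{equation*}
as $2$-forms on $\mathcal{M}(S,G)$. Applying the pushforward $(\mathcal{L}_G)_*=((\mathcal{L}_G)^{-1})^*$ to both sides of this identity (which is legal because $\mathcal{L}_G$ is a diffeomorphism) and using naturality of pullback under composition yields $\omega_G(\mathcal{J}_G\cdot,\mathcal{J}_G\cdot)=\omega_G$ on $\mathrm{Hit}(S,G)$, which is precisely the desired compatibility.

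There is essentially no obstacle here beyond invoking the rank $2$ bijectivity of $\mathcal{L}_G$: once the compatibility is known on $\mathcal{M}(S,G)$, transporting it across a global diffeomorphism is a formal matter. It is worth noting that, in contrast to Theorem A', no assertion about non-degeneracy or signature is made in this corollary, so no further signature computation is required at this stage; those will enter only in the pseudo-K\"ahler refinement stated in Theorem A, which additionally uses the signature computation carried out on $\mathcal{M}_0(S,G)$ in Sections \ref{sec: variations on fuchsian} and \ref{sec: signature}.
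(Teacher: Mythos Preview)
Your proof is correct and follows essentially the same approach as the paper: the paper states that, once Propositions \ref{prop:lagrangians} and \ref{prop:compatible} yield compatibility of $\mathcal{L}_G^*\omega_G$ with $\mathcal{J}'_G$ on $\mathcal{M}(S,G)$, the rank $2$ statement is obtained ``by pushing forward the structures through $\mathcal{L}_G$,'' which is exactly what you do. Your additional remark that no signature computation is needed at this stage is also consistent with how the paper separates the compatibility (Corollary \ref{cor: compatibility on HitG}) from the full pseudo-K\"ahler statement (Theorem A).
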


\subsection{Variations on the Fuchsian locus}\label{sec: variations on fuchsian} 
We continue in the setting of the subsection above. Having proved that $\mathcal{L}_G^*\omega_G$ and $\mathcal{J}_G'$ are compatible (recall from Section \ref{sec: A' and B'} that $\mathcal{J}_G'$ is the almost complex structure on $\mathcal{M}(S,G)$), toward Theorem A', we compute the signature of the induced pseudo-Riemannian metric $\mathcal{L}_G^*\omega_G(\cdot, \mathcal{J}_G'\cdot )$ on the subset $\mathcal{M}_0(S,G)$ (which, in rank $2$, identifies with $\mathrm{Hit}(S,G)$). Since the signature is locally constant, it suffices to compute it at a single point; we will do so on the Fuchsian locus, i.e., for points of the form $[c,0].$ We also restrict here to the simple groups, since the variations for $\mathrm{PSL}(2,\R)^2$ are then obtained by taking sums.

To do the computation, in the proof below, we first compute $(\mathcal{L}_G^{\C})^*\omega_G^{\C}$ on the Fuchsian locus, by which we mean on variations of points of the form $([c,0],[\overline{c},0]).$ Toward this, we first compute variations of flat connections coming from applying $\mathcal{L}_G^{\C}$. In the decomposition $$T_{([c,0],[\overline c,0])} \mathcal M(S,G)\times \mathcal M(\overline S,G)= T_{[c]}\mathcal T(S)\oplus T_{[\overline c]} \mathcal T(\overline S) \oplus H^0(S,\mathcal K^r)\oplus H^0(S,\overline{\mathcal K}^r),$$ $T_{([c,0],[\overline c,0])} \mathcal M(S,G)\times \mathcal M(\overline S,G)$ is spanned by elements of the form
$$[\dot c_1]= ([\dot c_1, 0], [0, 0]), \hspace{1mm}
[\dot {\overline{ c_2}}]= ( [0,0], [ \dot{\overline{ c_2}}, 0]), \hspace{1mm}
 \dot q_1= ([0,\dot q_1], [0,0]), \hspace{1mm}
  \dot{\overline{ q_2}}=([0,0],[0,\dot{\overline{ q_2}}]),$$
and we study cases separately. In the following, set $h=\vb*h_{(c,\overline c)}=\lambda_0 dz \cdot d\overline z$. To use as a reference for the computations below, we recall the expression for the connection for a general point $(c_1,q_1, \overline {c_2},\overline{q_2})$, in a suitable trivialization, as $$A^{\mathcal{I}}+(\widetilde{e}+q_1x_\delta) + (\sum_{\alpha \in \Pi} \lambda U_\alpha r_\alpha^{1/2}(x_\alpha \otimes dz) \otimes d\overline{w}+ \lambda^{-d+1} U_{-\delta}\overline{q_2}(w)(x_{-\delta}\otimes dz^{-d+1} )\otimes d\overline{w}),$$
Computing variations for $\dot{q}_1$ and $\dot{\overline{q_2}}$ is simple. In the direction $\dot{q}_1,$ we compute immediately 
\begin{equation}\label{eq: q_1 variation}
    d\mathcal{L}_G^{\C}(\dot{q_1})=\dot{q}_1x_{\delta},
\end{equation}
and, in the direction $\dot{\overline{q_2}},$ using that the isomorphism $\mathcal{I}$ does not depend on $\dot{\overline{q_2}}$ when $q_1=0$, we have 
\begin{equation}\label{eq: q_2 variation}
    d\mathcal{L}_G^{\C}(\dot{\overline{q_2}})=\lambda_0^{-d+1}U_{-\delta}\dot{\overline{q_2}}x_{-\delta},
\end{equation}
where $U_{-\delta}$ is a constant and $\lambda^{-d+1}\dot{\overline{q_2}}x_{-\delta}$ is interpreted as a section of $\g_{-\delta}\otimes \overline{\mathcal{K}}^{-d+1}$ tensored with a $(0,1)$-form. Explicitly, in the local coordinate $z,$ $$\lambda_0^{-d+1}\dot{\overline{q_2}}x_{-\delta}=\lambda^{-d+1}dz^{d-1}d\overline{z}^{d-1}U_{-\delta}\dot{\overline{q_2}}(z)(x_{-\delta}\otimes dz^{-d+1})\otimes d\overline{z}.$$ 
For a variation $[\dot{\overline{c_2}}]$ tangent to the zero section, consider a tangential path of complex structures $(c,0, \overline{c_2}^t,0)$ and, as in \eqref{eq: variation in Lagrangian marginal}, denote the corresponding path of flat connections by $A^t=A_0^t+\phi_1+\overline{\phi_2}^t$. As we observed in the proof of Proposition \ref{prop:lagrangians}, $\dot A_0$ is a $1$-form valued in $S\times \mathfrak{h}^{\C}$ and $\dot{\overline{\phi_2}}$ is valued in $\g_1\otimes \mathcal{K}_{c}$. So $d\mathcal{L}_G^{\C}([\dot{\overline{c_2}}])$ is a $1$-form valued in $(S\times \mathfrak{h}^{\C})\oplus (\g_1\otimes \mathcal{K}_c),$ and while we could compute more explicitly, this is in fact all the information that we'll need. The computation for a variation of the form $[\dot{c_1}]$ is a bit more involved, since the usual trivializations change along a tangential path $t\mapsto (c_1^t,0, \overline{c},0),$ but still doable. We'll see below that we don't need to carry out this computation.
\subsection{Computation of the signature}\label{sec: signature}
Continuing with the setup from the previous two subsections, we complete the proofs of Theorems A and A'.
\begin{proof}[Proof of Theorem A']
In order to relax the notation, in this proof we use $\omega_G$ and $\omega_G^{\C}$ to denote $\mathcal{L}_G^*\omega_G$ and $(\mathcal{L}_G^{\C})^*\omega_G^C$ respectively. As stated above, with compatibility established, it is only required to compute the signature on the Fuchsian locus, i.e., for points of the form $[c,0].$ Before getting to that, we carry out computations involving the complex Goldman symplectic form. Specifically, in the notation of the subsection above, we compute the pairings $\omega_G^{\C}([\dot{c_1}],[\dot{\overline{c_2}}]),$ $\omega_G^{\C}(\dot{q}_1,[\dot{\overline{c_2}}])$, $\omega_G^{\C}(\dot{\overline{q_2}},[\dot{c_1}])$, and $\omega_G^{\C}(\dot{q_1},\dot{\overline{q_2}}).$ As we already mentioned, the form $\omega_G$ restricts to a multiple of the Weil-Petersson symplectic form on $\mathcal{T}(S)$. Thus, the pairing $\omega_G^{\C}([\dot{c_1}],[\dot{\overline{c_2}}])$ is (up to a constant) the pairing of $[\dot{c_1}]$ and $[\dot{\overline{c_2}}]$ with respect to the complexification of the Weil-Petersson symplectic form. This latter pairing is computed in \cite[Section 5]{ElE}, and we won't explicitly need it. By Proposition \ref{prop:lagrangians}, we already know the remaining pairings vanish.

For $\omega_G^{\C}(\dot{q}_1,[\dot{\overline{c_2}}])$, in the usual decomposition of $\mathrm{ad}P_{G^{\C}},$ $d\mathcal{L}_G^{\C}(\dot{q}_1)$ is valued in $\g_{\delta}\otimes \mathcal{K}_c^d,$ and $d\mathcal{L}_G^{\C}(\dot{[\overline{c_2}]})$ is a sum of two forms valued $S\times \mathfrak{h}^{\C}$ and in $\g_{1}\otimes \mathcal{K}_c$ respectively. Since root spaces $\g_\alpha$ and $\g_\beta$ are orthogonal unless $\alpha+\beta=0,$ $\omega_G^{\C}(\dot{q}_1,[\dot{\overline{c_2}}])=0$. Using the map $\mathrm{Conj}$ from the proof of Proposition \ref{prop:lagrangians}, we obtain $\omega_G^{\C}(\dot{\overline{q_2}},[\dot{c_1}])= \overline{\omega_G^{\C}(\dot{q_2},[\dot{\overline{c_1}}])}=0.$ 
For $\omega_G^{\C}(\dot{q}_1,\dot{\overline{q_2}}),$ using the formulas \eqref{eq: q_1 variation} and \eqref{eq: q_2 variation},
\begin{equation}\label{eq: q1 q2 pairing}
    \omega_G^{\C}(\dot{q_1},\dot{\overline{q_2}}) = U_{-\delta}\nu(x_\delta,x_{-\delta})\int_S \frac{\dot{q_1}(z)\dot{\overline{q_2}}(z)}{\lambda^{d-1}(z)} dz \wedge d\overline{z}.
\end{equation}
We highlight that, since $U_{-\delta}$ is coming from the solution from Proposition \ref{prop: existence on marginal locus}, it is positive.

Now we get to the main signature computation for $\omega_G(\cdot,\mathcal{J}_G'\cdot)$. We show that the zero section, i.e., Teichm{\"u}ller space, is orthogonal to the fiber, and then we are free to compute the signatures on these two pieces.  
This orthogonality is easy: considering two variations of the $([\dot{c}],0)$ and $(0,\dot{q})$ based at the point $([c],0),$
$$\omega_G([\dot{c}],\mathcal{J}_G'\dot{q}) = \omega_G^{\C}([\dot{c}]+[\dot{\overline{c}}],\dot{q_1}-i\dot{\overline{q}})= 0,$$ since the middle expression breaks into a sum of four pairings that all vanish.
Now, to conclude, we show that $\omega_G(\cdot,\mathcal{J}_G'\cdot)$ is positive definite on the zero section and negative definite on the transverse fiber. For the zero section, although it is not hard to compute explicitly, to justify that $\omega_G(\cdot, \mathcal{J}_G'\cdot)$ is positive definite, we just use that, on this piece, $\omega(\cdot, \mathcal{J}_G'\cdot)$ is a positive scalar multiple of the Weil-Petersson metric. Finally, to see that $\omega_G$ is negative definite on the fibers, we take a variation $(0,\dot{q})$ based at $([c],0)$ and simply compute
$$\omega_G(\dot{q},\mathcal{J}_G'(\dot{q})) = \omega_G^{\C}(\dot{q}+\dot{\overline{q}}, i\dot{q}-i\dot{\overline{q}}) = -2i\omega_G^{\C}(\dot{q},\dot{\overline{q}}).$$ Substituting the expression (\ref{eq: q1 q2 pairing}) yields
$$\omega_G(\dot{q},\mathcal{J}_G'(\dot{q}))=-2i U_{-\delta}\nu(x_\delta,x_{-\delta})\int_S \frac{\dot{q}(z)\dot{\overline{q}}(z)}{\lambda^{d-1}(z)} dz \wedge d\overline{z}=-4 U_{-\delta}\nu(x_\delta,x_{-\delta})\int_S \frac{\dot{q}(z)\dot{\overline{q}}(z)}{\lambda^{d-1}(z)} dx \wedge dy\leq 0,$$ which finishes the computation. By the discussion above, we are done.
\end{proof}

\begin{proof}[Proof of Theorem A]
    As in Corollary \ref{cor: compatibility on HitG}, we obtain Theorem A by applying Theorem A' to $\mathcal{M}(S,G)=\mathcal{M}_0(S,G)$ and pushing forward the complex structure and the symplectic form via Labourie's parametrization $\mathcal{L}_G.$
\end{proof}

\begin{remark}
\label{rmk: Teich totally geodesic}
Consider the involution on  $\mathcal M_0(S,G)$ given by $[c,q]\mapsto [c, -q]$, which clearly preserves the complex structure. On the level of harmonic $G$-bundles, the involution is obtained by inputting a particular $\theta$ (depending on the Coxeter number) into the $S^1$-action from the end of Section \ref{sec: proof of Theorem D}. Hitchin showed in \cite{Hitchin:1986vp} that the $S^1$-action preserves $\omega_G$. We deduce that the zero section, which is the set of fixed points for the involution, is totally geodesic for the pseudo-Kähler structure. In rank $2$, pushing forward via Labourie's parametrization, we get that $\mathcal T(S)$ is a totally geodesic submanifold of $\mathrm{Hit}(S,G)$. 
\end{remark}

\subsection{The anti-conjugate locus}\label{sec: anti-conjugate}
We continue to assume that $G$ is simple split real and adjoint type or $\mathrm{PSL}(2,\R)^2$. Consider the elements of the form $(c,q,\overline c, -\overline q, u)\in \mathrm{SOL}_G(S)$, where $u$ is real. As explained in Section \ref{sec: low rank examples}, elements of this type correspond to minimal immersions in the Riemannian symmetric space $G^{\C}/K_{G^{\C}}$ with particular nilpotent Higgs bundles. 

\begin{defn}
    The \textbf{anti-conjugate locus} $\mathcal{AC}(S,G)$ is the open connected component of 
\[
\{[(c,q,\overline c, -\overline q, u)]\in \mathcal{SOL}_{G}(S) \text{ on  which $hol^*{\omega_G^{{\C}}}$ is non-degenerate}\}
\]
that contains the ``Fuchsian" elements $[(c, 0, \overline c,  0, u_0)]$, with $u_0$ the real constant function from Proposition \ref{prop: existence on marginal locus}. 
\end{defn}
Using that the projection $\pi: \mathcal{SOL}_{G}(S)\to \mathcal M(S,G)\times \mathcal M(\overline S,G)$ is a local biholomorphism, it follows that $\mathcal{AC}(S,G)$ is a smooth submanifold of $\mathcal{SOL}_{G}(S)$ that projects locally diffeomorphically to the locus $\{[c,q],[\overline c, -\overline q]\}\subset \mathcal M(S,G)\times\mathcal M(\overline S,G)$. Using the projection $[(c, q,\overline c, -\overline q, u)]\mapsto [(c, q)]$, we endow $\mathcal{AC}(S,G)$ with the pull-back complex structure from $\mathcal{M}(S,G).$ We denote the almost complex structure by $\widehat{\mathcal{J}_G}$.

\begin{prop}\label{prop: real sol}
    If $(c,q,\overline c, -\overline q, u)\in \mathrm{SOL}^*_G(S)$ projects to $\mathcal{AC}(S,G)$, $u$ is a real-valued function. 
\end{prop}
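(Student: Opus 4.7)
My plan is to show that the subset
\[
X := \{[\sigma] \in \mathcal{AC}(S,G) : \sigma = (c, q, \overline c, -\overline q, u) \text{ with } u \text{ real-valued}\}
\]
is non-empty, open, and closed in $\mathcal{AC}(S,G)$; by connectedness of $\mathcal{AC}(S,G)$, this will force $X = \mathcal{AC}(S,G)$, which is precisely the statement. First I would check that ``$u$ is real-valued'' is independent of the choice of representative: the $\mathrm{Diff}_0(S)$-action preserves the anti-conjugate subvariety $\{c_1 = c_2,\ \overline{q_2} = -\overline{q_1}\}$ and pulls back real functions to real functions, so the condition descends to the quotient. Non-emptiness is then immediate, since every Fuchsian class $[(c, 0, \overline c, 0, u_0)]$ lies in $X$ by construction.

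The crucial observation for openness is the algebraic fact that, on the anti-conjugate subvariety, the operator $\mathscr T_G$ has real coefficients. When $c_1 = c_2 = c$, the Bers metric reduces to the complexification of the hyperbolic metric on $(S, c)$, so $\Delta_h$ is a real elliptic operator; when in addition $\overline{q_2} = -\overline q$, the source term $q_1 \overline{q_2}/h^d = -|q|^2/h^d$ is real-valued; and all remaining structural constants $a_{\alpha\beta}$, $r_\beta$, $n_\gamma$, $\nu(\alpha,\beta)$ appearing in \eqref{eq: Theorem C} (equivalently, in \eqref{eq: the F}) are real by construction. Consequently, complex conjugation will send solutions to solutions, and a real $u$ makes the equation a genuine real system.

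To prove openness at $[\sigma_0] \in X$ with lift $\sigma_0 = (c_0, q_0, \overline{c_0}, -\overline{q_0}, u_0)$, I will exploit that the linearization $\partial_u \mathscr T_G$ at $\sigma_0$ is, by the $\mathrm{SOL}_G^*$ hypothesis, a complex Banach isomorphism between the relevant Sobolev spaces; since it is defined by a real operator, it also restricts to an isomorphism between the corresponding real subspaces. Applying the real Banach implicit function theorem to the restriction of $\mathscr T_G$ to the anti-conjugate subvariety will produce a smooth family $(c, q) \mapsto u^{\mathbb R}(c, q)$ of real solutions extending $u_0$, and uniqueness in the complex implicit function theorem applied to the same linearization will force $u^{\mathbb R}$ to coincide with the complex family parametrizing $\mathcal{AC}(S,G)$ near $[\sigma_0]$. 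So a full neighborhood of $[\sigma_0]$ will sit inside $X$. Closedness is the easiest of the three: since the projection $\mathcal{SOL}_G(S) \to \mathcal{M}(S, G) \times \mathcal{M}(\overline S, G)$ is a local biholomorphism, convergence in $\mathcal{AC}(S,G)$ can be realized by lifts converging in the Fr\'echet topology, and the limit of real $u_n$'s is real.

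The hard part will be the careful bookkeeping needed to verify that reality of the coefficients in \eqref{eq: the F} truly persists under the anti-conjugate substitution, including the sign behavior of the $U_{-\delta}$ factor $\prod_\gamma(-U_\gamma)^{-n_\gamma}$ when the $U_\gamma$ are real, and the honest matching of the real and complex branches from the two implicit function theorems. These are essentially mechanical issues, but they must be handled cleanly for the openness argument to be unambiguous.
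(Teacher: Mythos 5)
Your proposal is correct and follows essentially the same route as the paper: the paper also runs the connectedness argument (non-empty via the Fuchsian elements, closed because $C^\infty(S,\R^l)\subset C^\infty(S,\C^l)$ is closed, open via the reality of the coefficients of $\mathscr T_G$ on the anti-conjugate locus together with local uniqueness of solutions coming from the projection to $\mathrm M(S,G)\times\mathrm M(\overline S,G)$ being a local diffeomorphism). The only difference is in the openness step: instead of matching a real implicit-function-theorem branch against the complex one, the paper observes directly that $\mathscr T_{G}(c, q,\overline c, -\overline q, \overline{u})=\overline{\mathscr T_{G}(c, q,\overline c, -\overline q, u)}$, so $\overline u$ is another solution for the same data, and local uniqueness forces $\overline u=u$ near a real solution --- which sidesteps exactly the real/complex bookkeeping you flag as the delicate part.
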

\begin{proof}
Let $\mathrm{AC}(S,G)\subset \mathrm{SOL}_G^*(S)$ be the (connected) subset of elements of the form $(c,q,\overline{c},-\overline{q},u)$ projecting to $\mathcal{AC}(S,G),$ and let $\mathrm{AC}_{\R}(S,G)\subset \mathrm{AC}(S,G)$ be the subset consisting of elements such that $u$ is real. We prove that $\mathrm{AC}(S,G)=\mathrm{AC}_{\R}(S,G)$. Since $\mathrm{AC}_{\R}(S,G)$ contains the Fuchsian elements, it is non-empty. Since $C^\infty(S,\R^l)\subset C^\infty(S,\C^l)$ is closed, $\mathrm{AC}_{\R}(S,G)$ is closed in $\mathrm{AC}(S,G)$. To see that $\mathrm{AC}_{\R}(S,G)$ is open, observe that for any $(c,q,\overline{c},-\overline{q},u),$ $\mathscr{T}_G$ has real coefficients, and hence $\mathscr T_{G}(c, q,\overline c, -\overline q, \overline{u})=\overline{\mathscr T_{G}(c, q,\overline c, -\overline q, u)}$. Since the projection from $\mathrm{SOL}^*_G(S)\to \mathrm{M}(S,G)\times \mathrm{M}(\overline{S},G)$ (recall the notation from Section 6) is a local diffeomorphism, we deduce that, in any neighbourhood of a real solution in $\mathrm{AC}(S,G)$, the solutions are all real, i.e., $\mathrm{AC}_{\R}(S,G)$ is indeed open.
\end{proof}

Not only is $\mathcal{AC}(S,G)$ a complex manifold, but it can also be interpreted as a real symplectic submanifold of $\mathcal{SOL}_G(S).$
Firstly, there is an open neighbourhood $V$ of the Fuchsian locus inside $\mathcal{SOL}_G(S)$ in which we can define an anti-holomorphic involution $\widehat{\tau}:V\to V$ by $[(c_1, q_1, \overline {c_2}, \overline {q_2}, u)]\mapsto [(c_2, -q_2, \overline {c_1}, -\overline {q_1}, \overline u)]$ (we can't take $V=\mathcal{SOL}_G(S)$ without doing some work, since we have to worry about the holonomies landing in the smooth locus). Then it is clear that $\widehat{\tau}$ is the identity on $\mathcal{AC}(S, G)$, and hence $\mathcal{AC}(S, G)$ is a totally real submanifold. For the symplectic form, denote by $\widehat{\omega}_G$ the restriction of the pullback by the holonomy map, $\textrm{hol}^*\omega_G^{\C}$, to $\mathcal{AC}(S,G)$.
\begin{prop}
    $\widehat{\omega}_G$ defines a real symplectic form on $\mathcal{AC}(S, G)$.
\end{prop}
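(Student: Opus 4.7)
The plan is to establish three properties of $\widehat{\omega}_G$: closedness, non-degeneracy, and real-valuedness. Closedness follows immediately from $d\omega_G^{\C}=0$ combined with the fact that pullback along $\mathrm{hol}$ and restriction to the submanifold $\mathcal{AC}(S,G)$ both preserve $d$-closedness. Non-degeneracy is built into the very definition of $\mathcal{AC}(S,G)$. The only real content is real-valuedness, for which I would use the anti-holomorphic involution $\widehat{\tau}$ of the neighborhood $V\subset\mathcal{SOL}_G(S)$ of the Fuchsian locus, whose fixed set coincides with $\mathcal{AC}(S,G)\cap V$.

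The crucial identity to prove is
$$
\widehat{\tau}^*(\mathrm{hol}^*\omega_G^{\C}) = \overline{\mathrm{hol}^*\omega_G^{\C}} \qquad \text{on } V.
$$
Once this is shown, for any $\sigma\in\mathcal{AC}(S,G)\cap V$ and any pair $v,w$ of tangent vectors in $T_\sigma\mathcal{AC}(S,G)$ — which are fixed by the $\R$-linear antilinear map $d\widehat{\tau}_\sigma$ — we immediately obtain $\widehat{\omega}_G(v,w)=\overline{\widehat{\omega}_G(v,w)}$, so $\widehat{\omega}_G$ is real on $\mathcal{AC}(S,G)\cap V$. To prove the identity itself, I would compare the flat connections $A_\sigma$ and $A_{\widehat{\tau}(\sigma)}$ using the explicit formula \eqref{eq: flat connection SOL}: complex conjugating the underlying local coordinates $z,\overline w$, the Bers metric $\vb*h(c_1,\overline{c_2})$, and the function $u$ turns the formula for $A_\sigma$ (in a $c_1$-holomorphic trivialization) into that for $A_{\widehat{\tau}(\sigma)}$ (in a $c_2$-holomorphic trivialization), up to an accompanying holomorphic gauge transformation valued in the Cartan subgroup $H^{\C}\subset G^{\C}$ that absorbs the sign flips $q_1\mapsto -q_2$, $\overline{q_2}\mapsto -\overline{q_1}$. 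The residual transformation on $\g^{\C}$-valued connection forms is then the $\C$-antilinear Lie algebra involution $\tau:\g^{\C}\to\g^{\C}$ fixing the split real form $\g$. Since $\nu$ is the $\C$-bilinear extension of a real form on $\g$, we have $\nu(\tau X,\tau Y)=\overline{\nu(X,Y)}$, and combined with the fact that the integration pairing defining $\omega_G^{\C}$ is against a real volume form, this delivers the identity.

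Finally, I would extend real-valuedness from the open subset $\mathcal{AC}(S,G)\cap V$ to all of $\mathcal{AC}(S,G)$ by real-analytic continuation: $\mathcal{AC}(S,G)$ carries a real-analytic structure inherited via the local diffeomorphism $\pi$ to $\mathcal M(S,G)\times\mathcal M(\overline S,G)$, and $\widehat{\omega}_G$ is real analytic as the restriction of the holomorphic form $\mathrm{hol}^*\omega_G^{\C}$ to this submanifold. Hence the real-analytic function $\mathrm{Im}(\widehat{\omega}_G)$, vanishing on the non-empty open subset $\mathcal{AC}(S,G)\cap V$ of the connected manifold $\mathcal{AC}(S,G)$, must vanish identically. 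The main obstacle in this plan is the explicit verification of $A_{\widehat{\tau}(\sigma)}\sim\tau(\overline{A_\sigma})$ up to gauge: one must carefully track how the gauge change between the $c_1$- and $c_2$-holomorphic trivializations interacts with the Cartan involution $\mathcal I$ appearing in \eqref{eq: flat connection SOL} and with the sign corrections built into $\widehat{\tau}$.
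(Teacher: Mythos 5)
Your overall framing matches the paper's: closedness and non-degeneracy are automatic, realness reduces to the identity $\widehat{\tau}^*(\mathrm{hol}^*\omega_G^{\C})=\overline{\mathrm{hol}^*\omega_G^{\C}}$ on $V$, and conjugating all the data corresponds to the anti-holomorphic involution defining $G\subset G^{\C}$, which turns $\mathrm{hol}^*\omega_G^{\C}$ into its conjugate by bilinearity. But the step you flag as the main obstacle is where the argument genuinely breaks: there is no holomorphic gauge transformation valued in the Cartan subgroup $H^{\C}$ (nor in all of $G^{\C}$) that absorbs the residual sign flips. After the conjugation step you are comparing the connections for $(c_2,q_2,\overline{c_1},\overline{q_1},\overline u)$ and $(c_2,-q_2,\overline{c_1},-\overline{q_1},\overline u)$ (the Toda solution is unchanged since only $q_1\overline{q_2}$ enters the equation); these have the same Cartan part $A^{\mathcal{I}}$ and the same coefficients on the simple root vectors $x_{\pm\alpha}$, and differ only in the sign of the $x_{\pm\delta}$-terms. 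Any $g\in H^{\C}$ with $\mathrm{Ad}_g$ fixing every $x_{\pm\alpha}$, $\alpha\in\Pi$, satisfies $\chi_\alpha(g)=1$ for all simple roots, hence $\chi_\delta(g)=\prod_{\alpha\in\Pi}\chi_\alpha(g)^{n_\alpha}=1$, so it cannot negate $x_{\pm\delta}$ (for $G$ of adjoint type such a $g$ is forced to be the identity). Worse, no gauge transformation of any kind relates the two connections: their holonomies are the distinct points $\mathcal{L}_G^{\C}([c_2,q_2],[\overline{c_1},\overline{q_1}])$ and $\mathcal{L}_G^{\C}([c_2,-q_2],[\overline{c_1},-\overline{q_1}])$ (on the real locus these are the Hitchin representations attached to $q$ and $-q$, which differ whenever $q\neq 0$). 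So the claimed equivalence $A_{\widehat{\tau}(\sigma)}\sim\tau(\overline{A_\sigma})$ up to gauge is false, and realness does not follow from your mechanism.

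The sign flip must be handled at the level of the Goldman pairing rather than of the connections, which is how the paper proceeds: factor $\widehat{\tau}$ as the conjugation map followed by $(c_1,q_1,\overline{c_2},\overline{q_2},u)\mapsto(c_1,-q_1,\overline{c_2},-\overline{q_2},u)$, and show the latter preserves $\omega_G^{\C}$ even though it moves the holonomy. Writing $A=A^0+A^1$, where $A^0$ is valued in $\mathfrak{h}^{\C}$ plus the simple root spaces and $A^1$ is the $\pm\delta$-part, the sign flip sends a variation $\dot A=\dot A^0+\dot A^1$ to $\dot A^0-\dot A^1$; since the two summands are Killing-orthogonal, the cross terms in the pairing vanish and $\omega_G^{\C}(\dot A^0-\dot A^1,\dot B^0-\dot B^1)=\omega_G^{\C}(\dot A,\dot B)$. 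Combined with your (correct) conjugation step, this yields the needed identity on $V$; your remaining points, including the analytic-continuation argument spreading realness over all of $\mathcal{AC}(S,G)$, are fine.
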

\begin{proof}
It suffices to prove that, on $V\subset \mathrm{SOL}_G^*$, $\widehat{\tau}^*\mathrm{hol}^*\omega_G^{\C}$ is the complex conjugate $\overline{\mathrm{hol}^*\omega_G^{\C}}$. We can avoid making a technical calculation by observing that $(c_1, q_1, \overline {c_2}, \overline {q_2}, u)\mapsto (c_2,  -q_2, \overline {c_1}, -\overline {q_1}, \overline u)$ can be seen as the composition of two maps, $(c_1, q_1, \overline {c_2}, \overline {q_2}, u)\mapsto (c_2, q_2, \overline {c_1}, \overline {q_1}, \overline u)$ and $(c_1, q_1, \overline {c_2}, \overline {q_2}, u)\mapsto (c_1,  -q_1, \overline {c_2}, -\overline {q_2}, u)$. As explained at the end of Section \ref{sec: bi-Hitchin}, if we choose our frames correctly, then the first operation conjugates the expressions describing the flat connections (on the level of holonomy, this operation is the same as applying the anti-holomorphic involution defining $G\subset G^{\C}$). As a consequence, since the Goldman form is complex bilinear, it takes $\mathrm{hol}^*\omega_G^{\C}$ to $\overline{\mathrm{hol}^*\omega_G^{\C}}$. So we only have to argue that $(c_1, q_1, \overline {c_2}, \overline {q_2}, u)\mapsto (c_1,  -q_1, \overline {c_2}, -\overline {q_2}, u)$ preserves the complex Goldman form and its conjugate. This is not difficult: collecting terms on the usual expression, $\mathcal{L}_G^{\C}([c_1,q_1],[\overline{c_2},\overline{q_2}])$ is represented by the flat connection on the usual bundle $P_{G^{\C}}$ that splits into Killing orthogonal components as $$A=(A^{\mathcal{I}}+\widetilde{e}-I(\widetilde{e}))+(q_1x_\delta - \mathcal{I}(\overline{q_2}x_\delta))=:A^0 + A^1.$$ Variations of course have the same splitting. Negating $q_1$ and $q_2,$ $\mathcal{L}_G^{\C}([c_1,-q_1],[\overline{c_2},-\overline{q_2}])$ is represented by $A^*=A^0-A^1,$ and variations of $A^*$ are obtained by taking variations of $A$ and negating the $A^1$-component. Now, take two variations $\dot{A}=\dot{A}^0+\dot{A}^1$ and $\dot{B}=\dot{B}^0+\dot{B}^1,$ and $(c_1, q_1, \overline {c_2}, \overline {q_2}, u)\mapsto (c_1,  -q_1, \overline {c_2}, -\overline {q_2}, u)$, so that the variations become $\dot{A}^*=\dot{A}^0-\dot{A}^1$ and $\dot{B}^*=\dot{B}^0-\dot{B}^1$. We then find easily that the Goldman pairings agree: $$\omega_G^{\C}(\dot{A},\dot{B}) = \omega_G^{\C}(\dot{A}^0,\dot{B}^0)+ \omega_G^{\C}(\dot{A}^1,\dot{B}^1)= \omega_G^{\C}(\dot{A}^0,\dot{B}^0)+ \omega_G^{\C}(-\dot{A}^1,-\dot{B}^1) =\omega_G^{\C}(\dot{A}^*,\dot{B}^*).$$
That is, the complex Goldman form is indeed invariant, and we're done.
\end{proof}
With $\mathcal{AC}(S,G)$, $\widehat{\mathcal{J}_G}$, and $\widehat{\omega}_G$ all defined, we can now prove Theorem E.
\begin{proof}[Proof of Theorem E]
The compatibility is similar to that of Theorem A'. Given a point in $\mathcal{AC}(S,G),$ use the projection $\pi$ to map a neighbourhood $W$ of that point diffeomorphically into $\mathcal{M}(S,G)\times \mathcal{M}(\overline{S},G)$. The pushforward $\pi_*\widehat{\mathcal{J}_G}$ complexifies to the usual complex structure on  $\mathcal{M}(S,G)\times \mathcal{M}(\overline{S},G)$, and $\pi_*\widehat{\omega}_G$ complexifies to $(\mathcal{L}_G^{\C})^*\omega_G^{\C}.$ By Proposition \ref{prop:lagrangians}, $\pi_*\widehat{\omega}_G$ is compatible with $\pi_*\widehat{\mathcal{J}_G}$, and we get the result by pulling back to $W$. 

To show that $(\mathcal{AC}(S,G),\widehat{\mathcal{J}_G}, \widehat{\omega}_G)$ is K{\"a}hler, we need to show that the pseudo-Riemannian metric $\widehat{\omega}_G(\cdot, \widehat{\mathcal{J}_G}\cdot)$ is in fact Riemannian, i.e., positive definite. As in the proof of Theorem A', we're only required to compute the signature at a Fuchsian point $(c,0, \overline{c},0,u_0).$ Pushing forward via $\pi$ to $\mathcal{M}(S,G)\times \mathcal{M}(\overline{S},G)$, we already know that this metric is positive definite on the zero section, and the complex Goldman pairings computed during the proof of Theorem A' show again that the zero section is orthogonal to the fiber. We're left to show that the metric is positive definite on the fiber. For this, again working in $\mathcal{M}(S,G)\times \mathcal{M}(\overline{S},G)$ and using the notations from the proof of Theorem A', a variation $\dot{q}:=(0,\dot{q})$ based at $([c],0)$ corresponds to the variation $\dot{q}-\dot{\overline{q}}:=(0,\dot{q},0,-\dot{\overline{q}})$ in $\mathcal{M}(S,G)\times \mathcal{M}(\overline{S},G)$. We compute $$\pi_*\widehat{\omega}_G(\dot{q},\widehat{\mathcal{J}_G}(\dot{q})) = (\mathcal{L}_G^{\C})^*\omega_G^{\C}(\dot{q}-\dot{\overline{q}},i\dot{q}+i\dot{\overline{q}})=2i(\mathcal{L}_G^{\C})^*\omega_G^{\C}(\dot{q},\dot{\overline{q}}).$$ Having shown in the proof of Theorem A' that the right most quantity is non-negative, with equality if and only if $\dot{q}=0$, we thus have the positive definiteness.

   Finally, since the mapping class group action preserves both $\widehat{\omega}_G$ and $\widehat{\mathcal{J}_G}$, it preserves the associated Kähler metric.
\end{proof}
In the rest of this section, we specialize to $G=\mathrm{PSL}(2,\R)^2$ and $G=\mathrm{PSL}(3,\R).$

\subsubsection{$\mathrm{PSL}(2,\R)^2$}
 The almost-Fuchsian space $\mathcal{AF}(S)$ is the space of $\pi_1(S)$-equivariant minimal immersions $\sigma_0: \widetilde S\to \mathbb H^3$ with principal curvatures within $(-1,1)$, considered up to isotopy. The holonomy map defines an embedding of $\mathcal{AF}(S)$ inside $\chi(\pi_1(S), \PSL(2,\C))$, and the image is an open subset of the quasi-Fuchsian locus $\mathcal{QF}(S)$ (see \cite{Uh}). The equation (\ref{eq: Theorem C}) returns the Gauss equation for prescribing a minimal immersion in $\mathbb{H}^3$ from $q$, and it is well known that the linearization of this equation is the stability operator for the minimal surface (see \cite{Uh}). In \cite[Theorem 3.3]{Uh}, Uhlenbeck proves that when the principle curvatures of the minimal surface are in $(-1,1),$ the minimal surface is strictly area minimizing, and it follows that the linearization of (\ref{eq: Theorem C}) is an isomorphism. Consequently, the almost Fuchsian space embeds into $\mathcal{AC}(S,G).$ Geometrically, from our work in Section \ref{sec: low rank examples}, the inclusion associates a minimal immersion with first and second fundamental forms $\mathrm{I}$ and $\mathrm{II}$ to the pair of minimal immersions with data $(\mathrm{I},\mathrm{II})$ and $(\mathrm{I},-\mathrm{II})$ respectively. 

 Donaldson defined a hyperKähler structure on $\mathcal{AF}(S)$, and one of the associated K{\"a}hler structures is given by the Goldman symplectic form and by the pull-back complex structure through the projection to the cotangent bundle of Teichm{\"u}ller space \cite{Donaldson2003MomentMI, trautwein2018infinite} (which is the same as $\mathcal M(S, \PSL(2,\R)^2)$). Thus, the inclusion of $\mathcal{AF}(S)$ into $\mathcal{AC}(S, \PSL(2,\R)^2)$ is holomorphic and, up to a constant factor, preserves the symplectic form.

\subsubsection{$\mathrm{PSL}(3,\R)$}
From Section \ref{sec: low rank examples}, we know that for $G=\mathrm{PSL}(3,\R),$ the minimal immersions to $G^{\C}/K_{G^{\C}}$ are minimal Lagrangian immersions inside the $\mathrm{PU}(2,1)$ symmetric space. Recalling the principal embedding $\iota_G:\mathrm{PSL}(2,\C)\to G^{\C}$, if $\rho:\pi_1(S)\to \mathrm{PSL}(2,\R)$ is Fuchsian, then $\iota_G\circ \rho$ in fact lands in both $\mathrm{PSL}(3,\R)$ and $\mathrm{PU}(2,1).$ Representations to $\mathrm{PU}(2,1)$ such as $\iota_G\circ \rho$ are called $\R$-Fuchsian. The holonomy map on $\mathcal{AC}(S,G)$ gives a local diffeomorphism to $\chi(\pi_1(S),\mathrm{PU}(2,1))$. We can thus equip a suitable neighbourhood $\mathcal{U}$ of the $\R$-Fuchsian representations with the complex structure of $\mathcal{AC}(S,G)$. The same complex structure was already found by Loftin-McIntosh in \cite{LofM}, and we denote the almost complex structure by $\mathcal{J}_{\mathcal{U}}.$
\begin{proof}[Proof of Corollary E]
    Pushing $\widehat{\omega}_G$ back to the character variety, it is of course just the ordinary (real) Goldman symplectic form for $\mathrm{PU}(2,1).$ Via Theorem E, we immediately deduce that $(\mathcal{U},\mathcal{J}_{\mathcal{U}},\omega_{\mathrm{PU}(2,1)})$ is a K{\"a}hler manifold and that $\mathrm{MCG}(S)$ preserves the K{\"a}hler structure. 
\end{proof}
\appendix
\section{Characterizing opers}\label{appendix}
Here, for $G=\mathrm{PSL}(3,\R),$ we place the opers arising in Theorem B in the parametrization of Beilinson-Drinfeld. 

\subsection{Rewriting opers}
Firstly, let us recall the situation from Section \ref{sec: proof of Theorem D}. Fix a split real simple Lie group $G$ of adjoint type. Given $c_1,\overline{c_2}$, for any bi-Hitchin basepoint $(q,0)$ or $(0,\overline{q}),$ Proposition \ref{prop: existence on bigger marginal locus} shows that their exists an isomorphism $\mathcal{I}$ solving \eqref{eq: bi-Hitchin flatness}, thereby yielding a complex harmonic $G$-bundle $(P_{K^{\C}},A^{\mathcal{I}},\phi_1,\overline{\phi_2}^{\mathcal{I}})$. We recall that $\mathcal{I}$ is completely determined by the Bers metrics $\hpair$, as well as a vector of constants $U_0=(U_\alpha)_{\alpha\in \Pi}$ that depend only on $G$. Here we consider the case $(q,0)$; the other case is analogous and will be dealt with eventually. We write the underlying flat connection as $A=A^{\mathcal{I}}+\phi_1+\overline{\phi_2}^{\mathcal{I}}$, and we recall that part (2) of Theorem D shows that $A$ is also the holomorphic connection of a $G^{\C}$-oper, in the notation of Section \ref{sec: proof of Theorem D}, $(P_{G^{\C}},P_B,A)$. Below, we show that for $c_1$ and $\overline{c_2}$ satisfying a certain relation, the oper can be written in an interesting way.

Firstly, recall that if we fix $c_1,$ there is the classical holomorphic Bers map $$B_{c_1}:\mathcal{C}(S)\to H^0(S,\mathcal{K}_{\overline{c_1}}^2)\simeq \C^{3g-3},$$ which descends to a biholomorphism from $\mathcal{T}(S)$ onto a bounded domain (see \cite{Bersembedding}). Next, let $h$ be the conformal hyperbolic metric for $c_1\in \mathcal{C}(S)$. By \cite[Theorem A]{ElE}, for sufficiently small $\varphi\in H^0(S,\mathcal{K}_{c_1}^2)$, $h+\varphi$ is a Bers metric, of the form $h+\varphi=\hpair$ for some $\overline{c_2}\in \mathcal C(\overline S)$. The relation between $\overline{c_2}$ and $\varphi$ is that $B_{c_1}(c_2)=-\frac{1}{2}\overline{\varphi}$ (see \cite[Theorem C]{ElE}). Let $\mathcal{I}_0$ and $\mathcal{I}$ be the isomorphisms solving \eqref{eq: bi-Hitchin flatness} for the data $(c_1,q,\overline{c_1},0)$ and $(c_1,q,\overline{c_2},0)$ respectively, obtained via Proposition \ref{prop: existence on bigger marginal locus}, with complex harmonic $G$-bundles $(P_{K^{\C}},A^{\mathcal{I}_0},\phi_1,\overline{\phi_2}^{\mathcal{I}_0})$ and $(P_{K^{\C}},A^{\mathcal{I}},\phi_1,\overline{\phi_2}^{\mathcal{I}})$ and flat connections $A_q$ and $A$ respectively. The notation $A_q$ is chosen because, by Proposition \ref{prop: BD oper}, this is the flat connection $A_{q}$ of Beilison-Drinfeld.
\begin{prop}\label{lem: indepedence of basepoint}
    For the complex harmonic $G$-bundles above, $A^{\mathcal{I}_0}=A^{\mathcal{I}}.$
\end{prop}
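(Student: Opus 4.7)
The plan is to reduce the statement to a local identity about Bers metrics and then establish that identity by a direct coordinate computation. First I would apply Proposition \ref{prop: h valued}(3) together with Proposition \ref{prop: K^c connection form} in a local trivialization that is holomorphic for $P_{G^{\C}} \to (S, c_1)$: the $\mathfrak{h}^{\C}$-valued connection form $s^*A^{\mathcal{I}}$ is determined, via its pairings with the simple roots, by the line-bundle connection forms $A^{\mathcal{I}_{-\alpha}} = \partial_J \log(\lambda U_\alpha)$, $\alpha \in \Pi$. Since $U_\alpha$ is the constant from Proposition \ref{prop: existence on marginal locus} (depending only on $G$), $\partial_J \log(\lambda U_\alpha) = \partial_J \log \lambda$, and the analogous reduction applies to $A^{\mathcal{I}_0}$ with $(J, \lambda)$ replaced by $(J_0, \lambda_0)$. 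Thus the proposition is equivalent to the local identity $\partial_J \log \lambda = \partial_{J_0} \log \lambda_0$ in a $c_1$-holomorphic chart $z$ on $S$.

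Next I would derive a pointwise relation between $\log \lambda$ and $\log \lambda_0$ from the equality $\hpair = h + \varphi$. Matching symmetric-product coefficients in the chart $z$ yields $\lambda\, d\overline{w} = \lambda_0\, d\overline{z} + \varphi\, dz$, equivalently $d\overline{w} = (\varphi/\lambda)\, dz + (\lambda_0/\lambda)\, d\overline{z}$. Applying $d$ to $\lambda\, d\overline{w} - \lambda_0\, d\overline{z} - \varphi\, dz = 0$, using $d(d\overline{w}) = 0$ and the holomorphicity $\partial_{\overline{z}} \varphi = 0$, the expected outcome after simplification is
\[ \partial_z \log \lambda - \overline{\mu}\, \partial_{\overline{z}} \log \lambda = \partial_z \log \lambda_0, \]
where $\overline{\mu} = \varphi/\lambda_0$. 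Using $w = \overline{\overline{w}}$ and the chain rule, one verifies that $\overline{\mu}$ coincides with the complex conjugate of the Beltrami form $\mu = (\partial_{\overline{z}} w)/(\partial_z w)$ of the $c_2$-holomorphic coordinate $w$.

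Finally I would combine the previous identity with the decomposition $\partial_w = (\partial_w z)(\partial_z - \overline{\mu}\, \partial_{\overline{z}})$ recalled from the proof of Proposition \ref{prop: Laplace formula} and the formula \eqref{eqn: projected forms} for $\Pi_1^J$, obtaining
\[ \partial_J \log \lambda = \frac{\partial_w \log \lambda}{\partial_w z}\, dz = \big( \partial_z \log \lambda - \overline{\mu}\, \partial_{\overline{z}} \log \lambda \big)\, dz = \partial_z \log \lambda_0 \, dz = \partial_{J_0} \log \lambda_0. \]
This yields the required equality of connection forms in the chosen trivialization, and hence $A^{\mathcal{I}} = A^{\mathcal{I}_0}$ by the reduction of the first paragraph.

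The step I expect to be the main obstacle is the derivation of the intermediate identity $\partial_z \log \lambda - \overline{\mu}\, \partial_{\overline{z}} \log \lambda = \partial_z \log \lambda_0$ of the second paragraph: the calculation is elementary but requires some care with the symmetric-product convention for complex metrics, the correct identification of $\overline{\mu}$ with $\varphi/\lambda_0$, and the use of the $c_1$-holomorphicity of $\varphi$. Once that identity is established, the remaining steps are brief applications of formulas already present in the paper.
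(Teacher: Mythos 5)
Your proposal is correct and follows essentially the same route as the paper: the same reduction via Proposition \ref{prop: h valued} and Proposition \ref{prop: K^c connection form} together with the constancy of the $U_\alpha$, and then the key identity $\partial_J\log\lambda=\partial_{J_0}\log\lambda_0$, which is exactly the paper's Lemma \ref{lem: log equalities} proved by the same device (factor out $dz$ from $h+\varphi=\hpair$, take the exterior derivative using the $c_1$-holomorphicity of $\varphi$, and translate via \eqref{eqn: projected forms}). The only difference is bookkeeping: you expand $d\lambda\wedge d\overline{w}$ in the $(dz,d\overline{z})$ frame and identify $\overline{\mu}=\varphi/\lambda_0$ explicitly, whereas the paper wedges with $dz$ and divides the resulting $2$-forms; both computations check out.
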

Before giving the proof, we make a small calculation. Let $J_0$ and $J$ be the bi-complex structures of $(c_1,\overline{c_1})$ and $(c_1,\overline{c_2})$ respectively, and in a small disk on $S$, let $\lambda_0 dz d\overline{z}$ and $\lambda dzd\overline{w}$ be the local coordinate expressions for $h$ and $h+q$ respectively.
\begin{lem}\label{lem: log equalities} $\partial_{J_0} \log \lambda_0 = \partial_{J}\log \lambda.$
\end{lem}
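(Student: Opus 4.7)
The plan is to reduce the identity, which lives in $\Omega^1_{c_1}(S)$, to a local computation in the coordinate $z$, and then exploit the holomorphicity of $\varphi$.  First I will use the formula $\partial_J f = \frac{\partial_w f}{\partial_w z}\,dz$ together with the identity $\frac{\partial_w}{\partial_w z} = \partial_z - \overline{\mu}\,\partial_{\overline z}$ (derived in the proof of Proposition~\ref{prop: Laplace formula}), where $\mu$ is the Beltrami form of $w$ with respect to $z$.  Since $\vb*h(c_1,\overline{c_2}) = h + \varphi$, matching coefficients in $\lambda\,dz\,d\overline w = \lambda_0\,dz\,d\overline z + \varphi\,dz^2$ yields $\lambda\partial_{\overline z}\overline w = \lambda_0$ and $\overline\mu = \varphi/\lambda_0$.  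In the $(c_1,\overline{c_1})$ case, $\mu = 0$ and $w = z$, so $\partial_{J_0}\log\lambda_0 = \partial_z\log\lambda_0\cdot dz$.

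Next, I substitute $\log\lambda = \log\lambda_0 - \log\partial_{\overline z}\overline w$ into the expression for $\partial_J\log\lambda$ and collect terms.  After this reduction, the lemma becomes equivalent to the single scalar identity
\[
(\partial_z - \overline{\mu}\,\partial_{\overline z})\log\partial_{\overline z}\overline w \;=\; -\overline{\mu}\,\partial_{\overline z}\log\lambda_0.
\]
For the left-hand side, I will write $\partial_{\overline z}\overline w = \overline{\partial_z w}$ and pass the operator through the conjugation, so that the identity translates into computing $(\partial_{\overline z} - \mu\partial_z)\log\partial_z w$.  Here the Beltrami equation $\partial_{\overline z} w = \mu\,\partial_z w$ enters crucially: differentiating gives $\partial_{\overline z}\partial_z w - \mu\partial_z^2 w = (\partial_z\mu)\,\partial_z w$, and so $(\partial_{\overline z} - \mu\partial_z)\log\partial_z w = \partial_z\mu$.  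Conjugating back, the left-hand side of the required identity equals $\partial_{\overline z}\overline{\mu}$.

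Finally, using $\overline{\mu} = \varphi/\lambda_0$, the identity reduces to
\[
\partial_{\overline z}\!\left(\frac{\varphi}{\lambda_0}\right) \;=\; -\frac{\varphi}{\lambda_0}\,\partial_{\overline z}\log\lambda_0,
\]
which after expanding the product rule collapses to $\partial_{\overline z}\varphi = 0$.  This of course holds because $\varphi \in H^0(S,\mathcal K_{c_1}^2)$ is $c_1$-holomorphic.  The only technical step that requires care is the bookkeeping of complex conjugations between $(\partial_z, \partial_{\overline z})$ and $(\partial_w, \partial_{\overline w})$; aside from that, the argument is a short explicit computation, and the conceptual content is exactly that the ``defect'' between $\partial_J\log\lambda$ and $\partial_{J_0}\log\lambda_0$ is precisely $\partial_{\overline z}\varphi$.
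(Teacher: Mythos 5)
Your proof is correct, and it reaches the same punchline as the paper (everything collapses to $\partial_{\overline z}\varphi_0=0$), but by a genuinely different computational route. The paper never introduces the Beltrami coefficient: it rewrites the metric identity as the $1$-form equation $\lambda_0\,d\overline z+\varphi_0\,dz=\lambda\,d\overline w$, takes the exterior derivative of both sides (holomorphicity of $\varphi_0$ enters only through $d(\varphi_0\,dz)=0$), wedges the original identity with $dz$ to get $\lambda_0\,dz\wedge d\overline z=\lambda\,dz\wedge d\overline w$, and divides the two $2$-form identities to obtain $\partial_z\log\lambda_0=\partial_w\log\lambda/\partial_w z$, which is the lemma via the projection formula \eqref{eqn: projected forms}. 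You instead expand $\partial_J=(\partial_z-\overline\mu\,\partial_{\overline z})(\cdot)\,dz$, extract from coefficient matching that $\lambda\,\partial_{\overline z}\overline w=\lambda_0$ and $\overline\mu=\varphi_0/\lambda_0$, and use the differentiated Beltrami equation to get $(\partial_z-\overline\mu\,\partial_{\overline z})\log\partial_{\overline z}\overline w=\partial_{\overline z}\overline\mu$; all of these steps check out (the conjugation bookkeeping $(\partial_z-\overline\mu\,\partial_{\overline z})\overline F=\overline{(\partial_{\overline z}-\mu\,\partial_z)F}$ is handled correctly). The paper's argument is shorter and avoids the Beltrami formalism entirely; yours is more explicit and isolates two facts of independent use, namely that the Beltrami coefficient of $\overline{c_2}$ relative to $c_1$ in this situation is exactly $\varphi_0/\lambda_0$, and the general identity $(\partial_{\overline z}-\mu\,\partial_z)\log\partial_z w=\partial_z\mu$ valid for any quasiconformal $w$, so the ``defect'' between the two sides is literally $\partial_{\overline z}\varphi_0$. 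Either write-up would serve as a proof of the lemma.
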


 \begin{proof}
     Write $\varphi=\varphi_0 dz^2$. From $\lambda_0 dzd\overline z+ \varphi_0dz^2= \lambda dz d\overline w$,
     we get
     \begin{equation}
         \label{eq: summing q to Bers metrics}
     \lambda_0 d\overline z+\varphi_0 dz= \lambda d\overline w.
         \end{equation}
     Now, writing $dz=\partial_w zdw+\partial_{\overline w} z d\overline w$, we obtain $dw=\frac 1 {\partial_wz}dz-\frac{\partial_{\overline w}z}{\partial_wz}d\overline w$. Keeping this in mind, taking the exterior derivative of both sides yields
\begin{equation}\label{eq: exterior derivative}
         \partial_z \lambda_0 dz\wedge d\overline z = \frac{\partial_w \lambda}{\partial_w z}dz\wedge d\overline w .
\end{equation}
  As well, wedging \eqref{eq: summing q to Bers metrics} with $dz$ returns
\begin{equation}\label{eq: wedging}
    \lambda_0 dz \wedge d\overline z= \lambda dz\wedge d\overline w.
\end{equation}
  Dividing \eqref{eq: exterior derivative} by \eqref{eq: wedging} gives $$\partial_z \log \lambda_0= \frac{\partial_w {\log \lambda}}{\partial_w z},$$
and by the formulas of (\ref{eqn: projected forms}), we obtain the lemma.
 \end{proof}
\begin{proof}[Proof of Proposition \ref{lem: indepedence of basepoint}]
  Let $V\subset S$, $s:V\to P_{G^{\C}}$ be a trivialization that is holomorphic for $P_{G^{\C}}\to (S,c_1).$ It suffices to show that, in the trivialization, the connection forms agree. As in the proof of Proposition \ref{prop: hol descends to mathbb SOL}, the connection forms are determined by their restrictions to $\g_{-\alpha}\otimes \mathcal{K}_{c_1}^{-1},$ $\alpha\in \Pi.$ Moreover, one only needs to show that the two connection forms agree when restricted to each of these line bundles. By Proposition \ref{prop: K^c connection form}, for each $\alpha,$ the restricted connection form of $A^{\mathcal{I}_0}$ is $\partial_J(\log(\lambda_0 U_\alpha))$, for some function $U_\alpha$, and similar for $A^{\mathcal{I}}$. In our case, each $U_\alpha$ is a constant that does not depend on $c_1,\overline{c_2},$ or the bi-Hitchin basepoint. The result thus follows from Lemma \ref{lem: log equalities}.
\end{proof}
Next, we compare $\overline{\phi_2}^{\mathcal{I}}$ and $\overline{\phi_2}^{\mathcal{I}_0}.$
\begin{lem}\label{prop: hol section added}
In a holomorphic trivialization for $P_{G^{\C}}\to (S,c_1)$, the difference $\phi_2^{\mathcal{I}}-\phi_2^{\mathcal{I}_0}$ is a section of $\Omega^1(S,\g_1\otimes \mathcal{K}_{c_1})$ given by $\tau(c_1,\varphi,G):=\sum_{\alpha\in \Pi} U_\alpha r_\alpha^{1/2} x_\alpha \otimes \varphi. $
\end{lem}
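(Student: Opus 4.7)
The plan is to compute the difference directly in local coordinates, starting from the explicit expressions for $\overline{\phi_2}^{\mathcal{I}}$ and $\overline{\phi_2}^{\mathcal{I}_0}$ derived in Section \ref{sec: proof of Theorem C}, and then verify that the resulting $1$-form has a global, coordinate-invariant description. Since the bi-Hitchin basepoint is $(q,0)$, the $\overline{q_2}$-term in $\overline{\phi_2}^{\mathcal{I}}$ drops out. For the data $(c_1, q, \overline{c_1}, 0)$ and $(c_1, q, \overline{c_2}, 0)$, Proposition \ref{prop: existence on bigger marginal locus} gives the same constant vector $U_0 = (U_\alpha)_{\alpha\in \Pi}$, since this vector depends only on $G$, and likewise the same $U_{-\delta}$.

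In a disk with holomorphic coordinates $z$ for $c_1$, and $z$ and $\overline{w}$ for $\overline{c_1}$ and $\overline{c_2}$ respectively, with Bers metrics written $h_0 = \lambda_0 dz\, d\overline{z}$ and $h = \lambda dz\, d\overline{w}$, the formulas from the proof of Theorem C give
\begin{align*}
\overline{\phi_2}^{\mathcal{I}_0} &= \sum_{\alpha\in \Pi} r_\alpha^{1/2} U_\alpha (x_\alpha \otimes dz)\otimes \lambda_0 d\overline{z},\\
\overline{\phi_2}^{\mathcal{I}} &= \sum_{\alpha\in \Pi} r_\alpha^{1/2} U_\alpha (x_\alpha \otimes dz)\otimes \lambda d\overline{w}.
\end{align*}
Thus the difference reduces to computing $\lambda d\overline{w} - \lambda_0 d\overline{z}$, and this is exactly the content of equation \eqref{eq: summing q to Bers metrics} in the proof of Lemma \ref{lem: log equalities}: writing $\varphi = \varphi_0 dz^2$, we have $\lambda d\overline{w} - \lambda_0 d\overline{z} = \varphi_0\, dz$. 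Substituting back gives
\[
\overline{\phi_2}^{\mathcal{I}} - \overline{\phi_2}^{\mathcal{I}_0} = \sum_{\alpha\in \Pi} r_\alpha^{1/2} U_\alpha \, (x_\alpha \otimes dz)\otimes (\varphi_0\, dz),
\]
which, after regrouping the tensor factors, is precisely $\sum_{\alpha\in \Pi} r_\alpha^{1/2} U_\alpha\, x_\alpha \otimes \varphi$, viewed as an element of $\Omega^1(S, \g_1\otimes \mathcal{K}_{c_1})$.

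The remaining point is to note that this local expression assembles into a globally defined section. This is essentially automatic: both $\overline{\phi_2}^{\mathcal{I}}$ and $\overline{\phi_2}^{\mathcal{I}_0}$ are globally defined sections of $\mathrm{ad}P_{G^{\C}}$-valued $1$-forms in the fixed trivialization of $P_{G^{\C}}\to (S, c_1)$, so their difference is too; meanwhile, the right-hand side $\sum r_\alpha^{1/2} U_\alpha\, x_\alpha\otimes \varphi$ is manifestly an element of $\Omega^1(S, \g_1 \otimes \mathcal{K}_{c_1})$ since $\varphi \in H^0(S, \mathcal{K}_{c_1}^2)$. The identity, holding on any coordinate disk, therefore holds globally. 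There is no real obstacle here — the ``hard work'' was already done in Lemma \ref{lem: log equalities} via \eqref{eq: summing q to Bers metrics}; this lemma simply reads off the consequence for the Higgs field piece.
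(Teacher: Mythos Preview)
Your proof is correct and follows essentially the same approach as the paper: both compute $\overline{\phi_2}^{\mathcal{I}}$ and $\overline{\phi_2}^{\mathcal{I}_0}$ explicitly in a holomorphic trivialization, then invoke the identity $\lambda\, d\overline{w} = \lambda_0\, d\overline{z} + \varphi_0\, dz$ from \eqref{eq: summing q to Bers metrics} to read off the difference. The only cosmetic distinction is that the paper rederives the local expression for $\overline{\phi_2}^{\mathcal{I}}$ from the definition $\overline{\phi_2}^{\mathcal{I}} = -\mathcal{I}^{-1}(\widetilde{e})$, whereas you quote it directly from the proof of Theorem~C.
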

\begin{proof}
Recall that $\overline{\phi_2}^{\mathcal{I}_0}=-{\mathcal{I}_0}^{-1}(\psi_2)$, where $\psi_2=\widetilde{e},$ and similar for $\overline{\phi_2}^{\mathcal{I}}.$ Decomposing $\widetilde{e}$, $$(\mathcal{I})^{-1}(\psi_2) = \sum_{\alpha\in \Pi} (\mathcal{I})^{-1}(r_\alpha^{1/2}x_{-\alpha}\otimes d\overline{w}^{-1})\otimes d\overline{w} =  -\sum_{\alpha\in \Pi} (U_\alpha r_\alpha^{1/2} x_\alpha \otimes dz)\otimes (\lambda d\overline{w}).$$ As in the proof of Lemma \ref{lem: log equalities}, write $\varphi=\varphi_0 dz^2$, so that $\lambda d\overline{w}=\lambda_0 d\overline{z}+\varphi_0 dz$. Then,  $$(\mathcal{I})^{-1}(\psi_2)=-\sum_{\alpha\in \Pi} (U_\alpha r_\alpha^{1/2} x_\alpha \otimes dz)\otimes (\lambda_0 d\overline{z}+\varphi_0 dz) = \mathcal{I}_0^{-1}(\psi_2) -  \sum_{\alpha\in \Pi} (U_\alpha r_\alpha^{1/2} x_\alpha \otimes dz)\otimes \varphi_0 dz.$$
We subtract the left hand side by the first term on the right hand side to get the statement.
\end{proof}
Stepping back, Proposition \ref{lem: indepedence of basepoint} and Lemma \ref{prop: hol section added} together show that 
\begin{equation}\label{eq: connection relation}
    A=A_0+\tau(c_1,\varphi,G)=A_{q}+\tau(c_1,\varphi,G).
\end{equation}
Thus, the flat connection for our oper $(P_{G^{\C}},P_B,A)$ differs from that of Beilinson-Drinfeld by an adjoint bundle-valued form. It's also worthwhile to note that the induced holomorphic structures on the adjoint bundle are the same, since they are determined by the $(0,1)$ components of the affine connections induced by $A$ and $A_{q}$, which coincide. 

 If $U_\alpha=U_\beta=: U$ for all $\alpha,\beta\in \Pi$, then $\tau(c_1,\varphi,G)=u\varphi e_1.$ Thus, if $q=(q_1,\dots, q_l)$, setting $q'=(q_1+u\varphi,q_2,\dots, q_l),$ we would have $A=A_{q'}$, i.e., $(P_{G^{\C}},P_B,A)$ would be one of Beilinson-Drinfeld's opers. The condition that all $U_\alpha$'s are equal holds only for $\mathrm{PSL}(2,\R)$ and for $\mathrm{PSL}(3,\R)$, and in both of these cases, $U=1$ (recall we derived the equations \eqref{eq: rank 1 case} and \eqref{eq: sl(3) equation}). 
 
 \subsection{The case $G=\mathrm{PSL}(3,\R)$}
 For $G=\mathrm{PSL}(3,\R)$, the observations above lead us to characterize the opers appearing in Theorem B.
\begin{thm}\label{thm: which opers}
    Set $G=\mathrm{PSL}(3,\R)$. Let $p=([c_1,q_1],[\overline{c_2},0])\in \mathcal{M}(S,G)\times \mathcal{T}(\overline{S})$ and let $B_{c_1}(c_2)=\overline{\varphi}\in H^0(S,\mathcal{K}_{\overline{c}}^2)$. The map $\mathcal{L}_{G}^{\C}$ from Theorem B takes $p$ to the holonomy of Beilinson-rinfeld's $G^{\C}$-oper over $(S,c_1)$ associated with $(-2\overline{\varphi},q_1)\in H(c_1,G)$. The analogous result holds for points in $\mathcal{T}(S)\times \mathcal{M}(\overline{S},G).$
 \end{thm}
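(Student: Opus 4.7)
My plan is to carry this out in two steps: a direct identification near the diagonal using the computations already assembled in the appendix, followed by an analytic continuation to the rest of the domain.

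For the first step, I would fix $c_1$ and work on the open subset of $\mathcal{C}(\overline S)$ where the decomposition $h + \varphi = \hpair$ holds for some $\varphi \in H^0(S, \mathcal{K}_{c_1}^2)$. Proposition \ref{lem: indepedence of basepoint} combined with Lemma \ref{prop: hol section added} gives the identity (\ref{eq: connection relation}),
\[
A = A_q + \tau(c_1, \varphi, G),
\]
with $A_q$ Beilinson-Drinfeld's oper on $(S, c_1)$ having parameter $q = (0, q_1) \in H(c_1, G)$. For $G = \mathrm{PSL}(3, \R)$, the complex affine Toda system (\ref{eq: sl(3) equation}) reduces to a single equation whose unique constant solution from Proposition \ref{prop: existence on marginal locus} is $U \equiv 1$. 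Since the Hitchin basis vector satisfies $e_1 = e = \sum_{\alpha\in\Pi} r_\alpha^{1/2} x_\alpha$, this forces $\tau(c_1, \varphi, G) = e_1 \otimes \varphi$, whence $A = A_{(\varphi, q_1)}$. The identity $B_{c_1}(c_2) = -\frac{1}{2}\overline{\varphi}$ from \cite{ElE} then rewrites $(\varphi, q_1)$ in the form $(-2\overline{\varphi}, q_1)$ asserted in the theorem.

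For the second step, I would observe that both sides of the claimed equality define holomorphic maps to $\chi^{\mathrm{an}}(\pi_1(S), G^{\C})$ on the interior of their common domain inside $\mathcal M(S,G) \times \mathcal T(\overline S)$: the map $\mathcal{L}_G^{\C}$ is holomorphic by Theorem B', while the composition of the Bers embedding with Beilinson-Drinfeld's section is holomorphic by construction. The first step shows that the two coincide on a neighborhood of the diagonal; since $\mathrm{int}(\Omega_G') \cap (\mathcal M(S,G) \times \mathcal T(\overline S))$ is connected by the structural assertions of Theorem B', the identity principle for holomorphic maps extends the equality to the entire interior, and the continuity of $\mathcal{L}_G^{\C}$ handles the remaining points. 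The symmetric case $\mathcal T(S) \times \mathcal M(\overline S, G)$ follows by applying the same argument to bi-Hitchin basepoints of the form $(0, \overline q)$, using the mirror construction at the end of Section \ref{sec: bi-Hitchin}.

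The delicate point is confined to Step 1, and it is only tractable for $\mathrm{PSL}(3, \R)$ because there the constants $U_\alpha$ are all equal to $1$; hence $\tau(c_1, \varphi, G)$ lies in the same summand of the adjoint bundle as the quadratic Hitchin basis vector and realizes a single coordinate shift in the Beilinson-Drinfeld parametrization. In higher rank the $U_\alpha$ differ, so $\tau$ would spread across several summands and fail to be a simple shift of the Beilinson-Drinfeld section, which is exactly why the appendix hedges on general $G$ and Theorem~\ref{thm: which opers} is stated only for $\mathrm{PSL}(3, \R)$.
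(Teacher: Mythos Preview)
Your proposal is correct and follows essentially the same strategy as the paper: establish the identity directly on the locus where $\hpair = h + \varphi$ via Proposition~\ref{lem: indepedence of basepoint}, Lemma~\ref{prop: hol section added}, and the observation that $U_\alpha \equiv 1$ forces $\tau(c_1,\varphi,G) = \varphi\, e_1$, then propagate by analytic continuation. The only packaging difference is that the paper fixes $[c_1,q_1]$ and analytically continues in $\mathcal T(\overline S)$ alone (using that, for $\mathrm{PSL}(3,\R)$, $\mathcal L_G^{\C}$ is holomorphic on a full neighbourhood of $\mathcal M(S,G)\times\mathcal T(\overline S)$), whereas you continue in the product and close off the residual set by density and continuity; both work, with the paper's version avoiding the appeal to $\mathrm{int}(\Omega_G')$.
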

As alluded to above, a version of Theorem \ref{thm: which opers} could probably be done for any $G$: one just has to locate $(P_{G^{\C}},P_B,A_{q}+\tau(c_1,\varphi,G))$ in Beilinson-Drinfeld's parametrization. To get the point across, it is enough to treat just $G=\mathrm{PSL}(3,\R).$

In fact, we believe that one could go further: one should be able to build on the proof of Theorem \ref{thm: which opers} in order to place all opers from Theorem D in Beilinson-Drinfeld's parametrization. Theorem \ref{thm: which opers} has a quick proof, but attacking more general cases would require one to prove technical holomorphicity results and to use complex Lie derivatives as in Section \ref{sec: complex lie derivatvies}. For the sake of keeping things more brief, we do not pursue this here.

From now on, set $G=\mathrm{PSL}(3,\R).$ Recall that, for this $G$, $\mathcal{L}_{G}^{\C}$ extends holomorphically to a neighbourhood of all of $\mathcal{M}(S,G)\times \mathcal{T}(\overline{S})$ and $\mathcal{T}(S)\times \mathcal{M}(\overline{S},G)$. In the proof below, we refer back to the objects and notations in Section \ref{sec: thm B}.
\begin{proof}[Proof of Theorem \ref{thm: which opers}]
We first treat the case of points in $\mathcal{M}(S,G)\times \mathcal{T}(\overline{S})$. Fix $[c_1,q_1]\in \mathcal{M}(S,G)$ and let $\iota_{[c_1,q_1]}: \mathcal{T}(\overline{S})\to \mathcal{M}(S,G)\times \mathcal{T}(\overline{S})$ be the holomorphic inclusion $[\overline{c_2}]\mapsto ([c_1,q_1],[\overline{c_2},0])$. Define the holomorphic map $F_1: \mathcal{T}(\overline{S})\to \chi^{\mathrm{an}}(\pi_1(S),G^{\C})$ by $F_1=\mathcal{L}_{G}^{\C}\circ \iota_{[c_1,q_1]}$.
Consider the map from $\mathcal{C}(\overline{S})$ to $\chi^{\mathrm{an}}(\pi_1(S),G^{\C})$ that associates $\overline{c_2}\in \mathcal{C}(S)$ to the holonomy of Beilinson-Drinfeld's oper associated with $(-2\overline{B_{c_1}(c_2)},q_1)\in H(c_1,G)$. As the Bers map is invariant under the $\mathrm{Diff}_0(S)$ action, our map on $\mathcal{C}(\overline{S})$ descends to a map $F_2:\mathcal{T}(\overline{S})\to \chi^{\mathrm{an}}(\pi_1(S),G^{\C})$. Since the Bers embedding and Beilison-Drinfeld's parametrization are both holomorphic (see \cite{BD}), $F_2$ is holomorphic.

Let $h$ be the conformal hyperbolic metric for the representative $c_1$ of $[c_1]$. By the relation \eqref{eq: connection relation}, $F_1$ and $F_2$ agree on every point $[\overline{c_2}]\in \mathcal{T}(\overline{S})$ that admits a representatives $c_2\in \mathcal{C}(\overline{S})$ such that $\hpair=h+\varphi$ for some small $\varphi\in H^0(S,\mathcal{K}_{c_1}^2).$ From \cite[Theorem A]{ElE}, such points $[\overline{c_2}]\in \mathcal{T}(\overline{S})$ fill up an open neighbourhood of $[\overline{c_1}]$ in $\mathcal{T}(\overline{S}).$ Thus, $F_1$ and $F_2$ agree on an open subset. Since the two maps are holomorphic, they agree everywhere. This establishes the case of points in $\mathcal{M}(S,G)\times \mathcal{T}(\overline{S})$. 

For the the other case, the argument is the same: now we define $\iota_{[\overline{c_2},\overline{q_2}]}:\mathcal{T}(S)\to \mathcal{T}(S)\times \mathcal{T}(S)\times \mathcal{M}(\overline{S},G)$ by $[c_1]\mapsto ([c_1,0],[\overline{c_2},\overline{q_2}])$, and we replace $F_1$ with the map $\mathcal{L}_{G}^{\C}\circ \iota_{[\overline{c_2},\overline{q_2}]}.$ $F_2$ is replaced with the map that associated $[c_1]$ to Beilison-Drinfeld's oper over $(S,\overline{c_2})$ with Hitchin basepoint $(-2\overline{B_{\overline{c_2}}(\overline{c_1}}),\overline{q_2}).$ Applying the complex conjugation trick from the end of Section \ref{sec: bi-Hitchin}, it follows from (\ref{eq: connection relation}) that the two maps agree on points $[c_1]$ sufficiently close to $[\overline{c_2}]$, and we can again conclude via an analytic continuation argument.
\end{proof}

\printbibliography

@article {Tsu,
    AUTHOR = {Tsuboi, Takashi},
     TITLE = {On the group of real analytic diffeomorphisms},
   JOURNAL = {Ann. Sci. \'{E}c. Norm. Sup\'{e}r. (4)},
  FJOURNAL = {Annales Scientifiques de l'\'{E}cole Normale Sup\'{e}rieure.
              Quatri\`eme S\'{e}rie},
    VOLUME = {42},
      YEAR = {2009},
    NUMBER = {4},
     PAGES = {601--651},
      ISSN = {0012-9593,1873-2151},
   MRCLASS = {58D05 (57R50 58A07)},
  MRNUMBER = {2568877},
MRREVIEWER = {Stefan\ Haller},
       DOI = {10.24033/asens.2104},
       URL = {https://doi.org/10.24033/asens.2104},
}

@article {Bhd,
    AUTHOR = {Bers, Lipman},
     TITLE = {Holomorphic differentials as functions of moduli},
   JOURNAL = {Bull. Amer. Math. Soc.},
  FJOURNAL = {Bulletin of the American Mathematical Society},
    VOLUME = {67},
      YEAR = {1961},
     PAGES = {206--210},
      ISSN = {0002-9904},
   MRCLASS = {30.46},
  MRNUMBER = {122989},
MRREVIEWER = {L.\ Ahlfors},
       DOI = {10.1090/S0002-9904-1961-10569-7},
       URL = {https://doi.org/10.1090/S0002-9904-1961-10569-7},
}

@misc{LouS,
      title={Bi-Lagrangian structures and Teichm\"uller theory}, 
      author={Brice Loustau and Andrew Sanders},
      year={2020},
      eprint={1708.09145},
      archivePrefix={arXiv},
      primaryClass={id='math.DG'}
}

@article {Lof,
    AUTHOR = {Loftin, John C.},
     TITLE = {Affine spheres and convex {$\Bbb{RP}^n$}-manifolds},
   JOURNAL = {Amer. J. Math.},
  FJOURNAL = {American Journal of Mathematics},
    VOLUME = {123},
      YEAR = {2001},
    NUMBER = {2},
     PAGES = {255--274},
      ISSN = {0002-9327,1080-6377},
   MRCLASS = {53A15 (53C05 57N16)},
  MRNUMBER = {1828223},
MRREVIEWER = {William\ Goldman},
       URL =
              {http://muse.jhu.edu/journals/american_journal_of_mathematics/v123/123.2loftin.pdf},
}

@article{2021parahyperkahler,
  author = {{Mazzoli}, Filippo and {Seppi}, Andrea and {Tamburelli}, Andrea},
  title = {{Para-hyperK{\"a}hler geometry of the deformation space of maximal globally hyperbolic anti-de Sitter three-manifolds}},
  journal = {to appear in Mem. Amer. Math. Soc.},
  year = {2021},
  month = jul,
  doi = {10.48550/arXiv.2107.10363},
  archiveprefix = {arXiv},
  eprint = {2107.10363},
  primaryclass = {math.DG},
  adsurl = {https://ui.adsabs.harvard.edu/abs/2021arXiv210710363M},
  adsnote = {Provided by the SAO/NASA Astrophysics Data System},
  keywords = {accepted},
  year-preprint = {2021}
}

@article {EL,
    AUTHOR = {Eells, J. and Lemaire, L.},
     TITLE = {Deformations of metrics and associated harmonic maps},
   JOURNAL = {Proc. Indian Acad. Sci. Math. Sci.},
  FJOURNAL = {Indian Academy of Sciences. Proceedings. Mathematical
              Sciences},
    VOLUME = {90},
      YEAR = {1981},
    NUMBER = {1},
     PAGES = {33--45},
      ISSN = {0253-4142},
   MRCLASS = {58E20},
  MRNUMBER = {653945},
       DOI = {10.1007/BF02867016},
       URL = {https://doi-org.clsproxy.library.caltech.edu/10.1007/BF02867016},
}

@article {Lab2,
    AUTHOR = {Labourie, Fran\c{c}ois},
     TITLE = {Flat projective structures on surfaces and cubic holomorphic
              differentials},
   JOURNAL = {Pure Appl. Math. Q.},
  FJOURNAL = {Pure and Applied Mathematics Quarterly},
    VOLUME = {3},
      YEAR = {2007},
    NUMBER = {4},
     PAGES = {1057--1099},
      ISSN = {1558-8599,1558-8602},
   MRCLASS = {53C20 (53A15 53C56 57M50)},
  MRNUMBER = {2402597},
MRREVIEWER = {John\ C.\ Loftin},
       DOI = {10.4310/PAMQ.2007.v3.n4.a10},
       URL = {https://doi.org/10.4310/PAMQ.2007.v3.n4.a10},
}

@article{Bersembedding,
author = {Lipman Bers},
title = {{Spaces of Riemann surfaces as bounded domains}},
volume = {66},
journal = {Bulletin of the American Mathematical Society},
number = {2},
publisher = {American Mathematical Society},
pages = {98 -- 103},
year = {1960},
}

@misc{BD2,
      title={Opers}, 
      author={Alexander Beilinson and Vladimir Drinfeld},
      year={2005},
      eprint={math/0501398},
      archivePrefix={arXiv},
      primaryClass={math.AG},
      url={https://arxiv.org/abs/math/0501398}, 
}

@article{CSopers,
title = {(G,P)-Opers and global Slodowy slices},
journal = {Advances in Mathematics},
volume = {377},
pages = {107490},
year = {2021},
issn = {0001-8708},
doi = {https://doi.org/10.1016/j.aim.2020.107490},
url = {https://www.sciencedirect.com/science/article/pii/S0001870820305181},
author = {Brian Collier and Andrew Sanders},
keywords = {Higgs bundles, Opers, Holomorphic connection},
abstract = {In this paper, we introduce a generalization of G-opers for arbitrary parabolic subgroups P<G of a complex semisimple Lie group. For parabolic subgroups associated to “even nilpotents”, we parameterize (G,P)-opers by an object generalizing the base of the Hitchin fibration. In particular, we describe and parameterize families of opers associated to higher Teichmüller spaces.}
}

@article {GPRi,
    AUTHOR = {Garc\'ia-Prada, Oscar and Ramanan, S.},
     TITLE = {Involutions and higher order automorphisms of {H}iggs bundle
              moduli spaces},
   JOURNAL = {Proc. Lond. Math. Soc. (3)},
  FJOURNAL = {Proceedings of the London Mathematical Society. Third Series},
    VOLUME = {119},
      YEAR = {2019},
    NUMBER = {3},
     PAGES = {681--732},
      ISSN = {0024-6115,1460-244X},
   MRCLASS = {14H60 (57R57 58D29)},
  MRNUMBER = {3960666},
MRREVIEWER = {Ronald\ A.\ Z\'u\~niga-Rojas},
       DOI = {10.1112/plms.12242},
       URL = {https://doi.org/10.1112/plms.12242},
}

@article {Hi,
    AUTHOR = {Hitchin, N. J.},
     TITLE = {Lie groups and {T}eichm\"{u}ller space},
   JOURNAL = {Topology},
  FJOURNAL = {Topology. An International Journal of Mathematics},
    VOLUME = {31},
      YEAR = {1992},
    NUMBER = {3},
     PAGES = {449--473},
      ISSN = {0040-9383},
   MRCLASS = {32G13 (32G15 57M99 58D27 58E15)},
  MRNUMBER = {1174252},
MRREVIEWER = {William\ Goldman},
       DOI = {10.1016/0040-9383(92)90044-I},
       URL = {https://doi.org/10.1016/0040-9383(92)90044-I},
}

@article {CW,
    AUTHOR = {Collier, Brian and Wentworth, Richard},
     TITLE = {Conformal limits and the {B}ia\l ynicki-{B}irula
              stratification of the space of {$\lambda$}-connections},
   JOURNAL = {Adv. Math.},
  FJOURNAL = {Advances in Mathematics},
    VOLUME = {350},
      YEAR = {2019},
     PAGES = {1193--1225},
      ISSN = {0001-8708,1090-2082},
   MRCLASS = {58D27 (14D20 14D21 14H60 32G13)},
  MRNUMBER = {3949609},
       DOI = {10.1016/j.aim.2019.04.034},
       URL = {https://doi.org/10.1016/j.aim.2019.04.034},
}

@misc{ElE,
      title={A metric uniformization model for the Quasi-Fuchsian space}, 
      author={Christian El Emam},
      year={2023},
      eprint={2307.07388},
      archivePrefix={arXiv},
      primaryClass={math.DG}
}

@article{Hitchin:1986vp,
    author = "Hitchin, Nigel J.",
    title = "{The Selfduality equations on a Riemann surface}",
    doi = "10.1112/plms/s3-55.1.59",
    journal = "Proc. Lond. Math. Soc.",
    volume = "55",
    pages = "59--131",
    year = "1987"
}

@article {Li,
    AUTHOR = {Li, Qiongling},
     TITLE = {An introduction to {H}iggs bundles via harmonic maps},
   JOURNAL = {SIGMA Symmetry Integrability Geom. Methods Appl.},
  FJOURNAL = {SIGMA. Symmetry, Integrability and Geometry. Methods and
              Applications},
    VOLUME = {15},
      YEAR = {2019},
     PAGES = {Paper No. 035, 30},
      ISSN = {1815-0659},
   MRCLASS = {53C43 (53C07 53C21)},
  MRNUMBER = {3947036},
MRREVIEWER = {Marco\ Castrillon Lopez},
       DOI = {10.3842/SIGMA.2019.035},
       URL = {https://doi.org/10.3842/SIGMA.2019.035},
}

@article {L1,
    AUTHOR = {Labourie, Fran\c{c}ois},
     TITLE = {Cross ratios, {A}nosov representations and the energy
              functional on {T}eichm\"{u}ller space},
   JOURNAL = {Ann. Sci. \'{E}c. Norm. Sup\'{e}r. (4)},
  FJOURNAL = {Annales Scientifiques de l'\'{E}cole Normale Sup\'{e}rieure. Quatri\`eme
              S\'{e}rie},
    VOLUME = {41},
      YEAR = {2008},
    NUMBER = {3},
     PAGES = {437--469},
      ISSN = {0012-9593},
   MRCLASS = {53D30 (32G15 37D20 57M50)},
  MRNUMBER = {2482204},
MRREVIEWER = {Lee-Peng Teo},
       DOI = {10.24033/asens.2072},
       URL = {https://doi.org/10.24033/asens.2072},
}

@article {Si,
    AUTHOR = {Simpson, Carlos T.},
     TITLE = {Constructing variations of {H}odge structure using
              {Y}ang-{M}ills theory and applications to uniformization},
   JOURNAL = {J. Amer. Math. Soc.},
  FJOURNAL = {Journal of the American Mathematical Society},
    VOLUME = {1},
      YEAR = {1988},
    NUMBER = {4},
     PAGES = {867--918},
      ISSN = {0894-0347},
   MRCLASS = {58E15 (32L15 53C25 53C55)},
  MRNUMBER = {944577},
       DOI = {10.2307/1990994},
       URL = {https://doi.org/10.2307/1990994},
}

@book {Hu,
    AUTHOR = {Humphreys, James E.},
     TITLE = {Introduction to {L}ie algebras and representation theory},
    SERIES = {Graduate Texts in Mathematics},
    VOLUME = {9},
      NOTE = {Second printing, revised},
 PUBLISHER = {Springer-Verlag, New York-Berlin},
      YEAR = {1978},
     PAGES = {xii+171},
      ISBN = {0-387-90053-5},
   MRCLASS = {17Bxx},
  MRNUMBER = {499562},
MRREVIEWER = {I. P. Shestakov},
}

@book {Ni,
    AUTHOR = {Nicolaescu, Liviu I.},
     TITLE = {Lectures on the geometry of manifolds},
   EDITION = {Second},
 PUBLISHER = {World Scientific Publishing Co. Pte. Ltd., Hackensack, NJ},
      YEAR = {2007},
     PAGES = {xviii+589},
      ISBN = {978-981-277-862-8; 981-277-862-4},
   MRCLASS = {53-02 (58-02)},
  MRNUMBER = {2363924},
       DOI = {10.1142/9789812770295},
       URL = {https://doi-org.clsproxy.library.caltech.edu/10.1142/9789812770295},
}

@article{EE,
author = {Clifford J. Earle and James Eells},
title = {{A fibre bundle description of Teichmüller theory}},
volume = {3},
journal = {Journal of Differential Geometry},
number = {1-2},
publisher = {Lehigh University},
pages = {19 -- 43},
year = {1969},
doi = {10.4310/jdg/1214428816},
URL = {https://doi.org/10.4310/jdg/1214428816}
}

@article {DZ,
    AUTHOR = {Dumitrescu, Sorin and Zeghib, Abdelghani},
     TITLE = {Global rigidity of holomorphic {R}iemannian metrics on compact
              complex 3-manifolds},
   JOURNAL = {Math. Ann.},
  FJOURNAL = {Mathematische Annalen},
    VOLUME = {345},
      YEAR = {2009},
    NUMBER = {1},
     PAGES = {53--81},
      ISSN = {0025-5831,1432-1807},
   MRCLASS = {53C56 (32Q57 53C24)},
  MRNUMBER = {2520052},
MRREVIEWER = {Bernd\ Kreussler},
       DOI = {10.1007/s00208-009-0342-8},
       URL = {https://doi.org/10.1007/s00208-009-0342-8},
}

@article {BEE,
    AUTHOR = {Bonsante, Francesco and El Emam, Christian},
     TITLE = {On immersions of surfaces into {$SL(2,\Bbb C)$} and geometric
              consequences},
   JOURNAL = {Int. Math. Res. Not. IMRN},
  FJOURNAL = {International Mathematics Research Notices. IMRN},
      YEAR = {2022},
    NUMBER = {12},
     PAGES = {8803--8864},
      ISSN = {1073-7928,1687-0247},
   MRCLASS = {53C42 (32Q30 53C21 57K31)},
  MRNUMBER = {4436196},
MRREVIEWER = {Erasmo\ Caponio},
       DOI = {10.1093/imrn/rnab189},
       URL = {https://doi.org/10.1093/imrn/rnab189},
}

@incollection {BD,
    AUTHOR = {Beilinson, A. A. and Drinfeld, V. G.},
     TITLE = {Quantization of {H}itchin's fibration and {L}anglands'
              program},
 BOOKTITLE = {Algebraic and geometric methods in mathematical physics
              ({K}aciveli, 1993)},
    SERIES = {Math. Phys. Stud.},
    VOLUME = {19},
     PAGES = {3--7},
 PUBLISHER = {Kluwer Acad. Publ., Dordrecht},
      YEAR = {1996},
      ISBN = {0-7923-3909-6},
   MRCLASS = {14D20 (22E67)},
  MRNUMBER = {1385674},
MRREVIEWER = {Daniel\ Huybrechts},
}

@article{Gai,
    author = "Gaiotto, Davide",
    title = "{Opers and TBA}",
    eprint = "1403.6137",
    archivePrefix = "arXiv",
    primaryClass = "hep-th",
    month = "3",
    year = "2014"
}

@incollection {DS,
    AUTHOR = {Drinfeld, V. G. and Sokolov, V. V.},
     TITLE = {Lie algebras and equations of {K}orteweg-de {V}ries type},
 BOOKTITLE = {Current problems in mathematics, {V}ol. 24},
    SERIES = {Itogi Nauki i Tekhniki},
     PAGES = {81--180},
 PUBLISHER = {Akad. Nauk SSSR, Vsesoyuz. Inst. Nauchn. i Tekhn. Inform.,
              Moscow},
      YEAR = {1984},
   MRCLASS = {58F07 (17B67 35Q20)},
  MRNUMBER = {760998},
MRREVIEWER = {George\ Wilson},
}

@incollection {Wa,
    AUTHOR = {Wan, Tom Y. H.},
     TITLE = {Stability of minimal graphs in products of surfaces},
 BOOKTITLE = {Geometry from the {P}acific {R}im ({S}ingapore, 1994)},
     PAGES = {395--401},
 PUBLISHER = {de Gruyter, Berlin},
      YEAR = {1997},
      ISBN = {3-11-014792-0},
   MRCLASS = {58E20 (53C42)},
  MRNUMBER = {1468261},
}

@incollection {Sc,
    AUTHOR = {Schoen, Richard M.},
     TITLE = {The role of harmonic mappings in rigidity and deformation
              problems},
 BOOKTITLE = {Complex geometry ({O}saka, 1990)},
    SERIES = {Lecture Notes in Pure and Appl. Math.},
    VOLUME = {143},
     PAGES = {179--200},
 PUBLISHER = {Dekker, New York},
      YEAR = {1993},
   MRCLASS = {58E20 (32C17 53C21)},
  MRNUMBER = {1201611},
}

@incollection{Taubes,
    author = {},
    isbn = {9780199605880},
    title = {Copyright Page},
    booktitle = {Differential Geometry: Bundles, Connections, Metrics and Curvature},
    publisher = {Oxford University Press},
    year = {2011},
    month = {10},
    doi = {10.1093/acprof:oso/9780199605880.002.0004},
    url = {https://doi.org/10.1093/acprof:oso/9780199605880.002.0004},
    eprint = {https://academic.oup.com/book/0/chapter/161651467/chapter-ag-pdf/45194987/book\_12200\_section\_161651467.ag.pdf},
}

@article {DFKMMN,
    AUTHOR = {Dumitrescu, Olivia and Fredrickson, Laura and Kydonakis,
              Georgios and Mazzeo, Rafe and Mulase, Motohico and Neitzke,
              Andrew},
     TITLE = {From the {H}itchin section to opers through nonabelian
              {H}odge},
   JOURNAL = {J. Differential Geom.},
  FJOURNAL = {Journal of Differential Geometry},
    VOLUME = {117},
      YEAR = {2021},
    NUMBER = {2},
     PAGES = {223--253},
      ISSN = {0022-040X,1945-743X},
   MRCLASS = {53C07 (14D21 58E15 81T13)},
  MRNUMBER = {4214341},
MRREVIEWER = {Markus\ R\"{o}ser},
       DOI = {10.4310/jdg/1612975016},
       URL = {https://doi.org/10.4310/jdg/1612975016},
}

@article {D,
    AUTHOR = {Donaldson, S. K.},
     TITLE = {Twisted harmonic maps and the self-duality equations},
   JOURNAL = {Proc. London Math. Soc. (3)},
  FJOURNAL = {Proceedings of the London Mathematical Society. Third Series},
    VOLUME = {55},
      YEAR = {1987},
    NUMBER = {1},
     PAGES = {127--131},
      ISSN = {0024-6115},
   MRCLASS = {58E20 (32L15 53C05)},
  MRNUMBER = {887285},
MRREVIEWER = {Mitsuhiro Itoh},
       DOI = {10.1112/plms/s3-55.1.127},
       URL = {https://doi.org/10.1112/plms/s3-55.1.127},
}

@article {C,
      title={Finite order automorphisms of Higgs bundles: theory and application}, 
      author={Brian Collier},
   JOURNAL = {PhD thesis},
    YEAR={2016}
}

@article {LMHiggs,
    AUTHOR = {Loftin, John and McIntosh, Ian},
     TITLE = {Equivariant minimal surfaces in {$\Bbb{CH}^2$} and their
              {H}iggs bundles},
   JOURNAL = {Asian J. Math.},
  FJOURNAL = {Asian Journal of Mathematics},
    VOLUME = {23},
      YEAR = {2019},
    NUMBER = {1},
     PAGES = {71--106},
      ISSN = {1093-6106,1945-0036},
   MRCLASS = {53C43 (20H10 58E20)},
  MRNUMBER = {3949592},
MRREVIEWER = {Qiongling\ Li},
       DOI = {10.4310/AJM.2019.v23.n1.a5},
       URL = {https://doi.org/10.4310/AJM.2019.v23.n1.a5},
}

@article {Co,
    AUTHOR = {Corlette, Kevin},
     TITLE = {Flat {$G$}-bundles with canonical metrics},
   JOURNAL = {J. Differential Geom.},
  FJOURNAL = {Journal of Differential Geometry},
    VOLUME = {28},
      YEAR = {1988},
    NUMBER = {3},
     PAGES = {361--382},
      ISSN = {0022-040X},
   MRCLASS = {58E20 (32L99 53C10)},
  MRNUMBER = {965220},
MRREVIEWER = {John C. Wood},
       URL = {http://projecteuclid.org/euclid.jdg/1214442469},
}

@article{Baraglia2010CyclicHB,
  title={Cyclic Higgs bundles and the affine Toda equations},
  author={Baraglia, D.},
  journal={Geometriae Dedicata},
  year={2010},
  volume={174},
  pages={25-42},
  url={https://api.semanticscholar.org/CorpusID:119320524}
}

@article{KM,
  title={Sur certaines structures fibrées complexes},
  author={Koszul, J.L., and Malgrange, B},
  journal={Arch. Math},
  year={1958},
  volume={9},
  pages={102-109},
}

@phdthesis{BaragliaThesis,
  author = {Baraglia, D.},
  title  = {{$G_2$ geometry and integrable systems}},
  year   = {2009},
  school = {Oxford University},
}

@phdthesis{Sandon,
  author = {Sandon, S.},
  title  = {On the Chern correspondence for principal fibre bundles with complex reductive structure group},
  year   = {2005},
  school = {Universiteit Leiden},
}

@article{GPNR,
author = {Garcia-Prada, Oscar and Peón-Nieto, Ana and Ramanan, S.},
year = {2015},
month = {11},
pages = {},
title = {Higgs bundles for real groups and the Hitchin-Kostant-Rallis section},
volume = {370},
journal = {Transactions of the American Mathematical Society},
doi = {10.1090/tran/7363}
}

@incollection {G-P,
    AUTHOR = {Garc\'{\i}a-Prada, Oscar},
     TITLE = {Higgs bundles and surface group representations},
 BOOKTITLE = {Moduli spaces and vector bundles},
    SERIES = {London Math. Soc. Lecture Note Ser.},
    VOLUME = {359},
     PAGES = {265--310},
 PUBLISHER = {Cambridge Univ. Press, Cambridge},
      YEAR = {2009},
   MRCLASS = {32G08 (14D20 53D20)},
  MRNUMBER = {2537072},
MRREVIEWER = {Usha N. Bhosle},
}

@article {SU,
    AUTHOR = {Bers, Lipman},
     TITLE = {Simultaneous uniformization},
   JOURNAL = {Bull. Amer. Math. Soc.},
  FJOURNAL = {Bulletin of the American Mathematical Society},
    VOLUME = {66},
      YEAR = {1960},
     PAGES = {94--97},
      ISSN = {0002-9904},
   MRCLASS = {30.00},
  MRNUMBER = {111834},
MRREVIEWER = {H.\ L.\ Royden},
       DOI = {10.1090/S0002-9904-1960-10413-2},
       URL = {https://doi.org/10.1090/S0002-9904-1960-10413-2},
}

@article{Atiyah1983TheYE,
  title={The Yang-Mills equations over Riemann surfaces},
  author={Michael Francis Atiyah and Raoul Bott},
  journal={Philosophical Transactions of the Royal Society of London. Series A, Mathematical and Physical Sciences},
  year={1983},
  volume={308},
  pages={523 - 615},
  url={https://api.semanticscholar.org/CorpusID:13601126}
}

@inproceedings {Wie,
    AUTHOR = {Wienhard, Anna},
     TITLE = {An invitation to higher {T}eichm\"{u}ller theory},
 BOOKTITLE = {Proceedings of the {I}nternational {C}ongress of
              {M}athematicians---{R}io de {J}aneiro 2018. {V}ol. {II}.
              {I}nvited lectures},
     PAGES = {1013--1039},
 PUBLISHER = {World Sci. Publ., Hackensack, NJ},
      YEAR = {2018},
   MRCLASS = {22E40 (57M50)},
  MRNUMBER = {3966798},
MRREVIEWER = {Caglar Uyanik},
}

@misc{RT,
      title={Complex Lagrangian minimal surfaces, bi-complex Higgs bundles and $\mathrm{SL}(3,\mathbb{C})$-quasi-Fuchsian representations}, 
      author={Nicholas Rungi and Andrea Tamburelli},
      year={2024},
      eprint={2406.14945},
      archivePrefix={arXiv},
      primaryClass={id='math.DG'}
}

@article {Kim,
    AUTHOR = {Kim, Young-Heon},
     TITLE = {Holomorphic extensions of {L}aplacians and their determinants},
   JOURNAL = {Adv. Math.},
  FJOURNAL = {Advances in Mathematics},
    VOLUME = {211},
      YEAR = {2007},
    NUMBER = {2},
     PAGES = {517--545},
      ISSN = {0001-8708,1090-2082},
   MRCLASS = {58J52 (32G15)},
  MRNUMBER = {2323536},
MRREVIEWER = {Yoonweon\ Lee},
       DOI = {10.1016/j.aim.2006.09.009},
       URL = {https://doi.org/10.1016/j.aim.2006.09.009},
}

@incollection {Uh,
    AUTHOR = {Uhlenbeck, Karen K.},
     TITLE = {Closed minimal surfaces in hyperbolic {$3$}-manifolds},
 BOOKTITLE = {Seminar on minimal submanifolds},
    SERIES = {Ann. of Math. Stud.},
    VOLUME = {103},
     PAGES = {147--168},
 PUBLISHER = {Princeton Univ. Press, Princeton, NJ},
      YEAR = {1983},
      ISBN = {0-691-08324-X; 0-691-08319-3},
   MRCLASS = {53C42 (32G15 53A10)},
  MRNUMBER = {795233},
MRREVIEWER = {R.\ Osserman},
}

@article {W2,
    AUTHOR = {Wolf, Michael},
     TITLE = {The {T}eichm\"{u}ller theory of harmonic maps},
   JOURNAL = {J. Differential Geom.},
  FJOURNAL = {Journal of Differential Geometry},
    VOLUME = {29},
      YEAR = {1989},
    NUMBER = {2},
     PAGES = {449--479},
      ISSN = {0022-040X},
   MRCLASS = {58E20 (32G15 58D17)},
  MRNUMBER = {982185},
MRREVIEWER = {A. J. Tromba},
       URL = {http://projecteuclid.org/euclid.jdg/1214442885},
}

@article {LofM,
    AUTHOR = {Loftin, John and McIntosh, Ian},
     TITLE = {Minimal {L}agrangian surfaces in {$\Bbb{CH}^2$} and
              representations of surface groups into {$SU(2,1)$}},
   JOURNAL = {Geom. Dedicata},
  FJOURNAL = {Geometriae Dedicata},
    VOLUME = {162},
      YEAR = {2013},
     PAGES = {67--93},
      ISSN = {0046-5755,1572-9168},
   MRCLASS = {53C43 (20H10 58E20)},
  MRNUMBER = {3009535},
MRREVIEWER = {Andrew\ Swann},
       DOI = {10.1007/s10711-012-9717-1},
       URL = {https://doi.org/10.1007/s10711-012-9717-1},
}

@misc{collier2024conformallimitsparabolicslnchiggs,
      title={Conformal limits for parabolic SL(n,C)-Higgs bundles}, 
      author={Brian Collier and Laura Fredrickson and Richard Wentworth},
      year={2024},
      eprint={2407.16798},
      archivePrefix={arXiv},
      primaryClass={math.DG},
      url={https://arxiv.org/abs/2407.16798}, 
}

@Article{Dai2018,
  author    = {Dai, Song and Li, Qiongling},
  journal   = {Mathematische Annalen},
  title     = {{On cyclic Higgs bundles}},
  year      = {2020},
  issn      = {1432-1807},
  month     = nov,
  number    = {3–4},
  pages     = {1225--1260},
  volume    = {376},
  doi       = {10.1007/s00208-018-1779-4},
  publisher = {Springer Science and Business Media LLC},
}

@article {Lab3,
    AUTHOR = {Labourie, Fran\c{c}ois},
     TITLE = {Cyclic surfaces and {H}itchin components in rank 2},
   JOURNAL = {Ann. of Math. (2)},
  FJOURNAL = {Annals of Mathematics. Second Series},
    VOLUME = {185},
      YEAR = {2017},
    NUMBER = {1},
     PAGES = {1--58},
      ISSN = {0003-486X,1939-8980},
   MRCLASS = {14H60 (32G13 53C35 53C42)},
  MRNUMBER = {3583351},
MRREVIEWER = {Laura\ P.\ Schaposnik},
       DOI = {10.4007/annals.2017.185.1.1},
       URL = {https://doi.org/10.4007/annals.2017.185.1.1},
}

@book {Knapp,
    AUTHOR = {Knapp, Anthony W.},
     TITLE = {Lie groups beyond an introduction},
    SERIES = {Progress in Mathematics},
    VOLUME = {140},
   EDITION = {Second},
 PUBLISHER = {Birkh\"auser Boston, Inc., Boston, MA},
      YEAR = {2002},
     PAGES = {xviii+812},
      ISBN = {0-8176-4259-5},
   MRCLASS = {22-01},
  MRNUMBER = {1920389},
}

@book {Bookanalytic,
    AUTHOR = {Kriegl, Andreas and Michor, Peter W.},
     TITLE = {The convenient setting of global analysis},
    SERIES = {Mathematical Surveys and Monographs},
    VOLUME = {53},
 PUBLISHER = {American Mathematical Society, Providence, RI},
      YEAR = {1997},
     PAGES = {x+618},
      ISBN = {0-8218-0780-3},
   MRCLASS = {58Bxx (46-02 46Gxx 46M20 58C15)},
  MRNUMBER = {1471480},
MRREVIEWER = {Olga\ Gil-Medrano},
       DOI = {10.1090/surv/053},
       URL = {https://doi.org/10.1090/surv/053},
}

@article {DG,
    AUTHOR = {Darvishzadeh, Mehdi-Reza and Goldman, William M.},
     TITLE = {Deformation spaces of convex real-projective structures and
              hyperbolic affine structures},
   JOURNAL = {J. Korean Math. Soc.},
  FJOURNAL = {Journal of the Korean Mathematical Society},
    VOLUME = {33},
      YEAR = {1996},
    NUMBER = {3},
     PAGES = {625--639},
      ISSN = {0304-9914,2234-3008},
   MRCLASS = {58D27 (53A20 53C10)},
  MRNUMBER = {1419757},
MRREVIEWER = {Philippe\ P.\ Eyssidieux},
}

@article {Ligold,
    AUTHOR = {Li, Qiongling},
     TITLE = {Teichm\"uller space is totally geodesic in {G}oldman space},
   JOURNAL = {Asian J. Math.},
  FJOURNAL = {Asian Journal of Mathematics},
    VOLUME = {20},
      YEAR = {2016},
    NUMBER = {1},
     PAGES = {21--46},
      ISSN = {1093-6106,1945-0036},
   MRCLASS = {57N16 (58B20)},
  MRNUMBER = {3460757},
MRREVIEWER = {Behroz\ Bidabad},
       DOI = {10.4310/AJM.2016.v20.n1.a2},
       URL = {https://doi.org/10.4310/AJM.2016.v20.n1.a2},
}

@article {Sik,
    AUTHOR = {Sikora, Adam S.},
     TITLE = {Character varieties},
   JOURNAL = {Trans. Amer. Math. Soc.},
  FJOURNAL = {Transactions of the American Mathematical Society},
    VOLUME = {364},
      YEAR = {2012},
    NUMBER = {10},
     PAGES = {5173--5208},
      ISSN = {0002-9947,1088-6850},
   MRCLASS = {14D20 (14L24 53D30 57M50)},
  MRNUMBER = {2931326},
MRREVIEWER = {Benjamin\ M. S. Martin},
       DOI = {10.1090/S0002-9947-2012-05448-1},
       URL = {https://doi.org/10.1090/S0002-9947-2012-05448-1},
}

@article{Bridgeman2013ThePM,
  title={The pressure metric for Anosov representations},
  author={Martin J. Bridgeman and Richard Canary and François Labourie and Andr'es Sambarino},
  journal={Geometric and Functional Analysis},
  year={2013},
  volume={25},
  pages={1089-1179},
  url={https://api.semanticscholar.org/CorpusID:6691244}
}

@article{DE,
author = {Dai, Xian and Eptaminitakis, Nikolaos},
year = {2024},
month = {03},
pages = {},
title = {The Covariance Metric in the Blaschke Locus},
volume = {34},
journal = {The Journal of Geometric Analysis},
doi = {10.1007/s12220-024-01586-w}
}

@article{kim2017kahler,
  title={K{\"a}hler metric on the space of convex real projective structures on surface},
  author={Kim, Inkang and Zhang, Genkai},
  journal={Journal of Differential Geometry},
  volume={106},
  number={1},
  pages={127--137},
  year={2017},
  publisher={Lehigh University}
}

@article {EleSeppi,
    AUTHOR = {El Emam, Christian and Seppi, Andrea},
     TITLE = {On the {G}auss map of equivariant immersions in hyperbolic
              space},
   JOURNAL = {J. Topol.},
  FJOURNAL = {Journal of Topology},
    VOLUME = {15},
      YEAR = {2022},
    NUMBER = {1},
     PAGES = {238--301},
      ISSN = {1753-8416,1753-8424},
   MRCLASS = {53C42 (53A07 53D12 57K32)},
  MRNUMBER = {4503957},
MRREVIEWER = {Hui\ Ma},
}

@book {GuillPoll,
    AUTHOR = {Guillemin, Victor and Pollack, Alan},
     TITLE = {Differential topology},
      NOTE = {Reprint of the 1974 original},
 PUBLISHER = {AMS Chelsea Publishing, Providence, RI},
      YEAR = {2010},
     PAGES = {xviii+224},
      ISBN = {978-0-8218-5193-7},
   MRCLASS = {58-01 (57-01)},
  MRNUMBER = {2680546},
       DOI = {10.1090/chel/370},
       URL = {https://doi.org/10.1090/chel/370},
}

@misc{Sanders,
      title={The pre-symplectic geometry of opers and the holonomy map}, 
      author={Andrew Sanders},
      year={2020},
      eprint={1804.04716},
      archivePrefix={arXiv},
      primaryClass={math.DG},
      url={https://arxiv.org/abs/1804.04716}, 
}

@misc{GIFT,
  title = {Lecture notes},
author = {Wang, Zuoqin},
  howpublished = {\\
      \url{http://staff.ustc.edu.cn/~wangzuoq/Courses/21F-Manifolds/Notes/Lec10.pdf}},
}

@misc{Taylor,
  title = {Multidimensional Analytic Fredholm Theory},
author = {Taylor, Michael},
  howpublished = {\\
      \url{https://mtaylor.web.unc.edu/wp-content/uploads/sites/16915/2018/04/fred.pdf}},
}

@misc{ElSa,
      title={Complex affine spheres and a Bers theorem for SL(3,C)}, 
      author={Christian El Emam and Nathaniel Sagman},
      year={2025},
      eprint={2406.15287},
      archivePrefix={arXiv},
      primaryClass={math.DG},
      url={https://arxiv.org/abs/2406.15287}, 
}

@inproceedings{OV,
  title={Lie Groups and Lie Algebras III},
  author={Arkadij L. Onishchik and Ernest Borisovich Vinberg},
  year={1993},
  url={https://api.semanticscholar.org/CorpusID:124687276}
}

@misc{ST,
      title={On Hitchin's equations for cyclic G-Higgs bundles}, 
      author={Nathaniel Sagman and Ognjen Tošić},
      year={2024},
      eprint={2410.20853},
      archivePrefix={arXiv},
      primaryClass={math.DG},
      url={https://arxiv.org/abs/2410.20853}, 
}

@book{BR,
  title={Twistor Theory for Riemannian Symmetric Spaces: With Applications to Harmonic Maps of Riemann Surfaces},
  author={Burstall, F.E. and Rawnsley, J.H.},
  isbn={9780387526027},
  lccn={lc90009785},
  series={Lecture Notes in Mathematics},
  url={https://books.google.de/books?id=H8EZAQAAIAAJ},
  year={1990},
  publisher={Springer}
}

@article {Golform,
    AUTHOR = {Goldman, William M.},
     TITLE = {The symplectic nature of fundamental groups of surfaces},
   JOURNAL = {Adv. in Math.},
  FJOURNAL = {Advances in Mathematics},
    VOLUME = {54},
      YEAR = {1984},
    NUMBER = {2},
     PAGES = {200--225},
      ISSN = {0001-8708},
   MRCLASS = {32G15 (57M05)},
  MRNUMBER = {762512},
MRREVIEWER = {C.\ Earle},
       DOI = {10.1016/0001-8708(84)90040-9},
       URL = {https://doi.org/10.1016/0001-8708(84)90040-9},
}

@misc{ESholodependence,
      title={Holomorphic dependence for the Beltrami equation in Sobolev spaces}, 
      author={Christian El Emam and Nathaniel Sagman},
      year={2024},
      eprint={2410.06175},
      archivePrefix={arXiv},
      primaryClass={math.CV},
      url={https://arxiv.org/abs/2410.06175}, 
}

@misc{RTpara,
      title={Para-complex geometry and cyclic Higgs bundles}, 
      author={Nicholas Rungi and Andrea Tamburelli},
      year={2025},
      eprint={2503.01615},
      archivePrefix={arXiv},
      primaryClass={math.DG},
      url={https://arxiv.org/abs/2503.01615}, 
}

@article{Nie,
title = {Cyclic Higgs bundles and minimal surfaces in pseudo-hyperbolic spaces},
journal = {Advances in Mathematics},
volume = {436},
pages = {109402},
year = {2024},
issn = {0001-8708},
doi = {https://doi.org/10.1016/j.aim.2023.109402},
url = {https://www.sciencedirect.com/science/article/pii/S0001870823005455},
author = {Xin Nie},
keywords = {Higgs bundle, Minimal surface, Pseudo-hyperbolic space, Labourie's conjecture},
abstract = {We introduce a type of minimal surface in the pseudo-hyperbolic space Hn,n (with n even) or Hn+1,n−1 (with n odd) coming from cyclic SO0(n,n+1)-Higg bundles. By showing that these surfaces are saddle-type critical points for the area functional, and hence infinitesimally rigid, we get a new proof, for SO0(n,n+1), of Labourie's theorem that the holonomy map restricts to an immersion on the cyclic locus of Hitchin base, and extend it to Collier's components. This implies Labourie's former conjecture in the case of the exceptional group G2′, for which we also show that these minimal surfaces are J-holomorphic curves of a particular type in the almost complex H4,2.}
}

@misc{SS,
      title={Unstable minimal surfaces in symmetric spaces of non-compact type}, 
      author={Nathaniel Sagman and Peter Smillie},
      year={2025},
      eprint={2208.04885},
      archivePrefix={arXiv},
      primaryClass={math.DG},
      url={https://arxiv.org/abs/2208.04885}, 
}

@article{Donaldson2003MomentMI,
  title={Moment maps in differential geometry},
  author={Simon K. Donaldson},
  journal={Surveys in differential geometry},
  year={2003},
  volume={8},
  pages={171-189},
  url={https://api.semanticscholar.org/CorpusID:124403816}
}

@misc{RunTambnonkahler,
      author={Nicholas Rungi and Andrea Tamburelli},
      year={2024},
      eprint={2306.02699},
      archivePrefix={arXiv},
      primaryClass={math.DG},
      url={https://arxiv.org/abs/2306.02699}, 
}

@phdthesis{trautwein2018infinite,
  title={Infinite dimensional GIT and moment maps in differential geometry},
  author={Trautwein, Samuel},
  year={2018},
  school={ETH Zurich}
}

\end{document}